\definecolor{aquam}{rgb}{0.5,1.0,1.0}
\definecolor{bbrown}{rgb}{0.75,0.38,0.15}
\definecolor{Cyan}{rgb}{0,0.6,0.6}
\definecolor{Darkblue}{rgb}{0,0,1}
\definecolor{Dodgerblue2}{rgb}{0,0.5,1}
\definecolor{Green}{rgb}{0,0.6,0.06}
\definecolor{Kahki}{rgb}{1,1,0.5}
\definecolor{Magenta}{rgb}{1,0,1}
\definecolor{bMagenta}{rgb}{1,.6,1}
\definecolor{Orange}{rgb}{0.8,0.3,0}
\definecolor{dOrchid}{rgb}{0.7,0.2,0.4}
\definecolor{Orchid}{rgb}{1,0.5,1}
\definecolor{Purple}{rgb}{0.65,0.07,0.85}
\definecolor{Royalblue}{rgb}{0.6,0.85,0.87}
\definecolor{Tan}{rgb}{0.54,0.42,0.23}
\definecolor{bTan}{rgb}{0.94,0.82,0.63}
\definecolor{zoltan}{rgb}{0,0.1,0.3}
\definecolor{Turquoise}{rgb}{0,0.85,0.87}
\definecolor{Yellow}{rgb}{1,1,0}
\definecolor{darkamber}{rgb}{0.4,0.19,0.28}
\definecolor{bYellow}{rgb}{1,1,0.6}
\definecolor{bRed}{rgb}{1,0.7,0.7}
\definecolor{boxcolb}{rgb}{0.87,0.77,0.75}
\definecolor{boxcol}{rgb}{0.6,0.85,0.87}
\definecolor{boxcolgreen}{rgb}{0.64,0.93,0.79}
\definecolor{boxcolaa}{rgb}{.75,.99,.70}
\definecolor{boxcolbb}{rgb}{0.39,0.50,0.56}
\definecolor{boxcolcc}{rgb}{1,0.81,0.65}
\definecolor{yy}{rgb}{0.43,0.21,.18}
\definecolor{gA}{gray}{0.5}
\definecolor{gB}{gray}{0.8}
\definecolor{gC}{gray}{0.9}
\numberwithin{equation}{section}
\theoremstyle{plain}
\newtheorem{theorem}{Theorem}[section]
\newtheorem{corollary}[theorem]{Corollary}
\newtheorem{proposition}[theorem]{Proposition}
\newtheorem{lemma}[theorem]{Lemma}
\newtheorem{observation}[theorem]{Observation}
\theoremstyle{remark}
\newtheorem{remark}[theorem]{Remark}
\newtheorem{example}[theorem]{Example}
\newtheorem{question}[theorem]{Question}
\theoremstyle{definition}
\newtheorem{definition}[theorem]{Definition}
\newtheorem{construction}[theorem]{Construction}
\newtheorem{algorithm}[theorem]{Algorithm}
\def\B{{\mathcal B}}
\def\H{{\mathcal H}}
\def\L{{\mathcal L}}
\def\F{{\mathcal F}}
\def\E{{\mathbb E}}
\def\P{{\mathbb P}_p}
\def\Q{{\mathcal Q}}
\def\C{{\mathcal C}}
\def\S{{\mathcal S}}
\def\K{{\mathcal K}}
\def\R{{\mathbb R}}
\def\N{{\mathbb N}}
\def\T{{\mathcal T}}
\def\e{{\varepsilon}}
\def\ii{{\bf i}}
\def\bL{{\bf L}}
\newcommand{\LL}[2]{L^{#1}(m_1,k_0)_{#2}}
\newcommand{\DD}[2]{\Delta^{#1}_{#2}}
\newcommand{\mfnn}{\scalebox{2}[2]{${\mathfrak n}$}}
\newcommand{\mfn}{\mathfrak n}
\newcommand{\mfc}{\mathfrak c}
\DeclareMathOperator{\dimh}{dim_H}
\DeclareMathOperator{\dist}{dist}
\DeclareMathOperator{\diam}{diam}
\DeclareMathOperator{\inter}{Int}
\DeclareMathOperator{\dmax}{d_{max}}
\DeclareMathOperator{\dH}{d_H}
\DeclareMathOperator{\lpb}{LP_b}
\DeclareMathOperator{\lpi}{LP_{i-r}}
\DeclareMathOperator{\lp1}{LP_{1-r}}
\DeclareMathOperator{\interm0}{Int_{m_0}}
\begin{document}

\title[Fractal percolation is unrectifiable]
{Fractal percolation is unrectifiable}

\author[Z. Buczolich]{Zolt\'an Buczolich$^1$}
\address{Department of Analysis, ELTE E\"otv\"os Lor\'and University, 
         P\'azm\'any P\'eter s\'e\-t\'any 1/c, H-1117 Budapest, Hungary$^{1,4}$}
\email{zoltan.buczolich@ttk.elte.hu$^1$}

\author[E. J\"arvenp\"a\"a]{Esa J\"arvenp\"a\"a$^2$}
\address{Department of Mathematical Sciences, P.O. Box 3000,
         90014 University of Oulu, Finland$^{2,3,5}$}
\email{esa.jarvenpaa@oulu.fi$^2$}

\author[M. J\"arvenp\"a\"a]{Maarit J\"arvenp\"a\"a$^3$}
\email{maarit.jarvenpaa@oulu.fi$^3$}

\author[T. Keleti]{Tam\'as Keleti$^4$}
\email{tamas.keleti@gmail.com$^4$}

\author[T. P\"oyht\"ari]{Tuomas P\"oyht\"ari$^5$}
\email{tuomaspo@gmail.com$^5$}

\subjclass[2000]{28A80, 60J80}
\keywords{Fractal percolation, unrectifiability}

\thanks{ZB received support from the Hungarian National Research, Development
and Innovation Office--NKFIH, Grant K124003, and
KT received support from the Hungarian
National Foundation for Scientific Research, grant K124749.
ZB and KT also thank the R\'enyi Institute for the visiting researcher
positions during the fall semester of 2017.
EJ, MJ and TP acknowledge the support of the Centre of Excellence in
Analysis and Dynamics Research funded by the Academy of Finland. EJ and MJ 
thank the ICERM semester program on ``Dimension and Dynamics''. This material is
partially based upon work supported by the Swedish Research Council under grant
no. 2016-06596 while the authors were in residence at Institut Mittag-Leffler in
Djursholm, Sweden during the Fall semester of 2017.}

\maketitle

\begin{abstract} 
We show that there exists $0<\alpha_0<1$ (depending on the parameters) such 
that the fractal percolation is 
almost surely purely $\alpha$-unrectifiable for all $\alpha>\alpha_0$.
\end{abstract}

\setcounter{tocdepth}{3}
\tableofcontents


\section{Introduction}\label{intro}

Fractal percolation, also known as Mandelbrot percolation, is a 
classical random process introduced by Mandelbrot in 1974
for the purpose of modelling turbulence  
\cite{Man}. Mandelbrot called the model canonical curdling whereas the name
fractal percolation was established for the process later.
We begin by describing the model briefly 
and refer to Section~\ref{model} for more precise definitions.   

Fix $0\le p\le 1$, and let $N\in\N:=\{0,1,2,\dots\}$ with $N\ge 2$.
Letting $d\in\N\setminus\{0\}$, construct a random compact subset $E$ of the
unit cube $Q_0:=[0,1]^d\subset\R^d$ in the following manner:
Divide $Q_0$ into $N^d$ subcubes of equal size. Independently of 
each other, each of them is chosen with probability $p$ and deleted with
probability $1-p$, and the collection of the chosen subcubes is denoted by
$\C_1$. Continue by repeating the same process for each $Q\in\C_1$. The set of
all chosen cubes at the second level is denoted by $\C_2$. Iterating
this process inductively gives the fractal percolation set $E$, defined as
\[
E:=\bigcap_{n=1}^\infty \bigcup_{Q\in\C_n}Q.
\]
The probability space $\Omega$ is the space of all constructions and
the natural probability measure on $\Omega$ induced by this procedure is
denoted by $\P$.

We shortly describe some basic properties of fractal percolation set relevant
to our purposes and refer to \cite{C} or \cite{G} for further information.
It is clear that $E=\emptyset$ with positive probability if $p<1$, since 
$\C_1=\emptyset$ with probability $(1-p)^{N^d}$. It follows from 
the theory of branching processes that $E=\emptyset$ almost surely if the
expected number of chosen cubes is at most one, that is, $p\le N^{-d}$. 
Kahane and Peyri\`ere \cite{KP} proved that, in the opposite case
$p>N^{-d}$, the Hausdorff dimension, $\dimh$, of the limiting set is almost
surely
a constant conditioned on non-extinction, that is,
\begin{equation}\label{dimH}
\dimh E=\frac{\log(pN^d)}{\log N}
\end{equation}
almost surely conditioned on the event $E\ne\emptyset$. 

In \cite{CCD}, J.T. Chayes, L. Chayes and R. Durrett verified in the case 
$d=2$ that there is
a critical probability $0<p_c<1$ such that if $p<p_c$, then $E$ is totally 
disconnected with probability one, whereas the opposing sides of $Q_0$ are 
connected by a connected component of $E$ with positive probability provided 
that $p\ge p_c$. The latter phenomenon is commonly referred to as fractal
percolation. The exact value of $p_c$ is not known. From \eqref{dimH}
it trivially follows that $p_c\ge N^{-1}$ if $d=2$, and in 
\cite{CCD} it is proved that $N^{-1}<p_c<1$. Corresponding results are
apparently valid also for $d>2$ (see \cite{FG}).

Even though $p_c>N^{-1}$, the set $E$ looks connected from outside as soon
as its dimension is larger than one. Indeed, in \cite{FG}, Falconer and 
Grimmett proved that in this case the coordinate projections of $E$ contain an 
interval almost surely conditioned on non-extinction. Further, Rams and Simon
\cite{RS1,RS2} showed that almost surely all projections of $E$ contain an
interval simultaneously if $\dimh E>1$. This result also follows from \cite{PR}.
Finally, almost surely all visible parts
of $E$ are 1-dimensional (see \cite{AJJRS}).

By the above mentioned result of \cite{CCD},  conditioned on non-extinction,
$E$ contains almost surely a non-trivial 
connected component as soon as $p\ge p_c$. It 
is natural to ask whether $E$ contains a non-trivial path connected component
in this case. This was answered positively by Meester in \cite{Me}. As far as
the regularity of paths contained in $E$ is concerned, Chayes showed in
\cite{C2} that the lower box counting dimension of any path contained in $E$ is
strictly larger than 1 with a bound depending on 
the parameters $p$ and $N$. Thus, $E$ does not contain uniform $\alpha$-H\"older
curves for $\alpha$ close to 1. In particular, $E$ does not contain any
rectifiable curves. An explicit lower bound for the lower box counting 
dimension of the non-trivial curves contained in $E$ was given by Orzechowski
in \cite{O2}. In \cite{O1}, he proved that $E$ contains non-trivial curves
whose upper box counting dimension is strictly less than 2. Again, there is 
an explicit expression for the upper bound.

Broman et al.~\cite{BCJM} showed that, in the case $d=2$ and $p\ge p_c$,
the set $E$ can be decomposed as $E=E^c\cup E^d$, where $E^d$ is the totally
disconnected part of $E$ and $E^c$ consists of non-trivial connected components
of $E$. Moreover, $\dimh E^c<\dimh E^d=\dimh E$ and there exists $0<\beta<1$, 
depending on the parameters, such that $E^c$ is an uncountable union of 
non-trivial $\beta$-H\"older curves. 

In this paper, we supplement the result of Broman et al. We first
define a concept of $\alpha$-unrectifiability: given $0<\alpha\le 1$, a set 
$A\subset\R^d$ is purely $\alpha$-unrectifiable if 
$\H^{\frac 1\alpha}(A\cap\gamma([0,1]))=0$ for all $\alpha$-H\"older curves 
$\gamma\colon [0,1]\to\R^d$, where $\H^s$ is the $s$-dimensional Hausdorff
measure.  We note that related fractional rectifiability questions have
been studied for example by Mart\'{\i}n and Mattila for deterministic sets in 
\cite{MM1,MM2,MM3} and, quite recently, by Badger, Naples and
Vellis for measures in \cite{BNV,BV}.  In our main theorem
(Theorem~\ref{maingeneral}),  we verify that, for every $0\le p<1$ and
$N,d\in\N\setminus\{0,1\}$, there exists $\alpha_0<1$ such that almost surely
the fractal percolation set $E$ is purely $\alpha$-unrectifiable for all
$\alpha_0<\alpha\le 1$. Since the case $\alpha=1$ corresponds to standard
$1$-unrectifiability, our result implies that 
$E$ is almost surely purely $1$-unrectifiable and, thus,
purely $k$-unrectifiable for all $k\in\N$. In Section~\ref{standard}
(see Theorem~\ref{mainstandard}), we give a simpler proof than that of our
main theorem for $1$-unrectifiability. The
general case, requiring new tools, is considered in Section~\ref{general}.
We believe that these new tools turn out to be useful in many other problems
related to the fractal percolation and other random geometric constructions.

The paper is organised as follows.  In Section~\ref{idea}, we explain the
heuristic idea of the proof of our main result.   
In Section~\ref{model}, we define the fractal
percolation model by introducing a slightly different viewpoint than the
standard one described earlier in this section but leading to same
probabilities. In Section~\ref{standard}, we introduce basic concepts, prove
preliminary results and give a short proof
for the 1-unrectifiability of the fractal percolation in
Theorem~\ref{mainstandard} which, in turn, implies the $k$-unrectifiability,
see Corollary~\ref{kunrectifiable}. 
Section~\ref{hereditarilyexists} is concerned with probability estimates
guaranteeing the existence and abundance of hereditarily good cubes, 
 one  of the key concepts in our paper.  In these cubes 
holes generated during the fractal percolation process are ``sufficiently
uniformly'' located.  In  Section~\ref{general},  we study
$\alpha$-unrectifiability and prove our main result: Theorem~\ref{maingeneral}.
The proof is based on Proposition \ref{notalphaholder}, which is our main tool
guaranteeing the  length increase in hereditarily good cubes for broken
line approximations of curves staying close to the fractal percolation set.
 The proof of Proposition~\ref{notalphaholder} requires several  quite
technical definitions, algorithms and results. These are collected in three
Appendices.

In Section \ref{appendixa}  (Appendix A),  we construct the special
sequences used in Section~\ref{brokenlinesection}  to define correct zoom
levels.  Appendix B, that is, Section~\ref{brokenlinesection}, is dedicated
to our new tool consisting of several algorithms utilised to construct
special broken line approximations of curves staying close to the fractal
percolation set. Finally, in Appendix C, that is, in
Section~\ref{lengthincrease}, we  verify  growth estimates
for the length of the broken line approximations defined in
Section~\ref{brokenlinesection} and prove Proposition~\ref{notalphaholder}.

\section{Idea of the proof}\label{idea}

In this section, we describe the heuristic idea behind the proof of our main
result concerning the $\alpha$-unrectifiability of fractal percolation. We
also  describe  some subtleties encountered while making this idea rigorous.

The length of a smooth curve can be calculated by constructing finer and finer
broken line approximations of the curve and  by  taking the limit of their lengths.
In particular, the lengths of the broken line approximations are uniformly
bounded. If the curve is  genuinely  $\alpha$-H\"older continuous, the lengths of the
broken line approximations typically tend to infinity  and  the exponent $\alpha$
controls how fast this may happen. Indeed, let $\gamma\colon [a,b]\to\R^d$
be $\alpha$-H\"older continuous. Suppose that, for some integer $L$ and some
number $r>0$, one can find points $a=:a_1<a_2<\dots<a_L<a_{L+1}:=b$ such that
\begin{equation}\label{expgrowth}
(1+r)|\gamma(a)-\gamma(b)|\le\sum_{i=1}^L|\gamma(a_i)-\gamma(a_{i+1})|.
\end{equation}
Assuming that this process can be iterated $q$ times, there are points
$a=:a_1<a_2<\dots<a_{L^q}<a_{L^q+1}:=b$ such that
\[
(1+r)^q|\gamma(a)-\gamma(b)|\le\sum_{i=1}^{L^q}|\gamma(a_i)-\gamma(a_{i+1})|.
\]
Using $\alpha$-H\"older continuity and Jensen's inequality on the right hand
side, one obtains
\begin{equation}\label{idealeq}
(1+r)^q|\gamma(a)-\gamma(b)|\le C\sum_{i=1}^{L^q}|a_i-a_{i+1}|^\alpha
  \le CL^q(L^{-q}|a-b|)^\alpha.
\end{equation}
If this is true for large $q$,  we have  that $1+r\le L^{1-\alpha}$, that is,
the exponential growth rate of the broken line approximation is controlled by
the H\"older exponent. In particular, if the above inequality is valid for some
fixed $r$ and $L$,  the exponent  $\alpha$ cannot be very close to 1.

The idea for the existence of $r$ is as follows: If the intersection
of a curve with the fractal percolation set has positive measure, the density
point theorem implies that, at small scales, there cannot be big gaps, that
is, parts of the curve whose intersection with the set is empty. On the other
hand, there are a lot of holes at many scales in the percolation set.
 Hence,  in
order to avoid gaps, the curve has to go around the holes,  increasing 
the length compared to a straight line.

 For the purpose of iterating  Inequality~\eqref{expgrowth}, one needs to find
the fixed relative increase of length at many successive scales and at various
different places, that is, for all subcurves.  There  are two competing
phenomena: the size of a hole and the probability of its existence. In order
to have many successive scales with holes,  high probability for the
existence of a hole is needed.  This is obtained by decreasing the size of
a hole,  which , in turn, makes $r$ smaller.

There is also a trade-off between  $L$  and probabilities.
The larger $L$, the easier  to  obtain a length increase of $1+r$, but
the harder  to  iterate, since  there are more  subcurves which
should satisfy suitable conditions.

The choice of $q$ is also delicate. Taking large $q$ makes the scales very
small, which implies that the holes and, therefore, gaps become very small.
 Therefore, extremely small scales are required when using the density point
argument. It turns out that we need to choose  the points $a$ and $b$ in
\eqref{idealeq} such that they depend on $q$. To overcome this difficulty, we
introduce a concept called tightness (see Definition~\ref{tight}).

Finally, we note that the order of quantifiers is tricky. Due to the definition
of unrectifiability, one has to show that, almost surely, the intersection of
the fractal percolation set with all $\alpha$-H\"older curves has zero measure.

\section{Fractal percolation model}\label{model}

Letting $d\in\N\setminus\{0,1\}$ (the case $d=1$ is trivial for our purposes),
we begin by describing the underlying probability space related to the fractal
percolation set in $\R^d$. Fix $N\in\N\setminus\{0,1\}$. Let 
$\T$ be the rooted $N^d$-branching tree and set
$\Omega:=\{0,1\}^{v(\T)}=\{\omega\mid\omega\colon v(\T)\to\{0,1\}\}$, where
$v(\T)$ is the set of vertices of $\T$. Let 
$J:=\{1,\dots,N^d\}$. The vertices of $\T$ may be naturally encoded by
finite words with letters in $J$, that is, by elements of 
$\bigcup_{n=0}^\infty J^n$, where the root corresponds to the empty word
$\emptyset$ and
the vertices whose distance to the root is $n$ are coded by the words
$\ii:=i_1\cdots i_n$ of length $n$, where $i_j\in J$ for all $j=1,\dots,n$. 
We denote the length of a word $\ii$ by $|\ii|$ and
define a metric $\rho$ on $\Omega$ by setting 
$\rho(\omega,\omega'):=N^{-|\omega\wedge\omega'|}$, where 
\[
|\omega\wedge\omega'|:=\min\{n\in\N\mid\text{ there exists }\ii\in J^n
  \text{ with }\omega(\ii)\ne\omega'(\ii)\}.
\]
For $0\le p\le 1$, define a Borel probability measure $\P$ on $\Omega$ 
by 
\[
\P:=((1-p)\delta_0+p\delta_1)^{v(\T)},
\]
where $\delta_k$ is the Dirac measure at $k$.

We consider the probability space
$(\Omega,\mathcal B,\P)$, where $\mathcal B$ is the completion of the Borel 
$\sigma$-algebra. Every $\omega\in\Omega$ defines a
fractal percolation set $E(\omega)\subset\R^d$ as follows: For $n\in\N$, let 
\[
\Q_n:=\Big\{\prod_{i=1}^d[(l_i-1)N^{-n},l_iN^{-n}]\mid l_i=1,\dots,N^n \text{ and } 
        i=1,\dots,d\Big\}
\]
be the collection of grid cubes of $Q_0:=[0,1]^d$ with side length $N^{-n}$.
{\it The level of a cube} $Q\in\Q_n$ is $n$. Enumerating the elements of $\Q_1$
by $J$ and using the same enumeration for 
the subcubes of $Q$ belonging to $\Q_{n+1}$ for all $Q\in\Q_n$,
we define a natural
bijection between $J^n$ and $\Q_n$. The image of $\ii\in J^n$ under this 
bijection is denoted by $Q_\ii$. Given $\omega\in\Omega$, a cube $Q_\ii\in\Q_n$
is {\it chosen} if
$\omega(\ii)=1$ and {\it deleted} if $\omega(\ii)=0$. The set of chosen cubes
in $\Q_n$ is denoted by $\C_n(\omega):=\{Q_\ii\in\Q_n\mid\omega(\ii)=1\}$. For 
every $\omega\in\Omega$, we define the fractal percolation set $E(\omega)$ by
\[
E(\omega):=\bigcap_{n=0}^\infty\bigcup_{Q\in\C_n(\omega)}Q.
\]
Note that $E(\omega)\ne\emptyset$ if and only if there exists an infinite 
subtree $T\subset\T$ rooted at  $\emptyset$  such that $\omega(\ii)=1$
for all vertices $\ii$ of $T$. In particular, $E(\omega)$ may be identified
with the infinite component (determined by the condition $\omega(\ii)=1$) of
$\T$ containing the root.

\begin{remark}\label{unconventional}
In this section, we have chosen a slightly different viewpoint while defining
the probability space $\Omega$ than the standard one described in the
introduction. Indeed, the set $\C_n(\omega)$ depends only on $\omega(\ii)$
with $|\ii|=n$ and, therefore, the sequence $(\C_n)_{n\in\N}$ is not nested.
This is merely a notational convention, which does not change the probabilities
related to the fractal percolation sets $E(\omega)$, but turns out to be useful
for our purposes.
\end{remark}

\section{Pure 1-unrectifiability}\label{standard}


We begin  with  some notation used throughout the paper. For all
$i\in\{1,\dots,d\}$, the orthogonal projection onto the $i$-th coordinate axis
is denoted by $\Pi_i$. For all $x,y\in\R^d$, $L(x,y)$ is the line segment
connecting $x$ and $y$. The complement of a set $A\subset\R^d$ is denoted by
$A^c$ and $\inter A$ refers to the interior of $A$. Finally, set
\begin{equation}\label{restrictedQn}
\Q_n(A):=\{Q\in\Q_n\mid Q\subset A\}
\end{equation}
for all $A\subset\R^d$ and $n\in\N$.

\begin{definition}\label{properly}
We say that $i\in\{1,\dots,d\}$ is a {\it principle direction}  for  a line
$\ell\subset\R^d$ 
if
$j=i$ maximises the length of $\Pi_j(s)$ for subsegments $s$ of $\ell$.
 Observe that for certain lines $\ell $ the principle direction is not unique.
 For all
$n\in\N$, we say that a line $\ell\subset\R^d$ {\it intersects a cube}
$Q\in\Q_n$ {\it properly} if $\H^1(\Pi_i(\ell\cap Q))\ge d^{-1}N^{-n}$, where $i$
is a principle direction  for  $\ell$.
For any $n\in\N$ and $i\in\{1,\dots,d\}$,   
we say that a line $\ell\subset\R^d$ {\it intersects a cube}
$Q\in\Q_n$ {\it very properly in direction $i$} if 
$\Pi_i(\ell\cap Q)$ contains an interval
of the form $[k d^{-1}N^{-n},(k+1) d^{-1}N^{-n}]$
for some integer $k$.

A set $\L\subset\R^d$ is an {\it $i$-layer} if $\L=\Pi_i^{-1}([a,b])$ for some
$a,b\in\R$ with $a\le b$, and $\L$ is a {\it layer} if it is an $i$-layer for
some $i\in\{1,\dots,d\}$. A set $\L\subset\mathbb R^d$ is an
{\it $(n,i)$-layer} if $\L=\Pi_i^{-1}([kN^{-n},(k+1)N^{-n}])$ for some  
$k\in\{0,\dots,N^n-1\}$ and it is an {\it $(n,i)$-double-layer} if
$\L=\Pi_i^{-1}([kN^{-n},(k+2)N^{-n}])$.

Let $a,b\in\R$ with $a<b$. We say that a curve $\gamma\colon [a,b]\to\R^d$
{\it passes through a layer} $\L$, if $\gamma(a)$ and $\gamma(b)$ belong to
different connected components of $(\inter\L)^c$.  For $x,y\in\R^d$, the
line segment 
$L(x,y)$ passes  through $\L$, if $\gamma\colon [0,1]\to\R^d$,
$\gamma(t):=x+t(y-x)$,  passes through $\L$.

We say that cubes $Q,Q'\in\Q_n$ are {\it neighbours} if
$Q\cap Q'\ne\emptyset$. In this case,  we use the notation $Q\sim Q'$.
Cubes $Q,Q'\in\Q_n$ are {\it $i$-neighbours}, denoted by $Q\sim_i Q'$, if
$Q\sim Q'$ and $Q,Q'\subset\L$ for some $(n,i)$-layer $\L$.
\end{definition}


The above definition of intersecting properly is motivated by the
following geometric observation.

\begin{observation}\label{properintersection}
Fix $i\in\{1,\dots,d\}$, $n\in\N$ and an $(n,i)$-layer $\L$. Let $x,y\in\R^d$
and assume that
\begin{itemize}
\item[(a)] the line segment $L(x,y)$ passes through $\L$ and
\item[(b)] for all $j\ne i$ there are $(n,j)$-layers $\L_j^1$ and $\L_j^2$ such
  that $L(x,y)\subset\L_j^1\cup\L_j^2$.
\end{itemize}
Let $\ell$ be the line containing $L(x,y)$.
Then there exists $Q\in\Q_n(\L)$ with the 
following properties:
\begin{itemize}
\item[(i)] 
The line $\ell$ intersects $Q$ very properly in 
direction $i$.
 \item[(ii)] The line $\ell$ 
 intersects $Q$ properly and
  $\ell\cap Q=L(x,y)\cap Q$.
\item[(iii)] 
  The cube $Q$ is a neighbour to every cube belonging to $\Q_n(\L)$ that is
  intersected by $L(x,y)$ in a set of positive length.
\end{itemize}
\end{observation}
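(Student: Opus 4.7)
The plan is to exploit the asymmetry between direction $i$ and the remaining directions: the segment $L(x,y)\cap\L$ has $\Pi_i$-projection of length exactly $N^{-n}$, while in each direction $j\neq i$ the entire $L(x,y)$ is squeezed into a slab of width $2N^{-n}$. Subdividing $L(x,y)\cap\L$ into $d$ equal sub-segments and applying a pigeonhole argument on the boundary crossings in each of the $d-1$ remaining directions will then produce one sub-segment that fits inside a single cube of $\Q_n(\L)$, and this cube will be the desired $Q$.

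First I would fix $k$ with $\Pi_i(\L)=[kN^{-n},(k+1)N^{-n}]$, and by (a) assume without loss of generality $\Pi_i(x)\leq kN^{-n}$ and $\Pi_i(y)\geq(k+1)N^{-n}$. Let $\tilde x,\tilde y$ be the points where $L(x,y)$ meets the two faces of $\L$, so $\Pi_i(\tilde x)=kN^{-n}$ and $\Pi_i(\tilde y)=(k+1)N^{-n}$. Partition $L(\tilde x,\tilde y)$ into equal sub-segments $s_1,\dots,s_d$ by the points $z_r:=\tilde x+(r/d)(\tilde y-\tilde x)$; then $\Pi_i(s_r)=[(kd+r-1)d^{-1}N^{-n},(kd+r)d^{-1}N^{-n}]$, which already has the form demanded by (i).

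The main combinatorial step is to find a good index $r^*$. For each $j\neq i$, hypothesis (b) forces $\Pi_j(L(x,y))$ inside a union of two consecutive $N^{-n}$-intervals, so at most one grid point $t_j\in N^{-n}\Z$ lies in the interior of $\Pi_j(L(\tilde x,\tilde y))$. Call $r$ \emph{bad in direction $j$} if $t_j$ is in the interior of $\Pi_j(s_r)$; since the $s_r$ have essentially disjoint projections, at most one $r$ is bad per direction $j\neq i$. Counting gives at most $d-1$ bad indices among $d$, so there exists a good $r^*$ whose corresponding $s_{r^*}$ lies in a single $(n,j)$-layer for every $j\neq i$, hence in a unique cube $Q\in\Q_n(\L)$. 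Property (i) follows since $\Pi_i(s_{r^*})\subset\Pi_i(\ell\cap Q)$ is of the required form. For the proper intersection part of (ii), fix a principle direction $j^*$ of $\ell$ with unit direction vector $v$; since $|v_{j^*}|\geq|v_i|$,
\[
\H^1(\Pi_{j^*}(\ell\cap Q))\geq \H^1(\Pi_{j^*}(s_{r^*}))=\frac{|v_{j^*}|}{|v_i|}\cdot d^{-1}N^{-n}\geq d^{-1}N^{-n}.
\]
The identity $\ell\cap Q=L(x,y)\cap Q$ follows because $\ell\cap\L$ is a single segment with the same $i$-endpoints as $L(\tilde x,\tilde y)$, so $\ell\cap\L=L(\tilde x,\tilde y)\subset L(x,y)$, combined with $Q\subset\L$.

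For (iii), let $Q'\in\Q_n(\L)$ meet $L(x,y)$ in a set of positive length; then $Q$ and $Q'$ share the $i$-th coordinate interval of $\L$, and in each direction $j\neq i$ both $\Pi_j(Q)$ and $\Pi_j(Q')$ equal one of the two $N^{-n}$-intervals of $\L_j^1\cup\L_j^2$, so they are equal or adjacent; this forces $Q\cap Q'\neq\emptyset$, i.e., $Q\sim Q'$. The main subtlety I expect is the pigeonhole bookkeeping---checking that a crossing happening exactly at a partition point $z_r$ renders no sub-segment bad, and that the count ``at most one bad $r$ per direction $j$'' genuinely uses the double-layer hypothesis (without it, each direction could contribute two forbidden $r$'s, destroying the argument already when $d=2$).
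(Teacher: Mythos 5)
Your proposal is correct and takes essentially the same route as the paper: the paper likewise splits the $(n,i)$-layer into $d$ parallel sub-layers of width $d^{-1}N^{-n}$, uses (b) to note that $L(x,y)$ changes cube at most once in each direction $j\ne i$, and pigeonholes to get a sub-layer (your good segment $s_{r^*}$) inside a single cube, with (ii) following from the principle-direction inequality and (iii) from (b), exactly as you argue. The only microscopic caveat is in your (iii): in the degenerate case where $L(x,y)$ lies in a grid hyperplane in some direction $j\ne i$, the projection $\Pi_j(Q')$ need not equal one of the two layer intervals, but both $\Pi_j(Q)$ and $\Pi_j(Q')$ still contain the common projection point, so $Q\sim Q'$ holds regardless.
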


\begin{proof}
 Define $\gamma\colon [0,1]\to\R^d$ by $\gamma(t):=x'+t(y'-x')$, where
$\{x',y'\}=L(x,y)\cap\partial\L$ and the boundary of a set $A$ is denoted by
$\partial A$.  By (b), the line segment $L(x,y)$ intersects at most two
different cubes in a set of positive length in each direction $j\ne i$. Note
that along $L(x,y)$ one moves from one cube to another one at most once in each
coordinate direction $j\ne i$.  Therefore, there are points
$0<t_1<\dots<t_m<1$ with $m\le d-1$ satisfying the property that, for all
$l=0,\dots,m$, there is $Q\in\Q_n(\L)$ such that
$\gamma([t_l,t_{l+1}])\subset Q$.  Hence if we subdivide the $(n,i)$-layer
$\L$ into $d$ parallel $i$-layers  $\widetilde\L_1,\dots,\widetilde\L_d$
 of equal width then  there exist $l,k\in\{0,\dots,d-1\}$ such that
$\gamma|_{[t_l,t_{l+1}]}$ passes through $\widetilde\L_k$,  which implies (i).
The first statement in (ii)
is true since a principle direction maximises the length of the projection of a
line segment, and the second claim follows from assumption (a).
Finally, (iii) follows from (b).
\end{proof}

The next definition deals with random concepts, that is, concepts which
depend on $\omega\in\Omega$.

\begin{definition}\label{good}  
Let $m_0\in\mathbb N\setminus\{0\}$, and fix $\omega\in\Omega$.
For all $n\in\N$, a cube $Q'\in\Q_{n+m_0}$ is {\it strongly $i$-deleted} if
$\widetilde Q\in\Q_{n+m_0}\setminus\C_{n+m_0}(\omega)$ for all
$\widetilde Q\sim_i Q'$, see Figure~\ref{fig33}. A cube $Q'\in\Q_{n+m_0}$
is {\it weakly $i$-chosen} 
if it is not strongly $i$-deleted, that is, either $Q'$ or at least one of its 
neighbours in the same $(n+m_0,i)$-layer is chosen.

Let $\ell\subset\R^d$ be a line with a principle direction $i\in\{1,\dots,d\}$
and let $Q\in\Q_n$ be such that $\ell\cap Q$ has positive length. Let
$\L_1,\dots,\L_k$ be the $(n+m_0,i)$-layers (in the natural order) for which
$\ell\cap Q\cap\L_j$ has positive length. We say that  $Q$  is
{\it $m_0$-good for the line $\ell$} if there is a strongly $i$-deleted cube
$Q'\in\Q_{n+m_0}(Q)$ so that
\begin{itemize}
\item[(a)] $Q'\not\subset\L_j$ for $j\in\{1,2,k-1,k\}$ and
\item[(b)] $\ell$ intersects $Q'$ properly.
\end{itemize}
A cube $Q\in\Q_n$ is {\it $m_0$-bad for a line $\ell$} if it is not
$m_0$-good for $\ell$.

For a $(d-1)$-dimensional face $F$ of $Q\in\Q_n$, we denote by $G_{m_0}(F)$
the grid points of the natural grid of side length $N^{-n-m_0}$ in $F$. For all
$Q\in\Q_n$, define a collection of lines 
\[
\Gamma(Q,m_0):=\bigcup_{F}\bigcup_{v}\Gamma_{F,v}(m_0),
\]
where the first union is over all $(d-1)$-dimensional faces $F$ of $Q$, the 
second one is over all vertices of $Q$ not contained in $F$ and
$\Gamma_{F,v}(m_0)$ is the collection of all lines $\ell$ such that $v\in\ell$
and $\ell\cap G_{m_0}(F)\ne\emptyset$, see Figure~\ref{fig33}. 
Note that if $F$ is perpendicular to the $i$-th
coordinate axis, then $i$ is a principle direction for every
$\ell\in\Gamma_{F,v}(m_0)$.
We call
a cube $Q\in\Q_n$ {\it $m_0$-good}
if it is $m_0$-good for all lines which intersect $Q$ properly
and are parallel to some line in $\Gamma(Q,m_0)$. Finally, a cube $Q\in\Q_n$ is
{\it $m_0$-bad} if it is not $m_0$-good. 
\end{definition}

\begin{figure}[h]
\centering{
\resizebox{1\textwidth}{!}{
\scalebox{1} 
{
\begin{pspicture}(0,-4.621339)(21.659063,4.6246624)
\definecolor{color96c}{rgb}{1,1,1}
\definecolor{color134c}{rgb}{1,1,1}
\definecolor{color3b}{rgb}{0.00392156862745098,0.00392156862745098,0.00392156862745098}
\definecolor{color6b}{rgb}{0.6078431372549019,0.6,0.6}
\rput(11.848062,-4.562783){\rput{-270.0}(9.1,0.0){\psgrid[gridwidth=0.0182,subgridwidth=0.014111111,gridlabels=0.0pt,unit=1.3cm,subgridcolor=color96c](0,0)(0,0)(7,7)
\psset{unit=1.0cm}}}
\rput(0.0,-4.601662){\psgrid[gridwidth=0.0182,subgridwidth=0.014111111,gridlabels=0.0pt,unit=1.3cm,subgridcolor=color134c](0,0)(0,0)(7,7)
\psset{unit=1.0cm}}
\psframe[linewidth=0.04,dimen=outer,fillstyle=solid,fillcolor=black](2.6066666,1.9116713)(1.2866666,-2.008329)
\psframe[linewidth=0.016,dimen=outer,fillstyle=solid,fillcolor=color3b](5.243333,3.1983378)(3.9066667,0.6016712)
\psframe[linewidth=0.016,dimen=outer,fillstyle=solid,fillcolor=color3b](6.54,1.9183378)(5.18,0.6016712)
\pscircle[linewidth=0.016,dimen=outer,fillstyle=solid,fillcolor=color6b](1.9533334,0.025004547){0.13333334}
\psline[linewidth=0.046](11.890284,-4.553895)(20.890284,4.512772)(20.95695,4.601662)
\psline[linewidth=0.046cm](11.890284,-4.5983386)(20.934729,3.201661)
\psline[linewidth=0.046cm](11.934728,-4.5983386)(20.912506,1.8905512)
\psline[linewidth=0.046cm](11.934728,-4.5761156)(20.95695,0.64610523)
\psline[linewidth=0.046cm](11.868061,-4.531672)(20.890284,-0.6650038)
\psline[linewidth=0.046cm](11.8458395,-4.5761156)(20.95695,-2.0205607)
\psline[linewidth=0.046cm](11.912506,-4.553895)(20.912506,-3.3094497)
\psline[linewidth=0.046cm](11.847009,-4.53752)(20.962797,-4.53752)
\usefont{T1}{ptm}{m}{n}
\rput(11.104531,-4.387172){$v$}
\usefont{T1}{ptm}{m}{n}
\rput(21.29453,-4.1871724){$F$}
\end{pspicture} 
}}}
\caption{On the left hand side, the black cubes are deleted, white ones are
  chosen and the black cube with a grey spot is strongly $i$-deleted, where
  $i=1$. A collection $\Gamma_{F,v}(m_0)$ is illustrated on the right hand side.}
\label{fig33}
\end{figure}
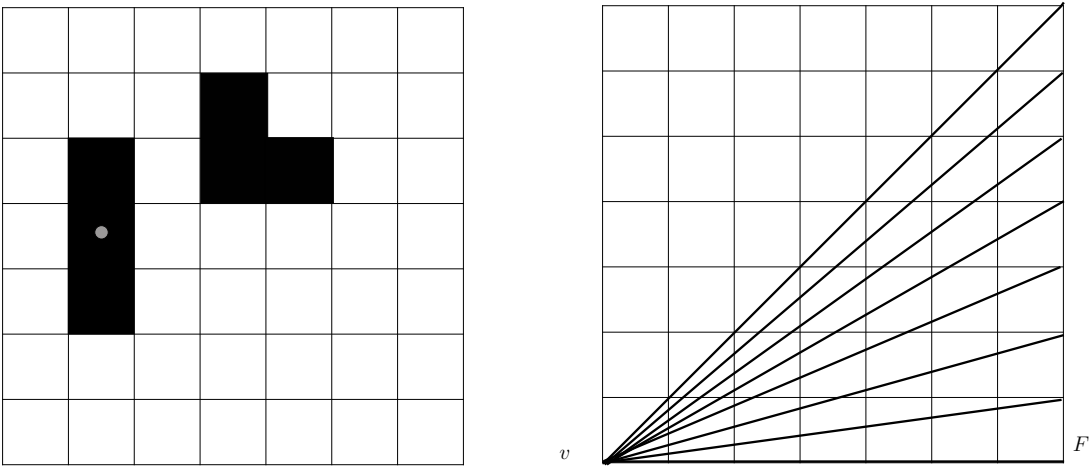

\begin{remark}\label{notchosen}
(a) Observe that all the concepts in Definition~\ref{good} depend only on
$\C_{n+m_0}(\omega)$. If $Q,Q'\in\Q_n$ with $Q\not\sim Q'$, then the events
``$Q$ is $m_0$-good'' and ``$Q'$ is $m_0$-good'' are independent 
since only those cubes of 
${\mathcal C}_{n+m_0}(\omega)$ play any role that intersect $Q$ or $Q'$  and since $m_0$ is at least $1$, no cube intersects both $Q$ and $Q'$.  
If $Q\sim Q'$ then 
these events are not independent
due to the lines passing through $Q$ or $Q'$ close
to their boundaries.  

(b) If $Q\in\Q_n$ is $m_0$-good for a line $\ell$, then there is a line segment 
$I_\ell\subset Q\cap\ell\cap E(\omega)^c$ with 
$\mathcal H^1(I_\ell)\ge d^{-1}N^{-n-m_0}$.
\end{remark}

We continue by introducing the notation utilised in
Lemmas~\ref{goodforline}--\ref{numberoflines}. Set
\begin{equation}\label{Nm0}
N_{m_0}:=\lfloor d^{-1}N^{m_0}\rfloor-4,
\end{equation}
where $\lfloor x\rfloor$ is the integer part of $x\in\R$. If $Q\in\Q_n$ and
$\ell$ is a line such that $\ell\cap Q$ contains more than one
point, let $\ell^a(Q)$ and $\ell^b(Q)$ be the end points of the line segment
$\ell\cap Q$. Given $n\in\N$, $Q\in\Q_n$ and $i\in\{1,\dots,d\}$, let $\ell$
be a line with a principle direction $i$ and intersecting $Q$ properly. 
The number of $(n+m_0,i)$-layers 
which the line segment
$L(\ell^a(Q),\ell^b(Q))$  intersects in a set of positive length is at least
$\lfloor d^{-1}N^{m_0}\rfloor$. Ignoring the first two and the last two of them
(in the natural order, recall property (a) in Definition~\ref{good}), we are
left with at least $N_{m_0}$ layers which $L(\ell^a(Q),\ell^b(Q))$ passes
through.
For each such $(n+m_0,i)$-layer $\L'$,
by choosing
$x$ and $y$ to be the end points of the line segment $\L'\cap\ell$, the
assumptions of Observation~\ref{properintersection} are valid for $i$, $x$, $y$
and $\L'$. Therefore, in each of these layers $\L'$, there is at
least one cube $Q'\in\Q_{n+m_0} (\L')$ which $\ell$ intersects 
very
properly in direction
$i$.  
Select the first $N_{m_0}$ of these layers.
If there is more than one
very
properly intersected cube inside some layer, order the cubes inside the layer
in some systematic way and select the smallest 
very
properly intersected cube with
respect to this order. Let
\begin{equation}\label{selectedcubes}
\K_{m_0}^Q(\ell):=\{Q_1',\dots,Q_{N_{m_0}}'\}
\end{equation}
be the collection of cubes selected in this manner, see
Figure~\ref{fig:percol3}.
If $\ell$ does not intersect $Q$ properly,
we set
$\K_{m_0}^Q(\ell):=\emptyset$.

\begin{lemma}\label{goodforline}
Let $n\in\N$, $Q\in\Q_n$ and $i\in\{1,\dots,d\}$. Fix $m_0\in\N$ such that
$N_{m_0}>1$. Assume that a line $\ell$ 
with  a  principle direction $i$
intersects $Q$ properly. 
For all
$0\le p<1$, there exists $0\le q=q(p,d)<1$ such that
\[
\P(\{\omega\in\Omega\mid\text{ every }Q'\in\K_{m_0}^Q(\ell)\text{ is weakly }
 i\text{-chosen}\})\le q^{N_{m_0}}.
\]
In particular,
\[
\P(\{\omega\in\Omega\mid Q\text{ is }m_0\text{-good for }\ell\})
  \ge 1-q^{N_{m_0}}.
\]
\end{lemma}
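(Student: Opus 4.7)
The plan is to exploit the fact that the $N_{m_0}$ cubes comprising $\K_{m_0}^Q(\ell)$ lie in pairwise distinct $(n+m_0,i)$-layers, so that the events ``$Q'_j$ is strongly $i$-deleted'' depend on disjoint sets of coordinates of $\omega$ and are therefore independent, and that each of these events has probability bounded away from zero by a quantity depending only on $p$ and $d$.

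First I would fix one cube $Q'_j\in\K_{m_0}^Q(\ell)$ and count the cubes $\widetilde Q\in\Q_{n+m_0}$ with $\widetilde Q\sim_i Q'_j$. Being in the same $(n+m_0,i)$-layer as $Q'_j$ fixes one coordinate of the multi-index, and the neighbour relation allows each of the remaining $d-1$ coordinates to differ from that of $Q'_j$ by at most one; hence there are at most $3^{d-1}$ such cubes (boundary effects only decrease the count). Each of them is independently chosen with probability $p$ and deleted with probability $1-p$, so
\[
\P\bigl(Q'_j\text{ is strongly }i\text{-deleted}\bigr)\ge (1-p)^{3^{d-1}}.
\]
Setting $q:=1-(1-p)^{3^{d-1}}$, which lies in $[0,1)$ since $p<1$, yields the single-cube bound $\P(Q'_j\text{ is weakly }i\text{-chosen})\le q$.

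Next I would upgrade this to the joint bound by an independence argument. By the construction of $\K_{m_0}^Q(\ell)$, the cubes $Q'_1,\dots,Q'_{N_{m_0}}$ belong to $N_{m_0}$ pairwise distinct $(n+m_0,i)$-layers. Since any $\sim_i$-neighbour of $Q'_j$ sits in the same layer as $Q'_j$, the index sets $V_j:=\{\ii\in J^{n+m_0}\mid Q_{\ii}\sim_i Q'_j\}$ are pairwise disjoint. The event ``$Q'_j$ is weakly $i$-chosen'' is measurable with respect to $\omega|_{V_j}$, and $\P$ is a product measure on $\{0,1\}^{v(\T)}$, so the $N_{m_0}$ events in question are mutually independent. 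Multiplying the single-cube bounds gives
\[
\P\bigl(\text{every }Q'\in\K_{m_0}^Q(\ell)\text{ is weakly }i\text{-chosen}\bigr)\le q^{N_{m_0}}.
\]

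For the final inequality I would argue contrapositively: I claim that if some $Q'_j\in\K_{m_0}^Q(\ell)$ happens to be strongly $i$-deleted, then $Q'_j$ itself witnesses that $Q$ is $m_0$-good for $\ell$. The application of Observation~\ref{properintersection} that produced $\K_{m_0}^Q(\ell)$ places $Q'_j$ in $\Q_{n+m_0}(Q)$; the deliberate skipping of the first two and last two $(n+m_0,i)$-layers in the definition of $\K_{m_0}^Q(\ell)$ supplies condition (a) of Definition~\ref{good}; and since $\ell$ intersects $Q'_j$ very properly in direction $i$ and $i$ is a principle direction for $\ell$, it also intersects $Q'_j$ properly, giving condition (b). Thus ``$Q$ is $m_0$-bad for $\ell$'' forces every $Q'\in\K_{m_0}^Q(\ell)$ to be weakly $i$-chosen, and the desired lower bound $\P(Q\text{ is }m_0\text{-good for }\ell)\ge 1-q^{N_{m_0}}$ follows from the first assertion of the lemma. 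The main point requiring care is the independence step: the geometric fact that the $Q'_j$'s lie in different layers is transparent, but one must check that the index sets $V_j$ governing the events are genuinely disjoint, not merely that the $Q'_j$ themselves are distinct.
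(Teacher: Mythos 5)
Your proof is correct and follows essentially the same route as the paper: the single-cube bound $\P(Q'\text{ weakly }i\text{-chosen})\le q:=1-(1-p)^{3^{d-1}}$, independence of the events because the cubes of $\K_{m_0}^Q(\ell)$ lie in distinct $(n+m_0,i)$-layers (so the events depend on disjoint coordinate sets of the product measure), and the observation that one strongly $i$-deleted cube in $\K_{m_0}^Q(\ell)$ witnesses that $Q$ is $m_0$-good for $\ell$. Your added care in checking disjointness of the index sets and in verifying conditions (a) and (b) of Definition~\ref{good} only makes explicit what the paper leaves implicit.
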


\begin{proof}
By definition, for every $Q'\in\Q_{n+m_0}$, 
\[
\P(\{\omega\in\Omega\mid Q'\text{ is strongly }i\text{-deleted}\})
  \ge (1-p)^{3^{d-1}},
\]
giving
\[
\P(\{\omega\in\Omega\mid Q'\text{ is weakly }i\text{-chosen}\})
  \le 1-(1-p)^{3^{d-1}}=:q.
  \]
Since the cubes in $\K_{m_0}^Q(\ell)$ belong to different $(n+m_0,i)$-layers,
the events ``$Q_j'$ is weakly $i$-chosen'' and
``$Q_k'$ is weakly $i$-chosen'' are independent provided that $j\ne k$.
Therefore,
\[
\P(\{\omega\in\Omega\mid\text{ every }Q'\in\K_{m_0}^Q(\ell)\text{ is weakly }
 i\text{-chosen}\})\le q^{N_{m_0}}.
\]
Note that $Q$ is $m_0$-good for $\ell$, if at least one of the cubes
$Q'\in\K_{m_0}^Q(\ell)$ is strongly $i$-deleted. Thus,
\[
\P(\{\omega\in\Omega\mid Q\text{ is }m_0\text{-good for }\ell\})
  \ge 1-q^{N_{m_0}}.
\]
\end{proof}

\begin{definition}\label{face}
 Let $n\in\N$.  If $F$ is a face of a cube $Q\in\Q_n$, 
we  denote by $-F$ the face of $Q$ which is parallel to $F$ and not equal
to $F$.  If  $\ell$ is a line and $a$ is a point in $\R^d$, we 
denote by $\ell_a$ the line parallel to  $\ell$ and  containing $a$.
\end{definition}

\begin{lemma}\label{gammaparallel}
Let $n,m_0\in\N$ with $m_0>0$ and let $Q\in\Q_n$. Assume that $\ell$ intersects
$Q$ properly and is parallel to some $\ell'\in\Gamma(Q,m_0)$. Then there exist
a face $F$ and a vertex $v$ of $Q$ and a line $\hat\ell\in\Gamma_{F,v}(m_0)$
such that $\ell=\hat\ell_a$ for some $a\in -F$. 
\end{lemma}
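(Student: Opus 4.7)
The plan is to extract the direction data encoded in the representation of $\ell'$ and then reconstruct a parallel line in $\Gamma_{F,v}(m_0)$ through a suitable target point $a$. Since $\ell'\in\Gamma_{F',v'}(m_0)$ for some $(d-1)$-dimensional face $F'$ of $Q$ perpendicular to some coordinate axis $i$ and some vertex $v'\in -F'$, there exists a grid point $g'\in G_{m_0}(F')$ such that $\ell'$ is the line through $v'$ and $g'$. The direction vector $w:=g'-v'$ of $\ell'$ satisfies $|w_i|=N^{-n}$ and, for each $j\ne i$, $w_j=k_jN^{-n-m_0}$ for some integer $k_j$ with $|k_j|\le N^{m_0}$. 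As $\ell$ is parallel to $\ell'$, the axis $i$ is a principle direction of $\ell$, and the direction of $\ell$ is a scalar multiple of $w$.

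I would then choose $F$ to be one of the two faces of $Q$ perpendicular to $i$ so that $\ell$ meets the opposite face $-F$ inside $Q$, and set $a:=\ell\cap(-F)$. Given $a$, the vertex $v\in -F$ is constructed componentwise: for each $j\ne i$, let $v_j$ be $\min_Q^j$ or $\max_Q^j$ according to the sign of $w_j$ relative to the chosen orientation from $-F$ to $F$, so that the companion grid point $g\in F$, defined by $g_j:=v_j+(g-v)_j$, has $g_j\in[\min_Q^j,\max_Q^j]$ and $(g-v)_j$ is an integer multiple of $N^{-n-m_0}$ proportional to $w_j$ with the matching sign. This places $g$ in $G_{m_0}(F)$, and the line $\hat\ell$ through $v$ and $g$ therefore lies in $\Gamma_{F,v}(m_0)$. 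By construction its direction is parallel to $w$, hence to $\ell$, and since $a\in\ell\cap(-F)$ the equality $\ell=\hat\ell_a$ is immediate from the definition of $\hat\ell_a$.

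The main obstacle will be the initial geometric step of finding a face $F$ perpendicular to $i$ for which $\ell$ actually meets $-F$ inside $Q$. This is where the proper-intersection hypothesis must be used: the lower bound $\H^1(\Pi_i(\ell\cap Q))\ge d^{-1}N^{-n}$, combined with the structural bound $|w_j|\le N^{-n}$ for $j\ne i$ coming from $\ell'\in\Gamma(Q,m_0)$, should be leveraged to show that $\ell$ traverses $Q$ sufficiently in the $i$-direction to enter or exit through an $i$-face, so that at least one of the two candidate points $\ell\cap H_\pm$ (where $H_\pm$ are the hyperplanes containing the two $i$-faces of $Q$) lies in the corresponding face. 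Once this geometric fact is established, the remainder of the argument is a straightforward matching of grid points to direction components.
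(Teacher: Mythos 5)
Your second paragraph -- choosing the vertex $v$ componentwise so that $g:=v\pm(g'-v')$ is again a grid point of the opposite face, which gives $\hat\ell\in\Gamma_{F,v}(m_0)$ parallel to $\ell$ -- is correct, and it is essentially the paper's own device of passing from $(F',v')$ to $(-F',-v')$; up to that point your route and the paper's coincide. The genuine gap is exactly the step you postpone: producing a face $F$ perpendicular to the principle axis $i$ with $\ell\cap(-F)\ne\emptyset$, where $-F$ is the \emph{face} of $Q$. Properness cannot deliver this for $d\ge 3$: the bound $\H^1(\Pi_i(\ell\cap Q))\ge d^{-1}N^{-n}$ does not force the segment $\ell\cap Q$ to reach either of the two faces of $Q$ perpendicular to the $i$-th axis. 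Concretely, take $d=3$, $n=0$, $Q=[0,1]^3$ and $\beta:=1-N^{-m_0}\in[\tfrac12,1)$. The line $\ell'$ through the vertex $(0,0,0)$ and the grid point $(1,\beta,\beta)\in G_{m_0}(F')$, $F'=Q\cap\{x_1=1\}$, lies in $\Gamma(Q,m_0)$. Let $\ell$ be the parallel line through $(0.3,\,0,\,1-0.4\beta)$. Then $\ell\cap Q$ is the segment from $(0.3,0,1-0.4\beta)$ to $(0.7,\,0.4\beta,\,1)$: it enters through the face $\{x_2=0\}$, exits through $\{x_3=1\}$, and $\H^1(\Pi_1(\ell\cap Q))=0.4\ge\tfrac13$, so $\ell$ intersects $Q$ properly with unique principle direction $1$; yet $\ell$ meets the planes $x_1=0$ and $x_1=1$ only at $(0,-0.3\beta,1-0.7\beta)$ and $(1,0.7\beta,1+0.3\beta)$, both outside $Q$. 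Moreover any line of $\Gamma(Q,m_0)$ parallel to $(1,\beta,\beta)$ must lie in some $\Gamma_{F,v}(m_0)$ with $F$ perpendicular to the first axis (for a face perpendicular to the second or third axis the vertex-to-grid-point vector would need first component of modulus $1/\beta>1$), so both of your candidate points $\ell\cap H_\pm$ fail and no admissible pair $(F,a)$ with $a$ in the face $-F$ exists. (For $d=2$ your claim is true: a proper segment whose endpoints both lie on the faces perpendicular to a non-principle axis must run corner to corner.)

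You are in good company: the paper's own proof asserts the same thing without justification (``if $\ell\cap(-F')=\emptyset$, then $\ell$ intersects $F'$''), which again is only correct for $d=2$. The statement that the rest of the paper actually needs, and the one you should aim at, lets $a$ range over the hyperplane $\Pi_i^{-1}(\Pi_i(-F))$ containing $-F$ rather than over the face itself. In that form the lemma is immediate and requires neither properness nor any case distinction: the common direction of $\ell$ and $\ell'$ has $i$-th component $\pm N^{-n}\ne 0$, so $\ell$ crosses that hyperplane, and one may simply take $F=F'$, $\hat\ell=\ell'$ and $a$ the crossing point. The cell count in Lemma~\ref{numberoflines}(b) is an arrangement bound valid for parameters in the whole hyperplane, so its use in Proposition~\ref{probgood} is unaffected. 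As written, however, your proposal hinges on a geometric claim that fails in dimension $d\ge 3$, so the gap is real and cannot be closed along the route you sketch.
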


\begin{proof}
Since  $\ell'\in\Gamma(Q,m_0)$, we have $\ell'\in\Gamma_{F',v'}(m_0)$ for
some face $F'$ and vertex $v'$ of $Q$. If $\ell\cap(-F')\ne\emptyset$,
we may choose
$\hat\ell=\ell'$ and $F=F'$. If $\ell\cap(-F')=\emptyset$, then
$\ell$ intersects $F'$ and there is $\hat\ell\in\Gamma_{-F',-v'}(m_0)$ parallel
to $\ell$ such that $\ell=\hat\ell_a$ for some $a\in F'$, where $-v'$ is
the opposite vertex to $v'$.
\end{proof}

Next we estimate the number of essentially different translates of a line
$\ell\in\Gamma_{F,v}(m_0)$.  Denote  by $\dmax$ the maximum metric
on $\R^d$.

\begin{lemma}\label{numberoflines}
Let $n\in\N$, $i\in\{1,\dots,d\}$ and $Q\in\Q_n$. Assume that
$m_0\in\N$ is such that $N_{m_0}\ge 1$.
Fix a face $F$ and a vertex $v$ of $Q$ such that $v\not\in F$ and $F$ is
perpendicular to the $i$-th coordinate axis. Assume that
$\ell\in\Gamma_{F,v}(m_0)$.
\begin{itemize}
\item[(a)]  If 
$
a,
b\in -F$ with $\dmax(a,b)\ge N^{-n-m_0}$, then
  $\K_{m_0}^Q(\ell_a)\cap\K_{m_0}^Q(\ell_b)=\emptyset$.
\item[(b)] 
We have
\[
\#\{\K_{m_0}^Q(\ell_b)\mid b\in 
-F
\}\le 
N^{2m_0(d-1)}, 
\]
where the number of elements in a set $B$ is denoted by $\# B$.
\end{itemize}
\end{lemma}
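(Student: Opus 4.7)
Since $a,b\in -F$ and $F$ is perpendicular to the $i$-th axis, we have $a_i=b_i$, so $\ell_b=\ell_a+(b-a)$ is a parallel translate of $\ell_a$ by a vector in the hyperplane $H_i:=\{x\in\R^d:x_i=0\}$; write $\Pi^\perp_i$ for the orthogonal projection onto $H_i$. Because $\ell\in\Gamma_{F,v}(m_0)$ joins $v\in -F$ to a grid point of $F$, the direction $w$ of $\ell$ satisfies $|w_i|=N^{-n}$ and $|w_j|\le N^{-n}$ for $j\ne i$, so $|\Pi^\perp_i(w)|_{\max}\le |w_i|$. These two facts will be the workhorses of the argument.

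For part (a) my plan is to argue by contradiction. Suppose $Q^*\in\K_{m_0}^Q(\ell_a)\cap\K_{m_0}^Q(\ell_b)$. Since each of these collections contains exactly one cube per selected $(n+m_0,i)$-layer, $Q^*$ must be the cube selected for both lines inside the same layer $\L'_j$. Hence both lines intersect $Q^*$ very properly in direction $i$, meaning that the $i$-projections $\Pi_i(\ell_a\cap Q^*)$ and $\Pi_i(\ell_b\cap Q^*)$ each contain an aligned sub-interval of length $d^{-1}N^{-n-m_0}$ inside the layer's $i$-range of length $N^{-n-m_0}$. The crux is to locate a single $x_i$-value $c$ at which both $\ell_a(c)$ and $\ell_b(c)$ lie in $Q^*$. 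Once such $c$ is found, $\Pi^\perp_i(\ell_a(c))$ and $\Pi^\perp_i(\ell_b(c))$ both lie in the closed $(d-1)$-cube $\Pi^\perp_i(Q^*)$ of side $N^{-n-m_0}$, and since $a_i=b_i$ gives $\Pi^\perp_i(\ell_a(c))-\Pi^\perp_i(\ell_b(c))=a-b$, I conclude $\dmax(a,b)\le N^{-n-m_0}$, contradicting the hypothesis.

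For part (b) I plan to count the distinct tuples $\K_{m_0}^Q(\ell_b)$ layer by layer. For each selected layer $\L'_j$ and a chosen $x_i$-reference value $\xi_j\in\Pi_i(\L'_j)$, the cube $Q^*_j(\ell_b)\in\Q_{n+m_0}(\L'_j)$ selected for $\ell_b$ depends on $b$ only through the point $\Pi^\perp_i(\ell_b(\xi_j))$, which is a translate of $\Pi^\perp_i(b)$ by a fixed vector depending on $j$ but not on $b$. Hence, as $b$ ranges over $-F$, the selection inside layer $\L'_j$ is piecewise constant on cells of $-F$ bounded by the pre-images of the axis-aligned $\Q_{n+m_0}$-grid in $H_i$; each of the $d-1$ coordinate directions contributes at most $N^{m_0}$ such boundary hyperplanes per layer. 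Combining over all $N_{m_0}$ selected layers yields at most $N_{m_0}\cdot N^{m_0}\le N^{2m_0}$ hyperplanes per direction, using $N_{m_0}\le d^{-1}N^{m_0}$ from (\ref{Nm0}). Consequently the common refinement partitions $-F$ into at most $(N^{2m_0})^{d-1}=N^{2m_0(d-1)}$ cells on which the whole tuple $\K_{m_0}^Q(\ell_b)$ is constant, delivering the desired bound.

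The main technical obstacle will be in part (a), specifically in producing the common $c$. When the aligned sub-intervals of $\ell_a$ and $\ell_b$ inside $Q^*$ coincide or touch, convexity of the line-cube intersection makes $c$ obvious. The delicate case, possible only for $d\ge 3$, is when the two aligned sub-intervals are strictly separated in $x_i$; here I plan to invoke the systematic ordering built into the definition of $\K_{m_0}^Q$, choosing it so that a cube very properly intersected via a smaller sub-layer index takes precedence. Then a line whose only very proper intersection with $Q^*$ lies in a late sub-layer must also very properly intersect an earlier cube in the layer, which replaces $Q^*$ as its selection and thus breaks the hypothetical conflict.
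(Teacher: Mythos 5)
Your plan for part (a) stalls exactly at the place you call delicate, and the proposed repair does not work. First, even in the easy case where the two aligned sub-intervals share a point $c$, your closed-cube estimate only gives $\dmax(a,b)\le N^{-n-m_0}$, which is consistent with the hypothesis $\dmax(a,b)\ge N^{-n-m_0}$, so the borderline case is not settled. More seriously, in the separated case ($d\ge 3$) the claim that a line whose very proper intersection with $Q^*$ uses a late sub-layer ``must also very properly intersect an earlier cube in the layer'' is unjustified: over an earlier sub-layer the line may cross a face in some direction $j\ne i$ and then meets no cube in a full aligned interval. In fact no choice of the systematic ordering can rescue the argument at the stated threshold. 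Working in units of $N^{-n-m_0}$ inside one $(n+m_0,1)$-layer, take the admissible direction $(1,-1,-1)$ (vertex to opposite vertex), $\ell_a(t)=(t,\,0.4-t,\,0.7-t)$ and $\ell_b=\ell_a+(0,\,1.2,\,0.5)$, so $\dmax(a,b)=1.2>1$; a direct check of the four cubes each line meets in the layer shows that the \emph{only} cube either line intersects very properly is $[0,1]^3$, so both selections are forced onto the same cube and (since the slope $-1$ makes the picture repeat in every layer and both lines enter through $-F$) the collections even coincide entirely. What your nearest-endpoint argument does prove is the statement with threshold $2N^{-n-m_0}$ (using $|c_a-c_b|\le\frac{d-2}{d}N^{-n-m_0}$ and slopes at most $1$). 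Note that the paper itself gives no argument for (a) beyond calling it obvious from the ($d=2$) figure, and only part (b) is used later (in Proposition~\ref{probgood}); still, as written your proof of (a) has a gap that cannot be closed in the form you describe.

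For part (b) your strategy is the same as the paper's: project the critical incidences along the direction of $\ell$ onto the hyperplane of $-F$ and count cells of the resulting grid. But your critical set is mis-identified: the selection in a layer is \emph{not} constant while the reference point $\ell_b(\xi_j)$ stays in one cell of the $N^{-n-m_0}$-grid, because as $b$ slides inside such a cell a face crossing of $\ell_b$ (in a direction $j\ne i$) can move past one of the aligned levels $x_i\in d^{-1}N^{-n-m_0}\Z$, changing which cubes are very properly intersected and hence possibly the selected cube. The correct critical set is the one the paper uses, namely $H\cap D$ with $D$ the face hyperplanes not parallel to $F$ and $H$ the aligned $d^{-1}N^{-n-m_0}$-levels inside the selected layers; per direction this gives about $(N_{m_0}d+1)(N^{m_0}+1)$ walls, roughly a factor $d$ more per layer than your count, the overcount being absorbed because consecutive layers share aligned endpoints and because of the slack $N_{m_0}=\lfloor d^{-1}N^{m_0}\rfloor-4$ (one also needs the ``$+1$''s: $K$ walls give $K+1$ slabs, and the paper's bookkeeping lands exactly at $N^{2m_0(d-1)}$). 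Finally, you use implicitly, and should state, that since every $\ell_b$ enters $Q$ through $-F$, the selected layers are the same fixed ones for all $b$, which is what allows a single wall system to govern all translates simultaneously.
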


\begin{proof} For an illustration of collections $\K_{m_0}^Q(\ell_a)$ and
$\K_{m_0}^Q(\ell_b)$, see Figure~\ref{fig:percol3}. 
Claim (a) follows directly from the definition of $\K_{m_0}^Q(\ell)$ and is
obvious from Figure~\ref{fig:percol3}. 
%
%

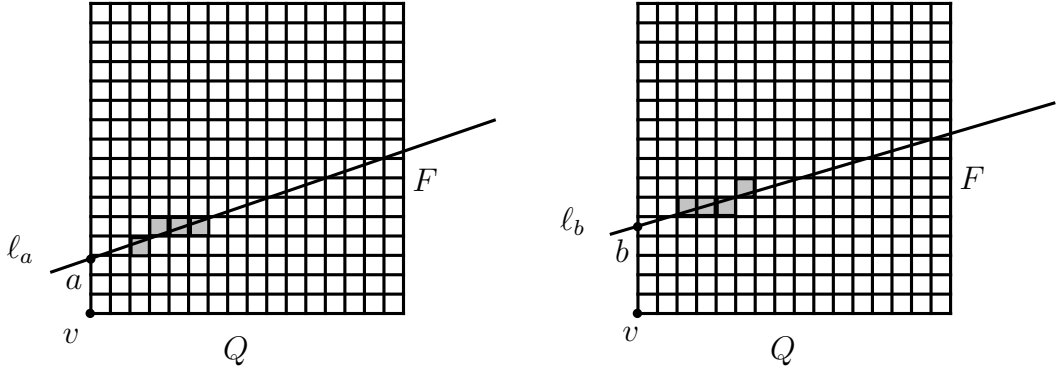
\begin{figure}[h]
\centering{
\resizebox{1\textwidth}{!}{
\scalebox{1} 
{
\begin{pspicture}(0,-2.3664062)(13.88828,2.3264062)
\definecolor{color527c}{rgb}{0.5019607843137255,0.5019607843137255,0.5019607843137255}
\definecolor{color529b}{rgb}{0.7647058823529411,0.7647058823529411,0.7647058823529411}
\rput(1.5282811,-1.6735938){\psgrid[gridwidth=0.04,subgridwidth=0.014111111,gridlabels=0.0pt,subgriddiv=1,unit=0.25cm,subgridcolor=color527c](0,0)(0,0)(16,16)
\psset{unit=1.0cm}}
\psframe[linewidth=0.04,dimen=outer,fillstyle=solid,fillcolor=color529b](2.296406,-0.67453104)(2.021875,-0.94906235)
\rput(8.52828,-1.6735938){\psgrid[gridwidth=0.04,subgridwidth=0.014111111,gridlabels=0.0pt,subgriddiv=1,unit=0.25cm,subgridcolor=color527c](0,0)(0,0)(16,16)
\psset{unit=1.0cm}}
\usefont{T1}{ptm}{m}{n}
\rput(0.62453127,-0.84359324){$\ell_a$}
\usefont{T1}{ptm}{m}{n}
\rput(7.6945314,-0.48359334){$\ell_b$}
\usefont{T1}{ptm}{m}{n}
\rput(1.2745311,-1.9635936){$v$}
\usefont{T1}{ptm}{m}{n}
\rput(8.434532,-1.9435939){$v$}
\usefont{T1}{ptm}{m}{n}
\rput(3.3845313,-2.1635938){$Q$}
\usefont{T1}{ptm}{m}{n}
\rput(10.384531,-2.1635938){$Q$}
\usefont{T1}{ptm}{m}{n}
\rput(5.8045306,0.05640661){$F$}
\usefont{T1}{ptm}{m}{n}
\rput(12.804532,0.07640663){$F$}
\psframe[linewidth=0.04,dimen=outer,fillstyle=solid,fillcolor=color529b](2.536406,-0.41453105)(2.261875,-0.6890625)
\psframe[linewidth=0.04,dimen=outer,fillstyle=solid,fillcolor=color529b](2.796406,-0.41453105)(2.5218751,-0.6890625)
\psframe[linewidth=0.04,dimen=outer,fillstyle=solid,fillcolor=color529b](3.056406,-0.41453105)(2.7818751,-0.6890625)
\psframe[linewidth=0.04,dimen=outer,fillstyle=solid,fillcolor=color529b](9.296407,-0.15453108)(9.021875,-0.42906234)
\psframe[linewidth=0.04,dimen=outer,fillstyle=solid,fillcolor=color529b](9.5364065,-0.15453108)(9.261875,-0.42906234)
\psframe[linewidth=0.04,dimen=outer,fillstyle=solid,fillcolor=color529b](9.796407,-0.15453108)(9.521874,-0.42906234)
\psframe[linewidth=0.04,dimen=outer,fillstyle=solid,fillcolor=color529b](10.0364065,0.08546893)(9.761875,-0.18906236)
\psline[linewidth=0.04cm](1.0082811,-1.1435933)(6.7082815,0.8264068)
\psline[linewidth=0.04cm](8.168282,-0.65359336)(13.868279,1.0464065)
\psdots[dotsize=0.12](1.5254687,-0.9735938)
\usefont{T1}{ptm}{m}{n}
\rput(1.3145312,-1.2635938){$a$}
\usefont{T1}{ptm}{m}{n}
\rput(8.324532,-0.8635937){$b$}
\psdots[dotsize=0.12](8.525469,-0.5535937)
\psdots[dotsize=0.12](1.52,-1.6735938)
\psdots[dotsize=0.12](8.52,-1.6735938)
\end{pspicture} 
}}}
\caption{Illustration of collections $\K_{m_0}^Q(\ell_a)$ and
$\K_{m_0}^Q(\ell_b)$ in the case $d=2$, $N_{m_0}= 4$ and $i=1$. The cubes belonging
to $\K_{m_0}^Q(\ell_a)$ and $\K_{m_0}^Q(\ell_b)$ are shaded.}
\label{fig:percol3}
\end{figure}
%
%
To prove (b), by symmetry we can suppose that $i=1$ and $-F$ is  included  in
the hyperplane $(x_1=0):=\{x\in\R^d\mid x_1=0\}$. 
Let $H$ be the union of the hyperplanes 
$(x_1=j N^{- n  -m_0}d^{-1})$ for $j=2d,2d+1,\ldots,(N_{m_0}+2)d$, and
denote by $D$  the union of  the hyperplanes spanned by  the 
faces of the cubes of $\Q_{n+m_0}(Q)$ that are not parallel to $F$.
Note that when $b$ varies in $-F$, the set $\K_{m_0}^Q(\ell_b)$
can be changed only when $\ell_b$ intersects $H\cap D$.
Thus,  if we project $H\cap D$ in the direction of $\ell$
to the hyperplane  $(x_1=0)$,  we get a grid such that 
$\K_{m_0}^Q(\ell_b)$  is  constant   if $b$ varies inside  any 
cell of the grid. This grid  has  at most 
$(N_{m_0}d +1) (N^{m_0}+1)$  many  $(d-2)$-dimensional  walls in each direction
 $j\ne 1$,  so it has at most
$((N_{m_0}d+1)(N^{m_0}+1)+1)^{d-1}$ cells.
Since by definition $N_{m_0}d\le N^{m_0}- 4 d\le N^{m_0}-2$,
we have 
$((N_{m_0}d+1) (N^{m_0}+1)+1)^{d-1}\le N^{2m_0(d-1)}$,
which completes the proof of (b).
\end{proof}

According to the next proposition, which is one of our key preliminary results,
the probability for being $m_0$-good is large when $m_0$ is large.

\begin{proposition}\label{probgood}
For all $0\le p<1$ and $m_0\in\N\setminus\{0\}$, there exists a number
$0\le p_g=p_g(m_0,p,d,N)\le 1$ with
$\lim_{m_0\to\infty}p_g(m_0,p,d,N)=1$ such that 
\[
\P(\{\omega\in\Omega\mid Q\text{ is }m_0\text{-good}\})\ge p_g
\]
for all $n\in\mathbb N$ and $Q\in\Q_n$.
\end{proposition}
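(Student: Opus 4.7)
The plan is to union-bound the failure event over all ``essentially distinct'' admissible lines, then exploit the gap between the double-exponential decay of $q^{N_{m_0}}$ and the single-exponential growth of the number of classes.

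First I would reduce from the continuum of admissible lines to a finite list. By Lemma~\ref{gammaparallel}, any line $\ell$ relevant to the definition of $m_0$-good, i.e.\ intersecting $Q$ properly and parallel to some line in $\Gamma(Q,m_0)$, can be written as $\ell=\hat\ell_a$ for some face $F$ of $Q$, vertex $v$ of $Q$ with $v\notin F$, line $\hat\ell\in\Gamma_{F,v}(m_0)$, and point $a\in -F$. Counting the $2d$ faces, the $2^{d-1}$ vertices not lying on a fixed face, and the $(N^{m_0}+1)^{d-1}$ grid points of $G_{m_0}(F)$, we obtain at most $C_1(d)N^{m_0(d-1)}$ distinct triples $(F,v,\hat\ell)$. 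Each such triple determines a direction whose principal axis is the index $i$ normal to $F$.

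Second, for a fixed triple $(F,v,\hat\ell)$, Lemma~\ref{numberoflines}(b) bounds the number of distinct collections $\K_{m_0}^Q(\hat\ell_a)$ as $a$ ranges over $-F$ by $N^{2m_0(d-1)}$. The key point is that, by construction, every cube in $\K_{m_0}^Q(\hat\ell_a)$ is very properly (hence properly) intersected by $\hat\ell_a$ and is chosen from layers avoiding the first two and last two $(n+m_0,i)$-layers traversed inside $Q$; so the mere existence of a single strongly $i$-deleted cube in $\K_{m_0}^Q(\hat\ell_a)$ already certifies that $Q$ is $m_0$-good for $\hat\ell_a$ in the sense of conditions (a) and (b) of Definition~\ref{good}. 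Lemma~\ref{goodforline} then gives that, for each class, the event that no cube in $\K_{m_0}^Q(\hat\ell_a)$ is strongly $i$-deleted has probability at most $q^{N_{m_0}}$, where $q:=1-(1-p)^{3^{d-1}}<1$ depends only on $p$ and $d$.

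Combining the counts by a union bound yields
\[
\P(Q \text{ is }m_0\text{-bad})\le C_1(d)N^{m_0(d-1)}\cdot N^{2m_0(d-1)}\cdot q^{N_{m_0}}=C(d)N^{3m_0(d-1)}q^{N_{m_0}},
\]
uniformly in $n$ and $Q\in\Q_n$. Setting
\[
p_g(m_0,p,d,N):=\max\bigl\{0,\,1-C(d)N^{3m_0(d-1)}q^{N_{m_0}}\bigr\}
\]
gives the desired uniform lower bound. Since $N_{m_0}=\lfloor d^{-1}N^{m_0}\rfloor-4$ grows exponentially in $m_0$, the factor $q^{N_{m_0}}$ decays doubly-exponentially in $m_0$, dominating the merely exponential growth of $N^{3m_0(d-1)}$; hence $p_g(m_0,p,d,N)\to 1$ as $m_0\to\infty$. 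The only conceptual hurdle is Step 2, the reduction from the continuum of translates to polynomially-many equivalence classes with a shared witness set $\K_{m_0}^Q$, but this is precisely what Lemma~\ref{numberoflines} provides; once that is in hand the remainder is a clean union bound.
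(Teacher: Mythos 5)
Your proposal is correct and follows essentially the same route as the paper's proof: reduce to the finitely many directions in $\Gamma(Q,m_0)$ (at most $d2^d(N^{m_0}+1)^{d-1}\le CN^{m_0(d-1)}$ of them), bound the number of distinct witness collections $\K_{m_0}^Q(\ell_b)$ per direction by $N^{2m_0(d-1)}$ via Lemma~\ref{numberoflines}, apply Lemma~\ref{goodforline} to each class, and conclude by a union bound $\P(Q\text{ is }m_0\text{-bad})\le CN^{3m_0(d-1)}q^{N_{m_0}}$, which tends to $0$ since $N_{m_0}$ grows exponentially in $m_0$. This matches the paper's argument, including the choice $p_g:=\max\{1-s,0\}$.
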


\begin{proof}
If $Q$ is not $m_0$-good, there are $i\in\{1,\dots,d\}$, a face $F$ of $Q$
perpendicular to the $i$-th coordinate axis, a vertex $v$ of $Q$ not contained
in $F$ and a line $\ell$ 
with  a  principle direction $i$
intersecting $Q$ properly 
such that
$\ell$ is parallel to some $\ell'\in\Gamma_{F,v}(m_0)$, $\ell$ intersects $-F$
and $Q$ is $m_0$-bad for $\ell$. In particular, every $Q'\in\K_{m_0}^Q(\ell)$
is weakly $i$-chosen. By Lemma~\ref{numberoflines}, the number of different
collections $\K_{m_0}^Q(\ell_b)$ is at most 
$N^{2m_0(d-1)}$.
Using the fact that  $F$ and $v$ may be chosen in $2d$ and $2^{d-1}$ 
different ways, respectively, leads to
\[
\#\Gamma(Q,m_0)\le d2^d\#G_{m_0}(F)=d2^d(N^{m_0}+1)^{d-1}\le CN^{m_0(d-1)},
\]
where $C$ depends only on $d$. Using Lemma~\ref{goodforline} and
Lemma~\ref{numberoflines}, we conclude that
\begin{align*}
\P(\{\omega\in\Omega\mid Q \text{ is }m_0\text{-bad}\})
&\le CN^{m_0(d-1)}
N^{2m_0(d-1)}
q^{N_{m_0}}
= CN^{3m_0(d-1)}
q^{N_{m_0}}=:s.
\end{align*}
Since $N^{m_0}\le 2dN_{m_0}$ when
$N_{m_0}>
5
$, the claim follows with $p_g:=\max\{1-s,0\}$.
\end{proof}

We need a few geometrical lemmas.
We denote by
\begin{equation}\label{dist}
\dist(x,A):=\inf\{|x-y|\mid y\in A\}
\end{equation}
the Euclidean distance between a point $x\in\R^d$ and a set $A\subset\R^d$.

\begin{lemma}\label{minimumdistance}
Let $x=(1,x_2,\dots,x_d)$ and $w=(0,1,w_3,\dots,w_d)$ be points in $\R^d$ such
that $0\le x_j\le 1$ for all $j=2,\dots,d$ and $0\le w_j\le 1$ for all
$j=3,\dots,d$. Let $\ell$ be the line spanned by
$x$. Then
\[
\dist(w,\ell)\ge\frac 1{\sqrt 2}.
\]
\end{lemma}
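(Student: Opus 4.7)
The plan is to reduce the problem to a two-dimensional computation by orthogonal projection onto the first two coordinates. Let $\pi\colon\R^d\to\R^2$ be the map $\pi(y)=(y_1,y_2)$. Since $\pi$ is $1$-Lipschitz (indeed $|w-v|^2\ge (w_1-v_1)^2+(w_2-v_2)^2=|\pi(w)-\pi(v)|^2$ for every $v\in\R^d$), taking the infimum over $v\in\ell$ yields
$$\dist(w,\ell)\ge\inf_{v\in\ell}|\pi(w)-\pi(v)|\ge\dist(\pi(w),\pi(\ell)).$$
Hence it suffices to lower bound the right-hand side.

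Next I would observe that $\pi(x)=(1,x_2)$ and $\pi(w)=(0,1)$, and that $\pi(\ell)$ is simply the line through the origin in $\R^2$ spanned by $\pi(x)$. The standard planar formula for the distance from a point to a line through the origin gives
$$\dist(\pi(w),\pi(\ell))=\frac{|1\cdot 1-x_2\cdot 0|}{\sqrt{1+x_2^2}}=\frac{1}{\sqrt{1+x_2^2}}.$$
Since $0\le x_2\le 1$, we have $1+x_2^2\le 2$, so this quantity is at least $1/\sqrt{2}$, and combining with the Lipschitz estimate above completes the proof.

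I do not expect any real obstacle here. The coordinates $x_3,\ldots,x_d$ and $w_3,\ldots,w_d$ are discarded by $\pi$ and play no role in the bound; their arbitrary values can only make $\dist(w,\ell)$ larger. The estimate $1/\sqrt{2}$ is moreover sharp, as seen already in the plane by taking $x=(1,1)$ and $w=(0,1)$, where both sides equal $1/\sqrt 2$.
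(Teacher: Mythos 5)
Your proof is correct, and it takes a genuinely different route from the paper's. The paper argues by optimising over the free parameters: it asserts (as ``a standard calculation'') that $\dist(w,\ell)$ is minimised when $x_2=1$ and $(x_3,\dots,x_d)$ sits at an extreme value $(0,\dots,0)$ or $(1,\dots,1)$, and then, in the latter case, that the minimum over $w$ is attained at $w_j=\tfrac12$ for $j=3,\dots,d$, where the distance equals exactly $\tfrac1{\sqrt2}$. You instead reduce to the plane: since the projection $\pi(y)=(y_1,y_2)$ is a contraction mapping $\ell$ onto the planar line spanned by $(1,x_2)$, you get $\dist(w,\ell)\ge\dist\bigl((0,1),\pi(\ell)\bigr)=1/\sqrt{1+x_2^2}\ge 1/\sqrt2$ because $0\le x_2\le1$. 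Your argument is shorter and entirely self-contained, and it makes explicit why the coordinates $x_3,\dots,x_d$ and $w_3,\dots,w_d$ are irrelevant to the lower bound, a point the paper leaves inside the unspecified calculation. What the paper's optimisation buys is the explicit extremal configuration in $\R^d$ (for instance $x=(1,\dots,1)$, $w=(0,1,\tfrac12,\dots,\tfrac12)$ attains exactly $\tfrac1{\sqrt2}$), exhibiting sharpness in every dimension; your planar example $x=(1,1)$, $w=(0,1)$ gives the same sharpness and extends to $\R^d$ by padding with zeros, so nothing needed for the application (the lower bound used in Lemma~\ref{mindistancegenform}) is lost.
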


\begin{proof}
A standard calculation shows that $\dist(w,\ell)$ is minimised when $x_2=1$ and
$|(x_3,\dots,x_d)|$ obtains an extreme value, that is,
$(x_3,\dots,x_d)\in\{(0,\dots,0),(1,\dots,1)\}$. If
$(x_3,\dots,x_d)=(0,\dots,0)$, the claim easily follows. In the other case, a
standard calculation shows that the minimum is obtained when
$w_j=\frac 12$ for all $j=3,\dots,d$ and then $\dist(w,\ell)=\frac 1{\sqrt 2}$.
\end{proof}

\begin{lemma}\label{mindistancegenform}
If  $j\in\{1,\dots,d\}$ is a principle direction  for  a line
$\ell\subset\R^d$ then,  for  all  $w\in\R^d$,  we have  that
\begin{equation}\label{e:sqrt2}
\dmax(w,\ell\cap \Pi_j^{-1}(\Pi_j(w)))\le 
\sqrt 2\dist(w,\ell).
\end{equation}
\end{lemma}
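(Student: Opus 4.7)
The plan is to reduce the inequality to a two-term Cauchy--Schwarz estimate that uses the principle-direction hypothesis in a single clean way. Fix a direction vector $v$ for $\ell$. Since $j$ is a principle direction, $|v_j|\ge|v_k|$ for every $k$, and in particular $v_j\ne 0$; hence $\ell$ meets the hyperplane $\Pi_j^{-1}(\Pi_j(w))$ in exactly one point $q$. Let $p$ denote the orthogonal projection of $w$ onto $\ell$ and set $r:=w-p$, so that $|r|=\dist(w,\ell)$. Since $q-p$ is parallel to $v$, write $q-p=tv$ for some $t\in\R$. Taking $j$-th coordinates in the identity $q-p=(q-w)+(w-p)$ and using $(q-w)_j=0$ together with $(w-p)_j=r_j$ yields $tv_j=r_j$, so $t=r_j/v_j$.

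For each $k\ne j$ I would then expand
\[
(q-w)_k=(q-p)_k+(p-w)_k=\frac{r_j}{v_j}\,v_k-r_k,
\]
and apply the Cauchy--Schwarz inequality in $\R^2$ to the pairs $(r_j,-r_k)$ and $(v_k/v_j,1)$ to obtain
\[
|(q-w)_k|^2\le (r_j^2+r_k^2)\left(\left(\frac{v_k}{v_j}\right)^2+1\right).
\]
The principle-direction hypothesis enters through $(v_k/v_j)^2\le 1$, which bounds the second factor by $2$, while the first factor is bounded by $|r|^2=\dist(w,\ell)^2$ because $r_j^2+r_k^2$ consists of two of the terms of $\sum_m r_m^2$. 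Combining these and passing to the maximum over $k$, together with $(q-w)_j=0$, delivers \eqref{e:sqrt2}.

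There is no real obstacle: once one pairs the variables correctly in the Cauchy--Schwarz step, the principle-direction bound furnishes exactly the factor $2$ needed to match $\sqrt{2}\,\dist(w,\ell)$. A more laborious alternative would be to translate $q$ to the origin, rescale the extremal coordinate of $q-w$ to $1$, and flip signs of the remaining coordinates so as to align the configuration with the hypotheses of Lemma~\ref{minimumdistance}, and then invoke that lemma; the direct Cauchy--Schwarz argument avoids this case work.
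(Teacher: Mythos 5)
Your argument is correct, but it follows a genuinely different route from the paper. The paper proves \eqref{e:sqrt2} by normalising the configuration --- translating the intersection point $q=\ell\cap\Pi_j^{-1}(\Pi_j(w))$ to the origin, rescaling so that the extremal coordinate of $w-q$ equals $1$, and permuting coordinates and flipping signs so as to land exactly in the hypotheses of Lemma~\ref{minimumdistance} --- and then quotes that lemma, whose own proof is a calculus minimisation over the extremal configuration. You instead argue directly: writing $q-p=\frac{r_j}{v_j}v$ with $p$ the orthogonal projection of $w$ onto $\ell$ and $r=w-p$, you bound each coordinate $(q-w)_k=\frac{r_j}{v_j}v_k-r_k$ by a two-term Cauchy--Schwarz estimate, using $|v_k|\le|v_j|$ (the principle-direction hypothesis, which also guarantees $v_j\ne 0$ so that $q$ exists and is unique) and $r_j^2+r_k^2\le|r|^2=\dist(w,\ell)^2$, while $(q-w)_j=0$. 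This is complete and in fact makes Lemma~\ref{minimumdistance} superfluous, since in the paper that lemma is used only here; it also avoids the coordinate-normalisation bookkeeping that the paper leaves implicit in the phrase ``by choosing the coordinate system properly.'' What the paper's route buys in exchange is that the extremal configuration of Lemma~\ref{minimumdistance} exhibits sharpness of the constant $\sqrt 2$, which your inequality-only argument does not address --- though sharpness is not needed for the applications.
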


\begin{proof}
By choosing the coordinate system properly,  this
follows directly from Lemma~\ref{minimumdistance}.
\end{proof}

\begin{lemma}\label{lines}
 Let $n,m_0\in\N$ with $m_0>0$ and $Q\in\Q_n$. Suppose that
$j\in\{1,\dots,d\}$ is a principle direction  for  a line $\ell\subset\R^d$ and
$\ell$ intersects $Q$ properly.  
Then there exist $\ell'\in\Gamma(Q,m_0)$ and a line $\ell''\subset\R^d$
such that
\begin{enumerate}
\item[(i)] the lines $\ell'$ and $\ell''$ are parallel  and $j$ is their
  principle direction,
\item[(ii)] $\Pi_j(\ell\cap Q)=\Pi_j(\ell''\cap Q)$ and
\item[(iii)] $\dmax\left(\ell\cap\Pi_j^{-1}(z),
  \ell''\cap\Pi_j^{-1}(z)\right)\le \frac{1}{2}N^{-n-m_0}$
  for every $z\in\Pi_j(Q)$.
\end{enumerate} 
\end{lemma}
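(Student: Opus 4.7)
After relabelling the coordinates I may assume $j=1$. Because $1$ is a principle direction for $\ell$ and $\ell$ intersects $Q$ properly, the direction of $\ell$ has non-zero first coordinate and may be normalized as $\vec u=(1,u_2,\dots,u_d)$ with $|u_k|\le 1$ for each $k\ge 2$. My plan is to approximate $\vec u$ by a grid-aligned direction realized by some $\ell'\in\Gamma(Q,m_0)$, and then pick $\ell''$ as a translate of $\ell'$ matching $\ell$ at the boundary of $Q$.

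To build $\ell'$, write $Q=\prod_{k=1}^d[a_k,a_k+N^{-n}]$, set $F=Q\cap\{x_1=a_1+N^{-n}\}$ and let $v\in -F$ be the vertex with $v_k=a_k$ when $u_k\ge 0$ and $v_k=a_k+N^{-n}$ when $u_k<0$. Since $G_{m_0}(F)$ has spacing $N^{-n-m_0}$, I can pick $g\in G_{m_0}(F)$ so that the line $\ell'$ through $v$ and $g$ has direction $\vec u'=(1,u_2',\dots,u_d')$ with $|u_k'-u_k|\le\tfrac12 N^{-m_0}$ for each $k\ge 2$. Then $\ell'\in\Gamma_{F,v}(m_0)\subseteq\Gamma(Q,m_0)$, giving (i). To build $\ell''$, I write $\Pi_1(\ell\cap Q)=[z_1,z_2]$ and, for each $i\in\{1,2\}$ with $z_i\in(a_1,a_1+N^{-n})$, select a coordinate $k^{(i)}\ne 1$ such that $\Pi_{k^{(i)}}(\ell(z_i))\in\{a_{k^{(i)}},a_{k^{(i)}}+N^{-n}\}$ (such a coordinate exists, since $\ell(z_i)\in\partial Q$ while its first coordinate is interior). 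I then let $\ell''$ be the line parallel to $\ell'$ whose intercepts are pinned by the conditions $\ell''_{k^{(i)}}(z_i)=\ell_{k^{(i)}}(z_i)$ at each active pair and $\ell''_k(z_1)=\ell_k(z_1)$ for the remaining coordinates. With this choice (iii) is a direct computation: each component $\ell_k(z)-\ell''_k(z)$ is linear in $z$ with slope $u_k-u_k'$ of absolute value at most $\tfrac12 N^{-m_0}$ vanishing at some point of $\Pi_1(Q)=[a_1,a_1+N^{-n}]$, so $|\ell_k(z)-\ell''_k(z)|\le N^{-n}\cdot\tfrac12 N^{-m_0}=\tfrac12 N^{-n-m_0}$ on $\Pi_1(Q)$, which (together with equality of the first coordinates) gives (iii). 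Item (ii) then follows because the matching conditions force $\ell''(z_i)$ to lie on the same face of $Q$ as $\ell(z_i)$ (the face $\perp k^{(i)}$ when $z_i$ is interior to $\Pi_1(Q)$, and the face $\perp 1$ otherwise), so $\ell''$ meets $\partial Q$ at the $\Pi_1$-values $z_1$ and $z_2$.

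The main obstacle is making (ii) fully rigorous, and in particular verifying that $\ell''(z)\in Q$ for every $z\in(z_1,z_2)$ and $\ell''(z)\notin Q$ for $z$ just outside $[z_1,z_2]$ in $\Pi_1(Q)$. This amounts to checking that the $\tfrac12 N^{-n-m_0}$-deviation from (iii) does not push any non-active coordinate of $\ell''(z)$ out of $[a_k,a_k+N^{-n}]$ in the interior of $(z_1,z_2)$, and that at each active coordinate $k^{(i)}$ the sign of $u_{k^{(i)}}'$ agrees with that of $u_{k^{(i)}}$ so that $\ell''$ really exits $Q$ at $z=z_i$; when $|u_{k^{(i)}}|$ is so small that rounding might flip its sign, the corresponding deviation is already within the allowed bound and can be absorbed. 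A short case analysis on the exit faces of $\ell$ handles the generic situation, and degenerate corner or edge crossings are handled by a perturbation argument on the intercepts of lines parallel to $\ell'$.
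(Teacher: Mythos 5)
Items (i) and (iii) of your construction are fine, but (ii) --- which is the entire content of the lemma --- is only asserted: your last paragraph concedes that the verification of $\Pi_j(\ell''\cap Q)=\Pi_j(\ell\cap Q)$ is deferred to an unspecified ``short case analysis'' plus a ``perturbation argument''. This is a genuine gap, and it is not mere bookkeeping, because the construction you committed to can be impossible to complete. You fix $\ell'$ once and for all by rounding every slope to the nearest grid value, so that $|u_k'-u_k|\le\frac12N^{-m_0}$, and then only allow translates of this one $\ell'$. Take $d=2$, $n=0$, $Q=[0,1]^2$, $N^{-m_0}=\frac1{10}$, and let $\ell$ be the line through $(0,0.995)$ with direction $(1,0.01)$. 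Then $1$ is a principle direction and $\Pi_1(\ell\cap Q)=[0,\frac12]$ has length $d^{-1}N^{-n}$, so $\ell$ intersects $Q$ properly and all hypotheses hold. Your rounding forces $u_2'=0$, so $\ell'$ and every admissible $\ell''$ is horizontal; but a horizontal line meets $Q$ either in the empty set or in a segment whose $\Pi_1$-projection is all of $[0,1]\ne[0,\frac12]$, so (ii) fails for \emph{every} translate --- in particular your inference that pinning $\ell''_{k^{(i)}}(z_i)=\ell_{k^{(i)}}(z_i)$ forces $\Pi_1(\ell''\cap Q)=[z_1,z_2]$ is false (here $\ell''$ does meet $\partial Q$ at $z_2$, indeed runs along the face $x_2=1$, yet the projection is $[0,1]$). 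The lemma is still true in this configuration, but only by taking the \emph{non-nearest} grid slope $u_2'=\frac1{10}$ and pinning $\ell''$ at the exit point $(\frac12,1)$: the slope error is $0.09>\frac12N^{-m_0}$, yet because the two lines cross inside $\Pi_1(Q)$ the pointwise deviation is at most $0.045\le\frac12N^{-n-m_0}$, so (iii) survives. So the deferred case analysis must sometimes go back and change the choice of $\ell'$ itself, which also undercuts the way you derived (iii) from the per-coordinate bound $|u_k'-u_k|\le\frac12N^{-m_0}$. (A smaller loose end: when both $z_1,z_2$ are interior to $\Pi_1(Q)$ your pinning conditions may involve the same coordinate $k^{(1)}=k^{(2)}$ twice, and consistency then needs a separate, if easy, argument.)

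For comparison, the paper first decouples the coordinates: it observes that (i)--(iii) hold for $\ell''$ if and only if, for each $i\ne j$, three conditions on the single pair $(b_i,c_i)$ hold, and these are exactly the two-dimensional versions of (i)--(iii); this reduces the lemma to $d=2$. It then makes an explicit case analysis according to whether one or both endpoints of $\ell\cap Q$ lie on the sides of the square perpendicular to the principle direction, choosing $\ell''$ to be $\ell'$ itself or a translate of $\ell'$ through a suitable endpoint of $\ell\cap Q$. The part you left unproved is precisely this case analysis, and, as the example above shows, the configurations in which the approximating line grazes a face parallel to the principle direction are exactly where the care (and possibly a different choice of the approximating grid direction) is required.
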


\begin{proof}
Figure~\ref{fig39} is serving as an illustration for the proof of this lemma
and also for its application in the proof of Lemma \ref{gap}. Hence, apart from
objects used in this proof,  some other ones are also shown. The dashed line
in the figure is $\Pi_j^{-1}(z)$. 

Without loss of generality,  we can suppose that $n=0$, $Q=[0,1]^d$ and
$j=1$. Let the direction vector of $\ell$ be $(1,a_2,\ldots,a_d)$. 
Since  $1$ is a principle direction  for  $\ell$,  we have  that
 $|a_i|\le 1$  for all $i=2,\ldots,d$. 
By symmetry,  we can suppose that $a_i\ge 0$ for every $i$.
Let $v=(0,\ldots,0)$,  $u=(1,a_2,\ldots,a_d)$ and
$F$ be the face of $Q$ in the  hyperplane $(x_1=1)$.  
Then there is a point 
$u'=(1,a'_2,\ldots,a'_d)\in G_{m_0}(F)$ such that
$|a_i-a'_i|\le \frac{1}{2}N^{-n-m_0} $ for  all  $i=2,\ldots,d$. 
 The points  $v$ and $u'$ define an element $\ell'\in\Gamma(Q,m_0)$.
Let $ \tilde x = (0,b_2,\ldots,b_d)$ and $ \tilde y = ( 1  ,c_2,\ldots,c_d)$ be
the intersections of $\ell$ with the  hyperplanes  $(x_1=0)$ and 
$(x_1=1)$, respectively.  Clearly,  $c_i-b_i=a_i$ for  all 
$i=2,\ldots,d$.

Note that if we want to define $\ell''$ as a line 
through  $(0,b''_2,\ldots,b''_d)$ and $(1,c''_2,\ldots,c''_d)$, 
the requirements (i), (ii) and (iii) about $\ell''$
are satisfied if and only if, for all $i=2,\ldots,d$, 
we have 
\begin{enumerate}
\item[(1)] $c''_i-b''_i=a'_i$, 
\item[(2)] $(\forall t\in [0,1])\  
  a_i t + b_i\in[0,1]\Leftrightarrow a'_i t + b''_i\in[0,1]$ and
\item[(3)] $|b''_i-b_i|\le\frac{1}{2}N^{-n-m_0}\text{ and }
  |c''_i-c_i|\le\frac{1}{2}N^{-n-m_0}.$
\end{enumerate}  
For different  values of  $i$ these conditions are independent, 
and for any fixed $i$ they are equivalent to the
two-dimensional versions of the original (i), (ii) and (iii). 
Therefore,  we can suppose that $d=2$.
 Then  either one or both endpoints of the segment
$\ell\cap Q$ are on the vertical sides of the square $Q$.
 In the first case,  it is easy to see that we can choose $\ell''$ as
the line parallel to $\ell'$  and going  through the other endpoint
of $\ell\cap Q$. In the latter case, there are two possibilities: either
$\ell\cap Q$ intersects $\ell'\cap Q$ or not. If the intersection is non-empty,
we may choose $\ell''=\ell'$. In the opposite case, we can choose $\ell''$ as
the line parallel to $\ell'$ and going through 
the endpoint of $\ell\cap Q$ closer to $\ell'$.
This situation is shown in Figure~\ref{fig39}, where $\ell ''$
is going through the endpoint $\tilde x$. 
\end{proof}

We denote by $\dH$ the Hausdorff distance between compact subsets of $\R^d$.

\begin{lemma}\label{gap}
Let $n,m_0\in\N$ with $N_{m_0}\ge 1$, 
$\omega\in\Omega$ and $i\in\{1,\dots,d\}$.
Suppose that $x,y\in [0,1]^d$ are such that $L(x,y)$ passes through an
$(n,i)$-layer $\L$. Let $\gamma\colon [a,b]\to\R^d$ be a curve passing through
$\L$ such that 
\begin{equation}\label{gammacloseL}
\dH(\gamma([a,b]),L(x,y))<\frac 1{2\sqrt 2}N^{-n-m_0}.
\end{equation}
 Finally,
assume that all the cubes $Q\in\Q_n(\L)$ intersecting $L(x,y)$ are $m_0$-good.  
Then there are $\tilde a,\tilde b\in [a,b]$ such that 
$\gamma(\mathopen]\tilde a,\tilde b\mathclose[)\cap E(\omega)=\emptyset$ and
$|\gamma(\tilde a)-\gamma(\tilde b)|\ge d^{-1}N^{-n-m_0}$.
\end{lemma}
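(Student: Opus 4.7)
Let $\ell$ denote the line containing $L(x,y)$ and let $j$ be a principle direction for $\ell$. My plan is to locate a strongly $j$-deleted cube $Q'\in\Q_{n+m_0}$ near $L(x,y)$, surround it by a rectangular slab $S$ which avoids $E(\omega)$, and produce a sub-arc of $\gamma$ in $S$ whose endpoints are at $\Pi_j$-distance at least $d^{-1}N^{-n-m_0}$.

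Because $L(x,y)$ passes through $\L$, the intersection $L(x,y)\cap\L$ has $\Pi_i$-projection of length $N^{-n}$; since $j$ is principle for $\ell$, a short pigeonhole argument produces a cube $Q\in\Q_n(\L)$ with $\H^1(\Pi_j(\ell\cap Q))\ge d^{-1}N^{-n}$. Lemma~\ref{lines} applied with this $Q$ yields $\ell'\in\Gamma(Q,m_0)$ and a line $\ell''$ parallel to $\ell'$ satisfying $\Pi_j(\ell\cap Q)=\Pi_j(\ell''\cap Q)$ and $\dmax(\ell\cap\Pi_j^{-1}(z),\ell''\cap\Pi_j^{-1}(z))\le\tfrac{1}{2}N^{-n-m_0}$ for every $z\in\Pi_j(Q)$. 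In particular $\ell''$ intersects $Q$ properly; combined with the hypothesis that $Q$ is $m_0$-good and with $\ell''$ parallel to $\ell'\in\Gamma(Q,m_0)$, Definition~\ref{good} provides a strongly $j$-deleted cube $Q'\in\Q_{n+m_0}(Q)$ intersected properly by $\ell''$ and lying outside the first two and last two $(n+m_0,j)$-layers of $\ell''\cap Q$.

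Define $S:=Q'\cup\bigcup_{\widetilde Q\sim_j Q'}\widetilde Q$; this is a rectangular slab of width $N^{-n-m_0}$ in direction $j$ and $3N^{-n-m_0}$ in each orthogonal direction, and the strongly $j$-deleted property gives $S\cap E(\omega)=\emptyset$. Using $\Pi_i$-injectivity of $\ell$ (valid because $L(x,y)$ spans $\L$ in direction $i$), one has $\ell\cap Q\subseteq L(x,y)$. For $z\in\Pi_j(\ell''\cap Q')$, Lemma~\ref{lines} places $p_\ell(z):=\ell\cap\Pi_j^{-1}(z)$ within $\dmax$-distance $\tfrac{1}{2}N^{-n-m_0}$ of $Q'$ in the coordinates orthogonal to $j$ (the $j$-coordinate matches), so $p_\ell(z)\in L(x,y)\cap S$. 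Meanwhile, for any $t\in[a,b]$ the Hausdorff hypothesis and Lemma~\ref{mindistancegenform} give $\dmax(\gamma(t),\ell\cap\Pi_j^{-1}(\Pi_j(\gamma(t))))\le\tfrac{1}{2}N^{-n-m_0}$. Adding these two $\dmax$-bounds in the orthogonal coordinates shows that whenever $\Pi_j(\gamma(t))\in\Pi_j(\ell''\cap Q')$, the point $\gamma(t)$ lies within $N^{-n-m_0}$ in $\dmax$ of $Q'$ in those coordinates, hence inside the $3$-cube block forming $S$.

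Pick $z_1<z_2$ as the endpoints of $\Pi_j(\ell''\cap Q')$, so $z_2-z_1\ge d^{-1}N^{-n-m_0}$. Condition~(a) of Definition~\ref{good} places $[z_1,z_2]$ at $\Pi_j$-distance at least $2N^{-n-m_0}$ from the $\Pi_j$-endpoints of $\ell\cap Q$, and hence (using $\ell\cap Q\subseteq L(x,y)$) from $\{\Pi_j(x),\Pi_j(y)\}$. Because $\dH(\gamma,L(x,y))<\tfrac{1}{2\sqrt{2}}N^{-n-m_0}\ll 2N^{-n-m_0}$, this forces $\min_t\Pi_j(\gamma(t))<z_1<z_2<\max_t\Pi_j(\gamma(t))$, and the intermediate value theorem applied to $\Pi_j\circ\gamma$ yields $\tilde a<\tilde b$ in $[a,b]$ with $\Pi_j(\gamma(\tilde a))=z_1$, $\Pi_j(\gamma(\tilde b))=z_2$, and $\Pi_j(\gamma(t))\in[z_1,z_2]$ on $[\tilde a,\tilde b]$. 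By the previous paragraph $\gamma([\tilde a,\tilde b])\subset S$, so $\gamma(]\tilde a,\tilde b[)\cap E(\omega)=\emptyset$ and
\[
|\gamma(\tilde a)-\gamma(\tilde b)|\ge|\Pi_j(\gamma(\tilde a))-\Pi_j(\gamma(\tilde b))|=z_2-z_1\ge d^{-1}N^{-n-m_0}.
\]
The main technical obstacle is this last step: extracting $\tilde a<\tilde b$ with $\Pi_j\circ\gamma$ pinned inside $[z_1,z_2]$ on all of $[\tilde a,\tilde b]$ despite possible oscillations of $\gamma$, for which the buffer from condition~(a) and the much smaller Hausdorff slack $\tfrac{1}{2\sqrt{2}}N^{-n-m_0}$ are essential.
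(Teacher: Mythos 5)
Your proposal follows essentially the same route as the paper's proof: locate a level-$n$ cube in the layer that the line intersects properly, pass to the nearby line $\ell''$ supplied by Lemma~\ref{lines}, invoke $m_0$-goodness for $\ell''$ to obtain a strongly $j$-deleted cube $Q'$ away from the extreme $(n+m_0,j)$-layers, and trap $\gamma$ in the deleted $3^{d-1}$-cube slab while its $\Pi_j$-coordinate sweeps $\Pi_j(\ell''\cap Q')$, using the (strict) Hausdorff bound together with Lemma~\ref{mindistancegenform} and Lemma~\ref{lines}(iii). One step, though, is asserted rather than proved, and it is precisely the step the paper spends effort on: ``a short pigeonhole argument produces $Q\in\Q_n(\L)$ with $\H^1(\Pi_j(\ell\cap Q))\ge d^{-1}N^{-n}$''. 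Naively counting the cubes met by $\ell\cap\L$ and dividing does not suffice (this is why Observation~\ref{properintersection} carries assumption (b), and why the paper's proof first reduces to a sub-segment and a possibly different layer and direction). The claim you need is nevertheless true: for each $m\ne i$ the $\Pi_j$-parameters at which the segment crosses a hyperplane $x_m\in N^{-n}\Z$ form a progression of spacing at least $N^{-n}$ (because $j$ is a principle direction), and there are no interior crossings in direction $i$ inside $\L$; so any half-open interval of $\Pi_j$-length $N^{-n}$ inside $\Pi_j(\ell\cap\L)$ (whose length is at least $N^{-n}$) contains at most $d-1$ crossing points, hence at most $d$ cube-pieces, one of which must have $\Pi_j$-length at least $d^{-1}N^{-n}$. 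Spelled out, this actually streamlines the paper's reduction and has the advantage that the cube $Q$ genuinely lies in $\Q_n(\L)$ and meets $L(x,y)$, exactly as the lemma's hypothesis requires.

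The second point to repair is the claim ``$S\cap E(\omega)=\emptyset$'': deleted cubes exclude $E(\omega)$ only from their interiors, and $E(\omega)$ may meet $\partial S$ along faces shared with chosen cubes outside the slab. Your estimates do contain the fix: since \eqref{gammacloseL} is strict, for $t$ with $\Pi_j(\gamma(t))\in\Pi_j(\ell''\cap Q')$ one gets $\dmax(\gamma(t),Q')<N^{-n-m_0}$ strictly, so $\gamma(t)$ lies strictly inside the lateral extent of $S$; and you must choose $\tilde a,\tilde b$ as a clean crossing, i.e.\ with $\Pi_j(\gamma(t))\in\mathopen]z_1,z_2\mathclose[$ for all $t\in\mathopen]\tilde a,\tilde b\mathclose[$ --- your weaker pinning $\Pi_j\circ\gamma\in[z_1,z_2]$ on $[\tilde a,\tilde b]$ would allow the open arc to touch the top and bottom faces of $S$, where $E(\omega)$ may sit. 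Finally, the buffer obtained from condition (a) of Definition~\ref{good} is only $N^{-n-m_0}$, not $2N^{-n-m_0}$ (the first and last layers may be only partially traversed by $\ell''\cap Q$), but this is still larger than the Hausdorff slack $\tfrac 1{2\sqrt 2}N^{-n-m_0}$, so your argument that $\gamma$ must cross the thin layer is unaffected. With these adjustments the proof is correct.
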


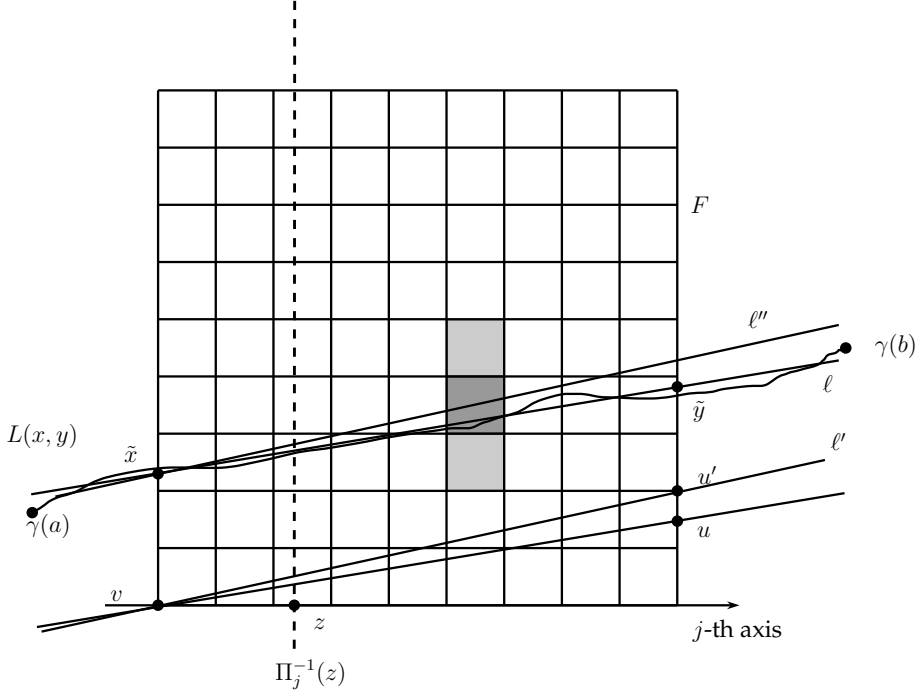
\begin{figure}[h]
\centering{
\resizebox{0.9\textwidth}{!}{
\scalebox{1} 
{
\begin{pspicture}(0,-6.0242186)(17.025469,6.0092187)
\definecolor{color26b}{rgb}{0.6,0.6,0.6}
\definecolor{color27b}{rgb}{0.8,0.8,0.8}
\psframe[linewidth=0.016,dimen=outer,fillstyle=solid,fillcolor=color26b](9.011875,-0.5807812)(8.011875,-1.5807812)
\psframe[linewidth=0.016,dimen=outer,fillstyle=solid,fillcolor=color27b](9.011875,-1.5807812)(8.011875,-2.5807812)
\psframe[linewidth=0.016,dimen=outer,fillstyle=solid,fillcolor=color27b](9.011875,0.41921875)(8.011875,-0.5807812)
\psline[linewidth=0.04cm,arrowsize=0.07291667cm 1.68,arrowlength=1.56,arrowinset=0.32]{->}(2.0918748,-4.5807815)(13.091875,-4.5807815)
\psline[linewidth=0.04cm](0.8118749,-2.6407812)(14.811875,-0.30078125)
\psline[linewidth=0.04cm](3.0118747,4.4192185)(12.011875,4.4192185)
\psline[linewidth=0.04cm](3.0118747,4.4192185)(3.0118747,-4.5807815)
\psline[linewidth=0.04cm](3.0118747,3.4192188)(12.011875,3.4192188)
\psline[linewidth=0.04cm](4.0118747,4.4192185)(4.0118747,-4.5807815)
\psline[linewidth=0.04cm](3.0118747,2.4192188)(12.011875,2.4192188)
\psline[linewidth=0.04cm](5.011875,4.4192185)(5.011875,-4.5807815)
\psline[linewidth=0.04cm](3.0118747,1.4192188)(12.011875,1.4192188)
\psline[linewidth=0.04cm](6.011875,4.4192185)(6.011875,-4.5807815)
\psline[linewidth=0.04cm](3.0118747,0.41921875)(12.011875,0.41921875)
\psline[linewidth=0.04cm](7.011875,4.4192185)(7.011875,-4.5807815)
\psline[linewidth=0.04cm](3.0118747,-0.5807812)(12.011875,-0.5807812)
\psline[linewidth=0.04cm](8.011875,4.4192185)(8.011875,-4.5807815)
\psline[linewidth=0.04cm](3.0118747,-1.5807812)(12.011875,-1.5807812)
\psline[linewidth=0.04cm](9.011875,4.4192185)(9.011875,-4.5807815)
\psline[linewidth=0.04cm](3.0118747,-2.5807812)(12.011875,-2.5807812)
\psline[linewidth=0.04cm](10.011875,4.4192185)(10.011875,-4.5807815)
\psline[linewidth=0.04cm](3.0118747,-3.5807812)(12.011875,-3.5807812)
\psline[linewidth=0.04cm](11.011875,4.4192185)(11.011875,-4.5807815)
\psline[linewidth=0.04cm](3.0118747,-4.5807815)(12.011875,-4.5807815)
\psline[linewidth=0.04cm](12.011875,4.4192185)(12.011875,-4.5807815)
\psline[linewidth=0.04cm](14.571875,-2.0407813)(0.9918749,-5.040781)
\psline[linewidth=0.04cm](14.811874,0.31921875)(1.231875,-2.6807814)
\psdots[dotsize=0.2](3.011875,-4.5807815)
\psdots[dotsize=0.2](12.011875,-3.1107812)
\psdots[dotsize=0.2](12.011875,-2.5807812)
\psdots[dotsize=0.2](14.931874,-0.08078125)
\psdots[dotsize=0.2](12.011875,-0.7607812)
\psdots[dotsize=0.2](0.8318749,-2.9607813)
\psdots[dotsize=0.2](3.011875,-2.2807813)
\psline[linewidth=0.05cm,linestyle=dashed,dash=0.16cm 0.16cm](5.371875,-5.340781)(5.38,5.9842186)
\psdots[dotsize=0.2](5.371875,-4.5807815)
\usefont{T1}{ppl}{m}{n}
\rput(2.2945309,-4.411406){$v$}
\usefont{T1}{ppl}{m}{n}
\rput(12.474531,-3.3014061){$u$}
\usefont{T1}{ppl}{m}{n}
\rput(14.810937,-1.7214062){$\ell'$}
\usefont{T1}{ppl}{m}{n}
\rput(12.534532,-2.3114061){$u'$}
\psline[linewidth=0.04cm](0.9118749,-4.960781)(14.911875,-2.6207812)
\usefont{T1}{ppl}{m}{n}
\rput(12.390937,-1.1892188){$\tilde y$}
\usefont{T1}{ppl}{m}{n}
\rput(1.0009375,-1.6192187){$L(x,y)$}
\usefont{T1}{ppl}{m}{n}
\rput(13.420937,0.41078126){$\ell''$}
\usefont{T1}{ppl}{m}{n}
\rput(15.800938,-0.04921875){$\gamma(b)$}
\usefont{T1}{ppl}{m}{n}
\rput(2.5445313,-1.9314063){$\tilde x$}
\usefont{T1}{ppl}{m}{n}
\rput(1.1145313,-3.1892188){$\gamma(a)$}
\usefont{T1}{ppl}{m}{n}
\rput(5.8145313,-4.911406){$z$}
\usefont{T1}{ppl}{m}{n}
\rput(12.4,2.4185936){$F$}
\usefont{T1}{ppl}{m}{n}
\rput(5.660937,-5.789219){$\Pi_{j}^{-1}(z)$}
\usefont{T1}{ppl}{m}{n}
\rput(14.620937,-0.70140624){$\ell$}
\usefont{T1}{ppl}{m}{n}
\rput(13.055469,-5.020781){$j$-th axis}
\pscustom[linewidth=0.04]
{
\newpath
\moveto(0.84546876,-2.9457812)
\lineto(0.92546874,-2.9057813)
\curveto(0.96546876,-2.8857813)(1.0504688,-2.8257813)(1.0954688,-2.7857811)
\curveto(1.1404687,-2.7457812)(1.2754687,-2.6857812)(1.3654687,-2.6657813)
\curveto(1.4554688,-2.6457813)(1.5954688,-2.5957813)(1.6454687,-2.5657814)
\curveto(1.6954688,-2.5357811)(1.8004688,-2.4757812)(1.8554688,-2.4457812)
\curveto(1.9104687,-2.4157813)(2.0404687,-2.3657813)(2.1154687,-2.3457813)
\curveto(2.1904688,-2.3257813)(2.3704689,-2.2857811)(2.4754686,-2.2657812)
\curveto(2.5804687,-2.2457812)(2.7604687,-2.2157812)(2.8354688,-2.2057812)
\curveto(2.9104688,-2.1957812)(3.0704687,-2.1807814)(3.1554687,-2.1757812)
\curveto(3.2404687,-2.1707811)(3.4804688,-2.1707811)(3.6354687,-2.1757812)
\curveto(3.7904687,-2.1807814)(4.030469,-2.1807814)(4.115469,-2.1757812)
\curveto(4.2004685,-2.1707811)(4.3554688,-2.1507812)(4.425469,-2.1357813)
\curveto(4.4954686,-2.1207812)(4.6604686,-2.0857813)(4.755469,-2.0657814)
\curveto(4.8504686,-2.0457811)(5.025469,-2.0057812)(5.1054688,-1.9857812)
\curveto(5.1854687,-1.9657812)(5.3304687,-1.9307812)(5.3954687,-1.9157813)
\curveto(5.460469,-1.9007813)(5.6204686,-1.8757813)(5.715469,-1.8657813)
\curveto(5.8104687,-1.8557812)(5.990469,-1.8307812)(6.0754685,-1.8157812)
\curveto(6.1604686,-1.8007812)(6.335469,-1.7707813)(6.425469,-1.7557813)
\curveto(6.5154686,-1.7407813)(6.7204685,-1.7007812)(6.835469,-1.6757812)
\curveto(6.9504685,-1.6507813)(7.260469,-1.6007812)(7.4554687,-1.5757812)
\curveto(7.650469,-1.5507812)(7.8754687,-1.5207813)(7.905469,-1.5157813)
\curveto(7.9354687,-1.5107813)(7.990469,-1.5007813)(8.015469,-1.4957813)
\curveto(8.040469,-1.4907813)(8.105469,-1.4857812)(8.145469,-1.4857812)
\curveto(8.185469,-1.4857812)(8.255468,-1.4857812)(8.285469,-1.4857812)
\curveto(8.315469,-1.4857812)(8.370469,-1.4857812)(8.395469,-1.4857812)
\curveto(8.420468,-1.4857812)(8.475469,-1.4707812)(8.505468,-1.4557812)
\curveto(8.535469,-1.4407812)(8.585468,-1.4157813)(8.605469,-1.4057813)
\curveto(8.625469,-1.3957813)(8.670468,-1.3807813)(8.695469,-1.3757813)
\curveto(8.7204685,-1.3707813)(8.780469,-1.3507812)(8.815469,-1.3357812)
\curveto(8.850469,-1.3207812)(8.965468,-1.2857813)(9.045468,-1.2657813)
\curveto(9.125469,-1.2457813)(9.235469,-1.2107812)(9.265469,-1.1957812)
\curveto(9.295468,-1.1807812)(9.350469,-1.1507813)(9.375469,-1.1357813)
\curveto(9.400469,-1.1207813)(9.455469,-1.0857812)(9.485469,-1.0657812)
\curveto(9.515469,-1.0457813)(9.605469,-1.0057813)(9.665469,-0.98578125)
\curveto(9.725469,-0.9657813)(9.8454685,-0.93078125)(9.905469,-0.91578126)
\curveto(9.965468,-0.9007813)(10.050468,-0.8857812)(10.075469,-0.8857812)
\curveto(10.100469,-0.8857812)(10.155469,-0.8857812)(10.185469,-0.8857812)
\curveto(10.215468,-0.8857812)(10.285469,-0.8857812)(10.325469,-0.8857812)
\curveto(10.365469,-0.8857812)(10.450469,-0.8957813)(10.495469,-0.90578127)
\curveto(10.540469,-0.91578126)(10.615469,-0.93078125)(10.645469,-0.93578124)
\curveto(10.675468,-0.94078124)(10.755468,-0.94578123)(10.805469,-0.94578123)
\curveto(10.855469,-0.94578123)(11.000469,-0.9507812)(11.0954685,-0.9557812)
\curveto(11.190469,-0.9607813)(11.320469,-0.9657813)(11.355469,-0.9657813)
\curveto(11.390469,-0.9657813)(11.500469,-0.9657813)(11.575469,-0.9657813)
\curveto(11.650469,-0.9657813)(11.810469,-0.9507812)(11.895469,-0.93578124)
\curveto(11.980469,-0.92078125)(12.120469,-0.8957813)(12.175468,-0.8857812)
\curveto(12.230469,-0.87578124)(12.360469,-0.86578125)(12.435469,-0.86578125)
\curveto(12.5104685,-0.86578125)(12.630468,-0.85078126)(12.675468,-0.8357813)
\curveto(12.7204685,-0.82078123)(12.805469,-0.80078125)(12.8454685,-0.79578125)
\curveto(12.8854685,-0.79078126)(13.005468,-0.7757813)(13.085468,-0.7657812)
\curveto(13.165469,-0.75578123)(13.285469,-0.74578124)(13.325469,-0.74578124)
\curveto(13.365469,-0.74578124)(13.435469,-0.74578124)(13.465468,-0.74578124)
\curveto(13.495469,-0.74578124)(13.550468,-0.73078126)(13.575469,-0.7157813)
\curveto(13.600469,-0.7007812)(13.665469,-0.65578127)(13.705469,-0.62578124)
\curveto(13.745469,-0.59578127)(13.810469,-0.56078124)(13.835468,-0.55578125)
\curveto(13.860469,-0.55078125)(13.905469,-0.53578126)(13.925468,-0.5257813)
\curveto(13.945469,-0.5157812)(13.995469,-0.50078124)(14.025469,-0.49578124)
\curveto(14.055469,-0.49078125)(14.115469,-0.47578126)(14.145469,-0.46578124)
\curveto(14.175468,-0.45578125)(14.270469,-0.43078125)(14.335468,-0.41578126)
\curveto(14.400469,-0.40078124)(14.480469,-0.36578125)(14.495469,-0.34578124)
\curveto(14.5104685,-0.32578126)(14.535469,-0.30078125)(14.545468,-0.29578125)
\curveto(14.555469,-0.29078126)(14.575469,-0.26578125)(14.585468,-0.24578124)
\curveto(14.5954685,-0.22578125)(14.625469,-0.20078126)(14.645469,-0.19578125)
\curveto(14.665469,-0.19078125)(14.710468,-0.18078125)(14.735469,-0.17578125)
\curveto(14.7604685,-0.17078125)(14.795468,-0.14578125)(14.805469,-0.12578125)
\curveto(14.815469,-0.10578125)(14.875469,-0.12578125)(14.925468,-0.16578124)
}
\end{pspicture} 
}}
\caption{  An illustration for the proofs of Lemmas \ref{lines}  and \ref{gap}.}
\label{fig39}}
\end{figure}

\begin{proof}
If $x$, $y$ and $\L$ do not satisfy the assumptions of
Observation~\ref{properintersection}, there is
$j\in\{1,\dots,d\}\setminus\{i\}$ such that $L(x,y)$ intersects three
successive $(n,j)$-layers, and one may pick points $x',y'\in L(x,y)$ and an
$(n,j)$-layer $\L'$ such that the line segment $L(x',y')\subset L(x,y)$ passes
through $\L'$ and $x'$ and $y'$ belong to the boundary of $\L'$. Iterating this
argument, we end up with points $\tilde x$ and $\tilde y$ and an
$(n,\tilde j)$-layer $\widetilde\L$ satisfying assumptions of
Observation~\ref{properintersection} with $i$ replaced by $\tilde j$. 

Let $\ell$ be the line containing $L(x,y)$ and let $j\in\{1,\dots,d\}$ be its
principal direction. Clearly, $\ell$ contains $\tilde x$ and $\tilde y$.
By Observation~\ref{properintersection}, there is $Q\in\Q_n(\widetilde\L)$ such
that $\ell$ intersects $Q$ properly and $\ell\cap Q=L(x,y)\cap Q$. 
By Lemma~\ref{lines}, there exist $\ell'\in\Gamma(Q,m_0)$ and a line
$\ell''\subset\R^d$  such that (i), (ii) and (iii) of Lemma~\ref{lines} hold. 

Let $\L_1,\dots,\L_k$ be the $(n+m_0,j)$-layers such that
$\ell''\cap Q\cap\L_h$ has positive length (recall this type of notation in 
Definition~\ref{good}). Since $N_{m_0}\ge 1$ and $\ell''$ intersects $Q$
properly, we have that $k\ge  N_{m_0}+4\ge 5$. Since $Q$ is $m_0$-good,
there is $h\in\{1,\dots,k\}\setminus\{1,2,k-1,k\}$ such that
$\ell''$ intersects properly a cube $Q'\in\Q_{n+m_0}(Q\cap\L_h)$ that is strongly
$j$-deleted.  By \eqref{gammacloseL}, there are points $c,d\in[a,b]$ such
that $|\gamma(c)-x|<N^{-n-m_0}$ and $|\gamma(d)-y|<N^{-n-m_0}$. Applying property
(ii) of Lemma~\ref{lines} and the fact that $h\not\in\{1,2,k-1,k\}$, we conclude
that the curve $\gamma|_{[c,d]}$ passes through $\L_h$. 
Suppose that $\gamma(t)\in\L_h$. Applying \eqref{e:sqrt2} of
Lemma~\ref{mindistancegenform} with $w=\gamma(t)$
and using \eqref{gammacloseL} and property (iii) of Lemma~\ref{lines}, 
we conclude that $\gamma|_{[ c,d]}$ passes through
$\Pi_j^{-1}(\Pi_j(Q'\cap\ell''))$ inside $\bigcup_{Q''\sim_j Q'}Q''$.
 Write $\partial\bigl(\Pi_j^{-1}(\Pi_j(Q'\cap\ell''))\bigr)=:B_c\cup B_d$,
where $B_c$ and $B_d$ are the hyperplanes closer to $\gamma(c)$ and $\gamma(d)$,
respectively. Now the points $\tilde a:=\max\{t\in [c,d]\mid\gamma(t)\in B_c\}$
and $\tilde b:=\min\{t\in [c,d]\mid\gamma(t)\in B_d\}$ satisfy the claim. 
\end{proof}

The next proposition will be used for studying the size of the set of points
that belong to a good cube with good neighbour cubes at infinitely many scales. 
 For  our purposes,  the critical value of dimension will be 1
 (see the proof of
Theorem~\ref{mainstandard} and Proposition~\ref{Salphaissmall})  but, with
the same effort, we prove a more general statement, since it might be useful in
other connections related to dimensions of random constructions. 

\begin{proposition}\label{smallsets}
Let $(\F_n)_{n\in\N}$ be a sequence of independent sub-$\sigma$-algebras of $\B$ 
on $\Omega$. For all $n\in\N$, let $A_n\colon\Omega\times\Q_n\to\{0,1\}$ be a 
function such that $A_n(\cdot,Q)$ is $\F_n$-measurable for all $Q\in\Q_n$. Let 
$0\le\varrho\le 1$. Assume that, for all $n\in\N$ and $Q\in\Q_n$,
\begin{equation}\label{probA}
\P(\{\omega\in\Omega\mid A_n(\omega,Q)=1\})\le\varrho.
\end{equation}
Let $M_1\in\N$ and assume that $\simeq$ is a reflexive relation on
$\bigcup_{n\in\N}\Q_n$ such that, for all $Q\in\Q_n$, there are at most $M_1$
cubes $Q'\in\Q_n$ with $Q'\simeq Q$, and $Q'\not\simeq Q$, if $Q'\in\Q_m$ with
$m\ne n$. Given $\omega\in\Omega$, we say that a cube 
$Q\in\Q_n$ is {\it selected}, if $A_n(\omega,Q')=1$ for some $Q'\simeq Q$. 
For all $n,k\in\N$, define
\[
\widetilde E_n(\omega):=\bigcup_{\substack{Q\in\Q_n\\ Q\text{ is selected}}}Q\quad
\text{and}\quad\widetilde E^k(\omega):=\bigcap_{n=k}^\infty\widetilde E_n(\omega).
\]
Then,  for all $k\in\N$, we have
\[
\dimh\widetilde E^k(\omega)\le\frac{\log(\max\{M_1\varrho N^d,1\})}{\log N}
\]
for $\P$-almost all $\omega\in\Omega$. In particular,
$\dimh\widetilde E^k(\omega)<1$ for all small enough $\varrho$.    
\end{proposition}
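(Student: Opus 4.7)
The plan is a standard first-moment/Borel--Cantelli covering argument, where the scales play the role of independent generations, allowing us to beat the trivial bound $\dim_H \widetilde E^k\le d$ by a factor controlled by $\varrho$.

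First, I would apply the union bound at each fixed scale $n$. Since each cube has at most $M_1$ relatives under $\simeq$, the probability that a given $Q\in\Q_n$ is selected is at most $M_1\varrho$. At this stage there is no need to worry about the dependence structure among cubes at scale $n$; only the bound on the probability of selection matters.

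Next, I would exploit the independence of the $\F_n$ \emph{across scales}. For $n\ge k$, let $\widetilde Z_n$ be the number of cubes $Q\in\Q_n$ such that $Q$ and each ancestor of $Q$ in $\Q_k,\dots,\Q_{n-1}$ is selected; here the ancestor of $Q$ in $\Q_m$ is the unique cube of $\Q_m$ containing $Q$. Because $A_n(\cdot,Q')$ is $\F_n$-measurable and the $\F_n$ are independent, the events ``the ancestor at scale $m$ is selected'' are independent as $m$ varies, so
\[
\P(Q \text{ and all its ancestors in }\Q_k,\dots,\Q_{n-1}\text{ are selected})\le (M_1\varrho)^{n-k+1},
\]
and summing over the $N^{dn}$ cubes of $\Q_n$ gives $\E[\widetilde Z_n]\le N^{dn}(M_1\varrho)^{n-k+1}$. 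The key geometric point is that every point of $\widetilde E^k$ lies in some cube counted by $\widetilde Z_n$, so $\widetilde E^k$ is covered by $\widetilde Z_n$ cubes of diameter $\sqrt d\, N^{-n}$ for every $n\ge k$.

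For $s>0$ this yields
\[
\E\Bigl[\sum_{n\ge k}\widetilde Z_n (\sqrt d\, N^{-n})^s\Bigr]
\le d^{s/2}(M_1\varrho)^{1-k}\sum_{n\ge k}\bigl(M_1\varrho\, N^{d-s}\bigr)^n,
\]
which is finite whenever $M_1\varrho\, N^{d-s}<1$, i.e.\ whenever
\[
s>d+\frac{\log(M_1\varrho)}{\log N}.
\]
For such $s$ the sum $\sum_n\widetilde Z_n N^{-ns}$ is almost surely finite, hence $\widetilde Z_n N^{-ns}\to0$ almost surely, and therefore $\mathcal H^s(\widetilde E^k)=0$ and $\dim_H\widetilde E^k\le s$ almost surely. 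Taking a countable sequence $s_j\downarrow \eta(\varrho):=\max\bigl\{0,d+\log(M_1\varrho)/\log N\bigr\}$ and intersecting the corresponding full-measure events (also over the countably many values of $k$) gives the claim, and $\eta(\varrho)\to0$ as $\varrho\to0$ is immediate from the formula.

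The only delicate point is the across-scales independence argument yielding the product $(M_1\varrho)^{n-k+1}$; the same-scale dependence between $Q'\simeq Q$ is handled cheaply by the union bound, so no subtle correlation issue arises. Once this is in place, the remainder is a routine first-moment cover estimate.
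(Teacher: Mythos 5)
Your argument is correct and reaches the same exponent as the paper, but by a genuinely different route. The paper works with the nested families $\S_n$ of cubes whose whole ancestor chain is selected, builds the quantity $Z_n=(\sqrt d)^s\sum_{Q\in\S_n}N^{-ns}$ with the critical exponent $s=\max\{0,\,d+\log(M_1\varrho)/\log N\}$ already built in, shows $\E(Z_n\mid\widetilde\F_{n-1})\le Z_{n-1}$ (independence of $\F_n$ from the past plus the same union bound over the $\le M_1$ relatives), and invokes Doob's supermartingale convergence theorem to get $\H^s(\widetilde E^k)<\infty$, hence $\dimh\widetilde E^k\le s$. You instead exploit the independence across scales to multiply the per-scale bound $M_1\varrho$ and obtain $\E[\widetilde Z_n]\le N^{dn}(M_1\varrho)^{n-k+1}$ directly, and then a first-moment summation gives $\H^s(\widetilde E^k)=0$ for every $s$ strictly above the same threshold; this is more elementary (no martingale theory), at the cost of getting $\H^s=0$ only above the critical exponent rather than finiteness of $\H^s$ at it — which is immaterial for the dimension statement, and your $\eta(\varrho)$ coincides with the paper's. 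One shared subtlety worth flagging: your covering claim that every point of $\widetilde E^k$ lies in a level-$n$ cube whose entire ancestor chain is selected can fail for points on grid hyperplanes (a point may lie in $\widetilde E_m$ at each scale only through non-nested cubes); the paper's inequality $\H^s_\delta(\widetilde E^1)\le Z_n$ makes exactly the same implicit leap, and it is harmless in the application because there the selected chain is produced through the half-open cubes $Q_n(x)$, but a fully self-contained proof of the proposition would either adopt that half-open convention or treat the grid hyperplanes separately (e.g.\ by running the same estimate within each face). Your handling of the countably many $s_j\downarrow\eta(\varrho)$ and of all $k$ by intersecting full-measure events is fine.
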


\begin{proof}
Note that $\widetilde E^k(\omega)$ is a finite union of sets satisfying similar
assumptions as $\widetilde E^1(\omega)$. Hence, it suffices to prove the claim
for
$\widetilde E^1(\omega)$. For all $n\in\N$, let 
\[
\S_n(\omega):=\Big\{Q\in\Q_n\mid Q\subset\bigcap_{k=1}^n\widetilde E_k(\omega)
\Big\},
\]
that is, $\S_n(\omega)$ is the collection of selected cubes in $\Q_n$ which
are subsets of selected cubes in $\Q_k$ for all $k=1,\dots,n-1$.
Set $\tilde\varrho:=\max\{M_1\varrho N^d,1\}$ and 
$s:=\tfrac{\log\tilde\varrho}{\log N}$. Then
\begin{equation}\label{sbound}
\tilde\varrho N^{-s}=1.
\end{equation}
For all $n\in\N$, define 
\begin{equation}\label{Zndef}
Z_n(\omega):=(\sqrt d)^s\sum_{Q\in\S_n(\omega)}N^{-ns}.
\end{equation}
Note that the diameter of $Q\in\Q_n$ is $\diam Q=\sqrt dN^{-n}$ and 
\begin{equation}\label{Znprop}
Z_n(\omega)=(\sqrt d)^s\sum_{Q\in\S_{n-1}(\omega)}N^{-(n-1)s}
  \sum_{\substack{Q'\in\S_n(\omega)\\ Q'\subset Q}}N^{-s}.
\end{equation}

Let $\widetilde\F_n:=\bigvee_{i=1}^n\F_n$ be the $\sigma$-algebra generated by 
$\F_1,\dots,\F_n$. Let $Q\in\S_{n-1}(\omega)$. Note that every $Q'\in\Q_n(Q)$
has at most $M_1$ cubes $Q''\in\Q_n(Q)$ with $Q''\simeq Q'$ (including $Q'$
itself), and there are $N^d$ elements in $\Q_n(Q)$. Therefore, combining
\eqref{Znprop}, \eqref{Zndef}, \eqref{probA} and \eqref{sbound}, we conclude
that
\begin{align*}
\E(Z_n\mid\widetilde\F_{n-1})&=(\sqrt d)^s\sum_{Q\in\S_{n-1}(\omega)}N^{-(n-1)s}
    \E\Bigl(\sum_{\substack{Q'\in\S_n(\omega)\\ Q'\subset Q}}N^{-s}\Bigr)\\
  &\le Z_{n-1}M_1\varrho N^dN^{-s}\le Z_{n-1},
\end{align*}
implying that $(Z_n)_{n\in\N}$ is a supermartingale with respect to the 
filtration $(\widetilde\F_n)_{n\in\N}$. By Doob's supermartingale 
convergence theorem (see, for example, \cite[Section 11.5]{W}), the limit 
$\lim_{n\to\infty}Z_n(\omega)=Z(\omega)$ exists
and is finite for $\P$-almost all $\omega\in\Omega$. For all such 
$\omega\in\Omega$, we have for all $\delta>0$ and for large enough $n\in\N$ 
that
\[
\H_\delta^s(\widetilde E^1(\omega))\le Z_n(\omega)\le Z(\omega)+1.
\]
Therefore, $\H^s(\widetilde E^1(\omega))<\infty$, giving 
$\dimh\widetilde E^1(\omega)\le s$  and completing the proof. 
\end{proof}

Next we prove that the fractal percolation set is purely
1-unrectifiable almost surely. Even though this result is a special case of
our main theorem 
(Theorem~\ref{maingeneral}), we give here a simple alternative proof.
Recall that a set $F\subset\R^d$ is purely $k$-unrectifiable if
$\H^k(f(\R^k)\cap F)=0$ for all Lipschitz maps $f\colon\R^k\to\R^d$.
The following characterisation of pure $k$-unrectifiability 
will be utilised in the proof of Theorem~\ref{mainstandard}.

\begin{theorem}\label{FM}
Let $k\in\N\setminus\{0\}$. A set $F\subset\R^d$ is purely $k$-unrectifiable if
 and only if  $\H^k(M\cap F)=0$ for all $k$-dimensional
$C^1$-submanifolds $M\subset\R^d$.
\end{theorem}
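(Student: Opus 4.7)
The plan is to show that the image of any Lipschitz $f\colon\R^k\to\R^d$ is, up to an $\H^k$-null set, contained in a countable union of $k$-dimensional $C^1$-submanifolds of $\R^d$; the conclusion is then immediate by countable additivity.

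First I would apply Rademacher's theorem to partition $\R^k$ into three pieces. Let $N$ be the set of non-differentiability points of $f$: since $N$ has Lebesgue measure zero and $f$ is Lipschitz, $\H^k(f(N))=0$. On the remaining set, split according to the rank of the differential: let $B'=\{x\mid Df(x)\text{ exists and has rank}<k\}$ and $B=\{x\mid Df(x)\text{ exists and has rank}=k\}$. The area formula for Lipschitz maps (equivalently, a direct covering argument using the fact that $Df(x)$ has a kernel of positive dimension on $B'$) gives $\H^k(f(B'))=0$. Hence the only piece whose image can carry positive $\H^k$-measure is $f(B)$.

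Next I would invoke Whitney's extension theorem in the form used by Federer (e.g.\ Theorem 3.1.16 in \emph{Geometric Measure Theory}): the set $B$ can be written as a countable union $B=\bigcup_{n\in\N}B_n$ together with an $\H^k$-null remainder, such that for each $n$ there is a $C^1$ map $g_n\colon\R^k\to\R^d$ with $f|_{B_n}=g_n|_{B_n}$ and $Df|_{B_n}=Dg_n|_{B_n}$. In particular, $Dg_n$ has rank $k$ at every point of $B_n$, so by continuity the set $U_n:=\{x\in\R^k\mid\operatorname{rank}(Dg_n(x))=k\}$ is open and contains $B_n$.

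On $U_n$ the inverse function theorem, applied after composing $g_n$ with a suitable coordinate projection of $\R^d$ onto a $k$-dimensional subspace, shows that each point of $U_n$ admits an open neighbourhood on which $g_n$ is a $C^1$-embedding; its image is then a $k$-dimensional $C^1$-submanifold of $\R^d$. Covering $U_n$ by countably many such neighbourhoods, $f(B_n)\subset g_n(U_n)$ is contained in a countable union of $k$-dimensional $C^1$-submanifolds. Combining over $n$ yields a countable family $\{M_j\}_{j\in\N}$ of such submanifolds with $f(\R^k)\subset f(N)\cup f(B')\cup\bigcup_{j}M_j$. By hypothesis $\H^k(F\cap M_j)=0$ for each $j$, and $\H^k(f(N)\cup f(B'))=0$, so $\H^k(F\cap f(\R^k))=0$; since $f$ was arbitrary Lipschitz, $F$ is purely $k$-unrectifiable. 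The main subtlety is the correct invocation of Whitney's extension theorem producing the countable decomposition on which $f$ actually coincides (not just is close to) a $C^1$ map; everything else is packaging of classical tools.
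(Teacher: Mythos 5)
Your proposal is correct. Note, however, that the paper does not actually prove Theorem~\ref{FM}: its ``proof'' is a citation to \cite[Theorem 15.21]{M} and \cite[Theorem 3.2.29]{F}. What you have written out is essentially the standard argument underlying those references: Rademacher's theorem disposes of the non-differentiability set $N$ (Lebesgue-null in $\R^k$, hence $\H^k(f(N))=0$ by Lipschitz continuity), the area formula kills the image of the set $B'$ where $\operatorname{rank}Df<k$, and the $C^1$-approximation theorem (Federer 3.1.16, or Mattila's Theorem 15.12) decomposes the full-rank set $B$, up to a Lebesgue-null remainder in the domain whose image is again $\H^k$-null, into countably many pieces $B_n$ on which $f$ and $Df$ coincide with a $C^1$ map $g_n$ and its differential; the inverse function theorem then shows $g_n$ is locally a $C^1$-embedding on the open set where $Dg_n$ has rank $k$, so $f(B_n)$ sits inside countably many $k$-dimensional $C^1$-submanifolds, and countable subadditivity of $\H^k$ finishes the proof. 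All the individual steps are standard and correctly invoked, so the argument is sound; the only gain over the paper is that you have made explicit the machinery the paper delegates to the textbooks, and correspondingly the only caveat is that your proof is not more elementary than the citation—it is the cited proof.
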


\begin{proof}
See \cite[Theorem 15.21]{M} or \cite[Theorem 3.2.29]{F}.
\end{proof}  

\begin{corollary}\label{1impliesk}
If $F\subset\R^d$ is a purely $1$-unrectifiable Borel set,  then it is
also purely $k$-unrectifiable for any $k\in \N\setminus\{0\}$.
\end{corollary}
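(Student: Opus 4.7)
The plan is to invoke Theorem~\ref{FM}, which reduces the claim to showing $\H^k(M \cap F) = 0$ for every $k$-dimensional $C^1$-submanifold $M \subset \R^d$. By second countability, I would cover $M$ by countably many coordinate patches $\phi_j(U_j)$, where each $\phi_j \colon U_j \to \R^d$ is a $C^1$-diffeomorphism onto its image defined on an open set $U_j \subset \R^k$. By the inverse function theorem, shrinking $U_j$ if necessary, one may assume each $\phi_j$ is bi-Lipschitz with constants $0 < c_1 \le c_2$ depending on the patch. By countable subadditivity, it suffices to prove $\H^k(\phi(U) \cap F) = 0$ for a single such patch $\phi \colon U \to \R^d$.

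Fix such a patch. Since $\phi$ is bi-Lipschitz, the Borel set $\phi^{-1}(F) \subset U$ satisfies $\H^k(\phi(U) \cap F) \le c_2^k \, \H^k(\phi^{-1}(F))$, and on $\R^k$ the measure $\H^k$ agrees with $k$-dimensional Lebesgue measure $\mathcal{L}^k$ up to a positive multiplicative constant. Writing $\R^k = \R \times \R^{k-1}$ and applying Fubini's theorem, it suffices to show that for $\mathcal{L}^{k-1}$-almost every $u' \in \R^{k-1}$ the slice $S_{u'} := \{ t \in \R : (t, u') \in \phi^{-1}(F) \}$ has $\mathcal{L}^1$-measure zero.

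For each fixed $u'$, define $\gamma_{u'}(t) := \phi(t, u')$ on the corresponding open subset of $\R$. This is the restriction of the bi-Lipschitz map $\phi$ to a line, and hence is itself bi-Lipschitz with the same constants $c_1, c_2$. In particular, $\gamma_{u'}$ extends to a Lipschitz map from $\R$ into $\R^d$, so by the assumed pure $1$-unrectifiability of $F$ one has $\H^1(\gamma_{u'}(\R) \cap F) = 0$. Because the inverse of $\gamma_{u'}$ on its image is Lipschitz with constant $c_1^{-1}$, the set $S_{u'} = \gamma_{u'}^{-1}(F \cap \gamma_{u'}(\R))$ also has $\mathcal{L}^1$-measure zero. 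Chaining the inequalities then gives $\H^k(\phi(U) \cap F) = 0$, and summing over the countably many patches yields $\H^k(M \cap F) = 0$, completing the proof.

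No serious obstacle is anticipated: the argument is a standard slicing/Fubini reduction combined with bi-Lipschitz invariance of the one-dimensional Hausdorff measure along curves. The only points requiring brief care are the existence of local bi-Lipschitz parameterizations of a $C^1$-submanifold (which follows from the inverse function theorem together with the $C^1$ hypothesis) and the Borel measurability of $\phi^{-1}(F)$ needed to apply Fubini (which is immediate from the continuity of $\phi$ together with $F$ being Borel).
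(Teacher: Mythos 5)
Your proposal is correct and takes essentially the same route as the paper: both reduce the claim to Theorem~\ref{FM} and then run a Fubini slicing argument to transfer positive $\H^k$-measure on a $k$-dimensional $C^1$-submanifold down to positive $\H^1$-measure along a one-dimensional slice. The paper states this in one line by contraposition (producing a $1$-dimensional $C^1$-submanifold $L\subset M$ with $\H^1(L\cap F)>0$), while you fill in the same step explicitly via bi-Lipschitz chart coordinates, Fubini in $\R^k$, and Lipschitz extension of the slice curves, which is a detailed elaboration rather than a different argument.
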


\begin{proof}
Assume that $F$ is not purely $k$-unrectifiable. Then
there exists a $k$-dimen\-sional $C^1$-submanifold $M$ such that 
$\H^k(M\cap F)>0$. Fubini's theorem, in turn, implies that
there exists a $1$-dimensional $C^1$-submanifold $L$ on $M$ such that
$\H^1(L\cap F)>0$ and, therefore,
$E(\omega)$ is not purely $1$-unrectifiable.
\end{proof}

\begin{theorem}\label{mainstandard}
For all $0\le p<1$, the set $E(\omega)$ is purely 1-unrectifiable for 
$\P$-almost all $\omega\in\Omega$.
\end{theorem}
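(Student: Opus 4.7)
The plan is to invoke the $C^1$-submanifold characterisation of pure $1$-unrectifiability (Theorem~\ref{FM}) and, at $\H^1$-almost every point of $M \cap E(\omega)$, produce gaps of definite relative size via Lemma~\ref{gap}, contradicting the density-$1$ property of $1$-rectifiable sets.

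First I would reduce via Theorem~\ref{FM} to showing that, for $\P$-almost every $\omega$, every $1$-dimensional $C^1$-submanifold $M \subset \R^d$ satisfies $\H^1(M \cap E(\omega)) = 0$. Decomposing $M$ into countably many compact arcs, I may assume $M = \gamma([0,1])$ for an injective $C^1$-curve with $|\gamma'|$ bounded below by a positive constant. Assuming for contradiction that $\H^1(M \cap E(\omega)) > 0$, the set $F := M \cap E(\omega)$ is $1$-rectifiable of positive finite $\H^1$-measure on some compact sub-arc, and the classical density theorem for rectifiable sets (see, e.g.,~\cite{M}) yields lower $1$-density equal to $1$ at $\H^1$-a.e.\ $x \in F$.

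Next, by Proposition~\ref{probgood} I would fix $m_0$ so large that the function $\eta$ from Proposition~\ref{smallsets} satisfies $\eta(1 - p_g(m_0, p, d, N)) < 1$. Then I apply Proposition~\ref{smallsets} with $\F_n := \sigma(\omega(\ii) : |\ii| = n + m_0)$ (independent since indexed by disjoint tree levels), with $A_n(\omega, Q) := 1$ iff $Q$ is $m_0$-bad, and with the neighbour relation $\sim$ playing the role of $\simeq$. Measurability of $A_n(\cdot,Q)$ with respect to $\F_n$ is automatic because $m_0$-badness of $Q \in \Q_n$ is determined by $\C_{n+m_0}(\omega)$ restricted to descendants of $Q$ and its neighbours, and each cube has at most $3^d$ neighbours. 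Hence the set
\[
B(\omega) := \bigcup_{k=1}^\infty \widetilde E^k(\omega)
\]
of points that from some scale on always lie in a cube with an $m_0$-bad neighbour satisfies $\dimh B(\omega) < 1$, and in particular $\H^1(B(\omega)) = 0$ for $\P$-almost every $\omega$. For every $x \notin B(\omega)$ there are infinitely many scales $n$ at which the cube $Q(x,n) \in \Q_n$ containing $x$ and all of its neighbours are $m_0$-good.

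Pick $x \in F \setminus B(\omega)$ at which the density-$1$ property holds and $\gamma$ is differentiable with $v := \gamma'(t_0) \ne 0$, where $\gamma(t_0) = x$; let $i$ be a principle direction of $v$. For each sufficiently large $n$ from this infinite sequence of good scales, take $s = \alpha N^{-n}$ with a small fixed $\alpha > 0$ depending on $\gamma$ and $i$. This choice makes the segment $L(\gamma(t_0 - s), \gamma(t_0 + s))$ cross an $(n,i)$-layer $\L$ that meets only $Q(x,n)$ and some of its $m_0$-good neighbours; at the same time the $C^1$-error $\sup_{|t-t_0|\le s}|\gamma(t)-(x+(t-t_0)v)|=o(s)$ falls below the threshold $\tfrac{1}{2\sqrt 2} N^{-n-m_0}$ once $n$ is large enough. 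Lemma~\ref{gap} then produces an open subarc of $\gamma$ of Euclidean diameter at least $d^{-1}N^{-n-m_0}$ disjoint from $E(\omega)$, sitting inside $B(x, CN^{-n})$ for a constant $C = C(\gamma)$. Together with $\H^1(\gamma \cap B(x, CN^{-n})) \le (2C+o(1))N^{-n}$, this gives
\[
\frac{\H^1(F \cap B(x, CN^{-n}))}{2CN^{-n}} \le 1 - \frac{N^{-m_0}}{2Cd} + o(1),
\]
which stays bounded away from $1$ along an infinite sequence of $n$, contradicting the density-$1$ property at $x$. The hard part will be the fine-tuning in this geometric step: choosing $s$ so that, uniformly in the good scales $n$, the segment genuinely crosses an $(n,i)$-layer inside the already-good neighbourhood of $Q(x,n)$ while the $C^1$-approximation error remains below the $\tfrac{1}{2\sqrt 2}N^{-n-m_0}$ threshold required by Lemma~\ref{gap}.
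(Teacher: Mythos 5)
Your proposal is correct and takes essentially the same route as the paper: reduce via Theorem~\ref{FM}, apply Proposition~\ref{probgood} and Proposition~\ref{smallsets} with the neighbour relation to the event ``$Q$ is $m_0$-bad'', and use Lemma~\ref{gap} at scales where $Q_n(x)$ and all its neighbours are good to contradict the density-one property; the paper merely states this contrapositively, showing the density points lie in $\bigcup_k\widetilde E^k(\omega)$, which has dimension $<1$. The only adjustment needed is that your radius $s=\alpha N^{-n}$ cannot be ``small'': $\alpha$ must be of size comparable to $1/|v_i|$ so that the segment genuinely spans the full $(n,i)$-layer containing $Q_n(x)$ while its transverse excursion stays within the neighbour cubes --- precisely the role of the radius $\sqrt d\,N^{-n}$ in the paper's proof.
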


\begin{proof}
Suppose that the claim is not true. Then
\[
A:=\{\omega\in\Omega\mid E(\omega)\text{ is not purely }1
\text{-unrectifiable}\}
\]
has positive $\P$-measure. Consider $\omega\in A$. By Theorem~\ref{FM},
there exists a $1$-dimensional $C^1$-submanifold $M\subset\R^d$ such that  
$\H^1(M\cap E(\omega))>0$. Let $\mu:=\H^1|_M$ be the restriction of $\H^1$ to
$M$ and denote by $D(\omega)$ the set of $\mu$-density points of
$E(\omega)$, that is,
\[
D(\omega)=\Bigl\{x\in E(\omega)\mid\lim_{ r\searrow 0}
\frac{\mu(E(\omega)\cap B(x,r))}{\mu(B(x,r))}=1\Bigr\}
\]
where $B(x,r)\subset\R^d$ is the closed ball with radius $r>0$ centred at 
$x\in\R^d$. By \cite[Corollary 2.14]{M}, we have that $D(\omega)\subset M$ and
$\mu(D(\omega))=\mu(E(\omega))$.

For all $n\in\N$, define a function $A_n\colon\Omega\times\Q_n\to\{0,1\}$
by setting $A_n(\eta,Q):=1$, if and only if $Q$ is $m_0$-bad. Let
$\simeq$ be the relation $\sim$ from Definition~\ref{properly} defining
neighbouring cubes. Then $\simeq$ satisfies the assumptions of the relation in
Proposition~\ref{smallsets} with $M_1=3^d$.
By means of Proposition~\ref{smallsets}, we will show that
\begin{equation}\label{subsetgoal}
D(\omega)\subset\bigcup_{k}\widetilde E^k(\omega),
\end{equation}
where $\widetilde E^k(\omega)$ is as in Proposition~\ref{smallsets}.
Note that, by Remark~\ref{notchosen}, $A_n$ satisfies the measurability
assumption of Proposition~\ref{smallsets} with $\F_n$ being the
$\sigma$-algebra generated by $\C_{n+m_0}$ and, moreover,   
Proposition~\ref{probgood} implies that assumption~(\ref{probA}) is valid
with $\varrho=1-p_g$.
For verifying~\eqref{subsetgoal}, let $x\in D(\omega)$ and let $\ell$ be
the tangent space of $M$ at $x$. Since $M$ is a $C^1$-submanifold, 
for all $m_0\in\N$, there exist $r_0>0$ such that
\[
\dH(M\cap B(x,r),\ell\cap B(x,r))<\tfrac 1{2\sqrt{2 d}} N^{-m_0}r
\]
for all $0<r\le r_0$. Further, there is a constant $c_1\ge 1$ 
such that $r\le\mu(B(x,r))\le c_1r$ for all $0<r\le r_0$. Combining this with
the fact that $x$ is a $\mu$-density point of $E(\omega)$ implies the
existence of $r_1>0$ such that 
$\mu(E(\omega)^c\cap B(x,r))<d^{-1}N^{-m_0}r$ for all $0<r\le r_1$.
 Applying Lemma~\ref{gap} with the line segment
$\ell\cap B(x,\sqrt d N^{-n})$ and the curve $M\cap B(x,\sqrt d N^{-n})$, we
conclude that,  for all large $n\in\N$, either $Q_n(x)$ or one of its 
neighbour cubes is $m_0$-bad, where $Q_n(x)$ is the cube in $\Q_n$ whose
half open counterpart contains $x$. Hence,
$x\in\widetilde E^k(\omega)$ for some $k\in\N$, completing the proof of
\eqref{subsetgoal}.

Since $\lim_{m_0\to\infty}p_g=1$, we deduce from Proposition~\ref{smallsets} that,
for large enough $m_0\in\N$, we have $\dimh(\bigcup_k\widetilde E^k(\omega))<1$
for $\P$-almost all $\omega\in A$. This leads to a contradiction with
\eqref{subsetgoal}, since $\dimh(D(\omega))=1$.
\end{proof}

Combining Corollary~\ref{1impliesk} and 
Theorem~\ref{mainstandard},  we obtain pure
$k$-unrectifiability of typical fractal percolation sets for all
$k\in\N\setminus\{0\}$.
\medskip

\begin{corollary}\label{kunrectifiable}
Let $k\in\N\setminus\{0\}$. For all $0\le p<1$, the set $E(\omega)$ is purely 
$k$-unrectifiable for $\P$-almost all $\omega\in\Omega$.
\end{corollary}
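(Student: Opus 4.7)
The plan is essentially a one-line composition of the two results just established. First I would invoke Theorem~\ref{mainstandard} to obtain a set $\Omega_0\subset\Omega$ with $\P(\Omega_0)=1$ such that $E(\omega)$ is purely $1$-unrectifiable for every $\omega\in\Omega_0$. Since $k\in\N\setminus\{0\}$ is arbitrary but fixed, it suffices to upgrade pure $1$-unrectifiability to pure $k$-unrectifiability pointwise on $\Omega_0$.

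Second, I would note that for every $\omega\in\Omega$ the set $E(\omega)$ is compact (it is defined as a nested intersection of finite unions of closed cubes), hence Borel. This is precisely the hypothesis required to apply Corollary~\ref{1impliesk}. Applying that corollary to $E(\omega)$ for each $\omega\in\Omega_0$ yields that $E(\omega)$ is purely $k$-unrectifiable. Since the exceptional set $\Omega\setminus\Omega_0$ has $\P$-measure zero (and is independent of $k$), this gives the claim.

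There is no real obstacle here: the analytic content sits in Theorem~\ref{mainstandard} (which supplies the almost-sure $1$-unrectifiability via the martingale-plus-$m_0$-good-cube machinery), while the passage from $1$-unrectifiability to $k$-unrectifiability is handled by the Fubini argument packaged in Corollary~\ref{1impliesk}, relying on the characterisation of $k$-unrectifiability through $C^1$-submanifolds (Theorem~\ref{FM}). The only point worth stating explicitly in the write-up is the Borel/compactness remark, so that the hypothesis of Corollary~\ref{1impliesk} is manifestly met.
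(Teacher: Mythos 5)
Your proposal is correct and matches the paper's argument exactly: the corollary is obtained by combining Theorem~\ref{mainstandard} (almost-sure pure $1$-unrectifiability) with Corollary~\ref{1impliesk}. Your added remark that $E(\omega)$ is compact, hence Borel, is a harmless (and accurate) clarification that the hypothesis of Corollary~\ref{1impliesk} is satisfied.
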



\section{Existence of hereditarily good cubes}\label{hereditarilyexists}

In the previous section, dealing with 1-unrectifiability, one of our key
concepts  is  the $m_0$-good cube.  While we zoom in, differentiable curves
 start  to look almost like lines and, therefore,  the little cubes in
$m_0$-good cubes removed during the fractal percolation process  guarantee
that  differentiable curves cannot intersect the fractal percolation set in
a set of positive linear measure. If $\alpha<1$ then  $\alpha$-H\"older 
curves are more flexible and can ``go around'' holes. The price one needs to pay
for it is a tiny bit of length increase compared to a straight line segment.
In order to prove that curves staying close to the fractal percolation set
cannot be  $\alpha$-H\"older  for $\alpha$ close to $1$, we need that
$m_0$-good cubes at different levels are occurring sufficiently uniformly. To
be more precise, as we zoom out at certain zoom levels, with high probability,
some  $m_0$-bad  cubes show up. However, with high probability, in one
``large'' cube we do not have too many  $m_0$-bad  cubes. 
The ``large'' cubes are considered "good" if this is the case. If we zoom out
further, we might find some ``much larger'' cubes which, with high probability,
contain some ``large'' cubes which are  ``bad''.  However, with high
probability, in one ``much larger'' cube we do not have too many ``large'' cubes
which are  ``bad''.  The ``much larger'' cubes are considered ``good''
if this is the case.  Clearly, this procedure can be iterated.  To have
 an  idea about this procedure, one  may  look at Figure~\ref{fig41}.
One can think of the small blue squares (two dimensional cubes) as the
 $m_0$-bad  cubes. Then the larger blue cubes are  bad 
``large'' cubes (they contain too many  $m_0$-bad  cubes) and 
one  ``good'' much larger level cube is shown  in  the figure.

To determine the different zoom levels, at which we  look  at good and bad
cubes, is very delicate. If we zoom out too fast,  we  might end up with too
many bad cubes  and, on the other hand, if we zoom out too slowly,  the
probability estimates about
the number of good cubes do not work. The complicated and quite technical task
of finding the proper zoom levels is postponed to Section~\ref{appendixa}.

In this section, our main aim is to define hereditarily good cubes in
Definition~\ref{hereditary}, that is, cubes where the above procedure can
be repeated for several different sequences of scales, and to prove
Proposition~\ref{Salphaissmall}  stating  that, with probability one,
the Hausdorff dimension  of the set of points, which are not included in
infinitely many ``highly hereditarily good'' cubes, is less than one.

\begin{figure}[h]
\centering{
\resizebox{0.6\textwidth}{!}{
%
%

\psscalebox{1.0 1.0} 
{
\begin{pspicture}(0,-4.1497626)(8.283259,4.1497626)
\definecolor{colour4}{rgb}{0.5019608,0.5019608,0.5019608}
\definecolor{colour5}{rgb}{0.41568628,0.41960785,0.83137256}
\rput(0.0,-4.1497626){\psgrid[gridwidth=0.028222222, subgridwidth=0.014111111, gridlabels=0.0pt, subgriddiv=8, unit=2.0708148cm, subgridcolor=colour4](0,0)(0,0)(4,4)
\psset{unit=1.0cm}}
\psframe[linecolor=black, linewidth=0.046, fillstyle=solid,fillcolor=colour5, dimen=outer](4.1981816,4.149762)(2.0381818,2.0697625)
\psframe[linecolor=black, linewidth=0.046, fillstyle=solid,fillcolor=colour5, dimen=outer](6.278182,0.0071536684)(4.1181817,-2.0728464)
\psframe[linecolor=black, linewidth=0.046, fillstyle=solid,fillcolor=colour5, dimen=outer](7.5407906,2.352371)(7.1807904,2.032371)
\psframe[linecolor=black, linewidth=0.046, fillstyle=solid,fillcolor=colour5, dimen=outer](7.8807907,3.372371)(7.5207906,3.052371)
\psframe[linecolor=black, linewidth=0.046, fillstyle=solid,fillcolor=colour5, dimen=outer](7.3407903,1.312371)(6.9807906,0.9923711)
\psframe[linecolor=black, linewidth=0.046, fillstyle=solid,fillcolor=colour5, dimen=outer](8.10079,-0.7476289)(7.7407904,-1.067629)
\psframe[linecolor=black, linewidth=0.046, fillstyle=solid,fillcolor=colour5, dimen=outer](7.3007903,-1.2476289)(6.9407907,-1.567629)
\psframe[linecolor=black, linewidth=0.046, fillstyle=solid,fillcolor=colour5, dimen=outer](1.0807905,1.052371)(0.7207905,0.73237103)
\psframe[linecolor=black, linewidth=0.046, fillstyle=solid,fillcolor=colour5, dimen=outer](1.3607905,3.372371)(1.0007905,3.052371)
\psframe[linecolor=black, linewidth=0.046, fillstyle=solid,fillcolor=colour5, dimen=outer](1.3607905,-3.0876288)(1.0007905,-3.407629)
\psframe[linecolor=black, linewidth=0.046, fillstyle=solid,fillcolor=colour5, dimen=outer](2.4129643,2.0949798)(2.0529644,1.7749797)
\psframe[linecolor=black, linewidth=0.046, fillstyle=solid,fillcolor=colour5, dimen=outer](3.4407904,-0.76762897)(3.0807905,-1.087629)
\psframe[linecolor=black, linewidth=0.046, fillstyle=solid,fillcolor=colour5, dimen=outer](7.8207903,-3.0876288)(7.4607906,-3.407629)
\psframe[linecolor=black, linewidth=0.046, fillstyle=solid,fillcolor=colour5, dimen=outer](5.4903555,-3.0824115)(5.130356,-3.4024115)
\psframe[linecolor=black, linewidth=0.046, fillstyle=solid,fillcolor=colour5, dimen=outer](6.012095,-2.5641506)(5.652095,-2.8841507)
\end{pspicture}
}}}
\caption{A $(k,\bL,\mfc,m_0)$-good cube containing $(k-1,\bL,\mfc,m_0)$-bad and
  $(k-2,\bL,\mfc,m_0)$-bad cubes. Here $\mfc=2$.} \label{fig41}
\end{figure}

\begin{definition}\label{levels}
Fix $\omega\in\Omega$ and $n\in\N$. Let $\mfc,m_0\in\N\setminus\{0\}$. Assume
that $\bL:=(L_j)_{j=0}^k$ is a strictly decreasing finite sequence of 
nonnegative
integers such that $L_k=0$. Let $Q\in\Q_n$. We say that $Q$ is
{\it $(\bL,\mfc,m_0)$-good} if it is $(k,\bL,\mfc,m_0)$-good, where the concept
of being $(k,\bL,\mfc,m_0)$-good is defined inductively below. If $Q$ is not
$(\bL,\mfc,m_0)$-good, it is {\it $(\bL,\mfc,m_0)$-bad}.

$\bullet$
If $k=0$, we say that $Q$ is {\it $(0,\bL,\mfc,m_0)$-good} if it is $m_0$-good.
Otherwise, $Q$ is {\it $(0,\bL,\mfc,m_0)$-bad}.

$\bullet$
Assume that we have defined the concepts of being
$(\tilde k,\bL,\mfc,m_0)$-good and $(\tilde k,\bL,\mfc,m_0)$-bad for all
strictly decreasing sequences $\bL=(L_j)_{j=0}^{k-1}$ of natural numbers with
$L_{k-1}=0$ and $\tilde k=0,\dots,k-1$.

$\bullet$
Let $\bL=(L_j)_{j=0}^k$ with $L_k=0$. For $\tilde k\le k-1$, define a sequence
$\widetilde\bL=(\widetilde L_j)_{j=0}^{\tilde k}$ by setting
$\widetilde L_j:=L_j-L_{\tilde k}$ for all $j=0,\dots,\tilde k$. We say that $Q$
is {\it $(\tilde k,\bL,\mfc,m_0)$-good} if it is
$(\tilde k,\widetilde\bL,\mfc,m_0)$-good. Otherwise, $Q$ is
{\it $(\tilde k,\bL,\mfc,m_0)$-bad}. Finally, we say that $Q$ is
{\it $(k,\bL,\mfc,m_0)$-good} if there are at most $\mfc$ cubes
$Q'\in\Q_{n+L_{k-1}}(Q)$  (recall \eqref{restrictedQn})   which are
$(k-1,\bL,\mfc,m_0)$-bad and, otherwise, $Q$ is {\it $(k,\bL,\mfc,m_0)$-bad}. 
\end{definition}

\begin{remark}\label{levelsremark}
If $Q\in\Q_n$ is $(k,\bL,\mfc,m_0)$-good, then all but $\mfc$ cubes
$Q'\in\Q_{n+L_{k-1}}(Q)$ are $(k-1,\bL,\mfc,m_0)$-good. This, in turn, means that
all except $\mfc$ cubes $Q''\in\Q_{n+L_{k-2}}(Q')$ are $(k-2,\bL,\mfc,m_0)$-good.
Iterating this, we conclude that, 
for $Q\in\Q_n$, the property of being $(\bL,\mfc,m_0)$-good is determined by 
$m_0$-good cubes $Q^{(k)}\in\Q_{n+L_0}(Q)$. On the other hand, whether a cube 
$Q^{(k)}\in\Q_{n+L_0}(Q)$ is $m_0$-good or not depends only on
$\C_{n+L_0+m_0}(\omega)$.
\end{remark}

 As observed in Remark~\ref{notchosen}.(a), the events
``$Q$ is $m_0$-good'' and ``$Q'$ is $m_0$-good'' are not independent if
$Q\sim Q'$. To overcome this difficulty, we introduce a notion of
independently good cubes in Definition~\ref{iwgood}. We begin by 
defining a class of shifted grids. Write
\[
S:=\{-1,1\}^d\cup\{\mathbf{0}\},
\]
where $\mathbf{0}$ is the origin of $\R^d$. 

\begin{definition}\label{shiftedgrid}
Let $n',m_0\in\N$ and $s\in S$. For all $n\in\N$, define 
\begin{align*}
\Q_n^{s,n',m_0}:=\Bigl\{\prod_{i=1}^d[(l_i-1)N^{-n},l_iN^{-n}]
    +\sum_{i=1}^d s_iN^{-n-n'-m_0}e_i\mid\,\, &l_i=1,\dots,N^n,\\
  &i=1,\dots,d\Bigr\},
\end{align*}
where $\{e_1,\dots,e_d\}$ is the standard basis of $\R^d$.
\end{definition}

When $s=\mathbf{0}$, we have that $\Q_n^{\mathbf{0},n',m_0}=\Q_n$ for all
$n,n',m_0\in\N$.
For all $n,n',m_0\in\N$ and for all $Q\in\bigcup_{s\in S}\Q_n^{s,n',m_0}$, set
\[
\interm0 Q:=(1-2N^{-m_0})Q,
\]
 where, for a cube $\widetilde Q$ and a positive constant $C$, we denote by
$C\widetilde Q$ the cube having the same centre as $\widetilde Q$ and the side
length $C$ times that of $\widetilde Q$.

\begin{definition}\label{iwgood}  
Let $n,m_0\in\mathbb N$ with $m_0>0$, and fix $\omega\in\Omega$.
Let $\ell\subset\R^d$ be a line with a principle direction
$i\in\{1,\dots,d\}$, and let $Q\in\bigcup_{s\in S}\Q_n^{s,0,m_0}$ be such that
$\ell\cap\interm0 Q$ has positive length. Let
$\L_1,\dots,\L_k$ be the $(n+m_0,i)$-layers (in the natural order) for which
$\ell\cap\interm0 Q\cap\L_j$ has positive length. We say that $Q$ is
{\it independently $m_0$-good for the line $\ell$} if there is a strongly
$i$-deleted cube $Q'\in\Q_{n+m_0}(\interm0 Q)$ so that
\begin{itemize}
\item[(a)] $Q'\not\subset\L_j$ for $j\in\{1,2,k-1,k\}$ and
\item[(b)] $\ell$ intersects $Q'$ properly.
\end{itemize}

A cube $Q\in\bigcup_{s\in S}\Q_n^{s,0,m_0}$  {\it is independently $m_0$-good}
if it is simultaneously
independently $m_0$-good for all lines which intersect $\interm0 Q$ properly
and are parallel to some line in $\Gamma(Q,m_0)$, where (with a slight abuse of
definition since the side length of $\interm0 Q$ is not equal to
$N^{-n}$) a line $\ell$ intersects the cube $CQ$ properly in direction $i$ if
$\H^1(\Pi_i(\ell\cap CQ))\ge d^{-1}N^{-n}$.
Finally, a cube $Q\in\Q_n$ is {\it independently $m_0$-bad} if it is not
independently $m_0$-good. 
\end{definition}

\begin{remark}\label{independent}
Clearly, if $Q,\widetilde Q\in\Q_n^{s,0,m_0}$ with $Q\ne\widetilde Q$, the events
``$Q$ is independently $m_0$-good'' and ``$\widetilde Q$ is independently
$m_0$-good'' are independent. However, if $Q$ is independently $m_0$-good, we
have no information about those lines which intersect $Q$ properly but do not
intersect $\interm0 Q$ properly.
This is the reason why shifted grids are needed.
\end{remark}

Next lemma describes the usefulness of shifted grids.

\begin{lemma}\label{iwgoodgood}
Let $n,m_0\in\N$ such that $m_0>0$ and let $Q\in\Q_n$. Suppose that all cubes
$\widetilde Q\in\bigcup_{s\in S}\Q_n^{s,0,m_0}$ intersecting the interior of $Q$
are independently $m_0$-good. Then $Q$ is $m_0$-good.
\end{lemma}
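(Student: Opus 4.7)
The plan is to verify, for an arbitrary line $\ell$ with principle direction $i$ that intersects $Q$ properly and is parallel to some element of $\Gamma(Q,m_0)$, that there exists a strongly $i$-deleted cube $Q'\in\Q_{n+m_0}(Q)$ satisfying properties (a) and (b) of Definition~\ref{good}. The idea is to find a single shift $s\in S$ and a cube $\widetilde Q\in\Q_n^{s,0,m_0}$, intersecting $\inter Q$, whose independent $m_0$-goodness (given by hypothesis) supplies this $Q'$.

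First I would take $\widetilde Q$ to be the shifted copy of $Q$ itself, $\widetilde Q:=Q+s\cdot N^{-n-m_0}$, for some $s\in S$ still to be chosen. Then $\widetilde Q$ automatically intersects $\inter Q$ since the shift has magnitude $\leq\sqrt d\,N^{-n-m_0}<N^{-n}$. A direct computation (placing $Q=[0,N^{-n}]^d$) shows that for every $s\in S$ the set $\interm0\widetilde Q$ lies inside $[0,N^{-n}]^d=Q$: the one-sided shrinkage by $N^{-n-m_0}$ exactly compensates the shift by $\pm N^{-n-m_0}$ in each coordinate. Consequently $\Q_{n+m_0}(\interm0\widetilde Q)\subset\Q_{n+m_0}(Q)$ automatically.

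The main geometric step is to choose $s\in S$ so that $\ell$ intersects $\interm0\widetilde Q$ properly, i.e.\ $\H^1(\Pi_i(\ell\cap\interm0\widetilde Q))\geq d^{-1}N^{-n}$. For each $j\in\{1,\dots,d\}$, let $\bar y_j$ be the midpoint of $\Pi_j(\ell\cap Q)$ and set $s_j:=+1$ if $\bar y_j$ exceeds the center of $Q$ in direction $j$, and $s_j:=-1$ otherwise; this yields $s\in\{-1,+1\}^d\subset S$. This choice ``pulls'' the region $\interm0\widetilde Q$ toward the side where $\ell$ spends most of its time. A case analysis on the slopes $\beta_j$ of $\ell$ shows that the boundary loss is negligible: in a direction $j$ where $|\beta_j|$ is not close to $1$, the line stays comfortably inside the shifted interior, while in a direction where the line is so slanted that it does hit the shifted boundary, one necessarily has $\Pi_j(\ell\cap Q)$ spanning almost all of $[0,N^{-n}]$, which forces $b-a$ (the $i$-extent of $\ell\cap Q$) to be close to $N^{-n}$, providing enough slack to absorb an $O(N^{-n-m_0})$ loss. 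Summing over $j$ keeps the total loss of order $N^{-n-m_0}$, which is negligible compared to $d^{-1}N^{-n}$ under the standing assumption $m_0\geq 1$.

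Once such $s$ is fixed, since $\widetilde Q$ is a translate of $Q$, the collection $\Gamma(\widetilde Q,m_0)$ is a translate of $\Gamma(Q,m_0)$, so $\ell$ is parallel to some line in $\Gamma(\widetilde Q,m_0)$. By hypothesis $\widetilde Q$ is independently $m_0$-good, so by Definition~\ref{iwgood} there is a strongly $i$-deleted cube $Q'\in\Q_{n+m_0}(\interm0\widetilde Q)\subset\Q_{n+m_0}(Q)$ that $\ell$ intersects properly and that avoids the first and last two $(n+m_0,i)$-layers of $\ell\cap\interm0\widetilde Q$. To transfer condition (a) to $\ell\cap Q$, note that, because $\interm0\widetilde Q\subset Q$, the layers $\L_{j_1},\dots,\L_{j_2}$ passed by $\ell\cap\interm0\widetilde Q$ form a contiguous subsequence of the layers $\L_1,\dots,\L_k$ passed by $\ell\cap Q$, with $1\leq j_1\leq j_2\leq k$. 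The cube $Q'\subset\interm0\widetilde Q$ lies in some $\L_l$ with $l\in[j_1+2,j_2-2]\subset[3,k-2]$, hence $l\notin\{1,2,k-1,k\}$, verifying (a); condition (b) transfers verbatim. Thus $Q$ is $m_0$-good for $\ell$, and as $\ell$ was arbitrary, $Q$ is $m_0$-good.

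\paragraph{Main obstacle.}
The hard step is rigorously justifying the shift selection, since $S$ couples the signs across all coordinates (only the all-zero or all-$(\pm 1)$ patterns are available) and one cannot tune each direction independently. The argument exploits the key tradeoff that any direction where the midpoint-based choice fails to confine $\ell$ inside $\interm0\widetilde Q$ must be a direction where the line is steep and $Q$-spanning, which in turn forces $b-a$ to be near $N^{-n}$ and supplies enough slack to absorb the $O(N^{-n-m_0})$ boundary loss.
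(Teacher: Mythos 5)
Your argument is essentially the paper's own proof: that proof consists of the single observation that for any line $\ell$ intersecting $Q$ properly one can choose a shift $s\in S$ of $Q$ itself so that $\ell$ intersects $\interm0\widetilde Q$ properly, and then the strongly $i$-deleted cube supplied by independent goodness transfers to $Q$ because $\interm0\widetilde Q\subset Q$; your midpoint rule for selecting $s$ and your layer-index bookkeeping for condition (a) simply make that assertion explicit. One caveat: the boundary loss is not absorbed merely because $m_0\ge 1$ as you claim (for example, with $d=2$, $N=2$, $m_0=2$ the line $x_2=x_1+\tfrac 14 N^{-n}$ intersects $Q=[0,N^{-n}]^2$ properly, yet no shifted copy of $Q$ has its $m_0$-interior properly intersected), so the quantitative step genuinely requires $N^{m_0}$ large compared with $d$ — which your own slack argument does deliver — a restriction the paper's one-line proof also leaves implicit and which is harmless since the lemma is only ever invoked for large $m_0$.
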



\begin{proof}
If a line $\ell$ intersects $Q$ properly, there exists
$\widetilde Q\in\bigcup_{s\in S}\Q_n^{s,0,m_0}$ such that $\ell$ intersects
$\interm0\widetilde Q$ properly, which gives the claim since
$\interm0\widetilde Q\subset Q$.
\end{proof}

Now we state a modified version of Definition~\ref{levels} valid for
shifted and non-shifted cubes.

\begin{definition}\label{iwlevels}
Fix $\omega\in\Omega$ and $m_0,\mfc\in\N\setminus\{0\}$. Let $k\in\N$ and let
$\bL=(L_j)_{j=0}^k$ be a strictly decreasing sequence of integers such that
$L_k=0$. For all $n\in\N$, the concept of $Q\in\bigcup_{s\in S}\Q_n^{s,L_0,m_0}$
being {\it independently $(\bL,\mfc,m_0)$-good or bad} is defined
by replacing goodness with independently goodness in Definition~\ref{levels}.
\end{definition}

\begin{remark}\label{iwlevelsremark}
Whether a cube $Q\in\bigcup_{s\in S}\Q_n^{s,L_0,m_0}$ is independently
$(\bL,\mfc,m_0)$-good or not is determined by the independently $m_0$-good
cubes $Q'\in\Q_{n+L_0}^{s,0,m_0}(Q)$. Furthermore, whether a cube
$Q'\in\Q_{n+L_0}^{s,0,m_0}(Q)$ is independently $m_0$-good or not depends only on
$\C_{n+L_0+m_0}(\omega)|_{Q'}$. Therefore, the events ``$Q$ is independently
$(\bL,\mfc,m_0)$-good'' and ``$\widetilde Q$ is independently
$(\bL,\mfc,m_0)$-good'' are independent provided
$Q,\widetilde Q\in\Q_n^{s,L_0,m_0}$ with $Q\ne\widetilde Q$.
\end{remark}   

Next lemma may be regarded as an extension of Lemma~\ref{iwgoodgood}. It 
determines the value of the constant $\mfc$ we will utilise later.

\begin{lemma}\label{*iwg}
Let $k,m_0\in\N$ with $m_0>0$ and let $\bL=(L_j)_{j=0}^k$ be a strictly
decreasing sequence of integers with $L_k=0$. Let $n\in\N$ and $Q\in\Q_n$.
Suppose that all cubes $\widetilde Q\in\bigcup_{s\in S}\Q_n^{s,L_0,m_0}$ which
intersect the interior of $Q$ are independently $(\bL,1,m_0)$-good. Then $Q$
is $(\bL,\mfc,m_0)$-good, where $\mfc:=4^d+1$.
\end{lemma}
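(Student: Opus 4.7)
The plan is to induct on $k$, proving in fact a slight strengthening in which the hypothesis is weakened to require only the $|S|=2^d+1$ \emph{canonical} shifted cubes
\[
\widetilde Q(s):=Q+\sum_{i=1}^d s_iN^{-n-L_0-m_0}e_i,\quad s\in S,
\]
(those with $\interm0\widetilde Q(s)\subset Q$, one per shift) to be independently $(\bL,1,m_0)$-good. Since each $\widetilde Q(s)$ intersects $\inter Q$, this strengthened statement implies the lemma. The base case $k=0$ is handled directly by Lemma~\ref{iwgoodgood}, whose proof in fact only invokes canonical shifted cubes (it selects $\widetilde Q$ with $\interm0\widetilde Q\subset Q$ and exploits $\interm0\widetilde Q\subset Q$), hence already establishes the strengthened base case.

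For the inductive step, fix $s\in S$ and set $\widetilde\bL:=(L_j-L_{k-1})_{j=0}^{k-1}$. By the strengthened hypothesis, $\widetilde Q(s)$ is independently $(\bL,1,m_0)$-good, so at most one shifted subcube in $\Q_{n+L_{k-1}}^{s,L_0-L_{k-1},m_0}(\widetilde Q(s))$ is independently $(\widetilde\bL,1,m_0)$-bad. The crucial observation is the bijection
\[
Q'\mapsto\widetilde Q'(Q',s):=Q'+\sum_{i=1}^d s_iN^{-n-L_0-m_0}e_i
\]
between $\Q_{n+L_{k-1}}(Q)$ and $\Q_{n+L_{k-1}}^{s,L_0-L_{k-1},m_0}(\widetilde Q(s))$. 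Moreover, since $L_0>L_{k-1}$ one has $|s_i|N^{-n-L_0-m_0}\le N^{-n-L_{k-1}-m_0}$, forcing $\interm0\widetilde Q'(Q',s)\subset Q'$; thus $\widetilde Q'(Q',s)$ is precisely the canonical shifted cube associated to $Q'$ at level $n+L_{k-1}$ with shift $s$.

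Set $B:=\{Q'\in\Q_{n+L_{k-1}}(Q):\widetilde Q'(Q',s)\text{ is independently bad for some }s\in S\}$. Since each shift contributes at most one element, $|B|\le|S|=2^d+1\le\mfc$. For any $Q'\notin B$, all $|S|$ canonical shifted cubes attached to $Q'$ are independently $(\widetilde\bL,1,m_0)$-good, so the strengthened inductive statement applied to $Q'$ (at level $n+L_{k-1}$, sequence $\widetilde\bL$) gives that $Q'$ is $(\widetilde\bL,\mfc,m_0)$-good, i.e., $(k-1,\bL,\mfc,m_0)$-good. Therefore every $(k-1,\bL,\mfc,m_0)$-bad subcube of $Q$ lies in $B$, and so $Q$ is $(\bL,\mfc,m_0)$-good.

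The main obstacle is setting up the strengthened induction so that canonical shifted cubes at two consecutive scales properly nest: this rests on the shift-coefficient identity $s_iN^{-(n+L_{k-1})-(L_0-L_{k-1})-m_0}=s_iN^{-n-L_0-m_0}$, which guarantees that canonical shifted cubes for subcubes $Q'$ of $Q$ are themselves subcubes of canonical shifted cubes for $Q$ with the same shift, thereby allowing the pigeonhole over $S$ to propagate through the levels. The looseness $|S|\le\mfc$ is harmless and is what permits the uniform constant $\mfc=4^d+1$ in the statement.
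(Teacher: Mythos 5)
Your proof is correct, but it takes a genuinely different route from the paper's. The paper keeps the inductive statement exactly as in the lemma and runs a colouring argument: for each shift $s$, the (at most one) independently bad shifted subcube of a good parent is absorbed by colouring green every standard cube of $\Q_{n+L_{k-1}}(Q)$ that it intersects --- up to $2^d$ of them --- and the resulting count $2^d\cdot 2^d+1=4^d+1$ is precisely what dictates $\mfc$; a non-green subcube then satisfies the lemma's full hypothesis one level down, so the unstrengthened induction hypothesis applies. You instead strengthen the inductive statement so that only the $2^d+1$ canonical translates $Q+\sum_i s_iN^{-n-L_0-m_0}e_i$, $s\in S$, need to be independently good, and you exploit the scale-invariance of the shift, $N^{-(n+L_{k-1})-(L_0-L_{k-1})-m_0}=N^{-n-L_0-m_0}$, which makes the canonical shifted cube of each standard subcube $Q'\in\Q_{n+L_{k-1}}(Q)$ exactly a shifted subcube of the canonical parent $\widetilde Q(s)$; the pigeonhole then produces at most one bad standard subcube per shift, hence at most $2^d+1\le\mfc$ in total. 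What your approach buys: a sharper count than $4^d+1$ (you rightly still state the lemma with $\mfc=4^d+1$), and it sidesteps any bookkeeping for bad shifted subcubes sitting inside the non-aligned shifted parents that meet $\inter Q$ only in thin slabs, a point the paper's counting passes over rather briskly. What it requires in exchange is that the base case of the strengthened induction still follows from Lemma~\ref{iwgoodgood}; your observation that its proof only ever invokes the shifted cube whose shrunken copy is contained in $Q$ --- which is necessarily the canonical translate --- is the right justification, so the argument closes.
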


\begin{proof}
We prove the claim by induction on $k$. The case $k=0$ is verified in
Lemma~\ref{iwgoodgood}. Suppose that the claim is true for $k-1$ and
fix $s\in S$.
Since every cube $\widetilde Q\in\Q_n^{s,L_0,m_0}$, intersecting the interior of
$Q$, is independently $(k,\bL,1,m_0)$-good, there is at most one
$Q'\in\Q_{n+L_{k-1}}^{s,L_0-L_{k-1},m_0}(\widetilde Q)$ which is independently
$(k-1,\bL,1,m_0)$-bad. If $s\ne \mathbf{0}$, $Q'$ is not in the standard grid,
and we colour in green all cubes in $\Q_{n+L_{k-1}}(Q)$ intersecting $Q'$.
On the other hand, if $s=\mathbf{0}$, $Q'$ is in the standard grid, and we
colour it in green. Since $\#(S\setminus\{ \mathbf{0}\})=2^d$ and every shifted
cube intersects $2^d$ standard cubes of the same size, the number of green
cubes is at most $\mfc$.

Consider a cube $Q'\in \Q_{n+L_{k-1}}(Q)$ which is not green. Then $Q'$ is
independently $(k-1,\bL,1,m_0)$-good and, moreover, every
$\widehat Q\in\bigcup_{s\in S\setminus\{\mathbf{0}\}}\Q_{n+L_{k-1}}^{s,L_0-L_{k-1},m_0}$
intersecting
$Q'$ is independently $(k-1,\bL,1,m_0)$-good, since otherwise
$Q'$ would be green. By the induction hypothesis, $Q'$ is
$(k-1,\bL,\mfc,m_0)$-good. Since this applies to all cubes
$Q'\in \Q_{n+L_{k-1}}(Q)$ which are not green, and the number of green
cubes in $Q$ is bounded by $\mfc$, we obtain that $Q$ is
$(k,L,\mfc,m_0)$-good.
\end{proof}

 
Observe that if $Q\in\Q_n$ is $m_0$-good, it is not necessarily independently
$m_0$-good. This follows from the fact that there may be a line $\ell$
intersecting $\interm0 Q$ properly
such that $\ell$ intersects only one strongly $i$-deleted $Q'\in\Q_{n+m_0}(Q)$
and $Q'\not\subset\interm0 Q$. However, analogously to Lemma~\ref{goodforline}
and Proposition~\ref{probgood}, one can prove the following proposition.
 
\begin{proposition}\label{probgoodiw}
For all $0\le p<1$ and $m_0\in\N\setminus\{0\}$, there exists a number
$0\le p_{ig}=p_{ig}(m_0,p,d,N)\le 1$ with $\lim_{m_0\to\infty}p_{ig}(m_0,p,d,N)=1$
such that 
\[
\P(\{\omega\in\Omega\mid Q\text{ is independently }m_0\text{-good}\})\ge p_{ig}
\]
for all $n\in\mathbb N$ and $Q\in\bigcup_{s\in S}\Q_n^{s,0,m_0}$.
\end{proposition}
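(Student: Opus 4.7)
The plan is to follow the proof of Proposition~\ref{probgood} with adjustments for the shifted grids $\Q_n^{s,0,m_0}$ and the shrunken region $\interm0 Q$. First I would establish the analog of Lemma~\ref{goodforline}: for $Q\in\Q_n^{s,0,m_0}$ and a line $\ell$ with principle direction $i$ intersecting $\interm0 Q$ properly, define a family $\K_{m_0}^{\interm0 Q}(\ell)$ of properly-intersected standard cubes $Q'\in\Q_{n+m_0}(\interm0 Q)$ exactly as in \eqref{selectedcubes}, but restricted to $(n+m_0,i)$-layers lying fully inside $\interm0 Q$ and excluding the first two and last two such layers (as required by condition~(a) of Definition~\ref{iwgood}). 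Since the side length of $\interm0 Q$ equals $(1-2N^{-m_0})N^{-n}$, the segment $\ell\cap\interm0 Q$ still crosses at least $d^{-1}N^{m_0}-O(1)$ standard $(n+m_0,i)$-layers contained in $\interm0 Q$, so after discarding boundary layers we are left with $\widetilde N_{m_0}$ useful ones, where $\widetilde N_{m_0}\ge N_{m_0}-c$ for an absolute constant $c$ depending only on $d$. The strong $i$-deletion events for cubes in distinct layers remain independent, so the computation of Lemma~\ref{goodforline} gives
\[
\P(\{\omega\mid Q\text{ is independently }m_0\text{-bad for }\ell\})\le q^{\widetilde N_{m_0}}
\]
with $q:=1-(1-p)^{3^{d-1}}<1$.

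Next I would control the union over all relevant lines. The collection $\Gamma(Q,m_0)$ is defined using only the faces, vertices and induced grid $G_{m_0}(F)$ of the cube $Q$, so its definition applies verbatim to any shifted $Q\in\Q_n^{s,0,m_0}$. Lemmas~\ref{gammaparallel} and~\ref{numberoflines} are purely geometric and do not rely on $Q$ lying in the standard grid; in particular, the projection argument bounding the number of essentially distinct selected families $\K_{m_0}^{\interm0 Q}(\ell_b)$, when $b$ varies over a face $-F$, by $N^{2m_0(d-1)}$ transfers unchanged, with the relevant grid now coming from the hyperplanes spanned by faces of standard $\Q_{n+m_0}$-cubes contained in $\interm0 Q$. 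Combining this with $\#\Gamma(Q,m_0)\le CN^{m_0(d-1)}$ and the union bound exactly as in the proof of Proposition~\ref{probgood} yields
\[
\P(\{\omega\mid Q\text{ is independently }m_0\text{-bad}\})\le CN^{3m_0(d-1)}q^{\widetilde N_{m_0}},
\]
which tends to $0$ as $m_0\to\infty$ since $\widetilde N_{m_0}$ grows like $N^{m_0}$ while the prefactor is only polynomial in $N^{m_0}$. Setting $p_{ig}:=\max\{1-CN^{3m_0(d-1)}q^{\widetilde N_{m_0}},0\}$ gives the claim.

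The only delicate point is the compatibility between proper intersection (defined via the threshold $d^{-1}N^{-n}$) and the shrunken side length $(1-2N^{-m_0})N^{-n}$ of $\interm0 Q$: the definition is non-vacuous only when $1-2N^{-m_0}>d^{-1}$, which holds for all sufficiently large $m_0$. For smaller $m_0$ the bound on $p_{ig}$ may degenerate to $0$, which is harmless since we only require $p_{ig}\to 1$ as $m_0\to\infty$. No other step of the argument requires modification.
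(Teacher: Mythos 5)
Your proposal is correct and is essentially the argument the paper intends: the paper gives no separate proof of Proposition~\ref{probgoodiw}, stating only that it is proved analogously to Lemma~\ref{goodforline} and Proposition~\ref{probgood}, and your write-up carries out exactly that analogy (layer count inside $\interm0 Q$ reduced only by a bounded amount, independence of strong deletions across layers, and the same union bound over $\Gamma(Q,m_0)$ and its translates via Lemmas~\ref{gammaparallel} and \ref{numberoflines}). Your closing remark about small $m_0$ is harmless, since for such $m_0$ one may take $p_{ig}=0$ (or note the condition is then vacuous), and only the limit $m_0\to\infty$ matters.
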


 In our proof, we will use a very specific choice of sequences $\bL$  
depending on parameters $k_0,m_1\in\N\setminus\{0\}$ to be fixed later.  
For every $i\in\N$, we will define in Section \ref{appendixa} inductively a
finite decreasing sequence $\bL^i(m_1,k_0):=(L^i(m_1,k_0)_j)_{j=0}^{k_i}$ of
nonnegative  integers.  Here we list a few properties we use in the
sequel. These properties are verified in Section~\ref{appendixa}. Set
\begin{equation}\label{e:Deltaa}
\DD{i}{j}:=\LL{i}{j-1}-\LL{i}{j}.
\end{equation}
We will suppose that we have  (see \eqref{stepestimate2}) 
\begin{equation}\label{stepestimate}
  \Delta_j^i\le 5\sqrt 2^jm_1\,\text{ for all }i\in\N \text{ and }
  j=1,\dots,k_i.
\end{equation}
We also assume that  (see  \eqref{L0def}, \eqref{initialdef1} and
\eqref{initialdef2} in  Definition~\ref{j-levels}) 
\begin{equation}\label{*hmr}
L^l(m_1,k_0)_0=(l+1)l_0\,\text{ for all }l\in \N, 
\end{equation}
 where  $l_0:=m_1(1+2+\cdots+k_0)$. An  index  $i_0$ will be
determined such that $m_1k_{i_0}=l_0$, see \eqref{defofi0}.
 In Lemma~\ref{samesizenumber}, we show that 
\begin{align}
k_i&\ge i+3\,\text{ for all }\,i\le i_0+2\,\text{ and} \label{j-leva}\\
k_{i_0+l}&\ge 2\log_2(i_0+l+1)\,\text{ for all }\,l\ge 3 \label{j-levb}.
\end{align}

 For the rest of this section, we use these special sequences
$\bL^i(m_1,k_0)$. 
For all $n,l\in\N$, $m_0,k_0,m_1\in\N\setminus\{0\}$, $k\in\{0,\dots,k_l\}$ and
$0\le p<1$, define
\begin{align*}
q_{k,l}=q_{k,l}(m_1,k_0,m_0,p,d,N):=\P(\{\omega\in\Omega\mid\,\,
  & Q\text{ is independently }\\
  &(k,\bL^l(m_1,k_0),1,m_0)\text{-bad}\}),
\end{align*}
where 
$Q\in\bigcup_{s\in S}\Q_n^{s,L^l(m_1,k_0)_0-L^l(m_1,k_0)_k,m_0}([N^{-n},1-N^{-n}]^d)$.
 Note that the value of $q_{k_,l}$ does not depend on the choice of $n$, $s$
or $Q$. Moreover, $q_{0,l}\le 1-p_{ig}$ does not depend on $l$, $m_1$ or
$k_0$.  

\begin{lemma}\label{qbkestiw}
For every $l\in\N$ and $k\in\{1,\dots,k_l\}$,
\[
q_{k,l}\le (q_{0,l}N^{25dm_1})^{2^k}.
\]
\end{lemma}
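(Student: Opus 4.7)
The plan is to induct on $k$, with the trivial base case $k=0$ giving $q_{0,l}\le q_{0,l}N^{25dm_1}$. For the inductive step, I will unpack Definition~\ref{iwlevels} via the reduction $\widetilde\bL=(L^l_j-L^l_k)_{j=0}^k$: since $\mfc=1$, a cube $Q\in\bigcup_{s\in S}\Q_n^{s,L^l_0-L^l_k,m_0}$ is independently $(k,\bL^l,1,m_0)$-bad exactly when at least two of its subcubes $Q'\in\bigcup_{s\in S}\Q_{n+\Delta^l_k}^{s,L^l_0-L^l_{k-1},m_0}(Q)$ are independently $(k-1,\bL^l,1,m_0)$-bad. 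Such subcubes tile $Q$ and number $N^{d\Delta^l_k}$. By Remark~\ref{iwlevelsremark}, the bad-events for distinct subcubes in the same shifted grid are independent, each occurring with probability $q_{k-1,l}$ by translation invariance, so a union bound over unordered pairs gives
\[
q_{k,l}\le\binom{N^{d\Delta^l_k}}{2}\,q_{k-1,l}^2\le N^{2d\Delta^l_k}\,q_{k-1,l}^2.
\]

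Iterating this inequality all the way down to the base case produces
\[
q_{k,l}\le q_{0,l}^{2^k}\cdot N^{2d\sum_{j=1}^k 2^{k-j}\Delta^l_j}
=q_{0,l}^{2^k}\cdot N^{2d\cdot 2^k\sum_{j=1}^k\Delta^l_j/2^j}.
\]
The concluding step is to verify the exponent bound $\sum_{j=1}^k \Delta^l_j/2^j\le 12.5\,m_1$. This is where the geometric-growth estimate $\Delta^l_j\le 5\sqrt{2}^j m_1$ from \eqref{stepestimate} enters: the weighted sum is majorised by $5m_1\sum_{j\ge 1}(\sqrt{2}/2)^j=5m_1(\sqrt{2}+1)<12.5\,m_1$, and feeding this back yields the claimed $(q_{0,l}N^{25dm_1})^{2^k}$ bound.

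The main subtlety is ensuring that the doubling of the exponent of $q$ at each induction step (from squaring $q_{k-1,l}$) is not defeated by the growth of the step sizes $\Delta^l_j$. The key observation is that \eqref{stepestimate} says $\Delta^l_j$ grows like $\sqrt{2}^j$ rather than $2^j$, so the weighted series $\sum \Delta^l_j/2^j$ converges with a bound uniform in both $k$ and $l$, which is precisely what allows the constant $25$ in the final estimate to be independent of these indices. Identifying this geometric balance—rather than the routine union-bound itself—is the heart of the argument.
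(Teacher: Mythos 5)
Your proof is correct and follows essentially the same route as the paper: the recursion $q_{k,l}\le N^{2d\Delta_k^l}q_{k-1,l}^2$ (which the paper derives by computing the probability of "at most one bad subcube" exactly under the independence from Remark~\ref{iwlevelsremark}, while you use a union bound over pairs — an inessential difference), followed by iterating and controlling the weighted sum $\sum_j \Delta_j^l 2^{-j}$ via \eqref{stepestimate}, exactly as in the paper. The only small slip is writing the subcubes of $Q$ as ranging over $\bigcup_{s\in S}$ of shifted grids; they should lie in the grid with the same shift $s$ as $Q$ (the paper fixes $s=\mathbf{0}$ without loss of generality), which is what your tiling count $N^{d\Delta_k^l}$ implicitly assumes anyway.
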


\begin{proof}
Since the claim does not depend on the choice of $s\in S$, for notational
simplicity, we assume that $s=\mathbf{0}$. 
Let $k\in\{1,\dots,k_l\}$. If
$Q\in Q_n$ is independently $(k,\bL^l(m_1,k_0),1,m_0)$-good, it contains at most
one independently $(k-1,\bL^l(m_1,k_0),1,m_0)$-bad cube
$Q'\in\Q_{n+\Delta_k^l}(Q)$.
By independence (recall Remark~\ref{iwlevelsremark}), we have that
\begin{align*}
1-q_{k,l}&=(1-q_{k-1,l})^{N^{d\Delta_k^l}}+N^{d\Delta_k^l}q_{k-1,l}
         (1-q_{k-1,l})^{N^{d\Delta_k^l}-1}\\
  &=(1-q_{k-1,l})^{N^{d\Delta_k^l}-1}(1-q_{k-1,l}+N^{d\Delta_k^l}q_{k-1,l})\\
  &\ge (1-(N^{d\Delta_k^l}-1)q_{k-1,l})(1+(N^{d\Delta_k^l}-1)q_{k-1,l})
   \ge 1-(N^{d\Delta_k^l}q_{k-1,l})^2,
\end{align*}  
implying that
\begin{equation}\label{badrecursion}
q_{k,l}\le(N^{d\Delta_k^l}q_{k-1,l})^2.
\end{equation}
Iterating Inequality~\eqref{badrecursion} and recalling
Inequality~\eqref{stepestimate}, we conclude that
\begin{align*}
q_{k,l}&\leq N^{d\sum_{j=0}^{k-1}2^{j+1}\Delta_{k-j}^l}\cdot q_{0,l}^{2^k}=q_{0,l}^{2^k}
       N^{2d2^k\sum_{j=1}^k2^{-j}\Delta_j^l}\\
  &\le q_{0,l}^{2^k} N^{10d2^km_1\sum_{j=1}^k\sqrt 2^{-j}}<(q_{0,l}N^{25dm_1})^{2^k}.
\end{align*}
\end{proof}

We proceed by defining the concept of hereditarily good cubes, which
will enable us to apply our length gain estimates repeatedly.

\begin{definition}\label{hereditary}
Fix $\omega\in\Omega$ and $m_0,\mfc,k_0,m_1\in\N\setminus\{0\}$. Let $n,q\in\N$.
 A cube  $Q\in\Q_n$ is {\it $(q,m_1,k_0,\mfc,m_0)$-hereditarily good}
if $Q$ is $(\bL^l(m_1,k_0),\mfc,m_0)$-good for all $l=0,\dots,q$.
\end{definition}

\begin{remark}\label{hereditaryremark}
(a) Whether a cube $Q\in\Q_n$ is $(q,m_1,k_0,\mfc,m_0)$-hereditarily good or not
depends only on
\[
\bigcup_{l=0}^q\C_{n+ L^l(m_1,k_0)_0  +m_0}(\omega)
 =\bigcup_{l=0}^q\C_{n+(l+1)l_0 +m_0}(\omega)
\]
by Remark~\ref{levelsremark}  and \eqref{*hmr}.

(b) The choice of the parameters $m_{0}$, $\mfc$, $k_{0}$ and $m_{1}$ will be
crucial in our proof. In the proof of Theorem~\ref{maingeneral}, we explain
how they are selected. In
 Sections~\ref{hereditarilyexists}--\ref{lengthincrease} some
restrictions on them are given, see in particular Lemmas~\ref{*iwg},
\ref{colourcontribution} and \ref{smallgain}, Propositions~\ref{hdprob},
\ref{Salphaissmall} and \ref{paintedinhereditarily} and
Construction~\ref{brokenline0}.  
\end{remark}

According to the next proposition, the probability that a cube is hereditarily
good is large when $m_0$ is large.

\begin{proposition}\label{hdprob}
Let  $0\le p<1$ and $k_0,m_1\in\N\setminus\{0\}$ with $k_0\ge 3$.   For
every $\e>0$, there exists $m_0=m_0(m_1,p,d,N,\e)\in\N$ such that
\[
\P(\{\omega\in\Omega\mid Q\text{ is }(q,m_1,k_0,\mfc,m_0)\text{-hereditarily
  good}\})\ge 1-\e
\]
for every $n,q\in\N$ and $Q\in\Q_n([N^{-n},1-N^{-n}]^d)$, where $\mfc$ is as
in Lemma~\ref{*iwg}.
\end{proposition} 

\medskip

\begin{proof}
From Lemma~\ref{*iwg} we obtain that
\[
\P(\{\omega\in\Omega\mid Q\text{ is }(\bL^l(m_1,k_0),\mfc,m_0)\text{-good}\})
  \ge 1-(2^d+1)q_{k_l,l}
\]
for all $l=0,\dots,q$. Further, by Remark~\ref{levelsremark}, the events
``$Q$ is $(\bL^l(m_1,k_0),\mfc,m_0)$-good'' and
``$Q$ is $(\bL^t(m_1,k_0),\mfc,m_0)$-good'' are independent for $l\ne t$ and,
by Proposition~\ref{probgoodiw}, we have $\lim_{m_0\to\infty}q_{0,l}=0$.  Recall
that $q_{0,l}$ does not depend on $l$, $m_1$ or $k_0$.  Thus,
 choosing sufficiently large $m_0\in\N$, we can
make $q_{0,l}N^{25dm_1}$ as small as we wish.  Combining \eqref{j-leva} and
\eqref{j-levb} with  Lemma~\ref{qbkestiw} leads to
\begin{align*}
\P&(\{\omega\in\Omega\mid Q\text{ is }
   (q,m_1,k_0,\mfc,m_0)\text{-hereditarily good}\})\\
&\ge\prod_{l=0}^q(1-(2^d+1)(q_{0,l}N^{25dm_1})^{2^{k_l}})
 \ge\prod_{l=0}^q(1-(2^d+1)(q_{0,l}N^{25dm_1})^{(l+1)^2})\ge 1-\e.
\end{align*}
\end{proof}

\medskip 

\medskip

Recalling Definition~\ref{properly}, set
\begin{equation}\label{neighbourhood}
k_Q:=\bigcup_{Q'\sim Q}Q'\text{ and }K_Q:=\bigcup_{Q'\sim Q}k_{Q'}
\end{equation}
for all $n\in\N$ and $Q\in\Q_n$, that is, $k_Q$ and $K_Q$ are cubes
with same centre as $Q$ having three and five times the side length of $Q$,
respectively.  We conclude this section with a proposition which states that
the hereditarily good cubes are abundant.

\begin{proposition}\label{Salphaissmall}
Let $0\le p<1$ and  $k_0,m_1\in\N\setminus\{0\}$ with $k_0\ge 3$.  Then
there exists a positive integer
$m_0=m_0(m_1,p,d,N)$ such that, for $\P$-almost all $\omega\in\Omega$, 
\begin{align*}
\dimh\Bigl\{x\in [0,1]^d\mid\, &\#\{n\in\N\mid Q'\text{ is }
   (q,m_1,k_0,\mfc,m_0)\text{-hereditarily good}\\
  &\text{for all }Q'\in\Q_n(K_{Q_n(x)})\}<\infty\Bigr\}<1
\end{align*}
for all $q\in\N$, where $\mfc$ is defined in Lemma~\ref{*iwg}.
\end{proposition}

\begin{proof}
Combining Proposition~\ref{hdprob} with Proposition~\ref{smallsets}
gives the claim (recall the proof of Theorem~\ref{mainstandard}). 
\end{proof}

\section{Pure $\alpha$-unrectifiability}\label{general}

In Section~\ref{standard}, we proved that a typical realisation of the fractal 
percolation is purely 1-unrectifiable, that is, all Lipschitz curves intersect 
the  fractal percolation in a set of zero 1-dimensional Hausdorff measure.
In view of~\cite{BCJM} (see Section~\ref{intro}), it is natural to
ask whether Lipschitz curves can be replaced by $\alpha$-H\"older curves,
and $\H^1$ by $\H^{\frac 1\alpha}$, for some $\alpha<1$. We define a concept of
$\alpha$-rectifiability for the purpose of answering this question positively.
Throughout the section, $I\subset\R$ is a generic closed and bounded
interval. 

\begin{definition}
Let $0<\alpha\le 1$ and $H\ge 0$. 
A curve $\gamma\colon I\to\R^d$ is {\it $(H,\alpha)$-H\"older (continuous) at 
$a\in I$}, if it satisfies the condition
\begin{equation*}
|\gamma(a)-\gamma(b)|\le H|a-b|^\alpha
\end{equation*}
for every $b\in I$. A curve $\gamma$ is {\it $(H,\alpha)$-H\"older}, if it 
is $(H,\alpha)$-H\"older at every $a\in I$. Finally, a curve $\gamma$ is 
{\it $\alpha$-H\"older}, if for every $a\in I$ there is $H_a\ge 0$ such that
$\gamma$ is $(H_a,\alpha)$-H\"older at $a\in I$.
\end{definition}

The following well-known lemma is an immediate corollary of definitions.  

\begin{lemma}\label{alphameasure}
Let $\gamma\colon I\to\R^d$ be an $(H,\alpha)$-H\"older curve for some 
$0<\alpha\le 1$ and $H\ge 0$. Assume that $A\subset I$. Then
$\H^{\frac 1\alpha}(\gamma(A))\le H^{\frac 1\alpha}\H^1(A)$.
\end{lemma}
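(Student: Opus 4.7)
The plan is to unwind the definition of Hausdorff measure on both sides and exploit the H\"older bound at the level of diameters of covering sets. This is the standard argument for how Hausdorff measure transforms under H\"older maps, and it is essentially a one-line computation once the right cover is chosen.

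First I would fix $\delta>0$ and take an arbitrary countable cover $\{U_i\}_{i\in\N}$ of $A$ by subsets of $\R$ with $\diam U_i\le\delta$. Replacing each $U_i$ by $U_i\cap I$ only decreases diameters and still covers $A$, so I may assume $U_i\subset I$ from the outset. Then $\{\gamma(U_i)\}_{i\in\N}$ is a countable cover of $\gamma(A)$, and the $(H,\alpha)$-H\"older property applied to any two points $a,b\in U_i$ gives
\[
|\gamma(a)-\gamma(b)|\le H|a-b|^\alpha\le H(\diam U_i)^\alpha,
\]
so $\diam\gamma(U_i)\le H(\diam U_i)^\alpha\le H\delta^\alpha$.

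Next I would raise each diameter to the power $\frac 1\alpha$ and sum, obtaining
\[
\sum_{i\in\N}(\diam\gamma(U_i))^{\frac 1\alpha}
  \le H^{\frac 1\alpha}\sum_{i\in\N}\diam U_i.
\]
Taking the infimum on the right over all such covers yields $\H^{\frac 1\alpha}_{H\delta^\alpha}(\gamma(A))\le H^{\frac 1\alpha}\,\H^1_\delta(A)$, where $\H^s_\varepsilon$ denotes the usual $\varepsilon$-restricted Hausdorff premeasure. Finally, letting $\delta\searrow 0$ (so that $H\delta^\alpha\searrow 0$ as well, since $\alpha>0$) and invoking the definition of Hausdorff measure as the supremum over $\varepsilon$ of $\H^s_\varepsilon$ gives the desired inequality $\H^{\frac 1\alpha}(\gamma(A))\le H^{\frac 1\alpha}\,\H^1(A)$.

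There is no serious obstacle here: the only mild subtlety is that the H\"older estimate is a pointwise hypothesis on $I$, so one must restrict the covering sets to $I$ before applying it. The rest is purely formal manipulation of diameters and the definition of $\H^s$.
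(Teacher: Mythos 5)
Your proof is correct: it is the standard covering argument (images of a $\delta$-cover of $A$ form an $H\delta^\alpha$-cover of $\gamma(A)$, with the $\frac1\alpha$-power sums dominated by $H^{\frac1\alpha}$ times the length sums), which is precisely what the paper has in mind when it states the lemma as an immediate consequence of the definitions and omits the proof. Nothing is missing; the restriction of the covering sets to $I$ that you note is the only point of care.
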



According to the next lemma, every $\alpha$-H\"older curve can be covered by
a countable collection of $(H,\alpha)$-H\"older curves.

\begin{lemma}\label{holdercover}
Let $\gamma\colon I\to\R^d$ be an $\alpha$-H\"older curve for some
$0<\alpha\le 1$. Then there is a countable collection of curves
$\gamma_i\colon I\to\R^d$, $i\in\N$, such that $\gamma_i$ is
$(i,\alpha)$-H\"older and 
\[
\gamma(I)\subset\bigcup_{i=1}^\infty\gamma_i(I).
\]
\end{lemma}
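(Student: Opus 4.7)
For each $i\in\N$ set
\[
A_i:=\{a\in I\mid |\gamma(a)-\gamma(b)|\le i|a-b|^\alpha\text{ for all }b\in I\}.
\]
The family $(A_i)$ is nested, and $\bigcup_i A_i=I$ by the very definition of $\alpha$-H\"older. Since $\gamma$ is $\alpha$-H\"older at every point, it is continuous, so each $A_i$ is closed: if $a_n\to a$ with $a_n\in A_i$ then passing to the limit in $|\gamma(a_n)-\gamma(b)|\le i|a_n-b|^\alpha$ gives $a\in A_i$.

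The plan is to build an auxiliary curve $\widetilde\gamma_i\colon I\to\R^d$ for each $i$ by extending $\gamma|_{A_i}$ piecewise-linearly into the gaps. Concretely, put $\widetilde\gamma_i(a):=\gamma(a)$ for $a\in A_i$; on each bounded component $(a,b)$ of $I\setminus A_i$ (so that $a,b\in A_i$) linearly interpolate between $\gamma(a)$ and $\gamma(b)$; and on a component of $I\setminus A_i$ containing an endpoint of $I$ extend by the constant value $\gamma(a)$, where $a\in A_i$ is the other endpoint of that component. If $A_i=\emptyset$ take $\widetilde\gamma_i$ to be any constant map.

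The main step is to check that $\widetilde\gamma_i$ is $(3i,\alpha)$-H\"older. For $s,t\in A_i$ the bound $|\widetilde\gamma_i(s)-\widetilde\gamma_i(t)|\le i|s-t|^\alpha$ is immediate. If $s,t$ both lie in the same interpolation component $(a,b)$, one uses $|s-t|(b-a)^{\alpha-1}\le |s-t|^\alpha$ (valid for $|s-t|\le b-a$ and $0<\alpha\le 1$) to obtain the same bound. The remaining cases (one endpoint in $A_i$ and the other in a complementary component, two distinct complementary components, or a constant-extension component at $\partial I$) reduce by triangle inequality to at most three segments of the previous types; combining the resulting estimates with the elementary inequality $x^\alpha+y^\alpha\le 2(x+y)^\alpha$ for $x,y\ge 0$ and $0<\alpha\le 1$ yields the constant $3i$.

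Since every $a\in I$ belongs to some $A_i$ and $\widetilde\gamma_i(a)=\gamma(a)$ there, we have
\[
\gamma(I)=\bigcup_{i\in\N}\gamma(A_i)\subset\bigcup_{i\in\N}\widetilde\gamma_i(I).
\]
To obtain the indexing in the statement, define $\gamma_j:=\widetilde\gamma_i$ when $j=3i$ and let $\gamma_j$ be an arbitrary $(j,\alpha)$-H\"older curve (e.g.\ constant) otherwise. Then $\gamma_j$ is $(j,\alpha)$-H\"older and $\gamma(I)\subset\bigcup_j\gamma_j(I)$, as required. The only non-routine ingredient is the case analysis for the H\"older constant of $\widetilde\gamma_i$ near the boundary of $A_i$; everything else is bookkeeping.
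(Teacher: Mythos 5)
Your proposal is correct and follows essentially the same route as the paper: the same decomposition of $I$ into the closed sets $A_i$ (the paper's $F_i$), followed by affine interpolation of $\gamma|_{A_i}$ across the complementary intervals. The only difference is in the bookkeeping of the H\"older constant: the paper shows the interpolated curve is $(i,\alpha)$-H\"older directly via a concavity-of-$s\mapsto i|t-s|^\alpha$ argument, whereas you settle for the cruder constant $3i$ by the triangle inequality and repair the indexing by relabelling, which is perfectly legitimate (your explicit treatment of boundary components of $I\setminus A_i$ and of $A_i=\emptyset$ is in fact slightly more careful than the paper's).
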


\begin{proof}
For all $i\in\N$, let $F_i\subset I$ be the set of points where $\gamma$ is 
$(i,\alpha)$-H\"older. By definition, $I=\bigcup_{i=1}^\infty F_i$
and, therefore, it is enough to show that $\gamma(F_i)$ can be covered by
an $(i,\alpha)$-H\"older curve $\gamma_i\colon I\to\R^d$.

Fix $i\in\N$. Suppose that $(t_n)_{n\in\N}$ is a sequence in $F_i$
converging to $t\in I$. For all $u\in I$ and $n\in\N$, we have
\[
|\gamma(t)-\gamma(u)|\le |\gamma(t)-\gamma(t_n)|+|\gamma(t_n)-\gamma(u)|
  \le i|t-t_n|^\alpha+i|t_n-u|^\alpha
  \xrightarrow[n\to\infty]{} i|t-u|^\alpha,
\]
implying that $t\in F_i$. Thus $F_i$ is closed for all $i\in\N$. Since the 
complement of $F_i$ is open, we can write it as 
$F_i^c=\bigcup_{j=1}^\infty]a_j^i,b_j^i[$, where the intervals are disjoint.
(The case of finite union is included by adding infinitely many empty sets.)
Define $\gamma_i\colon I\to\R^d$ as $\gamma_i(t):=\gamma(t)$ for $t\in F_i$, and
on each interval $]a_j^i,b_j^i[$ with $j\in\N$, define $\gamma_i$ as the affine
map connecting $\gamma(a_j^i)$ and $\gamma(b_j^i)$.

We verify that $\gamma_i$ is $(i,\alpha)$-H\"older. Letting
$t,u\in I$ with $t<u$, we need to prove that
$|\gamma_i(t)-\gamma_i(u)|\le i|t-u|^\alpha$. This is
trivial if $t,u\in F_i$. Assume that $t\in F_i$ and $u\not\in F_i$. Then 
$u\in\mathopen]a_j^i,b_j^i\mathclose[$
for some $j\in\N$. Considering the functions 
$g_t,f_t\colon [t,b_j^i]\to[0,\infty[$, $g_t(s)=i|t-s|^\alpha$ and 
$f_t(s)=|\gamma_i(t)-\gamma_i(s)|$, it suffices to 
show that $f_t(s)\le g_t(s)$ for all $s\in\mathopen]a_j^i,b_j^i\mathclose[$. 
This follows from the concavity of $g_t$, since $f_t(a_j^i)\le g_t(a_j^i)$, 
$f_t(b_j^i)\le g_t(b_j^i)$ and $f_t$ is affine on $]a_j^i,b_j^i[$. By symmetry,
$f_t(u)\le g_t(u)$ when $u\in F_i$ and $t\not\in F_i$. Finally, let
$t\in\mathopen]a_k^i,b_k^i\mathclose[$ and 
$u\in\mathopen]a_j^i,b_j^i\mathclose[$ for some $k,j\in\N$. Since
$a_j^i,b_j^i\in F_i$, we have that $f_t(a_j^i)\le g_t(a_j^i)$ and
$f_t(b_j^i)\le g_t(b_j^i)$. Thus, concavity of $g_t$ 
and affinity of $f_t$ on $]a_j^i,b_j^i[$ imply that $f_t(u)\le g_t(u)$, 
completing the proof. 
\end{proof}

In view of Lemmas \ref{alphameasure} and \ref{holdercover}, the following
definition is natural.

\begin{definition}\label{alpharectifiable}
Let $0<\alpha\le 1$. A set $A\subset\R^d$ is {\it $\alpha$-rectifiable}, if
there exist $\alpha$-H\"older curves $\gamma_i\colon I\to\R^d$, $i\in\N$, such
that $\H^{\frac 1\alpha}(A\setminus(\bigcup_{i=1}^\infty\gamma_i(I)))=0$. A set 
$A\subset\R^d$ is {\it purely $\alpha$-unrectifiable}, if 
$\H^{\frac 1\alpha}(A\cap\gamma(I))=0$ for all $\alpha$-H\"older curves 
$\gamma\colon I\to\R^d$.
\end{definition}

\begin{remark}\label{alphaisone}
When $\alpha=1$, the above definition agrees with the standard
definition of 1-rectifiability and pure 1-unrectifiability.
\end{remark}

According to Lemma~\ref{alphameasure}, the images of $(H,\alpha)$-H\"older
curves have finite $\H^{\frac 1\alpha}$-measure. One may address the question
whether the images have
always positive $\H^{\frac 1\alpha}$-measure. The answer is negative, since
any $\beta$-H\"older curve is $\alpha$-H\"older for all $\alpha<\beta$ and
$\H^{\frac 1\beta}(\gamma(I))<\infty$ implies $\H^{\frac 1\alpha}(\gamma(I))=0$.
To avoid problems caused by this fact, we give the following definition.
  

\begin{definition}\label{tight}
Let $\eta,R>0$ and $0<\alpha\le 1$. A curve $\gamma\colon I\to\R^d$ is 
{\it $(\alpha,\eta,R)$-tight at $t\in I$} if
\begin{equation*}
\frac{\diam\gamma([t-r,t+r])}{(2r)^\alpha}\ge\eta
\end{equation*}
for all $r\le R$. We say that $\gamma$ is 
{\it $(\alpha,\eta,R)$-tight} if it is $(\alpha,\eta,R)$-tight at
every $t\in I$. A curve $\gamma$ is {\it $(\alpha,\eta)$-tight at 
$t\in I$} if it is $(\alpha,\eta,R)$-tight at $t\in I$ for some $R>0$. Finally,
a curve $\gamma$ is {\it $(\alpha,\eta)$-tight} if it is 
$(\alpha,\eta)$-tight at every $t\in I$. 
\end{definition}

\begin{remark}\label{tightexists}
There exist tight H\"older curves. For example, the natural parametrisation 
$\gamma\colon [0,1]\to\R^2$ of the von Koch curve is 
$(\frac{\log 3}{\log 4},\frac 1{\sqrt 3^3},1)$-tight since, for all 
$t\in[0,1]$, $n\in\N$ and $4^{-n}\le r<4^{-n+1}$, the set $\gamma([t-r,t+r])$ 
contains points whose distance is $3^{-n}$. It is also
$(1,\frac{\log 3}{\log 4})$-H\"older continuous. Modifying this example, one 
can construct $(\alpha,\eta,1)$-tight $(H,\alpha)$-H\"older curves for any
$0<\alpha,\eta\le 1$ and $H>0$.
\end{remark}

Next lemma states that a H\"older curve is tight at most points.

\begin{lemma}\label{mosttight}
Letting $\gamma\colon I\to\R^d$ be an $(H,\alpha)$-H\"older curve, define
\[
A_0:=\bigcap_{\eta>0}\{t\in I\mid\gamma\text{ is not }
      (\alpha,\eta)\text{-tight at }t\}.
\]
Then $\H^{\frac 1\alpha}(\gamma(A_0))=0$.
\end{lemma}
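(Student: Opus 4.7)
The plan is to use a Vitali covering argument: the defining property of $A_0$ says that at each point of $A_0$, arbitrarily small intervals are mapped by $\gamma$ to sets whose diameter is strictly smaller than the Hölder bound $H(2r)^\alpha$ allows, by any factor $\eta > 0$ we wish. This gain should translate into an arbitrarily small upper bound for $\mathcal{H}^{1/\alpha}(\gamma(A_0))$.

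First I would unravel the definition: $t\in A_0$ means that for every $\eta>0$ and every $R>0$ there exists $r\in(0,R]$ with $\diam\gamma([t-r,t+r])<\eta(2r)^\alpha$. Fix $\eta>0$ and $\delta>0$, and pick $R>0$ small enough that $\eta(2R)^\alpha\le\delta$. Then the family
\[
\mathcal{V}_\eta:=\{[t-r,t+r]\mid t\in A_0,\ 0<r\le R,\ \diam\gamma([t-r,t+r])<\eta(2r)^\alpha\}
\]
is a Vitali (fine) cover of $A_0$. By the classical Vitali covering theorem on $\mathbb{R}$, I would extract a countable pairwise disjoint subfamily $\{V_i\}_{i\in\mathbb{N}}\subset\mathcal{V}_\eta$, with $V_i=[t_i-r_i,t_i+r_i]$, such that the residual set $E:=A_0\setminus\bigcup_i V_i$ has Lebesgue measure zero.

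Next I would combine two observations. On the one hand, Lemma~\ref{alphameasure} applied to $E$ gives $\mathcal{H}^{1/\alpha}(\gamma(E))\le H^{1/\alpha}\mathcal{H}^1(E)=0$. On the other hand, the disjoint intervals $V_i$ all lie in a fixed bounded enlargement of $I$, so $\sum_i 2r_i\le L$ for some finite constant $L$ depending only on $I$ and $R$. Since $\diam\gamma(V_i)<\eta(2r_i)^\alpha\le\delta$, the collection $\{\gamma(V_i)\}\cup\{\gamma(E)\}$ is a $\delta$-cover of $\gamma(A_0)$, and hence
\[
\mathcal{H}^{1/\alpha}_\delta(\gamma(A_0))\le \mathcal{H}^{1/\alpha}_\delta(\gamma(E))+\sum_i\bigl(\eta(2r_i)^\alpha\bigr)^{1/\alpha}\le \eta^{1/\alpha}L.
\]
Letting $\delta\to 0$ yields $\mathcal{H}^{1/\alpha}(\gamma(A_0))\le\eta^{1/\alpha}L$, and then letting $\eta\to 0$ gives the conclusion.

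I do not anticipate a serious obstacle here: the only thing to check carefully is that $\mathcal{V}_\eta$ really is a Vitali fine cover (which is immediate from the definition of $A_0$, since the defining property provides $r$ in any prescribed neighborhood of $0$), and that the Vitali residual $E$ is sent to an $\mathcal{H}^{1/\alpha}$-null set, which is exactly Lemma~\ref{alphameasure}. Everything else is a straightforward bookkeeping of the Hausdorff $\delta$-content.
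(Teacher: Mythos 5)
Your proposal is correct and follows essentially the same route as the paper: a Vitali covering argument exploiting the failure of tightness to gain a factor $\eta$ on each covering interval, combined with Lemma~\ref{alphameasure} to dispose of the Vitali residual set; the paper merely phrases the limit $\eta\to 0$ via the stratification $A_0=\bigcap_i A_i$ with $\eta=1/i$. The only bookkeeping point (which does not affect correctness) is that your constant $L$ depends on the $\delta$-dependent radius bound $R$, but it is uniformly bounded by $\diam I$ plus a constant, so the limits $\delta\to 0$ and then $\eta\to 0$ go through as you claim.
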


\begin{proof}
For all $i\in\N$, set 
\[
A_i:=\Big\{t\in I\mid\liminf_{r\searrow 0}\frac{\diam\gamma([t-r,t+r])}{(2r)^\alpha}
    <\frac 1i\Big\}.
\]
Then $A_{i+1}\subset A_i$ for all $i\in\N$ and $A_0=\bigcap_{i=1}^\infty A_i$.
So it is enough to show that $\lim_{i\to\infty}\H^{\frac 1\alpha}(\gamma(A_i))=0$.
Fixing $i\in\N$ and using Vitali's covering theorem 
\cite[Theorem 2.8]{M}, we find, for all $\delta>0$, disjoint balls 
$B_j\subset I$, $j\in\N$, such that $\diam B_j\le\delta$, 
$\H^1(A_i\setminus\bigcup_{j=1}^\infty B_j)=0$ and 
$\diam\gamma(B_j)\le\frac 1i(\diam B_i)^\alpha$. By Lemma~\ref{alphameasure},
$\H^{\frac 1\alpha}(\gamma(A_i\setminus\bigcup_{j=1}^\infty B_j))=0$ which implies
$\H_\e^{\frac 1\alpha}(\gamma(A_i\setminus\bigcup_{j=1}^\infty B_j))=0$ for all
$\e>0$. Therefore,
\[
\H_{\frac 1i\delta^\alpha}^{\frac 1\alpha}(\gamma(A_i))
  \le (\tfrac 1i)^{\frac 1\alpha}\sum_{j=1}^\infty\diam B_j
  \le (\tfrac 1i)^{\frac 1\alpha}\diam I,
\]
from which the claim follows.
\end{proof}

Next lemma guarantees that $\alpha$-H\"older curves can be essentially covered 
by tight curves.

\begin{lemma}\label{tightcover}
Let $\gamma\colon I\to\R^d$ be an $(H,\alpha)$-H\"older curve. Then there exist
sequences $(H_i)_{i\in\N}$, $(\eta_i)_{i\in\N}$ and $(R_i)_{i\in\N}$ of
positive real
numbers and $(\alpha,\eta_i,R_i)$-tight $(H_i,\alpha)$-H\"older curves
$\gamma_i\colon I\to\R^d$, $i\in\N$, such that
\[
\H^{\frac 1\alpha}\bigl(\gamma(I)\setminus\bigcup_{i=1}^\infty\gamma_i(I)\bigr)=0.
\]
\end{lemma}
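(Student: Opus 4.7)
The plan is to reduce the problem using Lemma~\ref{mosttight}, decompose $I \setminus A_0$ into countably many closed sets of uniform tightness, and then manufacture a single tight H\"older curve covering each piece via an interpolation-with-loops construction.

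\textbf{Reduction.} By Lemma~\ref{mosttight}, the set $A_0 = \bigcap_{\eta > 0}\{t : \gamma \text{ is not } (\alpha, \eta)\text{-tight at } t\}$ satisfies $\H^{1/\alpha}(\gamma(A_0)) = 0$. Hence it suffices to produce a countable family of tight H\"older curves whose union of images contains $\gamma(I \setminus A_0)$.

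\textbf{Decomposition.} Writing $T_{k,n} := \{t \in I \mid \gamma \text{ is } (\alpha, 1/k, 1/n)\text{-tight at } t\}$, one checks, by the same argument used to show $F_i$ is closed in the proof of Lemma~\ref{holdercover} (passing to the limit in the defining inequality), that each $T_{k,n}$ is closed and that $I \setminus A_0 = \bigcup_{k,n \in \N \setminus \{0\}} T_{k,n}$. Thus it suffices to construct, for each pair $(k,n)$, a $(H_{k,n}, \alpha)$-H\"older and $(\alpha, \eta_{k,n}, R_{k,n})$-tight curve $\gamma_{k,n} \colon I \to \R^d$ whose image contains $\gamma(T_{k,n})$.

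\textbf{Construction.} I would fix once and for all a closed H\"older tight loop $\sigma \colon [0, 1] \to \R^d$ with $\sigma(0) = \sigma(1) = 0$, $(H_\sigma, \alpha)$-H\"older and $(\alpha, \eta_\sigma, 1)$-tight, obtained by concatenating two suitably rescaled copies of the von Koch-type curve from Remark~\ref{tightexists}. Write $I \setminus T_{k,n} = \bigsqcup_j (a_j, b_j)$. Set $\gamma_{k,n}(t) = \gamma(t)$ for $t \in T_{k,n}$ and, on each gap $(a_j, b_j)$,
\[
\gamma_{k,n}(t) = \gamma(a_j) + \tfrac{t - a_j}{b_j - a_j}\bigl(\gamma(b_j) - \gamma(a_j)\bigr) + C (b_j - a_j)^\alpha \, \sigma\!\bigl(\tfrac{t - a_j}{b_j - a_j}\bigr),
\]
with $C > 0$ chosen later. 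H\"older continuity of $\gamma_{k,n}$ with a constant independent of the gap follows by a case analysis mirroring the one at the end of the proof of Lemma~\ref{holdercover}; the key point is that for $|s-t| \leq b_j - a_j$, the factor $(b_j-a_j)^{\alpha - 1} |s - t|$ from the affine part is dominated by $|s-t|^\alpha$, while the loop contributes at most $CH_\sigma|s-t|^\alpha$.

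\textbf{Tightness via case analysis.} The main obstacle is the verification that $\gamma_{k,n}$ is $(\alpha, \eta_{k,n}, R_{k,n})$-tight at every $t \in I$. One splits into two regimes:
\begin{itemize}
\item[(i)] If $t$ lies in or near a gap $(a_j, b_j)$ with $b_j - a_j \gtrsim r$, tightness at scale $r$ is delivered by the rescaled loop, since $\diam \bigl(C(b_j-a_j)^\alpha \sigma|_{\cdot}\bigr) \geq C\eta_\sigma r^\alpha$ on the relevant sub-interval, which dominates the affine error of order $Hr^\alpha$ whenever $C \geq 2H/\eta_\sigma$.
\item[(ii)] If $t \in T_{k,n}$ and all gaps intersecting $[t-r, t+r]$ contribute sub-segments of length at most $r$, the Hölder bound $|\gamma_{k,n}(s) - \gamma(s)| \leq (2H + CH_\sigma)\min(s - a_j, b_j - s)^\alpha \leq Dr^\alpha$ lets the witnessing points of the tightness of $\gamma$ at $t$ still witness diameter at least $((2^\alpha/k) - 2D)r^\alpha$ for $\gamma_{k,n}$.
\end{itemize}

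\textbf{Main obstacle and its resolution.} The two regimes put conflicting demands on $C$ (large for (i), small for (ii)). The resolution is to introduce a third parameter $m$ and refine $T_{k,n}$ into
\[
T_{k,n,m} := T_{k,n} \cap \{t : \text{every gap of } I \setminus T_{k,n} \text{ meeting } [t-1/m, t+1/m] \text{ has length} \leq 1/m\},
\]
which still covers $T_{k,n}$. On $T_{k,n,m}$ one works at scale $R_{k,n,m} = 1/m$ and chooses $C_{k,n,m}$ small enough that regime (ii) succeeds, while the rescaled loops themselves supply tightness at all interior points of gaps via regime (i). Enumerating the triples $(k,n,m)$ and collecting the resulting $\gamma_{k,n,m}$ produces the desired countable family, completing the proof modulo the straightforward numerical bookkeeping of $H_{k,n,m}, \eta_{k,n,m}, R_{k,n,m}$.
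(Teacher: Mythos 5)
Your skeleton (reduce via Lemma~\ref{mosttight}, decompose into the closed sets $T_{k,n}$, fill the complementary gaps with tight H\"older interpolants, verify H\"older continuity and tightness by a case analysis) is the same as the paper's, and your H\"older estimate for the ``affine plus rescaled loop'' filling is fine; the proof breaks in the tightness verification, at exactly the two claims your ``resolution'' rests on. First, the covering claim is false: $\bigcup_m T_{k,n,m}$ does \emph{not} contain $T_{k,n}$. If $t\in T_{k,n}$ is an endpoint of a gap of length $1$, then for every $m\ge 2$ that gap meets $[t-1/m,t+1/m]$ and has length $>1/m$, so $t$ belongs to no $T_{k,n,m}$. (This defect happens to be repairable: the missed set is porous at the scales $2/m$, hence $\H^1$-null by the density theorem, and Lemma~\ref{alphameasure} then makes its $\gamma$-image $\H^{1/\alpha}$-null; but none of this is in your argument, and you assert coverage instead.)

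Second, and more seriously, regime (ii) does not succeed for \emph{any} choice of $C_{k,n,m}$, so the conflict you identify is not resolved. For $t\in T_{k,n,m}$ and $r\le 1/m$, knowing that all gaps meeting $[t-1/m,t+1/m]$ have length $\le 1/m$ gives no improvement at scale $r$: the point $t$ may sit at the endpoint of a gap of length $1/m\gg r$, and then for $s$ in that gap with $|s-t|\le r$ your deviation bound is only $(2H+CH_\sigma)\min(s-a_j,b_j-s)^\alpha\le(2H+CH_\sigma)r^\alpha$. Hence your claimed lower bound $((2^\alpha/k)-2D)r^\alpha$ with $D=2H+CH_\sigma\ge 2H$ is negative for any realistic $H$ and $k$; shrinking $C$ cannot remove the $2H$ term, which comes from comparing $\gamma$ with the chordal interpolation across the gap, and taking $C$ small simultaneously contradicts your own requirement $C\ge 2H/\eta_\sigma$ in regime (i). The missing idea is that the case split must be made at a \emph{scale-proportional} threshold, as in the paper: compare the largest gap-portion inside $[t-r,t+r]$ with $r/K$; if it exceeds $r/K$, the inserted tight curve alone yields diameter at least of order $\eta(r/K)^\alpha$, and if not, every point of the window lies within $r/K$ of $T_{k,n}$, so the deviation is only of order $H(r/K)^\alpha$ and the tightness of $\gamma$ at a nearby point of $T_{k,n}$ over a slightly shrunken window finishes the estimate once $K$ is chosen large in terms of $H$ and $1/k$. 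With that threshold no refinement in $m$ is needed and the tension over $C$ disappears; with your fixed-scale threshold the construction fails to be tight precisely at the points of $T_{k,n,m}$ adjacent to gaps of intermediate length, which are the points you must cover.
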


\begin{proof}
For $\eta,R>0$, set
\[
F_{\eta,R}:=\{t\in I\mid\gamma\text{ is }(\alpha,\eta,R)\text{-tight at }t\}.
\]
By Lemma~\ref{mosttight},
\[
\H^{\frac 1\alpha}\bigl(\gamma(I\setminus\bigcup_{i,j=1}^\infty F_{\frac 1i,\frac 1j})
  \bigr)=0.
\]
Therefore, it suffices to show that, for all $\eta,R>0$, the set
$\gamma(F_{\eta,R})$ can be covered by an $(\alpha,\eta',R)$-tight
$(H',\alpha)$-H\"older curve for some $\eta',H'>0$. To that end, 
let $(t_n)_{n\in\N}$ be a sequence in $F_{\eta,R}$ tending to $t\in I$. Let
$0<r\le R$ and define $r_n:=r-|t-t_n|$ for all $n\in\N$. Then
\[
\diam\gamma([t-r,t+r])\ge\lim_{n\to\infty}\diam\gamma([t_n-r_n,t_n+r_n])
 \ge\eta(2r)^\alpha.
\]
Therefore, $F_{\eta,R}$ is closed. Write
$F_{\eta,R}^c=\bigcup_{n\in\N}\mathopen]a_n,b_n\mathclose[$, where the union is
disjoint. Define a curve $\gamma_\eta\colon I\to\R^d$ by setting
$\gamma_\eta(t):=\gamma(t)$ for all $t\in F_{\eta,R}$ and, on each interval
$]a_n,b_n[$ with $n\in\N$, define $\gamma_\eta$ as an
$(\alpha,\eta,\frac 12|a_n-b_n|)$-tight $(H,\alpha)$-H\"older curve connecting
    $\gamma(a_n)$ and $\gamma(b_n)$ (recall Remark~\ref{tightexists}).

Let $t\in I$ and $r\le R$. Choose a constant $K>1$ (to be fixed later). If 
\[
|a_{n_0}-b_{n_0}|:=\max\{|a_n-b_n|\mid [a_n,b_n]\subset [t-r,t+r]\}\ge\tfrac rK,
\]
then, by the $(\alpha,\eta,\frac 12|a_n-b_n|)$-tightness of
$\gamma|_{[a_n,b_n]}$, we have that
\[
\diam\gamma_\eta([t-r,t+r])\ge\diam\gamma_\eta([a_{n_0},b_{n_0}])
 \ge\eta\bigl(2\tfrac r{2K}\bigr)^\alpha=:\eta'(2r)^\alpha.
\]
Otherwise, for every $u\in[t-r,t+r]$, there is $s_u\in [t-r,t+r]\cap F_{\eta,R}$
with $|s_u-u|<\frac rK$. By H\"older continuity,
$|\gamma(u)-\gamma_\eta(s_u)|=|\gamma(u)-\gamma(s_u)|
  \le H\bigl(\frac rK\bigr)^\alpha$,
so
\begin{align*}
\diam\gamma_\eta([t-r,t+r])
  &\ge\diam\gamma([s_t-(1-\tfrac 1K)r,s_t+(1-\tfrac 1K)r])-2H(\tfrac rK)^\alpha\\
&\ge\eta(2(1-\tfrac 1K)r)^\alpha-2H(\tfrac rK)^\alpha=:\eta''(2r)^\alpha.
\end{align*}
Solving $K$ from the equation
\[
\frac\eta{(2K)^\alpha}=\eta'=\eta''=\eta\Bigl(\bigl(1-\frac 1K\bigr)^\alpha
  -\frac{2H}{\eta(2K)^\alpha}\Bigr),
\]
we obtain that $K=1+\frac 12(1+\frac{2H}\eta)^{\frac 1\alpha}$, which leads to
$\eta'=\eta''=\frac\eta{\bigl(2+(1+\frac{2H}\eta)^{\frac 1\alpha}\bigr)^\alpha}$.
Thus, $\gamma_\eta$ is $(\alpha,\eta',R)$-tight at $t$.

Finally, we prove that $\gamma_\eta$ is $(3H,\alpha)$-H\"older continuous. Let
$t,s\in I$ with $t<s$. If $t,s\in [a_n,b_n]$ for some $n\in\N$, then
$|\gamma_\eta(t)-\gamma_\eta(s)|\le H|t-s|^\alpha$ by construction. Otherwise, set
$t':=\min\{u\in F_{\eta,R}\mid u\ge t\}$ and
$s':=\max\{u\in F_{\eta,R}\mid u\le s\}$. Then
\begin{align*}
|\gamma_\eta(t)-\gamma_\eta(s)|&\le |\gamma_\eta(t)-\gamma_\eta(t')|+
     |\gamma(t')-\gamma(s')|+|\gamma_\eta(s')-\gamma_\eta(s)|\\
   &\le 3H|t-s|^\alpha.
\end{align*}
Thus, $\gamma_\eta$ is $(3H,\alpha)$-H\"older continuous.
\end{proof}

Next we define a concept of an $\e$-gap which depends on $\omega\in\Omega$.

\begin{definition}\label{gapdef}
Let $\gamma\colon I\to\R^d$ be a curve and let $\e>0$. Fix $\omega\in\Omega$.
For $r>0$, we say that {\it $\gamma$ has an $\e$-gap of scale $r$ at
$t\in I$} if there are $\tilde a,\tilde b\in [t-r,t+r]$ such that
$\gamma(]\tilde a,\tilde b[)\cap E(\omega)=\emptyset$ 
and $|\gamma(\tilde a)-\gamma(\tilde b)|\ge\e\diam\gamma([t-r,t+r])$.
\end{definition}

The following lemma shows that around typical points of tight H\"older curves
there are no $\e$-gaps of small scale.
  
\begin{lemma}\label{smallgaps}
Let $\gamma\colon I\to\R^d$ be an $(H,\alpha)$-H\"older and
$(\alpha,\eta)$-tight curve. Fix $\e>0$ and $\omega\in\Omega$. Denote by $A$ 
the set of $t\in I$ such that $\gamma$ has
$\e$-gaps of arbitrarily small scales at $t$. Then 
$\H^1\bigl(A\cap\gamma^{-1}(E(\omega))\bigr)=0$ and 
$\H^{\frac{1}{\alpha}}(\gamma(A)\cap E(\omega))=0$.
\end{lemma}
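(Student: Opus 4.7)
The second conclusion will follow immediately from the first via Lemma~\ref{alphameasure}: since $\gamma(A)\cap E(\omega)=\gamma(A\cap\gamma^{-1}(E(\omega)))$, one obtains
\[
\H^{\frac 1\alpha}(\gamma(A)\cap E(\omega))\le H^{\frac 1\alpha}\H^1(A\cap\gamma^{-1}(E(\omega))),
\]
so the whole task reduces to showing $\H^1(A\cap\gamma^{-1}(E(\omega)))=0$.

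Because $(\alpha,\eta)$-tightness is a local condition with a point-dependent scale, my first step is to decompose $I$ into the countable union $I=\bigcup_{n\in\N}I_n$, where $I_n:=\{t\in I\mid\gamma\text{ is }(\alpha,\eta,\tfrac 1n)\text{-tight at }t\}$. It then suffices to prove $\H^1(A\cap I_n\cap\gamma^{-1}(E(\omega)))=0$ for each fixed $n$.

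The key geometric step is to convert an $\varepsilon$-gap in space into a gap of comparable length in the parameter interval. Fix $n$ and let $t\in A\cap I_n$. Given $r\le\tfrac 1n$ for which $\gamma$ admits an $\varepsilon$-gap of scale $r$ at $t$, pick the corresponding $\tilde a,\tilde b\in[t-r,t+r]$. Combining the tightness bound $\diam\gamma([t-r,t+r])\ge\eta(2r)^\alpha$ with H\"older continuity yields
\[
H|\tilde a-\tilde b|^\alpha\ge|\gamma(\tilde a)-\gamma(\tilde b)|\ge\varepsilon\eta(2r)^\alpha,
\]
so $|\tilde a-\tilde b|\ge c\cdot 2r$ where $c:=(\varepsilon\eta/H)^{1/\alpha}\in(0,1)$. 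Since $\gamma(\mathopen]\tilde a,\tilde b\mathclose[)\cap E(\omega)=\emptyset$, the open interval $]\tilde a,\tilde b[$ is disjoint from the closed (hence measurable) set $\gamma^{-1}(E(\omega))$, which gives
\[
\H^1(\gamma^{-1}(E(\omega))\cap[t-r,t+r])\le 2r(1-c).
\]

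Since $t\in A$ provides arbitrarily small such $r$, the upper Lebesgue density of $\gamma^{-1}(E(\omega))$ at every $t\in A\cap I_n$ is at most $1-c<1$. By the Lebesgue density theorem applied to the measurable set $\gamma^{-1}(E(\omega))$, the set of points in $\gamma^{-1}(E(\omega))$ with density strictly less than $1$ has $\H^1$-measure zero. Therefore $\H^1(A\cap I_n\cap\gamma^{-1}(E(\omega)))=0$, and summing over $n$ yields the first claim; the second then follows as noted. The only step that requires any care is the density argument, which is routine once the uniform lower bound $c$ on the relative length of the gap interval has been extracted; that extraction — trading the $\alpha$-exponent on the geometric side for a full linear length on the parameter side by pairing tightness against H\"older continuity — is the substantive point of the proof.
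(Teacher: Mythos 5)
Your argument is essentially the paper's own proof: both combine the Hölder bound with tightness to show each $\e$-gap of scale $r$ forces a subinterval of $[t-r,t+r]$ of length at least $(\e\eta/H)^{1/\alpha}\,2r$ disjoint from $\gamma^{-1}(E(\omega))$, conclude via the Lebesgue density theorem that such $t$ cannot be density points, and then deduce the image statement from Lemma~\ref{alphameasure}; your decomposition into the sets $I_n$ is just a bookkeeping variant of the paper's ``for all large enough $i$''. One small correction: since the gap estimate holds only along a sequence of radii $r\to 0$, you get that the \emph{lower} density of $\gamma^{-1}(E(\omega))$ at $t$ is at most $1-c$ (not the upper density), which still shows $t$ is not a density point and so the conclusion stands.
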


\begin{proof}
Fix $t\in A$. There exist a sequence $(r_i)_{i\in\N}$ of positive real numbers
tending to zero and points $\tilde a_i,\tilde b_i\in [t-r_i,t+r_i]$ for which 
$\gamma(]\tilde a_i,\tilde b_i[)\cap E(\omega)=\emptyset$ and
$|\gamma(\tilde a_i)-\gamma(\tilde b_i)|\ge\e\diam\gamma([t-r_i,t+r_i])$. Since 
$\gamma$ is $(H,\alpha)$-H\"older and $(\alpha,\eta)$-tight, we have, for all
large enough $i\in\N$, that
\[
|\tilde a_i-\tilde b_i|\ge\Bigl(\frac 1H|\gamma(\tilde a_i)-\gamma(\tilde b_i)|
  \Bigr)^{\frac 1\alpha}\ge\Bigl(\frac{\e}{H}\Bigr)^{\frac 1\alpha}
  \bigl(\diam\gamma([t-r_i,t+r_i])\bigr)^{\frac 1\alpha}
  \ge\Bigl(\frac{\e}{H}\Bigr)^{\frac 1\alpha}\eta^{\frac 1\alpha} 2r_i.
  \]
Combining this with the fact that
$\mathopen]\tilde a_i,\tilde b_i\mathclose[
    \subset I\setminus\gamma^{-1}(E(\omega))$ implies that 
$t$ cannot be a density point of $\gamma^{-1}(E(\omega))$.
This is true for all $t\in A$, so
$\H^1\bigl(A\cap\gamma^{-1}(E(\omega))\bigr)=0$. Since
$\gamma(A)\cap E(\omega)=\gamma\bigl(A\cap\gamma^{-1}(E(\omega))\bigr)$, we have
that 
\[
\H^{\frac 1\alpha}(\gamma(A)\cap E(\omega))\le H^{\frac 1\alpha}
  \H^1\bigl(A\cap\gamma^{-1}(E(\omega))\bigr)=0
\]
by Lemma~\ref{alphameasure}.  
\end{proof}

 The next proposition is the main result about the length gain of our
broken line approximations to H\"older curves.  Its  proof,  which
we postpone until the end of Section~\ref{lengthincrease},  is quite
technical and depends on several lemmas which are in the Appendices. 
 Recall the notation $l_0=m_1(1+2+\dots+k_0)$ from \eqref{*hmr} or
Definition~\ref{j-levels}. 

\begin{proposition}\label{notalphaholder}
Fix $\omega\in\Omega$ and $m_0,\mfc,q\in\N\setminus\{0\}$.  Suppose that 
$k_0,m_1\in\N\setminus\{0\}$ are large enough so that the assumptions in
Lemma~\ref{smallgain} concerning them are satisfied.  Let $n\in\N$ and
$Q\in\Q_n$. Assume that every $Q'\in\Q_n(K_Q)$ is
$(q,m_1,k_0,\mfc,m_0)$-hereditarily good. Further, suppose that
$\gamma\colon [a,b]\to K_Q$ is a curve passing through an
$(n,i)$-layer for some $i\in\{1,\dots,d\}$. Assume that, for all $h=0,\dots,q$,
there are no points $a^h,b^h\in[a,b]$ and $\tilde a^h,\tilde b^h\in[a^h,b^h]$
such that $\diam\gamma([a^h,b^h])\le 5\sqrt d N^{-hl_0-n}$, 
$\gamma(\mathopen]\tilde a^h,\tilde b^h\mathclose[)\cap E(\omega)=\emptyset$
and $|\gamma(\tilde a^h)-\gamma(\tilde b^h)|\ge d^{-1}N^{-m_0-(h+1)l_0-n}$. Then
there exist points $a\le b_1<d_1\le\dots\le b_{2M}<d_{2M}\le b$ such that
\begin{equation}\label{exponentialgain}
\sum_{j=1}^{2M}|\gamma(b_j)-\gamma(d_j)|\ge (1+C_3N^{-2m_0})^{q+1}
  |\gamma(a)-\gamma(b)|,
\end{equation}  
where $M<(C_0N^{l_0})^{q+1}$.
\end{proposition}

We are now ready to prove our main theorem.
  
\begin{theorem}\label{maingeneral}
For all $0\le p<1$, there exists $\alpha_0=\alpha_0(p,d,N)<1$ such that, for 
$\P$-almost all $\omega\in\Omega$, the set $E(\omega)$ is purely
$\alpha$-unrectifiable for all $\alpha_0<\alpha\le 1$.
\end{theorem}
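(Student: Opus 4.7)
\emph{Proof plan.} The plan is to argue by contradiction, combining the exponential length-gain estimate of Proposition~\ref{notalphaholder} with the H\"older upper bound on broken line sums. First I would fix the parameters in the order dictated by Remark~\ref{parameters}: pick $m_1\in\N$ with $C_1 N^{-m_1}\le 1/4$ and $m_1\ge M_1$ from Proposition~\ref{paintedinhereditarily}; use Proposition~\ref{Salphaissmall} to pick $m_0=m_0(m_1,p,d,N)$ so that, for $\P$-a.e.\ $\omega$, the set of $x\in[0,1]^d$ at which only finitely many $n$ satisfy ``every $Q'\in\Q_n(K_{Q_n(x)})$ is $(q,m_1,k_0,\mfc,m_0)$-hereditarily good'' has Hausdorff dimension strictly less than one, for every $q\in\N$; then choose $k_0$ so that $C_1 N^{-m_1 k_0}\le\tfrac14 C_2 N^{-2m_0}$, satisfying the remaining hypothesis of Lemma~\ref{smallgain}. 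Setting $\mfc=4^d+1$ and $l_0=m_1(1+\cdots+k_0)$, define
\[
\alpha_0:=1-\frac{\log(1+C_3 N^{-2m_0})}{2(\log C_0+l_0\log N)}<1.
\]

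Assuming for contradiction that, on a positive-$\P$-probability event, there exist $\alpha\in(\alpha_0,1]$ and an $\alpha$-H\"older curve $\gamma\colon I\to\R^d$ with $\H^{1/\alpha}(\gamma(I)\cap E(\omega))>0$, I would reduce via Lemmas~\ref{holdercover} and \ref{tightcover} to an $(H,\alpha)$-H\"older and $(\alpha,\eta,R)$-tight $\gamma$. Since $1/\alpha>1$, any set of Hausdorff dimension strictly less than $1$ is $\H^{1/\alpha}$-null, so by Proposition~\ref{Salphaissmall} and $\sigma$-subadditivity over $q$, for $\H^{1/\alpha}$-a.e.\ $y\in\gamma(I)\cap E(\omega)$ there are infinitely many $n$ with $K_{Q_n(y)}$ hereditarily good at every level $q$; Lemma~\ref{smallgaps} applied with a fixed $\e_0$ of order $N^{-m_0-l_0}$ removes a further $\H^{1/\alpha}$-null set of points at which arbitrarily small-scale $\e_0$-gaps occur. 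I would pick $t\in\gamma^{-1}(E(\omega))$ satisfying these conditions which is also a Lebesgue density point of $\gamma^{-1}(E(\omega))$ (positivity of $\H^1(\gamma^{-1}(E(\omega)))$ follows from Lemma~\ref{alphameasure}), fix a large $q\in\N$, and choose $n$ so large that $K_{Q_n(\gamma(t))}$ is hereditarily good and the maximal symmetric interval $[a,b]:=[t-r_0,t+r_0]$ with $\gamma([a,b])\subset K_{Q_n(\gamma(t))}$ has $|b-a|\le R_t$. Then $\gamma(a),\gamma(b)\in\partial K_{Q_n(\gamma(t))}$, so $\gamma|_{[a,b]}$ passes through an $(n,i)$-layer with $|\gamma(a)-\gamma(b)|\ge N^{-n}$, while tightness yields $|b-a|^\alpha\le 5\sqrt d\,\eta^{-1}N^{-n}$. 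After verifying that the no-$\e_0$-gap condition at $t$ rules out the forbidden configurations of Proposition~\ref{notalphaholder} at all $h=0,\dots,q$, applying Proposition~\ref{notalphaholder} produces a broken line with $M<(C_0 N^{l_0})^{q+1}$ satisfying
\[
(1+C_3 N^{-2m_0})^{q+1}N^{-n}\le\sum_{j=1}^{2M}|\gamma(b_j)-\gamma(d_j)|\le H(2M)^{1-\alpha}|b-a|^\alpha\le 5\sqrt d\,H\eta^{-1}(2M)^{1-\alpha}N^{-n},
\]
by concavity of $x\mapsto x^\alpha$. Cancelling $N^{-n}$, taking $(q+1)$-th roots and letting $q\to\infty$ yield $\log(1+C_3 N^{-2m_0})\le(1-\alpha)(\log C_0+l_0\log N)$, contradicting $\alpha>\alpha_0$.

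The hard part will be the translation of the single-scale, $t$-centred no-$\e_0$-gap property from Lemma~\ref{smallgaps} into the multi-scale, subinterval-based gap-absence hypothesis of Proposition~\ref{notalphaholder}. The pivotal observation is that every forbidden configuration -- a gap of size at least $d^{-1}N^{-m_0-(h+1)l_0-n}$ inside a subinterval of image-diameter at most $5\sqrt d\,N^{-hl_0-n}$ -- has a common, $h$-independent relative-gap bound of order $N^{-m_0-l_0}$, and, by $(H,\alpha)$-H\"older continuity and tightness applied at an endpoint of the gap, transfers into an $\e_0$-gap at a point of $\gamma^{-1}(E(\omega))$ inside $[a,b]$. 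Likely one needs Egorov's theorem to pass to a subset of $\gamma(I)\cap E(\omega)$ on which $R_t$ is uniformly bounded below and then to take $t$ as a density point of this subset, in order to push the transfer through at arbitrary subintervals rather than only at $t$ itself.
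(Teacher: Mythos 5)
Your plan is essentially the paper's own proof: the same reduction via Lemmas~\ref{holdercover} and \ref{tightcover}, the same order of parameter choices, the same $h$-independent relative gap threshold $\e\sim N^{-m_0-l_0}$, the same contradiction between the exponential gain of Proposition~\ref{notalphaholder} and the H\"older--Jensen--tightness upper bound, and essentially the same $\alpha_0$. The ``hard part'' you single out is indeed where the work lies, and the paper resolves it exactly along the lines you indicate, except that no Egorov-type argument is needed: one exhausts by the sets $B_\varrho$ of points with no $\e$-gaps at scales $\le\varrho$, picks $\varrho_0$ with $\H^1\bigl(B_{\varrho_0}\cap\gamma^{-1}(E(\omega)\setminus G)\bigr)>0$ (Lemma~\ref{smallgaps}), takes $t_0$ a density point of this set $D$, and chooses the density parameter $\delta$ of order $\bigl(\eta^2H^{-2}N^{-ql_0}\bigr)^{1/\alpha_1}$, depending on the fixed $q$, so that every subinterval of image diameter comparable to $N^{-hl_0-n}$ contains points of $D$ in both halves; the no-gap property at those points of $D$ (not merely at points of $\gamma^{-1}(E(\omega))$, which is a distinction you should make explicit) then rules out the forbidden configurations at all levels $h=0,\dots,q$ simultaneously.

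One step of your write-up fails as stated and needs repair, since both the layer-passing hypothesis of Proposition~\ref{notalphaholder} and the factor $N^{-n}$ in your final display rest on it: for the maximal symmetric interval $[t-r_0,t+r_0]$ with image in $K_{Q_n(\gamma(t))}$, only one of the endpoints $\gamma(t\pm r_0)$ need lie on $\partial K_{Q_n(\gamma(t))}$, and even if both did, they need not lie in different components of the complement of a common $(n,i)$-layer -- they can be arbitrarily close to each other, so neither ``passes through an $(n,i)$-layer'' nor $|\gamma(a)-\gamma(b)|\ge N^{-n}$ follows. The fix is easy and is what the paper implicitly arranges: take, say, $a:=t$ and $b$ the first exit time of $\gamma$ from $k_{Q_n(\gamma(t))}$ after $t$ (or argue on one side of $t$); then $\gamma(a)\in Q_n(\gamma(t))$ and $\gamma(b)\in\partial k_{Q_n(\gamma(t))}$, so $\gamma|_{[a,b]}$ passes through an $(n,i)$-layer, $|\gamma(a)-\gamma(b)|\ge N^{-n}$, $\gamma([a,b])\subset K_{Q_n(\gamma(t))}$, and $[a,b]\subset[t-r_0,t+r_0]$ so the density-point mechanism still applies. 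With that correction (and the quantitative density argument carried out), your argument coincides with the proof of Theorem~\ref{maingeneral}.
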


\begin{proof}
By Lemmas~\ref{holdercover} and \ref{tightcover}, it is enough to show that, 
for $\P$-almost all $\omega\in\Omega$, all $(\eta,\alpha,R)$-tight
$(H,\alpha)$-H\"older curves $\gamma\colon I\to\R^d$ with
$\alpha_0<\alpha\le 1$ satisfy
\[
\H^{\frac 1\alpha}(\gamma(I)\cap E(\omega))=0
\]
for all $H,\eta,R>0$.

Clearly, we may assume that $\eta\le 1\le H$. Choose $\mfc=\mfc(d)$ as in
Lemma~\ref{*iwg} and $m_1=m_1(\mfc,d,N)$ as in
Lemma~\ref{smallgain}. Select $m_0=m_0(m_1,p,d,N)$ such that
Proposition~\ref{Salphaissmall} is valid. Finally, let
$k_0=k_0(m_1,m_0,p,d,N)$ be as in Lemma~\ref{smallgain}. Then the parameters
$\mfc$, $m_1$, $m_0$ and $k_0$ satisfy the assumptions of
Proposition~\ref{notalphaholder}.
Fix $H,\eta,R>0$. Let $0<\alpha\le 1$ and assume that
$\gamma\colon I\to\R^d$ is
$(\eta,\alpha,R)$-tight $(H,\alpha)$-H\"older curve. Consider $\omega\in\Omega$
satisfying the conclusion of Proposition~\ref{Salphaissmall}. We show that the
assumption
\begin{equation}\label{positivemeasure}
\H^{\frac 1\alpha}(\gamma(I)\cap E(\omega))>0
\end{equation}
implies that $\alpha$ is bounded away from 1.

To that end,   fix  $q\in\N$ and set $\e:=(5\sqrt dd)^{-1}N^{-m_0-l_0}$.
Let $G$ be the set whose dimension is proved to be less than 1 in
Proposition~\ref{Salphaissmall}. 
By assumption~\eqref{positivemeasure} and Lemma~\ref{alphameasure},
$\H^1(\gamma^{-1}(E(\omega)\setminus G))>0$. Set
\[
B_\varrho:=\{t\in I\mid\forall r\le\varrho,\gamma\text{ has no }
  \e\text{-gaps of scale }r\text{ at }t\}.
\]
By Lemma~\ref{smallgaps}, there exists $\varrho_0>0$ such that
$\H^1\bigl(B_{\varrho_0}\cap\gamma^{-1}(E(\omega)\setminus G)\bigr)>0$.
Let $t_0\in D:=B_{\varrho_0}\cap\gamma^{-1}(E(\omega)\setminus G)$ be a density
point of $D$. For all $0<\delta<1$,
there exists $0<r_\delta\le\min\{R,\varrho_0\}$ such that, for all
$r\le r_\delta$ and $s\in[t_0-r,t_0+r]$,
we have $[s-\delta r,s+\delta r]\cap D\ne\emptyset$. Fix $0<\alpha_1<\alpha$
such that $\alpha_1<\alpha_0$, where $\alpha_0$ is defined later.
Choose $\delta:=\bigl((20\sqrt dH^2)^{-1}\eta^2N^{-ql_0}\bigr)^{\frac 1{\alpha_1}}$.
Let $r\le r_\delta$ and assume that
$\diam\gamma([t_0-r,t_0+r])\le 5\sqrt d N^{-n}$.
If $\mathopen]c,d\mathclose[\subset [t_0-r,t_0+r]\setminus D$, then
$\mathopen]c,d\mathclose[\subset [s-\delta r,s+\delta r]$ for some
$s\in[t_0-r,t_0+r]$, and
\begin{align*}
\diam\gamma([c,d])&\le\diam\gamma([s-\delta r,s+\delta r])
   \le H\delta^\alpha(2r)^\alpha\le\tfrac H\eta
   \delta^\alpha\diam\gamma([t_0-r,t_0+r])\\
 &\le\tfrac H\eta\bigl((20\sqrt dH^2)^{-1}\eta^2N^{-ql_0}
   \bigr)^{\frac \alpha{\alpha_1}}5\sqrt d N^{-n}<\tfrac\eta{4H}N^{-ql_0-n}.
\end{align*}
In particular, for all $c,d\in[t_0-r,t_0+r]$, we have that
\begin{equation}\label{denseset}
\diam\gamma([c,d])\ge\tfrac\eta{4H}N^{-ql_0-n}\implies [c,d]\cap D\ne\emptyset.
\end{equation}

By Proposition~\ref{Salphaissmall}, there exist $0<r_1\le r_\delta$ and $n\in\N$
such that every $Q'\in\Q_n(K_{Q_m(\gamma(t_0))})$ is
$(q,m_1,k_0,\mfc,m_0)$-hereditarily good,
$\gamma([t_0-r_1,t_0+r_1])\subset K_{Q_m(\gamma(t_0))}$,
$\gamma|_{[t_0-r_1,t_0+r_1]}$ passes through an $(n,i)$-layer for some
$i\in\{1,\dots,d\}$ and $\gamma|_{[t_0-r_1,t_0+r_1]}$ is an
$(\eta,\alpha,r_1)$-tight $(H,\alpha)$-H\"older curve. Suppose that there are
$t\in [t_0-r_1,t_0+r_1]$ and $r<r_1$ such that
$N^{-hl_0-n}\le\diam\gamma([t-r,t+r])\le 5\sqrt dN^{-hl_0-n}$
for some $h\in\{0,\dots,q\}$. Since
$N^{-hl_0-n}\le\diam\gamma([t-r,t+r])\le H(2r)^\alpha$ and $h\le q$, we have 
\[
\diam\gamma([\tilde t,\tilde t+\tfrac r2])\ge\eta(\tfrac r2)^\alpha
   \ge\tfrac\eta{4^\alpha H}N^{-ql_0-n}\ge\tfrac\eta{4H}N^{-ql_0-n}
\]
for all $\tilde t\in [t-r,t+r]$. By \eqref{denseset}, we conclude that there are
$\tilde t_1\in [t-\frac r2,t]\cap D$ and $\tilde t_2\in [t,t+\frac r2]\cap D$.
Since $\tilde t_1\in B_{\varrho_0}$, we have, for all
$\tilde a,\tilde b\in [t-r,\tilde t_1+(\tilde t_1-(t-r))]$ with
$\gamma(\mathopen]\tilde a,\tilde b\mathclose[)\cap E(\omega)=\emptyset$, that
\begin{align*}    
|\gamma(\tilde a)-\gamma(\tilde b)|&<\e\diam
   \gamma([\tilde t_1-(\tilde t_1-(t-r)),\tilde t_1+(\tilde t_1-(t-r))])\\
 &\le\e 5\sqrt dN^{-hl_0-n}\le\tfrac{5\sqrt d}{5\sqrt dd}N^{-m_0-l_0}N^{-hl_0-n},
\end{align*}
and similarly for $\tilde a,\tilde b\in [\tilde t_2-(t+r-\tilde t_2),t+r]$.
Since
\[
[t-r,t+r]\subset [t-r,\tilde t_1+(\tilde t_1-(t-r))]
\cup [\tilde t_2-(t+r-\tilde t_2),t+r],
\]
$\gamma|_{[t_0-r_1,t_0+r_1]}$ satisfies the assumptions of
Proposition~\ref{notalphaholder}.

Set $L:=C_0N^{l_0}$, and denote by $a_i$, $i=1,\dots,\widetilde M+1$, the
increasing sequence of division points given by
Proposition~\ref{notalphaholder} including the points $a_1=t_0-r_1$ and
$a_{\widetilde M+1}=t_0+r_1$. If $\widetilde M<3L^{q+1}$, add extra division points
to obtain points $a_1,\dots,a_f=t_0+r_1$ with $f-1=3L^{q+1}$.  We remind that
the points $a_1$ and $a_f$ depend on $q$ via the definition of $\delta$.  By
Proposition~\ref{notalphaholder}, $(H,\alpha)$-H\"older continuity, Jensen's
inequality and $(\eta,\alpha,r_1)$-tightness, we obtain that
\begin{align*}
&(1+C_3N^{-2m_0})^{q+1}|\gamma(a_1)-\gamma(a_f)|\le\sum_{i=1}^{3L^{q+1}}
  |\gamma(a_i)-\gamma(a_{i+1})|\le H\sum_{i=1}^{3L^{q+1}}|a_i-a_{i+1}|^\alpha\\
&\le H3L^{q+1}(3^{-1}L^{-q-1}|a_1-a_f|)^\alpha=H3^{1-\alpha}L^{(1-\alpha)(q+1)}
  |a_1-a_f|^\alpha\\
&\le\eta^{-1}H3^{1-\alpha}L^{(1-\alpha)(q+1)}\diam(\gamma([a_1,a_f]))
   \le\eta^{-1}H3L^{(1-\alpha)(q+1)}5\sqrt d|\gamma(a_1)-\gamma(a_f)|.
\end{align*}
Hence $(1+C_3N^{-2m_0})^{q+1}\le C(H,\eta,d)(L^{1-\alpha})^{q+1}$, which is a
contradiction for large $q$ provided that
$(C_0N^{l_0})^{1-\alpha}=L^{1-\alpha}<1+C_3N^{-2m_0}$. Therefore, $\alpha_0$ can be
chosen to be the solution of the equation
\[
(C_0N^{l_0})^{1-\alpha}=1+C_3N^{-2m_0}.
\]
\end{proof}

To conclude, we pose a natural open question related to the results of Broman
et al.~\cite{BCJM} described in the Introduction.

\begin{question}
Is it possible to have $\alpha_0=\beta$, where $\beta$ is the constant obtained
by Broman et al.~in \cite{BCJM}? That is, is it true that
$\mathcal H^{\frac 1\alpha}(E\cap\gamma(I))=0$ for all $\alpha$-H\"older curves
$\gamma\colon I\to\R^d$ with $\beta<\alpha\le 1$?
\end{question}

\medskip

\section{Appendix A: Definition of  zoom levels}\label{appendixa}

In this appendix, we construct the sequences $\bL^i:=\bL^i(m_1,k_0)$, $i\in\N$,
used in Sections~\ref{hereditarilyexists}--\ref{lengthincrease} to
determine the appropriate zoom levels,
 and prove their basic properties. 
The explicit construction of the sequences $\bL^i$ is given in
Definition~\ref{j-levels} below, see also Figure~\ref{fig:levels}.
Note  that, according to Definition~\ref{levels}, the sequence $\bL$ determines
the levels with information about the distribution of bad cubes and
$\Delta_j:=L_{j-1}-L_j$ is the number of levels between a
$(j-1,\bL,\mfc,m_0)$-bad cube and its $(j,\bL,\mfc,m_0)$-good parent cube.
In Section~\ref{hereditarilyexists}, we  saw  that the larger the step
size $\Delta_j$, the smaller the probability that a given cube at level
$L_{j-1}$ is $(j-1,\bL,\mfc,m_0)$-bad. Hence, we will consider sequences with
increasing $\Delta_j$. However, increasing $\Delta_j$ increases the total number
of subcubes at level $L_{j-1}$ which, in turn, increases the probability that
some cubes at level $L_{j-1}$ are $(j-1,\bL,\mfc,m_0)$-bad and, therefore,
a balance between these two competing phenomena will be needed. 

 In addition to probability estimates, the sequences $\bL^i$ are used to
determine appropriate scales for broken line approximations of curves.
In Section~\ref{brokenlinesection}, we construct broken line approximations
for curves using information of the distribution of good cubes given by
hereditarily good cubes (see Definition~\ref{hereditary}). In order to do that,
it is essential that the zooming levels determined by different sequences
$\bL^i$ are ``synchronised'', see Figure~\ref{fig:levels}. Since  we are
zooming in as we go down into the fractal set, the level zero is on the top of
the figure and the levels are going downwards. In order to simplify the
notation, we are not adding negative signs to these levels. The basic idea
 is  as follows: We start with the first
basic block of $l_0$ successive levels, which will be covered by a decreasing
sequence $\bL^0$ with linearly increasing step size. The basic block is used to
obtain a macroscopic increase in length for a broken line approximation.
For the purpose of obtaining an exponential increase of length, basic blocks are
utilised iteratively as follows: We proceed by adding the second basic block
below the first one and by defining $\bL^1$ by means of the same  step sizes
 $\Delta_j$ as for $\bL^0$ but starting from the level $2l_0$ instead of
 level  $l_0$. Once the level $l_0$ is reached, we continue increasing
the step size linearly but round up the sizes so that we end up using a subset
of the levels utilised for $\bL^0$. Continue inductively by defining the
sequence $\bL^i$ similarly starting from level $(i+1)l_0$ and using the levels
utilised for $\bL^{i-1}$. For an illustration, see Example~\ref{j-levelsexample}
and Figure~\ref{fig:levels} below.  Once  the step size $l_0$ is
reached, we change our strategy by allowing several steps of the same size
followed by an exponential increase in the step size. This is explained in
Construction~\ref{blocksteps} and represented pictorially in Figure~\ref{figX}.

\begin{definition}\label{j-levels}
Fix  $k_0,m_1\in\N\setminus\{0\}$  and set
$l_0:=m_1(1+2+\dots+k_0)$. For every $i\in\N$, define inductively a finite
decreasing sequence $\bL^i(m_1,k_0):=(L^i(m_1,k_0)_j)_{j=0}^{k_i}$ of nonnegative
integers as follows: Set 
\begin{equation}\label{L0def}
\LL{0}{j}:=l_0-m_1(0+1+2+\dots+j)\text{ for }j=0,1,\dots,k_0.
\end{equation}
 Assume that the sequence $\bL^{i-1}(m_1,k_0)$ is defined for  some
$i\in\N\setminus\{0\}$. In order to  define $\bL^i(m_1,k_0)$,   
we distinguish  two cases.

\emph{Case 1}: $m_1 k_{i-1}<l_0$. In this case, let 
\begin{equation}\label{initialdef1}
\LL{i}{j}:=l_0+\LL{i-1}{j}\text{ for }j=0,\ldots,k_{i-1}.
\end{equation}
 When $j>k_{i-1}$, we define $\LL{i}{j}$ inductively.
If $\LL{i}{j-1}$ is defined, let 
\begin{equation}\label{roundupgeneral}
\begin{aligned}
\LL{i}{j}:=\max\{\LL{i-1}{l} \mid\, &  \LL{i-1}{l}\le\LL{i}{j-1}-m_1 j,\\
  & l\in\{0,\ldots,k_{i-1}\}\}
\end{aligned}
\end{equation}
provided we have $\LL{i}{j}\ge m_1 (j+1)$. Otherwise, set $k_i:=j$ and
let $\LL{i}{j}:=0$ be  the last element of the sequence $\bL^i(m_1,k_0)$.

\emph{Case 2}: $m_1 k_{i-1} \ge l_0$. In this case, let 
\begin{equation}\label{initialdef2}
\LL{i}{j}:=l_0+\LL{i-1}{j}\text{ for }j=0,\ldots,\frac{l_0}{m_1}-1.
\end{equation}
 When $j\ge \frac{l_0}{m_1}$, we define $\LL{i}{j}$
inductively by  setting  
\begin{equation}\label{e:Delta}
\LL{i}{j}:=\LL{i}{j-1} - \DD{i}{j},
\end{equation}
where the values of $\DD{i}{j}$ will be specified later in
Construction~\ref{blocksteps} and  are 
illustrated in Figure~\ref{figX}.
\end{definition}

\begin{example}\label{j-levelsexample}
We calculate the sequences $\bL^0(m_1,k_0)$, $\bL^1(m_1,k_0)$ and
$\bL^2(m_1,k_0)$ when
$k_{0}=8$ and illustrate them in Figure~\ref{fig:levels}. In this case,
$L^0(m_1,k_0)_0=L^0(m_1,8)_0=l_0=36\cdot m_1$, $L^0(m_1,8)_1=35\cdot m_1$,
$L^0(m_1,8)_2=33\cdot m_1$, $L^0(m_1,8)_3=30\cdot m_1$,
$L^0(m_1,8)_4=26\cdot m_1$, $L^0(m_1,8)_5=21\cdot m_1$,
$L^0(m_1,8)_6=15\cdot m_1$, $L^0(m_1,8)_7=8\cdot m_1$ and
$L^0(m_1,8)_8=0\cdot m_1$. These numbers
define the levels in the first column in Figure~\ref{fig:levels}. For
$j=0,\dots,8$, we have that $L^1(m_1,8)_j=36\cdot m_1+L^0(m_1,8)_j$. Since
$36\cdot m_1-9\cdot m_1=27\cdot m_1$ and the largest number in the sequence
$L^0(m_1,8)_j$ not exceeding $27\cdot m_1$ is $26\cdot m_1$, we conclude that
$L^1(m_1,8)_9=26\cdot m_1$. Further, $26\cdot m_1-10\cdot m_1=16\cdot m_1$
giving $L^1(m_1,8)_{10}=15\cdot m_1$, and $15\cdot m_1-11\cdot m_1=4\cdot m_1$
yielding $L^1(m_1,8)_{11}=0\cdot m_1$. So $k_1=11$. This gives the second
column in Figure~\ref{fig:levels}. For the third column in
Figure~\ref{fig:levels}, we calculate $36\cdot m_1-12\cdot m_1=24\cdot m_1$ and
take the largest element in the middle
column not exceeding it (keeping in mind that we are zooming into our fractal,
so in Figure~\ref{fig:levels} levels increase downwards). In this way, we
obtain $L^2(m_1,k_0)_{k_1+1}=L^2(m_1,k_0)_{12}=15\cdot m_1$. Finally,
$15\cdot m_1-13\cdot m_1=2\cdot m_1$
and hence we need to take $L^2(m_1,k_0)_{13}=0\cdot m_1$ and $k_2=13$. 
\end{example}

Before we specify the values of $\DD{i}{j}$ when $k_{i-1}\ge m_1 l_0$ and
$j\ge \frac{l_0}{m_1}$, we introduce the notation
$\DD{i}{j}$ for all $i$ and $j$ in harmony with \eqref{e:Delta}.

\begin{definition}\label{d:Delta}
Let
\[
\DD{i}{j}:=\LL{i}{j-1}-\LL{i}{j}\text{ for all }i\in\N\text{ and }
j=1,\ldots,k_i.
\]
\end{definition}

Note that in Definition~\ref{j-levels} (depending on the parameters $k_0$ and
$m_1$) we already specified the values of $\DD{i}{j}$ when $m_1 k_{i-1}<l_0$ or
$j<\frac{l_0}{m_1}$ and, in order to complete the definition of the sequences
$\bL^i(m_1,k_0)$, we need to specify the values of $\DD{i}{j}$
for the rest of the cases. 

\begin{figure}[h]
\centering{
\resizebox{1\textwidth}{!}{
\scalebox{1} 
{
\begin{pspicture}(0,0)(22,24)

\usefont{T1}{ptm}{m}{n}
\rput(1.0,23){$0=L^0(m_1,k_0)_{k_0}$}
\rput(10.0,23.6){$L^1(m_1,k_0)_{k_1}$}
\rput(19.3,23){$L^2(m_1,k_0)_{k_2}$}

\rput(1,15.8){$l_0=L^0(m_1,k_0)_{0}$}
\rput(9.0,15.3){$L^1(m_1,k_0)_{k_0}$}
\rput(19.3,15.8){$L^2(m_1,k_0)_{k_1}$}

\rput(8.3,8.6){$2l_0=L^1(m_1,k_0)_{0}$}
\rput(19.3,8.6){$L^2(m_1,k_0)_{k_0}$}

\rput(19.8,1.4){$3l_0=L^2(m_1,k_0)_{0}$}

\psline[linewidth=0.08cm](3,15.800000)(4,15.800000)
\psline[linewidth=0.08cm](10,8.600000)(11,8.600000)
\psline[linewidth=0.08cm](17,1.400000)(18,1.400000)
\psline[linewidth=0.08cm](3,16.000000)(4,16.000000)
\psline[linewidth=0.08cm](10,8.800000)(11,8.800000)
\psline[linewidth=0.08cm](17,1.600000)(18,1.600000)
\psline[linewidth=0.08cm](3,16.400000)(4,16.400000)
\psline[linewidth=0.08cm](10,9.200000)(11,9.200000)
\psline[linewidth=0.08cm](17,2.000000)(18,2.000000)
\psline[linewidth=0.08cm](3,17.000000)(4,17.000000)
\psline[linewidth=0.08cm](10,9.800000)(11,9.800000)
\psline[linewidth=0.08cm](17,2.600000)(18,2.600000)
\psline[linewidth=0.08cm](3,17.800000)(4,17.800000)
\psline[linewidth=0.08cm](10,10.600000)(11,10.600000)
\psline[linewidth=0.08cm](17,3.400000)(18,3.400000)
\psline[linewidth=0.08cm](3,18.800000)(4,18.800000)
\psline[linewidth=0.08cm](10,11.600000)(11,11.600000)
\psline[linewidth=0.08cm](17,4.400000)(18,4.400000)
\psline[linewidth=0.08cm](3,20.000000)(4,20.000000)
\psline[linewidth=0.08cm](10,12.800000)(11,12.800000)
\psline[linewidth=0.08cm](17,5.600000)(18,5.600000)
\psline[linewidth=0.08cm](3,21.400000)(4,21.400000)
\psline[linewidth=0.08cm](10,14.200000)(11,14.200000)
\psline[linewidth=0.08cm](17,7.000000)(18,7.000000)
\psline[linewidth=0.08cm](3,23.000000)(4,23.000000)
\psline[linewidth=0.08cm](10,15.800000)(11,15.800000)
\psline[linewidth=0.08cm](17,8.600000)(18,8.600000)
\psline[linewidth=0.08cm](10,15.800000)(11,15.800000)
\psline[linewidth=0.08cm,linestyle=dotted,dotsep=0.08cm](4,15.800000)(10,15.800000)
\psline[linewidth=0.08cm](17,8.600000)(18,8.600000)
\psline[linewidth=0.08cm,linestyle=dotted,dotsep=0.08cm](11,8.600000)(17,8.600000)
\psline[linewidth=0.08cm](10,17.800000)(11,17.800000)
\psline[linewidth=0.08cm,linestyle=dotted,dotsep=0.08cm](4,17.800000)(10,17.800000)
\psline[linewidth=0.08cm](17,10.600000)(18,10.600000)
\psline[linewidth=0.08cm,linestyle=dotted,dotsep=0.08cm](11,10.600000)(17,10.600000)
\psline[linewidth=0.08cm](10,20.000000)(11,20.000000)
\psline[linewidth=0.08cm,linestyle=dotted,dotsep=0.08cm](4,20.000000)(10,20.000000)
\psline[linewidth=0.08cm](17,12.800000)(18,12.800000)
\psline[linewidth=0.08cm,linestyle=dotted,dotsep=0.08cm](11,12.800000)(17,12.800000)
\psline[linewidth=0.08cm](10,23.000000)(11,23.000000)
\psline[linewidth=0.08cm,linestyle=dotted,dotsep=0.08cm](4,23.000000)(10,23.000000)
\psline[linewidth=0.08cm](17,15.800000)(18,15.800000)
\psline[linewidth=0.08cm,linestyle=dotted,dotsep=0.08cm](11,15.800000)(17,15.800000)
\psline[linewidth=0.08cm](17,15.800000)(18,15.800000)
\psline[linewidth=0.08cm,linestyle=dotted,dotsep=0.08cm](11,15.800000)(17,15.800000)
\psline[linewidth=0.08cm](17,20.000000)(18,20.000000)
\psline[linewidth=0.08cm,linestyle=dotted,dotsep=0.08cm](11,20.000000)(17,20.000000)
\psline[linewidth=0.08cm](17,23.000000)(18,23.000000)
\psline[linewidth=0.08cm,linestyle=dotted,dotsep=0.08cm](11,23.000000)(17,23.000000)
\end{pspicture}
}}}
\caption{Definition of $\bL^i(m_1,k_0)$ for $i=0$, 1 and 2, when $k_0=8$.}
\label{fig:levels}
\end{figure}
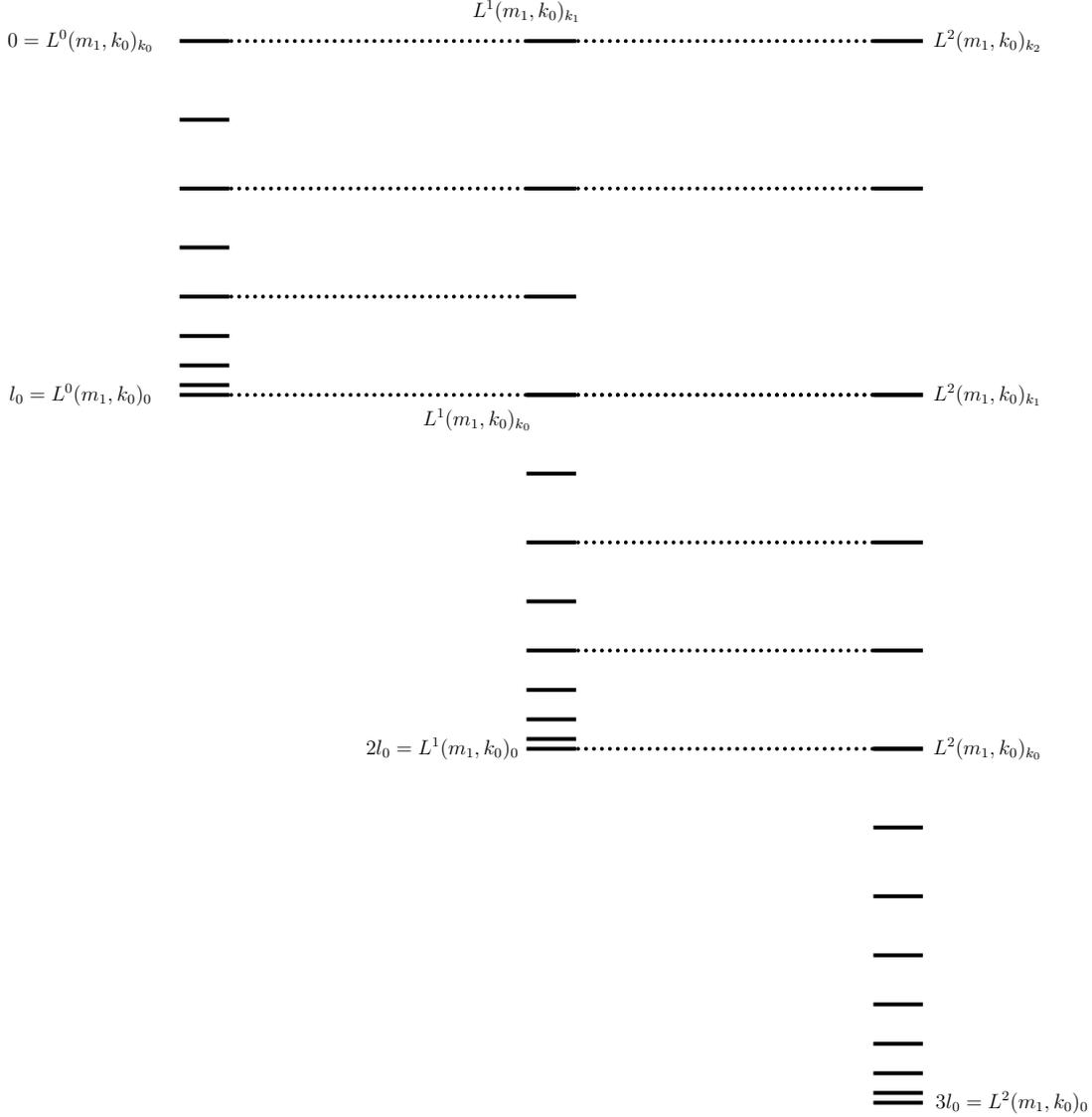

\begin{figure}[h]
\centering{
\resizebox{1\textwidth}{!}{\input{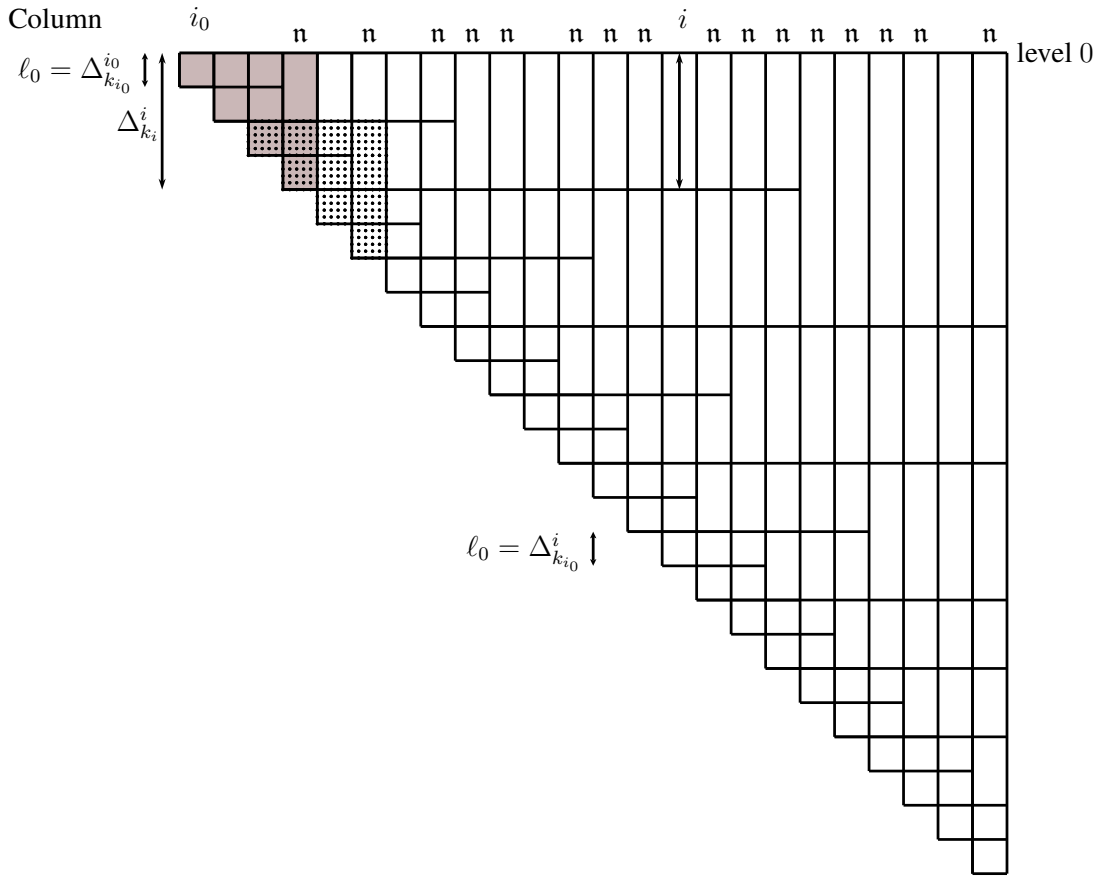}}}
\caption{Illustration of Construction \ref{blocksteps}.}
\label{figX}
\end{figure}

\begin{construction}\label{blocksteps}
Each column in Figure~\ref{figX} consists of a finite number of rectangles
called blocks. The columns are labelled by $i_0,i_0+1,\dots$ referring to the
sequences defined in Definition~\ref{j-levels}, where $i_0$ is
determined via the formula
\begin{equation}\label{defofi0}
m_1k_{i_0}=l_0.
\end{equation}
 From \eqref{roundupgeneral} we obtain for all $i=0,\dots,i_0$  that
\begin{equation}\label{*dj1}
\Delta_j^i\geq m_1j\text{ for all }j=k_{i-1}+1,...,k_{i},
\end{equation}
where we have used the convention $k_{-1}:=0$. 

In the $i$-th column the heights of the blocks determine $\Delta_j^i$ with
$j= k_{i_0},\dots,k_i$, the lowest and the highest ones being
$\Delta_{k_{i_0}}^i$ and $\Delta_{k_i}^i$, respectively (see Figure~\ref{figX}).
In every column, each block has height of the form $2^nl_0$ for some $n\in\N$,
the lowest block having height $l_0$, and the heights form
a non-decreasing sequence while moving upwards. We enumerate the blocks
such that the lowest one is the first, the second lowest one is the
second etc. Note that the $i$-th column starts from level $(i-i_0+1)l_0$ and
ends at level $0$. 

For all $i\in\mathbb N$ with $i\ge i_0$, let $p_i$ be the number of blocks in
column $i$. We choose the numbers $p_i$ and the heights of the blocks
such that the following properties hold:
\begin{align}
  &\text{Either } p_{i+1}=p_i \text{ or } p_{i+1}=p_i+1\text{ for all } i.
  \label{pchoice}\\
  &\text{In each column the block height is increased by factor 2 from }
  2^nl_0\text{ to }\label{nblock}\\
  &2^{n+1}l_0 \text{ when the inequality } p_i\le 2(n+1)+1
  \text{ would otherwise be violated}\notag.
\end{align}
For a detailed explanation see below. Note that \eqref{nblock} guarantees
that in each column the height of the 
$j$-th block is approximately $\sqrt 2^jl_0$.

Figure~\ref{figX} is constructed as follows: For simplicity, the common factor
$l_0$ is omitted from the notation. Clearly, the column $i_0$ consists of one
block of height $1=2^0$, and the column $i_0+1$ consists of two blocks
of height $1=2^0$, giving $p_{i_0}=1$ and $p_{i_0+1}=2$. In column $i_0+2$ we may
still utilise blocks of height $1=2^0$, giving $p_{i_0+2}=3$, since
$p_{i_0+2}=3\le 2(0+1)+1$ satisfying \eqref{nblock}. However, the column $i_0+3$
cannot consist of four blocks of height 1 since $4>2(0+1)+1$. Instead, we
use two blocks of height 1 and one block of height 2
in accordance with \eqref{pchoice} and \eqref{nblock}, giving $p_{i_0+3}=3$.
In Figure~\ref{figX} we indicate the fact that $p_{i_0+3}=p_{i_0+2}$ by the symbol
$\mfn$ on top of the column $i_0+3$. Note that in the column $i_0+3$ two blocks
of height 1 from the column $i_0+2$ are ``merged'' into one block of height 2.

We proceed by translating the pattern of the four columns obtained so far by
two steps along the diagonal, that is, the block of height 1 in the column
$i_0$ is moved to the lowest block in column $i_0+2$. In Figure~\ref{figX} the
grey shaded part is translated to the dotted one. 
The translated columns $i_0+2$ and $i_0+3$ form a part of the columns
$i_0+4$ and $i_0+5$ which are completed by adding a block of height 2 as the
topmost block according to \eqref{pchoice} and \eqref{nblock}.
Then  $p_{i_0+4}=4=p_{i_0+5}$, as indicated by the symbol $\mfn$ above column
$i_0+5$ in Figure~\ref{figX}.

Next we apply the same translation to the pattern of the six columns constructed
above and complete the columns $i_0+6$ and $i_0+7$ by adding a block of
height 2 as the topmost block in accordance with \eqref{pchoice} and
\eqref{nblock}. This leads to $p_{i_0+6}=5=p_{i_0+7}$, as indicated by $\mfn$
above column $i_0+7$ in Figure~\ref{figX}.

Next we need to modify the translation we are utilising. This is due to the
fact that shifting the column $i_0+6$ onto the lower part of the column
$i_0+8$ would imply $p_{i_0+8}=6$ violating Inequality~\eqref{nblock}
since $6>2\cdot(1+1)+1$. Therefore, we translate
the pattern of the first eight rows by four steps along the diagonal, that is,
the column $i_0$ is moved onto the lowest block in column $i_0+4$, and complete
the incomplete columns by adding a block of height $2^2$ to the top of each of
them in accordance with \eqref{pchoice} and \eqref{nblock}.

We complete the construction inductively by translating the initial pattern by
$2^{n+1}$ steps along the diagonal, whenever utilising the shift of $2^n$ steps
leads to a contradiction with \eqref{nblock}, and by placing blocks of height
$2^{n+1}$ to the top of each incomplete blocks. 
The symbol $\mfn$ is used above every column where $p_i$ is not increased.
These columns are called $\mfn$-columns. $\qed$
\end{construction}

\begin{remark}\label{k0versusk1}
(a) Observe that $l_0=\frac{m_1}2k_0(1+k_0)\approx\frac{m_1}2k_0^2$.
Without the round up process \eqref{roundupgeneral},
by simply defining $\LL{i}{j}$ as $\LL{i}{j-1}-m_1j$
for $j>k_{i-1}$ provided $\LL{i}{j}\ge m_1(j+1)$,
we would have
$l_0=\frac{m_1}2(k_1-k_0)(k_0+1+k_1)\approx\frac{m_1}2(k_1^2-k_0^2)$,
leading to $k_1\approx\sqrt 2k_0$ and, more precisely,
$k_1\le\frac 32 k_0$. Since the round up process \eqref{roundupgeneral}
reduces the number of steps needed to
reach the value 0 for $L^1(m_1,k_0)_j$, the number $k_1$ is, in fact, smaller
than the above calculation indicates. In particular, $k_1\le\frac 32 k_0$.
Similarly, we obtain that $k_2\approx\sqrt 3k_0<2k_0$. 

(b) We claim that,  for all $i\in\N$,
\begin{equation}\label{first1}
\Delta_j^i=m_1j\,\text{ for }j=1,\dots,k_0 \text{ and}
\end{equation}
\begin{equation}\label{first2}
\Delta_j^i \le (i+1)m_1j \le j^2m_{ 1}\,\text{ for } j=k_{i-1}+1,\dots,k_i<k_{i_0}. 
\end{equation}
Indeed, \eqref{first1} follows directly from the definition. The first
inequality in \eqref{first2} follows easily by induction since  
$\Delta_j^i\le m_1j+\Delta_l^{i-1}$ for some $l<j$ by \eqref{roundupgeneral},
see also Figure~\ref{fig:levels}. Since $k_{i'}>k_{i'-1}$ for all $i'\le i_0$ and
$k_0\ge 1$,  we have that $k_i\ge i+1$. This implies  the second inequality in
\eqref{first2}. Further, for $j=k_{i_0}+l$, \eqref{nblock} and \eqref{defofi0}
imply that
\begin{equation}\label{second}
\Delta_{k_{i_0}+l}^i\le\sqrt 2^ll_0=\sqrt 2^lm_1k_{i_0}\le 2\sqrt 2^{k_{i_0}+l}m_1.
\end{equation}
Combining Inequalities~\eqref{first1}, \eqref{first2} and \eqref{second}, we
conclude that  
\begin{equation}\label{stepestimate2}
  \Delta_j^i\le 5\sqrt 2^jm_1\,\text{ for all }i\in\N\text{ and }
  j=1,\dots,k_i,
\end{equation}
that is, \eqref{stepestimate} holds.
\end{remark}

\medskip

Next we estimate the number of blocks of height $2^nl_0$ ending at a given level
and derive lower bounds for $p_i$  and $k_i$.

\medskip

\begin{lemma}\label{samesizenumber} Consider Construction~\ref{blocksteps}
illustrated in Figure~\ref{figX}.  
For every $n,j\in\N$, let $y_j^n$ be the number of blocks of height $2^nl_0$
having upper side at level $jl_0$. Then
\begin{equation}\label{ybound}
y_j^n\le 3\cdot 2^n.
\end{equation}
 Further,
\begin{equation}\label{pibound}
p_i\ge\max\{2\log_2(i-i_0-3)-3,3\}\,\text{ for all }i\ge i_0+3.
\end{equation}
 Finally, provided that $k_0\ge 3$, we have that
\begin{align}
k_i&\ge i+3\,\text{ for all }\,i\le i_0+2\,\text{ and} \label{j-leva2}\\
k_{i_0+l}&\ge 2\log_2(i_0+l+1)\,\text{ for all }\,l\ge 3.\label{j-levb2}
\end{align}
\end{lemma}

\begin{proof}   
By the self-repeating structure of Figure~\ref{figX}, we have $y_j^n\le y_0^n$ for
all $j\in\N$. Thus, it is enough to prove the case $j=0$. As in
Construction~\ref{blocksteps}, for notational simplicity, the common factor
$l_0$ of the heights is omitted in what follows.

Clearly, the claim is true when $n=0$, since $y_0^0=3$. In the case $n\ge 1$, 
the proof is based on counting the number of $\mfn$-columns.
Let $i_n$ be the column where a block of height $2^n$ appears for
the first time. Note that $y_0^n=i_{n+1}-i_n$. By construction, the column $i_n$
is an $\mfn$-column. Let $x_n$ be the number of columns in the
maximal chain of successive $\mfn$-columns including the column
$i_n$. Then $x_1=1$ by Figure~\ref{figX}. 

Since the columns $i_0+3$ and $i_0+5$ are $\mfn$-columns (see
Figure~\ref{figX}), the
self-repeating structure implies that each column $i_0+2i+1$ is an 
$\mfn$-column when $i\ge 1$, that is, the number of blocks is not
increased between the columns $i_0+2i$ and  $i_0+2i+1$ for $i\ge 1$.
In particular, the height of the topmost block is never increased by factor two
in columns $i_0+2i+1$ for $i\ge 2$. (This only happens in column $i_0+3$.)
Furthermore, by Construction~\ref{blocksteps}, introducing a block of height
$2^2$ in column $i_2$ produces three successive $\mfn$-columns
$i_2-1$, $i_2$ and $i_2+1$, that is, $x_2=3$.

Again, by the self-repeating structure of Figure~\ref{figX}, there will be three
successive $\mfn$-columns $i_0+4i-1$, $i_0+4i$ and $i_0+4i+1$ for all $i\ge 2$.
In general, the self-repeating structure implies that there are $x_{n-1}$
successive $\mfn$-columns before and after the column $i_n$, and moreover,
the column $i_n$ is an $\mfn$-column. Therefore, $x_n=2x_{n-1}+1$. Since
$x_1=1$, we conclude that $x_n=2^n-1$ for all $n\ge 1$.

From \eqref{nblock} it follows that $p_{i_n}=2n+1$ and, thus,
$p_{i_{n+1}}=p_{i_n}+2$. Utilising the self-repeating
structure of Figure~\ref{figX}, we deduce that after the column $i_n$ there are
$x_{n-1}$ successive $\mfn$-columns followed by a column which is not an
$\mfn$-column, then $x_n$ successive $\mfn$-columns followed by a column
which is not an $\mfn$-column and after that again $x_n$ successive
$\mfn$-columns. Since $p_{i_{n+1}}=p_{i_n}+2$, the next one is the column $i_{n+1}$.
This implies that
\[
y_0^n=1+x_{n-1}+1+x_n+1+x_n=3+2^{n-1}-1+2(2^n-1)=\frac 52\cdot 2^n,
\]
completing the proof of  \eqref{ybound}. 

We observed above that $p_{i_n+x_{n-1}}=2n+1$ and
$p_{i_n+x_{n-1}+1}=2n+2$. Moreover,
\[
i_n=i_{n-1}+y_0^{n-1}=i_1+\sum_{j=1}^{n-1}y_0^j=i_0+3+\frac 52\sum_{j=1}^{n-1}2^n
   =i_0+3+5(2^{n-1}-1),
\]
giving $i_n+x_{n-1}= i_0+3+6(2^{n-1}-1)$. Writing $i=i_0+3+6(2^{n-1}-1)$, we
conclude that
\[
p_i\ge 2\log_2(\tfrac 16(i-i_0-3)+1)+3\ge 2\log_2(i-i_0-3)-3.
\]
This implies  \eqref{pibound}  since $p_{i_0+3}=3$.

 According to Definition~\ref{j-levels} (see the line after
\eqref{roundupgeneral}), we have that $k_i\ge k_{i-1}+1$ for all $i\le i_0$. This
is true also for $i=i_0+1$ and $i=i_0+2$ by Figure~\ref{figX}. Therefore,
$k_i\ge i+3$ for all $i\le i_0+2$ provided that $k_0\ge 3$, completing the proof
of \eqref{j-leva2}. To prove \eqref{j-levb2}, observe that, by the definition of
$p_i$, we get $k_{i_0+l}=k_{i_0}-1+p_{i_0+l}$ for all $l\in\N$. Thus,
combining \eqref{pibound} and \eqref{j-leva2}, we obtain
\[
k_{i_0+l}\ge i_0+2+\max\{2\log_2(l-3)-3,3\}\,\text{ for all }\,l\ge 3.
\]
Inequality~\eqref{j-levb2} follows from this by elementary calculations using
the fact that condition $k_0\ge 3$ implies that $i_0\ge 3$, which can be easily
checked from Definition~\ref{j-levels} and \eqref{defofi0}.
\end{proof}

\section{Appendix B: Broken line approximations}\label{brokenlinesection}

In this section, we derive one of our main tools -- a special
algorithm for constructing broken line approximations, having exponentially
increasing arc lengths at different scaling levels, for curves $\gamma$ which
are close to the fractal percolation set. Since the algorithm is quite
technical, we first try to give a heuristic outline to prepare the reader
to what is coming up in the  next two sections.

 Our final goal is to prove Proposition \ref{notalphaholder}.  
Definitions~\ref{levels},  \ref{hereditary}  and \ref{j-levels} are utilised
for the purpose of defining the appropriate scaling levels. The
reason behind the length gain is quite simple: If $\gamma$ passes through an
$m_0$-good cube (recall Definition~\ref{good}) and is close to the fractal
percolation set, it has to go around a strongly deleted cube. This increases
the arc length slightly compared to a straight line. The macroscopic increase in
length is a consequence of the fact that $m_0$-good cubes are abundant and
uniformly spread. To achieve this, we need to define several concepts depending
on $\omega\in\Omega$ and based on the definitions of $m_0$-good and $m_0$-bad
cubes.  If we had $m_0$-good cubes everywhere,  the proof  would be
straightforward. However, as illustrated on Figure~\ref{fig41}, there might be 
blue cubes without sufficiently many, sufficiently uniformly distributed
$m_0$-good cubes to guarantee that the lengths of the
broken line approximations  increase at an exponential rate. In
addition to this,  there might also occur  blue  and red  cubes coming from
 various 
scaling levels. We will paint our curve to mark on it the parts where we have
some ``bad'' sections. The parts staying white will be good, the blue sections
are bad, but not too bad, and the red sections are very bad. 

The construction of the broken line approximation is carried through
using five algorithms: colouring cubes (Algorithm \ref{colouring}),
priming curves with colour (Algorithm \ref{priming}),
primed curve modification (Algorithm \ref{modification}),
layer division (Algorithm \ref{layerdivision}) and painting curves
(Algorithm \ref{painting}). Roughly speaking, as a result of a repeated
application of these algorithms, we will end up with a modified painted curve
where sections with various properties are distinguished
by different colours. We also obtain some division points plus some $c$-points
on our curves at different zoom levels. The broken line approximation to our
curve will be determined by the division and the $c$-points. The $c$-points
will be responsible for the length gain of our broken line approximation
as we move to finer and finer broken line approximations. The existence of
these $c$-points is due to the fact that there are gaps in the fractal
percolation set $E(\omega)$ and we work with curves which cannot do large jumps
over gaps in $E(\omega)$. The increase of length will be achieved in white
sections, allowing us to iterate our construction. The blue colour also
indicates that the construction may be iterated in the corresponding sections
even though there is neither length gain, nor length loss. In red sections
we are unable to iterate the construction. These sections will be
disregarded when estimating the length of the broken line approximation.
In Proposition \ref{paintedinhereditarily}  below,  we will verify that
the white and blue sections make up most of our broken line approximation. The
probability estimates  were  given in Section~\ref{hereditarilyexists}.

 Next  we describe a process of colouring cubes that will be
used as a tool for constructing broken line approximations. As the result
of the colouring process, we will have blue cubes and $i$-red cubes for
different values of $i\ge 1$ . One can think of $i$-red colours as
different shades of red.

\begin{algorithm}\label{colouring}{\bf (Colouring cubes).}
Fix $\omega\in\Omega$ and $m_0,\mfc,k_0,m_1\in\N\setminus\{0\}$. Let
$n,q\in\mathbb N$ and let $\bL^i(m_1,k_0)$  and $\Delta_j^i$ be as in
Definitions~\ref{j-levels} and \ref{d:Delta} for all $i=0,\dots,q$ and
$j=1,\dots,k_i$.  Suppose that $I\subset\{0,\dots,q\}$ and
$\tilde k_i\in\{1,\dots,k_i\}$ for all $i\in I$. Assume that $Q\in\Q_n$ is
$(\tilde k_i,\bL^i(m_1,k_0),\mfc,m_0)$-good for all $i\in I$.

If $0\in I$, then $Q$ is $(\tilde k_0,\bL^0(m_1,k_0),\mfc,m_0)$-good and there
are at most $\mfc$ cubes $Q'\in\Q_{n+\Delta_{\tilde k_0}^0}(Q)$ that are
$(\tilde k_0-1,\bL^0(m_1,k_0),\mfc,m_0)$-bad. If such $Q'$ exist, colour them in
blue and attach a blue label $n$ to them to denote the level of the parent cube
$Q$ of $Q'$. In this case, $Q$ is called {\it the blue-labelled parent of $Q'$}
and it is denoted by $\lpb(Q')$.

Similarly, for every $i\in I$ with $i\ge 1$, there are at most $\mfc$ cubes
$Q^{(i+1)}\in\Q_{n+\Delta_{\tilde k_i}^i}(Q)$ that are
$(\tilde k_i-1,\bL^i(m_1,k_0),\mfc,m_0)$-bad. Colour them in $i$-red
and attach an $i$-red label $n$ to them to denote the level of the parent cube
$Q$ of $Q^{(i+1)}$. As above, $Q$ is called {\it the $i$-red-labelled parent of
$Q^{(i+1)}$} and it is denoted by $\lpi(Q^{(i+1)})$. $\qed$
\end{algorithm}

We proceed by giving algorithms for the purpose of constructing
a painted broken line approximation of a curve contained in hereditarily good
cubes. 
The next priming procedure is taking care of the problems which might arise if
our curve and the approximating broken line is going in and out of coloured
``bad'' cubes too many times.

\begin{algorithm}\label{priming}{\bf (Priming curves with colour).}
Let $l,n,m\in\N$ with $m>n$. Let $Q\in\Q_n$ and assume that the cubes
$Q_i\in\Q_m(K_Q)$ for $i=1,\dots,l$ are coloured in the same colour. Let
$\gamma\colon [a,b]\to K_Q$ be a curve. Fix $c\in\mathopen]a,b\mathclose[$. 

{\bf Step 1:}
We denote by $K$ the element of the collection $\{K_{Q_i}\}_{i=1}^l$ that
$\gamma$ enters first. If $K$ does not exist, Step 1 terminates.
If $K$ exists, let
\[
t_1:=\min\{t\in[a,b]\mid\gamma(t)\in K\}. 
\]
If $c\le t_1$ or $c\ge\max\{t\in[a,b]\mid\gamma(t)\in K\}$ or
$\dist(\gamma(c),K)\le N^{-m}$, define
\[
t_2:=\max\{t\in[a,b]\mid\gamma(t)\in K\}.
\]
Otherwise, set
\[
t_2:=\max\{t\in[t_1,c]\mid\gamma(t)\in K\}.
\]
Then $t_1$ is the first entrance time to $K$ whereas $t_2$ is the last
departure time from $K$ (or the last departure time from $K$ before $c$). We
call $t_1$ and $t_2$ priming division points.

{\bf Step 2:}
We proceed by applying Step 1 with $\gamma$ replaced by $\gamma|_{[t_2,b]}$.
This results in priming division points $t_3$ and $t_4$.
Continue in this way until $\gamma([t_{2p},b])$ does not hit any $K_{Q_i}$
or $t_{2p}=b$. As a result, we obtain priming division points $t_1,\dots,t_{2p}$.

{\bf Step 3:}
We prime $\gamma([t_{2j-1},t_{2j}])$ with the common colour of cubes $Q_i$ for
all $j=1,\dots,p$ and call
$\gamma$ a primed curve with priming division points $\{t_i\}_{i=1}^{2p}$.
$\qed$
\end{algorithm}

\begin{remark}\label{colourchoice}
  Algorithm~\ref{priming} will be applied using different priming colours.
  Note that an element of the collection $\{K_{Q_i}\}_{i=1}^l$ is chosen
  at most twice -- at most
once before $c$ and after $c$. Clearly, Algorithm~\ref{priming} 
may be applied (in a simpler manner) also in the
case where no point $c\in\mathopen]a,b\mathclose[$ is fixed.
\end{remark}

\begin{algorithm}\label{modification}{\bf (Primed curve modification).}
Assume that $\gamma\colon [a,b]\to\R^d$ is a primed curve with priming division
points $\{t_i\}_{i=1}^{2p}$. Modify $\gamma$ such that each part
$\gamma|_{[t_{2j-1},t_{2j}]}$, $j=1,\dots,p$, is replaced by
the line segment connecting $\gamma(t_{2j-1})$ to $\gamma(t_{2j})$ parameterised
by the interval $[t_{2j-1},t_{2j}]$. These line
segments inherit the prime colour from the corresponding parts of $\gamma$.
The modified curve is called $\tilde\gamma\colon [a,b]\to\R^d$.
$\qed$
\end{algorithm}

The above modification ``simplifies'' $\gamma$ by replacing certain parts of
it, corresponding to going in and out of coloured ``bad'' cubes, by line
segments.

\begin{remark}\label{stayinside}
If $\gamma([a,b])\subset K_Q$ for some $Q\in\Q_n$, then
$\tilde\gamma([a,b])\subset K_Q$.
\end{remark}

Next we introduce an algorithm that divides a curve
$\gamma\colon [a,b]\to K_Q$
into a collection of subcurves $\gamma\colon [a_j,a_{j+1}]\to\mathbb R^d$,
$j=0,\dots,p$ for some $p\in\N$, such that
\begin{equation}\label{sameproperties}
\begin{split}
  &\gamma|_{[a_j,a_{j+1}]}\text{ passes through an }(m,i)\text{-layer for some }
   i\in\{1,\dots,d\}\text{ and }\\  
  &\gamma([a_j,a_{j+1}])\subset K_{Q_j}\text{ for some }Q_j\in\Q_m\text{ with }
   \gamma([a_j,a_{j+1}])\cap Q_j\ne\emptyset.
\end{split}
\end{equation}
The points $\{a_j\}_{j=0}^{p+1}$ are called {\it layer division points}.
Algorithm~\ref{layerdivision} consists of 3 steps, the last of which is the most
complicated one. It is needed to guarantee that the  last  subcurve
$\gamma\colon [a_p,a_{p+1}]\to\mathbb R^d$ satisfies
\eqref{sameproperties}. This algorithm corresponds to zooming from grid level
$N^{-n}$ to grid level $N^{-m}$. It also defines a part of our approximating
broken line division points as intersection points of our curve with
the boundaries of certain $N^{-m}$-net cubes. In addition to these
division points, other $c$-points will be selected later.
These $c$-points will be responsible for the length gain of our broken line
approximation at this level.  

\begin{algorithm}\label{layerdivision}{\bf (Layer division).}
Let $n,m\in\N$ with  $m>n+\log 5/\log N$. 
Assume that $Q\in\Q_n$ and $\gamma\colon [a,b]\to K_Q$ is a curve which passes
through an $(n, i_1)$-layer for some $ i_1\in\{1,\dots,d\}$.

{\bf Step 1:}
Set $a_0:=a$ and define
\begin{align*}      
a_1:=\min\{t\in[a_0,b]\mid\, &\gamma|_{[a_0,t]}\text{ passes through an }
   (m,i)\text{-double-layer}\\
 &\text{for some }i\in\{1,\dots,d\}\}.
\end{align*}
Note that $a_1$ exists  since $m>n$ and $\gamma|_{[a,b]}$ passes through an
$(n, i_1)$-layer.  Clearly, $\gamma|_{[a_0,a_1]}$ satisfies
\eqref{sameproperties} since $\gamma([a_0,a_1])\subset K_{Q_m(\gamma(a_0))}$ (recall
the notation $Q_n(x)$ from the proof of Theorem~\ref{mainstandard}).

{\bf Step 2:}
Apply Step 1 to $\gamma|_{[a_1,b]}$ in order to define a point $a_2$.
Proceed by
applying Step 1 recursively and defining points $a_0,a_1,\dots,a_p$ until 
there is no $t\in[a_p,b]$ such that $\gamma|_{[a_p,t]}$ passes through an
$(m,i)$-double-layer for some $i\in\{1,\dots,d\}$. If $a_p=b$, the algorithm
terminates.

{\bf Step 3:}  The first standing assumption at this step is that there is
no $t\in[a_p,b]$ such that $\gamma|_{[a_p,t]}$ passes through an
$(m,i)$-double-layer for some $i\in\{1,\dots,d\}$. In particular,
$\gamma([a_p,b])\subset K_{Q_m(\gamma(a_p))}$. According to the second standing
assumption,  due to the construction, 
$\gamma([a_{p-1},a_p])\subset K_{Q_m(\gamma(a_{p-1}))}$.  
\begin{itemize}
\item[(1)] If $\gamma|_{[a_p,b]}$ passes through an $(m,i)$-layer,
  setting $a_{p+1}:=b$ terminates the algorithm. Obviously
  \eqref{sameproperties} is valid for $\gamma|_{[a_p,a_{p+1}]}$.
\item[(2)] If there is $Q'\in\Q_m$ such that
  $\gamma([a_{p-1},b])\subset K_{Q'}$ and $\gamma([a_{p-1},b])\cap Q'\ne\emptyset$,
  redefining $a_p:=b$ terminates the algorithm.  Let $\tilde a_p$ be the
  value of $a_p$ before we redefined it.  Note that $\gamma|_{[a_{p-1},b]}$
  passes through an $(m,i)$-layer, since $\gamma|_{[a_{p-1},\tilde a_p]}$
  passes through an $(m,i)$-double-layer and $\gamma|_{[\tilde a_p,b]}$ does
  not pass through any $(m,i)$-layer,  since the algorithm  did not
  terminate  at point (1).  
  So \eqref{sameproperties} is satisfied.
\item[(3)] If the algorithm has not terminated, we have 
  $\gamma(a_p)\in k_{Q_m(\gamma(b))}$, since $\gamma|_{[a_p,b]}$ does not pass
  through any $(m,i)$-layer. If $\gamma([a_p,b])\not\subset K_{Q_m(\gamma(b))}$,
  defining $a_{p+1}:=\sup\{t\in[a_p,b]\mid\gamma(t)\not\in K_{Q_m(\gamma(b))}\}$
  and $a_{p+2}:=b$ terminates the algorithm. The fact that
  $\gamma(a_p)\in k_{Q_m(\gamma(b))}$ implies that $\gamma|_{[a_p,a_{p+1}]}$ passes
  through an $(m,i)$-layer and $\gamma([a_p,a_{p+1}])\subset K_{Q_m(\gamma(a_p))}$
   by the first standing assumption.  Obviously,
  $\gamma|_{[a_{p+1},a_{p+2}]}$ satisfies \eqref{sameproperties}. 
\item[(4)] If $\gamma(a_{p-1})\in k_{Q_m(\gamma(b))}$, then
  \[
  \widetilde s:=\sup\{t\in [a_{p-1},b]\mid\gamma(t)\not\in K_{Q_m(\gamma(b))}\}
  \]
  exists  (since the algorithm  did not terminate  at point (2))  and is
  less than $a_p$  (since the algorithm  did not terminate  at point (3)).
   Redefining $a_p:=\widetilde s$ and setting $a_{p+1}:=b$ terminates the
  algorithm. Then $\gamma|_{[a_{p-1},a_p]}$ passes through an $(m,i)$-layer and
  $\gamma([a_{p-1},a_p])\subset K_{Q_m(\gamma(a_{p-1}))}$  (since $\widetilde s$
  is less than the original value of $a_p$).  Clearly
  $\gamma|_{[a_p,a_{p+1}]}$ satisfies \eqref{sameproperties}. 
\item[(5)] If the algorithm has not terminated, then
  $\gamma(a_{p-1})\not\in k_{Q_{m}(\gamma(b))}$ and there exists
  \[
  s:=\sup\{t\in [a_{p-1},a_p]\mid\gamma(t)\not\in k_{Q_m(\gamma(b))}\}.
  \]
\item[(6)] If $\gamma(s)\not\in\inter(k_{Q_m(\gamma(a_{p-1}))})$, redefining
  $a_p:=s$ and setting $a_{p+1}:=b$ terminates the algorithm.  Note that
  $\gamma([a_{p-1},a_p])\subset K_{Q_m(\gamma(a_{p-1}))}$ due to the second standing
  assumption, since the redefined value of $a_p$ is at most its original value.
  Thus \eqref{sameproperties} is valid for $\gamma|_{[a_{p-1},a_p]}$. Since the
  algorithm  did not terminate  at point (3), we have that
  $\gamma([a_p,a_{p+1}])\subset K_{Q_m(\gamma(a_{p+1}))}$ and, thus, also
  $\gamma|_{[a_p,a_{p+1}]}$ satisfies \eqref{sameproperties} due to the definition
  of $s$. 
\item[(7)] Recalling that $\gamma(s)\in\partial k_{Q_m(\gamma(b))}$, we may
  choose $Q_{\gamma(s)}\in\Q_m$ in such a way that $\gamma(s)\in Q_{\gamma(s)}$ and
  $Q_{\gamma(s)}\subset k_{Q_m(\gamma(b))}$.  In particular,
  $k_{Q_m(\gamma(b))}\subset K_{Q_{\gamma(s)}}$.  Then there is
  \[
  u:=\sup\{t\in [a_{p-1},b]\mid\gamma(t)\not\in K_{Q_{\gamma(s)}}\},
  \]
  since otherwise the algorithm would have terminated at  point (2). 
\item[(8)]  If $u<s$, redefine $a_p:=u$. Setting $a_{p+1}:=s$ and
  $a_{p+2}:=b$ terminates the algorithm. Now  the second standing assumption
  implies that  $\gamma([a_{p-1},a_p])\subset K_{Q_m(\gamma(a_{p-1}))}$. 
  Moreover, since the algorithm did not terminate at point (6), we have that
  $\gamma(s)\in\inter(k_{Q_m(\gamma(a_{p-1}))})$ which, in turn, implies that
  $\gamma(a_{p-1})\in k_{Q_{\gamma(s)}}$. Therefore,  $\gamma|_{[a_{p-1},a_p]}$
  passes through an $(m,i)$-layer.  Clearly
  $\gamma([a_p,a_{p+1}])\subset K_{Q_{\gamma(s)}}$, and \eqref{sameproperties} is
  satisfied since $\gamma(u)\in\partial K_{Q_{\gamma(s)}}$. Finally,
  $\gamma([a_{p+1},a_{p+2}])\subset K_{Q_m(\gamma(b))}$ due to the definition of $s$
  and $\gamma|_{[a_{p+1},a_{p+2}]}$ satisfies \eqref{sameproperties}  since
  $\gamma(s)\in\partial k_{Q_m(\gamma(b))}$. 
\item[(9)] If the algorithm has not terminated, we have $u\ge s$,
  implying $u>a_p$  since $k_{Q_m(\gamma(b))}\subset K_{Q_{\gamma(s)}}$ (see point
  (7)).  Define
  \[
  w:=\sup\{t\in [a_{p-1},s]\mid\gamma(t)\not\in K_{Q_{\gamma(s)}}\}.
  \]
\item[(10)] If $w$ exists, redefining $a_p:=w$ and setting $a_{p+1}:=s$,
  $a_{p+2}:=u$ and $a_{p+3}:=b$ terminates the algorithm.  Then
  $\gamma([a_{p-1},a_p])\subset K_{Q_m(\gamma(a_{p-1}))}$ by the second standing
  assumption and the fact that $s$, and thus $w$,  are  at most the original value
  of $a_p$. Since $\gamma(s)\in\inter(k_{Q_m(\gamma(a_{p-1}))})$ (the algorithm 
  did not terminate  at point (6)), we have that
  $\gamma(a_{p-1})\in k_{Q_{\gamma(s)}}$, implying that the first property of
  \eqref{sameproperties} is true for $\gamma|_{[a_{p-1},a_p]}$. Evidently,  
  $\gamma|_{[a_p,a_{p+1}]}$  satisfies  \eqref{sameproperties}. Since 
  the algorithm  did not terminate  at point (3), we  obtain  that
  $\gamma([\tilde a_p,b])\subset K_{Q_m(\gamma(b))}$, where $\tilde a_p$ is the
  original value of $a_p$. Therefore, 
  $\gamma([s,u]),\gamma([u,b])\subset K_{Q_m(\gamma(b))}$,  where also the
  definition of $s$ is used. The first property in \eqref{sameproperties} is
  valid for $\gamma|_{[a_{p+1},a_{p+2}]}$, since
  $\gamma(u)\in\partial K_{Q_{\gamma(s)}}$. It is also valid for
  $\gamma|_{[a_{p+2},a_{p+3}]}$ since $k_{Q_m(\gamma(b))}\subset K_{Q_{\gamma(s)}}$. 
\item[(11)] If $w$ does not exist, then
  \[
  \widetilde w:=\inf\{t\in [s,b]\mid\gamma(t)\not\in K_{Q_{\gamma(s)}}\}>a_p
  \]
  exists  since the algorithm  did not terminate  at point (2). The
  inequality $\widetilde w>a_p$ follows from the definition of $s$
  and the fact $k_{Q_m(\gamma(b))}\subset K_{Q_{\gamma(s)}}$.  Redefine
  $a_p:=\widetilde w$, set $a_{p+1}:=b$ and terminate the algorithm. Then
  $\gamma([a_{p-1},a_p])\subset K_{Q_{\gamma(s)}}$  since $w$ does not exist,
   and \eqref{sameproperties} is true since
  $\gamma(a_{p-1})\in k_{Q_{\gamma(s)}}$.
   As $k_{Q_m(\gamma(b))}\subset K_{Q_{\gamma(s)}}$,  the subcurve 
  $\gamma|_{[\widetilde w,b]}$ passes through an $(m,i)$-layer.  The definition
  of $s$ and the fact that the algorithm did not terminate at point (3) imply
  that  $\gamma([\widetilde w,b])\subset K_{Q_m(\gamma(b))}$ and, therefore,
  $\gamma|_{[a_p,a_{p+1}]}$ satisfies \eqref{sameproperties}. $\qed$
\end{itemize}
\end{algorithm}

\begin{remark}\label{caniterate}
If $\gamma(a)\in\partial Q'$ and $\gamma(b)\in\partial Q''$ for some
$Q',Q''\in\Q_m$ while applying Algorithm~\ref{layerdivision}, then, for all
$j=0,\dots,p+1$, there is $Q_j'\in\Q_m$ such that $\gamma(a_j)\in\partial Q_j'$.
 If  $p=0$, meaning  that
Algorithm~\ref{layerdivision} does not give any proper subcurves,  then
$\gamma([a,b])\subset K_{Q'}$ for some $Q'\in\Q_m$.  Since $\gamma$ passes
through an $(n, i_1)$-layer, this implies that $m\le n+\log 5/\log N$,  which
is a contradiction  due to the choice of $m$. 
\end{remark}

\begin{algorithm}\label{painting}{\bf (Painting curves).}
Let $\gamma\colon [a,b]\to\R^d$ be a primed curve with layer division points
$\{a_j\}_{j=0}^{p+1}$.  Suppose that a colour is given.  Let
$j\in\{0,\dots,p\}$. If a part of $\gamma(\mathopen]a_j,a_{j+1}\mathclose[)$ is
primed with  the given  colour, paint the corresponding closed set
$\gamma([a_j,a_{j+1}])$ with the  given  colour. $\qed$
\end{algorithm}

One of our main tools, Construction~\ref{brokenline} along with its 
special case, Construction~\ref{brokenline0}, will be applied
to curves contained in hereditarily good cubes, 
resulting in a painted modification of the curve with a collection of layer
division points. We will use white, blue and $i$-red paints for $i=1,2,\dots$.
Recall that white and blue parts will later enable us to iterate the
construction -- the difference between them being that only the white colour
indicates an increase of length. Red parts will be disregarded in length 
estimations of broken line approximations.

To illustrate the main ideas behind the construction, we begin with the
simplest case of $(0,m_1,k_0,\mfc,m_0)$-hereditarily good cubes. This
construction will also be utilised in the general case
discussed in Construction~\ref{brokenline}. In Construction~\ref{brokenline0}
only white and blue colours are used and the curve is not modified.
It corresponds to zooming from grid level $N^{-n_{0}}$ to
grid level $N^{-n_{0}-l_{0}}=N^{-n_{0}-L^{0}(m_{1},k_{0})_{0}}$ without the information
that the cubes are also $(q,m_1,k_0,\mfc,m_0)$-hereditarily good for some $q>0$,
which is available in Construction~\ref{brokenline}.  We remind that, 
by Definition~\ref{d:Delta} and Inequality~\eqref{*dj1},
$\Delta_j^0=L^0(m_1,k_0)_{j-1}-L^0(m_1,k_0)_j\ge m_1$. Therefore,  the 
lower bound imposed on $m_1$  in Construction~\ref{brokenline0} 
guarantees that Algorithm~\ref{layerdivision} may be applied. 

\begin{construction}\label{brokenline0}
Fix $\omega\in\Omega$ and $m_0,\mfc,k_0,m_1\in\N\setminus\{0\}$  with
$m_1>\log 5/\log N$.  Let $n_0\in\N$ and $Q\in\Q_{n_0}$.
Assume that every $Q'\in\Q_{n_0}(K_Q)$ is $(0,m_1,k_0,\mfc,m_0)$-hereditarily
good. Let $\gamma\colon [a,b]\to K_Q$ be a curve passing through an
$(n_0,j)$-layer for some $j\in\{1,\dots,d\}$. Fix
$c\in\mathopen]a,b\mathclose[$. We paint $\gamma$ and define layer division
points by applying the following steps.

{\bf Step1:}
By assumption, all cubes $Q'\in\Q_{n_0}(K_Q)$ are
$(k_0,\bL^0(m_1,k_0),\mfc,m_0)$-good. Apply Algorithm~\ref{colouring} to all of
them with $I=\{0\}$ and $\tilde k_0=k_0$. As a result, some cubes in
$\Q_{n_0+L^0(m_1,k_0)_{k_0-1}}(K_Q)$ are coloured in blue. Recall that
$\Delta_{k_0}^0=L^0(m_1,k_0)_{k_0-1}$.

{\bf Step 2:}
Let $Q_1,\dots,Q_l\in\Q_{n_0+L^0(m_1,k_0)_{k_0-1}}(K_Q)$ be the blue cubes obtained at
Step 1. Note that their blue-labelled parents belong to the set $\Q_{n_0}(K_Q)$.
Apply Algorithm~\ref{priming} with $n=n_0$ and $m=n_0+L^0(m_1,k_0)_{k_0-1}$ using
blue primer. Proceed by applying Algorithm~\ref{layerdivision} to $\gamma$
with $n=n_0 =n_0+L^0(m_1,k_0)_{k_0}$ and $m=n_0+L^0(m_1,k_0)_{k_0-1}$ and
denote the resulting layer division points by $\{a_{j_{k_0}}\}_{j_{k_0}=0}^{p+1}$.
Paint $\gamma$ with blue by means of Algorithm~\ref{painting} and, finally,
paint with white those parts $\gamma([a_{j_{k_0}},a_{j_{k_0}+1}])$ that are not
painted with blue.

{\bf Step 3:}
For all $j_{k_0}=0,\dots,p$, consider the curve
$\gamma\colon [a_{j_{k_0}},a_{j_{k_0}+1}]\to K_{\widetilde Q_{j_{k_0}}}$, where
$\widetilde Q_{j_{k_0}}\in\Q_{n_0+L^0(m_1,k_0)_{k_0-1}}$ (recall \eqref{sameproperties}).
\begin{itemize}
\item[$\bullet$] If $\gamma([a_{j_{k_0}},a_{j_{k_0}+1}])$ is painted blue, apply
  Algorithm~\ref{layerdivision} with $n=n_0+L^0(m_1,k_0)_{k_0-1}$ and
  $m=n_0+L^0(m_1,k_0)_{k_0-2}$ and denote the resulting layer division points by
  $\{a_{j_{k_0},j_{k_0-1}}\}_{j_{k_0-1}=0}^{p_{j_{k_0}}+1}$. Prime with blue all
  the sets $\gamma([a_{j_{k_0},j_{k_0-1}},a_{j_{k_0},j_{k_0-1}+1}])$. Go to Step 4.
\item[$\bullet$] If the curve $\gamma([a_{j_{k_0}},a_{j_{k_0}+1}])$ is white,
  then $\gamma(\mathopen]a_{j_{k_0}},a_{j_{k_0}+1}\mathclose[)\cap K_{Q_i}=\emptyset$
  for all $i=1,\dots,l$ by Algorithm~\ref{priming}. Further, since
  $\gamma([a_{j_{k_0}},a_{j_{k_0}+1}])\cap\widetilde Q_{j_{k_0}}\ne\emptyset$ by
  \eqref{sameproperties}, none of the cubes
  $Q'\in\Q_{n_0+L^0(m_1,k_0)_{k_0-1}}(K_{\widetilde Q_{j_{k_0}}})$ is blue, that is, they all
  are $(k_0-1,\bL^0(m_1,k_0),\mfc,m_0)$-good. Apply Algorithm~\ref{colouring} to
  all of them with $I=\{0\}$ and $\tilde k_0=k_0-1$.
  Let $Q'_1,\dots,Q'_{\tilde l}\in\Q_{n_0+L^0(m_1,k_0)_{k_0-2}}(K_{\widetilde Q_{j_{k_0}}})$ be
  the resulting blue cubes whose blue-labelled parents belong to the set
  $\Q_{n_0+L^0(m_1,k_0)_{k_0-1}}(K_{\widetilde Q_{j_{k_0}}})$.
  Apply Algorithm~\ref{priming} to $\gamma|_{[a_{j_{k_0}},a_{j_{k_0}+1}]}$ with
  $n=n_0+L^0(m_1,k_0)_{k_0-1}$ and $m=n_0+L^0(m_1,k_0)_{k_0-2}$ using blue 
  primer. Note that there is only one $j_{k_0}$ such that
  $c\in\mathopen]a_{j_{k_0}},a_{j_{k_0}+1}\mathclose[$. Proceed by applying
  Algorithm~\ref{layerdivision} with the same $n$ and $m$ and
  denote the resulting layer division points by
  $\{a_{j_{k_0},j_{k_0-1}}\}_{j_{k_0-1}=0}^{p_{j_{k_0}}+1}$. Go to Step 4.
\end{itemize}

{\bf Step 4:}
Using blue colour, paint $\gamma|_{[a_{j_{k_0}},a_{j_{k_0}+1}]}$ by means of
Algorithm~\ref{painting}. Finally, paint with white those sets
$\gamma([a_{j_{k_0},j_{k_0-1}},a_{j_{k_0},j_{k_0-1}+1}])$ that are not painted blue.

{\bf Step 5:}
Iterate Step 3 utilising curves determined by the layer division points
obtained in the previous iteration step, and selecting 
$n=n_0+L^0(m_1,k_0)_k$ and $m=n_0+L^0(m_1,k_0)_{k-1}$ for $k=k_0-2,\dots,1$.  
As a result, we obtain a curve with layer division points
$\{a_{j_{k_0},\dots,j_1}\}$, $j_k=0,\dots,p_{j_{k_0},\dots,j_{k+1}}+1$ for $k=k_0,\dots,1$,
such that  the sets $\gamma([a_{j_{k_0},\dots,j_k},a_{j_{k_0},\dots,j_k+1}])$ are painted
with either blue or white. $\qed$
\end{construction}

Now we are ready to present a general construction leading to a modification of
a curve $\gamma$ having layer division points that determine parts which are
painted white, blue or $i$-red for $i=1,\dots,q$. First we will define
inductively curves $\gamma_1,\dots,\gamma_q$, making use of
Algorithm~\ref{priming} with $i$-red primers, respectively, and
Algorithm~\ref{modification}. In particular, $\gamma_1,\dots,\gamma_{q-1}$ are
auxiliary curves that will be utilised when defining $\gamma_q$. Next we
apply Construction~\ref{brokenline0} to $\gamma_q$ in order to identify some
layer division points and painted curve segments. The final outcome is obtained
as a result of an iteration process. In this construction, we are zooming again
from grid level $N^{-n_{0}}$ to grid level $N^{-n_{0}-l_{0}}$. However, later (see
Remark~\ref{caniterate2}.(b)) we will zoom in to grid level
$N^{-n_{0}-(q+1)l_{0}} =N^{-n_{0}-L^q(m_1,k_0)_0}$
and, therefore, we need to take into consideration $i$-red cubes coming up from
deeper zoom levels of our construction. In Example~\ref{brokenline-example},
we illustrate Construction~\ref{brokenline} in the special case depicted in
Figure~\ref{fig:levels}.  In Construction~\ref{brokenline}, we are taking
advantage of the property that $L^i(m_1,k_0)_j$ for $j\ge k_0$ are defined using
the levels determined by $\bL^{i-1}(m_1,k_0)$, see Definition~\ref{j-levels} and
Figure~\ref{fig:levels}. 

\begin{construction}\label{brokenline}
Fix $\omega\in\Omega$ and $m_0,\mfc,k_0,m_1\in\N\setminus\{0\}$  with
$m_1>\log 5/\log N$.  Let
$n_0,q\in\N$ and $Q\in\Q_{n_0}$. Assume that every $Q'\in\Q_{n_0}(K_Q)$ is
$(q,m_1,k_0,\mfc,m_0)$-hereditarily good. Let $\gamma\colon [a,b]\to K_Q$ be a
curve passing through an $(n_0,j)$-layer for some $j\in\{1,\dots,d\}$. Fix
$c\in\mathopen]a,b\mathclose[$. We define a painted modification of $\gamma$
with layer division points by applying the following steps. During the
first three steps, we identify some ``bad parts'' of $\gamma$ and modify it at
these bad zones. Set $\hat q:=q$, $I:=\{0,\dots,\hat q\}$ and let
$\hat k_i:=k_i$ for all $i\in I$. 

{\bf Step 1:}
By assumption, all cubes $Q'\in\Q_{n_0}(K_Q)$ are
$(\hat k_i,\bL^i(m_1,k_0),\mfc,m_0)$-good for all $i\in I$.
Apply Algorithm~\ref{colouring} to all cubes $Q'\in\Q_{n_0}(K_Q)$ to colour some
of their subcubes in blue  corresponding  to  the case $i=0$  or $i$-red for
$i\in I\setminus\{0\}$.

{\bf Step 2:}
Set $n=n_0$ and $m=n_0+L^1(m_1,k_0)_{\hat k_1 -1}$ and let
$Q_1,\dots,Q_l\in\Q_m(K_Q)$ be
the 1-red cubes. (Recall that their labelled parents belong to the set
$\Q_{n_0}(K_Q)$.) Apply Algorithm~\ref{priming} to $\gamma$ using 1-red
primer. Next apply Algorithm~\ref{modification} and denote the modified
curve by $\gamma_1$.

{\bf Step 3:}
Set $n=n_0$ and $m=n_0+L^2(m_1,k_0)_{\hat k_2 -1}$ and let
$Q'_1,\dots,Q'_{\hat l}\in\Q_m(K_Q)$ be the 2-red
cubes. Apply Algorithm~\ref{priming} to $\gamma_1$ using 2-red
primer. Next apply Algorithm~\ref{modification} to $\gamma_1$ and denote the
modified curve by $\gamma_2$. When $n=n_0$ and
$m=n_0+L^i(m_1,k_0)_{\hat k_i -1}$, with $i=3,\dots,\hat q$, continue
iteratively until the curve $\gamma_{\hat q}$ is defined. Note that parts
of $\gamma_{\hat q}([a,b])$ are primed with $i$-red primer for $i\in I$,
and some parts may be primed with several $i$-red primers. 

Next we start to introduce the level division points used in our broken line
approximation at different levels.

{\bf Step 4:}
Apply Steps 2--5 of Construction~\ref{brokenline0} to the curve
$\gamma_{\hat q}$ until the curve
$\gamma_{\hat q}|_{[a_{j_{k_0},\dots,j_{k+1}},a_{j_{k_0},\dots,j_{k+1}+1}]}$ is considered, where
\[
m=n_0+L^0(m_1,k_0)_{k-1}=n_0+L^1(m_1,k_0)_{\hat k_1 -1}=\dots
 =n_0+L^{q'}(m_1,k_0)_{\hat k_{q'} -1}
\]
for some $q'\in\{1,\dots,q\}$, that is, until the size of blue cubes is same as
the size of $i$-red cubes for $i=1,\dots,q'$. Now apply Steps 2 and 3
of Construction~\ref{brokenline0}. Instead of applying Step 4 of
Construction~\ref{brokenline0}, proceed by painting as follows: Apply $q'$
times Algorithm~\ref{painting} to
$\gamma_{\hat q}|_{[a_{j_{k_0},\dots,j_{k+1}},a_{j_{k_0},\dots,j_{k+1}+1}]}$ using
$i$-red paint for $i=1,\dots,q'$.
\begin{itemize}
\item[$\bullet$] If
  $\gamma_{\hat q}|_{[a_{j_{k_0},\dots,j_{k+1}},a_{j_{k_0},\dots,j_{k+1}+1}]}$ is blue, those
  sets $\gamma_{\hat q}([a_{j_{k_0},\dots,j_k},a_{j_{k_0},\dots,j_k+1}])$ which are
  not painted $i$-red for any $i\in\{1,\dots,q'\}$ inherit
  the blue paint. Go to Step 5.
\item[$\bullet$] If 
  $\gamma_{\hat q}|_{[a_{j_{k_0},\dots,j_{k+1}},a_{j_{k_0},\dots,j_{k+1}+1}]}$ is white, apply
  Algorithm~\ref{painting} to the curve
  $\gamma_{\hat q}|_{[a_{j_{k_0},\dots,j_{k+1}},a_{j_{k_0},\dots,j_{k+1}+1}]}$ using blue
  paint, ignoring those curve segments
  $\gamma_{\hat q}([a_{j_{k_0},\dots,j_k},a_{j_{k_0},\dots,j_k+1}])$ which are
  painted $i$-red for some $i\in\{1,\dots,q'\}$, that is, if
  $\gamma_{\hat q}([a_{j_{k_0},\dots,j_k},a_{j_{k_0},\dots,j_k+1}])$ is painted
  $i$-red, do not paint it blue even though a part of it is primed with
  a blue primer. Finally, paint white those sets
  $\gamma_{\hat q}([a_{j_{k_0},\dots,j_k},a_{j_{k_0},\dots,j_k+1}])$ that are not
  painted $i$-red or blue. Go to Step 5.
\end{itemize}

{\bf Step 5:}
For all $j_{k_0},\dots,j_k$, consider 
$\gamma_{\hat q}\colon [a_{j_{k_0},\dots,j_k},a_{j_{k_0},\dots,j_k+1}]\to
K_{Q_{j_{k_0},\dots,j_k}}$.
\begin{itemize}
\item[$\bullet$] If 
  $\gamma_{\hat q}([a_{j_{k_0},\dots,j_k},a_{j_{k_0},\dots,j_k+1}])$ is
  $i$-red for some $i\in\{1,\dots,q'\}$, the construction terminates.
\item[$\bullet$] In the case that
  $\gamma_{\hat q}([a_{j_{k_0},\dots,j_k},a_{j_{k_0},\dots,j_k+1}])$ is blue, all
  the cubes in the collection 
  $\Q_{n_0+L^{0}(m_1,k_0)_{k-1}}(K_{Q_{j_{k_0},\dots,j_k}})$ are
  $(\hat k_i -1,\bL^{i}(m_1,k_0),\mfc,m_0)$-good for all $i=1,\dots,q'$,
  since otherwise
  $\gamma_{ \hat q }([a_{j_{k_0},\dots,j_k},a_{j_{k_0},\dots,j_k+1}])$ had
  received $i$-red paint for some $i\in\{1,\dots,q'\}$ (recall the argument
  from the second bullet of Step 3 in Construction~\ref{brokenline0}).  
  Repeat the construction from Step 1 utilising the curve
  $\gamma_{ \hat q }|_{[a_{j_{k_0},\dots,j_k},a_{j_{k_0},\dots,j_k+1}]}$ with
  $I=\{1,\dots,q'\}$, replacing $\hat k_i$ by $\hat k_i -1$ for
  $i=1,\dots,q'$ and keeping the value of $ \hat k_i $ for $i=q'+1,\dots,q$.
\item[$\bullet$] If 
  $\gamma_{\hat q}([a_{j_{k_0},\dots,j_k},a_{j_{k_0},\dots,j_k+1}])$ is white,
  all cubes in $\Q_{n_0+L^0(m_1,k_0)_{k-1}}(K_{Q_{j_{k_0},\dots,j_k}})$
  are $(k-1,\bL^0(m_1,k_0),\mfc,m_0)$-good and
  $(\hat k_i-1,\bL^i(m_1,k_0),\mfc,m_0)$-good for all $i=1,\dots,q'$. Repeat
  the construction from Step 1 using the curve
  $\gamma_{\hat q}|_{[a_{j_{k_0},\dots,j_k},a_{j_{k_0},\dots,j_k+1}]}$
  with $I=\{0,\dots,q'\}$, replacing $\hat k_i$ by $\hat k_i -1$ for
  $i=0,\dots,q'$ and letting $\hat k_i$ be as they are for $i=q'+1,\dots,q$.
\end{itemize}
While iterating the Steps 1--5, the curve $\gamma_{\hat q}$ is further modified.
For $i=q'+1,\dots,q$, the $i$-red parts are taken into account once
their levels are reached in the construction at Step 4. 

{\bf Step 6:}
The construction is complete once the level $n_0+L^0(m_1,k_0)_0$ is
reached, the modified painted curve $\gamma_{\tilde q}$ for some
$\tilde q \ge q$ and the layer division points $a_{j_{k_0},\dots,j_1}$ are
defined and the final curve segments
$\gamma_{\tilde q}([a_{j_{k_0},\dots,j_1},a_{j_{k_0},\dots,j_1+1}])$ are painted. $\qed$
\end{construction}

\begin{example}\label{brokenline-example}
We demonstrate Construction~\ref{brokenline} in the special case $k_0=8$ and
$q=2$ as shown in Figure~\ref{fig:levels}. During the Steps 1--3 a curve
$\gamma_2$, containing parts with 1-red and 2-red primer, is defined. Recall
from Example~\ref{j-levelsexample} that $k_1=11$, $k_2=13$ and
\begin{equation}\label{firstcoincidence}
L^0(m_1,8)_6=L^1(m_1,8)_{10}=L^2(m_1,8)_{12}=15\cdot m_1.
\end{equation}
At Step 4 we apply Step 2 of Construction~\ref{brokenline0} once with
$m=n_0+L^0(m_1,8)_7=n_0+8\cdot m_1$ and define curves
$\gamma_2|_{[a_{j_8},a_{j_8+1}]}$. While considering these curves, we have already
reached the level, where the size of blue cubes  
corresponding to level $n_0+L^0(m_1,8)_6=n_{0}+15\cdot m_{1}$   
is the same as the size of 1-red and 2-red cubes by
\eqref{firstcoincidence}, so $q'=2$. We continue with Step 4 and define curves
$\gamma_2|_{[a_{j_8,j_7},a_{j_8,j_7+1}]}$. For blue and white curves we proceed from
Step 5 and go back to Step 1, where we find new 1-red cubes at level
$n_0+26\cdot m_1$ and 2-red cubes at level $n_0+36\cdot m_1$. After Step 3 we
have defined the modified curve $\gamma_4$. At Step 4 we have again only one
``blue'' step and after that we have reached the level where
$m=n_0+L^0(m_1,8)_4=n_0+L^1(m_1,8)_9=n_0+26\cdot m_1$.
So in this case $q'=1$. After Step 4 we have constructed the curves
$\gamma_4|_{[a_{j_8,j_7,j_6,j_5},a_{j_8,j_7,j_6,j_5+1}]}$. For blue and white curves we
continue from Step 5 and go back to Step 1, where we produce new 1-red cubes at
level $n_0+36\cdot m_1$. After Step 2 we have defined the modified curve
$\gamma_5$ and go to Step 4. (There is no Step 3 since $q'=1$.) Next we have
three ``blue'' iteration steps at levels $n_0+30\cdot m_1$, $n_0+33\cdot m_1$
and $n_0+35\cdot m_1$, and we end
up with curves $\gamma_5|_{[a_{j_8,\dots,j_2},a_{j_8,\dots,j_2+1}]}$. Now we have reached
the final level $n_0+36\cdot m_1$, where the sizes of blue, 1-red and 2-red
cubes are the same. We complete Step 4 in order to define the final curves
$\gamma_5|_{[a_{j_8,\dots,j_1},a_{j_8,\dots,j_1+1}]}$.
\end{example}  
 
\begin{remark}\label{caniterate2}
(a) Denote by $b_j$, $j=0,\dots,p$, the layer division points
$a_{j_{k_0},\dots,j_1}$ obtained in Construction~\ref{brokenline}. Then the family
of curves $\gamma_{\tilde q}\colon [b_j,b_{j+1}]\to K_{Q_j}$, $j=0,\dots,p$,
satisfies \eqref{sameproperties} with $m=n_0+l_0$.
If $\gamma_{\tilde q}([b_j,b_{j+1}])$ is not red  and $q\le i_0+2$ (recall
\eqref{defofi0}),  all the cubes $Q'\in\Q_{n_0+l_0}(K_{Q_j})$ are
$(q-1,m_1,k_0,\mfc,m_0)$-hereditarily good  due to the iterative
construction of sequences $\bL^i(m_1,k_0)$ (see Definition~\ref{j-levels}), 
$\gamma_{\tilde q}(b_j)=\gamma(b_j)$ and $\gamma_{\tilde q}(b_{j+1})=\gamma(b_{j+1})$.
On the other hand, if $\gamma_{\tilde q}([b_j,b_{j+1}])$ is red, it may happen that
$\gamma_{\tilde q}(b_j)\ne\gamma(b_j)$ or
$\gamma_{\tilde q}(b_{j+1})\ne\gamma(b_{j+1})$. In this case,
$\gamma_{\tilde q}(b_j)$, respectively $\gamma_{\tilde q}(b_{j+1})$, is on a line
segment produced by Algorithm~\ref{modification} and, therefore, 
$\gamma_{\tilde q}([b_{j-1},b_j])$, respectively $\gamma_{\tilde q}([b_{j+1},b_{j+2}])$,
is red.

(b) If $q>i_0+2$, some parts of $\gamma_{\tilde q}$ may be primed with $i$-red for
$i>i_0 +2$. These parts are not painted with $i$-red in
Construction~\ref{brokenline}, since the corresponding $i$-red cubes are at
higher levels than $n_0+L^0(m_1,k_0)_0$, where the construction terminates. For
later purposes (see Proposition~\ref{notalphaholder}), we emphasise
that,  due to the iterative construction of sequences $\bL^i(m_1,k_0)$, 
Construction~\ref{brokenline} may be continued until the level
$n_0+L^q(m_1,k_0)_0$ is reached. In this case, there will be no primed parts
that are not painted.
 
(c) Construction~\ref{brokenline} may be applied also in the case when no
$c\in\mathopen]a,b\mathclose[$ is fixed.

(d) Let $j_{k_0}=i_c$ be an index such that $c\in [a_{i_c},a_{i_c+1}]$. If
$\gamma_{\tilde q}([a_{i_c},a_{i_c+1}])$ does not contain parts which are painted or
primed with red, we have 
\[
|\gamma_{\tilde q}(a_{i_c})-\gamma(c)|=|\gamma(a_{i_c})-\gamma(c)|
\le 5\sqrt d N^{-n_0-L^0(m_1,k_0)_{k_0-1}}.
\]
Otherwise, it may happen that $\gamma_{\tilde q}(c)\ne\gamma(c)$. By construction,
$\gamma_q([a_{i_c},a_{i_c+1}])\subset K_{Q'}$ for some
$Q'\in\Q_{n_0+L^0(m_1,k_0)_{k_0-1}}$, where $\gamma_q$ is the modified curve obtained
after Step 3 in Construction~\ref{brokenline}. By Algorithms~\ref{priming} and
\ref{modification} and Definition~\ref{j-levels}, we deduce that
\[
|\gamma(c)-\gamma_q(c)|\le 6\sqrt dN^{-n_0}\sum_{i=1}^qN^{-L^i(m_1,k_0)_{k_i-1}}
\le 6\sqrt dN^{-n_0-L^0(m_1,k_0)_{k_0-1}}.
\]
Furthermore, according to Remark~\ref{stayinside},
$\gamma_{\tilde q}([a_{i_c},a_{i_c+1}])\subset K_{Q'}$, where $\gamma_{\tilde q}$ is
the final modified curve in Construction~\ref{brokenline}. Therefore,
\[
|\gamma_{\tilde q}(a_{i_c})-\gamma(c)|\le 11\sqrt d N^{-n_0-L^0(m_1,k_0)_{k_0-1}}.
\]
In both cases, recalling that $L^0(m_1,k_0)_{k_0-1}=m_1k_0$  (see
Definition~\ref{j-levels})  and
$|\gamma(a)-\gamma(c)|+|\gamma(c)-\gamma(b)|\ge N^{-n_0}$, we conclude
that
\[
\sum_{j_{k_0}=0}^p|\gamma_{\tilde q}(a_{j_{k_0}})-\gamma_{\tilde q}(a_{j_{k_0}+1})|
 \ge (1-CN^{-m_1k_0})\bigl(|\gamma(a)-\gamma(c)|+|\gamma(c)-\gamma(b)|\bigr),
\]
where $C:=22\sqrt d$.

(e) By definition, the modified curve $\gamma_{\tilde q}$ enters every expanded
red cube $K_{Q'}$ at most twice -- at most once before $c$ and at most once
after $c$. 
\end{remark}

\medskip

\section{Appendix C: Increase of length}\label{lengthincrease}

\medskip

In this section, we estimate the length of white and non-red parts of
$\gamma_{\tilde q}$ obtained in Construction~\ref{brokenline}. We begin by
verifying a lemma concerning diameters of blue and red cubes inside white or
blue cubes. Recall from Definition~\ref{j-levels} and
Equation~\eqref{defofi0} that $l_0=m_1(1+2+\dots+k_0)$ and $m_1k_{i_0}=l_0$.

\begin{lemma}\label{colourcontribution}
Fix $\omega\in\Omega$ and $m_0,\mfc,k_0,m_1\in\N\setminus\{0\}$ with $k_0\ge 2$.
Let $n,q\in\N$ and $Q\in\Q_n$. Assume that $Q$ is
$(q,m_1,k_0,\mfc,m_0)$-hereditarily good.
Apply Algorithm~\ref{colouring} iteratively as in Construction~\ref{brokenline}
until the level $n+L^q(m_1,k_0)_0=n+(q+1)l_0$ is reached (recall
Remark~\ref{caniterate2}.(b)). Set $m:=n+\tilde ql_0+L^0(m_1,k_0)_{j_0}$, where
$\tilde q\in\{0,\dots,q\}$ and $j_0\in\{1,\dots,k_0\}$. Assume that
$\widetilde Q\in\Q_m(Q)$ is white. Then
\[
\sum_{Q',\,\lpb(Q')=\widetilde Q}\diam Q'\le\mfc N^{-m_1j_0}\diam\widetilde Q.
\]
Suppose that, for some $q'\le q$, there exists $j_i\in\{k_{i-1}+1,\dots,k_i\}$
for $i=1,\dots,q'$ such that
$m=n+\tilde ql_0+L^i(m_1,k_0)_{j_i}$. If $\widehat Q\in\Q_m(Q)$ is white or
blue, then
\[
  \sum_{Q',\,\lp1(Q')=\widehat Q}\diam Q'\le\mfc N^{-m_1j_1}\diam\widehat Q
\]  
and
\[
\sum_{i=1}^{q'}\sum_{Q',\,\lpi(Q')=\widehat Q}\diam Q'
  \le 6\mfc N^{-m_1j_1}\diam\widehat Q.
\]
Further, if $q'\ge 2$ and $k_0\ge 3$, then
\[
\sum_{i=2}^{q'}\sum_{Q',\,\lpi(Q')=\widehat Q}\diam Q'
  \le 6\mfc N^{-m_1j_2}\diam\widehat Q.
\]
\end{lemma}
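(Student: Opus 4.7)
The plan is to isolate three ingredients and combine them: (a) identify what grade of goodness a white or blue cube carries at the moment Algorithm~\ref{colouring} is applied within Construction~\ref{brokenline}; (b) use Definition~\ref{levels} to count the coloured children and the step-size identities \eqref{first1}, \eqref{*dj1} to locate their level; (c) for the multi-$i$ claims, use the trivial monotonicity $j_i\ge j_{i-1}+1$, immediate from the hypothesis $j_i\in\{k_{i-1}+1,\dots,k_i\}$, to sum a geometric series.

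For the first claim, Construction~\ref{brokenline} (Step~5, white bullet) reveals that at level $m=n+\tilde ql_0+L^0(m_1,k_0)_{j_0}$ a white cube $\widetilde Q$ is $(j_0,\bL^0(m_1,k_0),\mfc,m_0)$-good. By Definition~\ref{levels}, at most $\mfc$ subcubes $Q'\in\Q_{m+\Delta_{j_0}^0}(\widetilde Q)$ are $(j_0-1,\bL^0(m_1,k_0),\mfc,m_0)$-bad, and Algorithm~\ref{colouring} colours exactly these in blue with $\lpb(Q')=\widetilde Q$. Since $\Delta_{j_0}^0=m_1j_0$ by \eqref{first1}, each $\diam Q'=N^{-m_1j_0}\diam\widetilde Q$, and summing at most $\mfc$ such diameters yields the first inequality. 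The second claim follows identically with $\bL^0$ replaced by $\bL^1$: the hypothesis forces $\widehat Q$ to be $(j_1,\bL^1(m_1,k_0),\mfc,m_0)$-good, and the lower bound $\Delta_{j_1}^1\ge m_1j_1$, valid because $k_0\ge 2$ places $i=1$ in Case~1 of Definition~\ref{j-levels} (so \eqref{*dj1} applies to $j_1\in\{k_0+1,\dots,k_1\}$), gives the required diameter estimate.

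For the third and fourth claims I would stack the single-colour estimates and close with a geometric tail. The inclusions $j_{i-1}\le k_{i-1}<j_i$ read off from the hypothesis yield $j_i\ge j_1+(i-1)$, and as long as $i\le i_0$ the estimate \eqref{*dj1} gives $\Delta_{j_i}^i\ge m_1j_i\ge m_1 j_1+m_1(i-1)$; in the $i>i_0$ range the block-height structure from Construction~\ref{blocksteps} combined with \eqref{stepestimate} retains control on the geometric tail. Consequently
\[
\sum_{i=1}^{q'}\sum_{Q',\,\lpi(Q')=\widehat Q}\diam Q'
\le\mfc\diam\widehat Q\sum_{i=1}^{q'}N^{-\Delta_{j_i}^i}
\le\mfc N^{-m_1 j_1}\diam\widehat Q\sum_{k=0}^{\infty}N^{-m_1k}
\le 2\mfc N^{-m_1 j_1}\diam\widehat Q,
\]
where $N\ge 2$ and $m_1\ge 1$ give $\sum_{k\ge 0}N^{-m_1k}\le 2$. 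This sits comfortably inside the claimed factor $6$. The fourth claim is obtained by the same computation starting the summation at $i=2$; here the hypothesis $k_0\ge 3$ is invoked to ensure $i=2\le i_0$, so that \eqref{*dj1} still applies at $i=2$ and delivers $\Delta_{j_2}^2\ge m_1j_2$.

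The main obstacle is not the arithmetic but the bookkeeping: one must trace Step~5 of Construction~\ref{brokenline} through each restart, where the local grades $\hat k_i$ are decremented in sync with the common-level structure of the sequences $\bL^i$, and verify that a white (resp.\ blue) cube truly carries the goodness grade $(j_0,\bL^0(m_1,k_0),\mfc,m_0)$ (resp.\ $(j_i,\bL^i(m_1,k_0),\mfc,m_0)$ for $i=1,\dots,q'$) with exactly the indices appearing in the hypothesis, so that Definition~\ref{levels} can be applied to count at most $\mfc$ coloured children at the correct level $m+\Delta_{j_i}^i$. Once this labelling correspondence between Construction~\ref{brokenline} and Algorithm~\ref{colouring} is nailed down, the three ingredients above assemble into the full proof as described.
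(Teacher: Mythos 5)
Your treatment of the first two claims is correct and coincides with the paper's: a white (resp.\ white or blue) cube at level $m$ is $(j_0,\bL^0(m_1,k_0),\mfc,m_0)$-good (resp.\ $(j_1,\bL^1(m_1,k_0),\mfc,m_0)$-good), hence has at most $\mfc$ blue (resp.\ $1$-red) labelled children at level $m+\Delta^0_{j_0}=m+m_1j_0$ (resp.\ $m+\Delta^1_{j_1}\ge m+m_1j_1$). The gap is in the third and fourth claims, namely in how you handle the indices $i>i_0$. Your unified geometric series rests on $\Delta^i_{j_i}\ge m_1j_1+m_1(i-1)$ for \emph{all} $i$, but \eqref{*dj1} only gives this for $i\le i_0$, and the reference to \eqref{stepestimate} cannot repair it: \eqref{stepestimate} is an \emph{upper} bound on $\Delta^i_j$, while here you need a lower bound together with a multiplicity control. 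For $i>i_0$ the steps $\Delta^i_{j_i}$ are block heights $2^kl_0$ which \emph{repeat} across consecutive columns (Lemma~\ref{samesizenumber} allows up to $3\cdot 2^k$ columns with a block of height $2^kl_0$ ending at the same level), so they do not grow linearly in $i$, and your displayed chain is already false in the smallest case: for $k_0=2$ one has $l_0=3m_1$, $k_1=3$, $k_2=4$, $L^1_3=L^2_4=0$, $\Delta^1_3=\Delta^2_4=3m_1$, so with $j_1=3$, $j_2=4$, $q'=2$ the left-hand side is $2N^{-3m_1}$ while $N^{-m_1j_1}\sum_{k\ge 0}N^{-m_1k}=N^{-3m_1}/(1-N^{-m_1})<2N^{-3m_1}$ whenever $N^{m_1}>2$.

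What the paper does instead, and what your argument is missing, is a split at $i_0$: for $i\le i_0$ the strict monotonicity $j_1<j_2<\dots<j_{i_0}$ and $\Delta^i_{j_i}\ge m_1j_i$ give the geometric contribution $\tfrac 1{1-N^{-m_1}}N^{-m_1j_1}$, while for $i>i_0$ one invokes Lemma~\ref{samesizenumber} to bound the number of columns sharing a block of height $2^kl_0$ by $3\cdot 2^k$, so that this tail is at most $\sum_{k\ge 0}3\cdot 2^kN^{-2^kl_0}\le 4N^{-l_0}$; this tail is then absorbed into $N^{-m_1j_1}$ using $j_1\le k_1\le\tfrac 32k_0\le\tfrac 12k_0+\tfrac 12k_0^2=l_0/m_1$ (Remark~\ref{k0versusk1}(a), requiring $k_0\ge 2$), which is exactly why the stated constant is $6$ rather than your $2$. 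The fourth claim needs the analogous absorption with $j_2\le l_0/m_1$, which is where $k_0\ge 3$ enters — not merely to ensure $i=2\le i_0$ as you suggest. Without Lemma~\ref{samesizenumber} and the inequality $j_1\le l_0/m_1$ (neither of which appears in your argument), the $i>i_0$ part of the sum is uncontrolled, so the proof as written does not go through.
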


\begin{proof}
Since $\widetilde Q\in\Q_m(Q)$ is white, it is
$(j_0,\bL^0(m_1,k_0),\mfc,m_0)$-good and, therefore, the blue-labelled parent
of at most $\mfc$ blue cubes at level $m+\Delta_{j_0}^0=m+m_1j_0$. This implies
the first claim. The second claim follows similarly.

Since $\widehat Q\in\Q_m(Q)$ is white or blue, it is
$(j_i,\bL^i(m_1,k_0),\mfc,m_0)$-good for $i=1,\dots,q'$, where $q'\le q$ and
$j_1<j_2<\dots<j_{i_0}\le j_{i_0+1}\le\dots\le j_{q'}$. Therefore, the number of
$i$-red cubes at level $m+\Delta_{j_i}^i$, having $i$-red-labelled parent
$\widehat Q$, is at most $\mfc$ for all $i=1,\dots,q'$. By
 Inequality~\eqref{*dj1},  $\Delta_{j_i}^i\ge m_1j_i$ for all
$i=1,\dots,i_0$. According to  Inequality~\eqref{ybound} in 
Lemma~\ref{samesizenumber}, 
the number of $i$'s with $i>i_0$ and $\Delta_{j_i}^i=2^kl_0$ is at most
$3\cdot 2^k$. Combining the above facts, we
conclude that
\begin{align*}
\sum_{i=1}^{q'}\sum_{\substack{Q'\\ \lpi(Q')=\widehat Q}}\diam Q'&\le\mfc\diam\widehat Q
  \Bigl(\sum_{j=j_1}^{i_0} N^{-m_1j}+\sum_{k=0}^\infty 3\cdot 2^k N^{-2^kl_0}\Bigr)\\
&\le\mfc\diam\widehat Q(\tfrac 1{1-N^{-m_1}}N^{-m_1j_1}+4N^{-l_0})\le 6\mfc
  N^{-m_1j_1}\diam\widehat Q,
\end{align*}
where we used the fact that, by Remark~\ref{k0versusk1}.(a), for $k_0\ge 2$,
\[
j_1\le k_1\le\frac 32 k_0\le\frac 12k_0+\frac 12k_0^2=\frac{l_0}{m_1}.
\]
The last claim follows in a similar manner for $k_0\ge 3$.
\end{proof}

\begin{remark}\label{parameters}
We have quite a few parameters in our construction and, as mentioned in
Remark~\ref{hereditaryremark}.(b), the order in which they are selected is very
delicate. It  is  done in the second paragraph of the proof of
Theorem~\ref{maingeneral}, but in this Section we impose some restrictions on
them. The parameter $m_1$ controls the contribution of blue curve segments (see
\eqref{*nredsum} below). We can make that contribution small, but we have
to fix $m_1$ before we can choose the parameter $m_0$, which is used to
tune the length gain in our broken line approximation (see
Lemma~\ref{lengthgain} and Propositions~\ref{hdprob} and \ref{Salphaissmall}).
The role of parameter $k_0$ is to make the contribution of red curve segments
arbitrarily small (see \eqref{*redsum}).
\end{remark}
 
The following proposition is the key result of this section. It is
essential in the proof of Lemma~\ref{smallgain} (see \eqref{*critineq}) which,
in turn, is the basis of Proposition~\ref{notalphaholder},  whose proof is
the main goal of this section. A curve is called
red, if it is $i$-red for some $i\in\N\setminus\{0\}$.

\begin{proposition}\label{paintedinhereditarily}
Fix $\omega\in\Omega$ and $m_0,\mfc,k_0\in\N\setminus\{0\}$ with $k_0\ge 3$. Let
$m_1,n,q\in\N$  with $m_1>\log 5/\log N$  and
$Q\in\Q_n$. Assume that every $Q'\in\Q_n(K_Q)$
is $(q,m_1,k_0,\mfc,m_0)$-hereditarily good. Let $\gamma\colon[a,b]\to K_Q$ be a
curve passing through an $(n,i)$-layer for some $i\in\{1,\dots,d\}$
and assume that
$\gamma(x)\in\partial Q_x$ for some $Q_x\in\Q_n$, where $x\in\{a,b\}$. Fix
$c\in\mathopen]a,b\mathclose[$. Applying Construction~\ref{brokenline} to
$\gamma$ and denoting the modified painted curve by $\gamma_{\tilde q}$, there
exist positive constants $C_1$ and $M_1$ depending
only on $d$ and $\mfc$ such that, for all $m_1\ge M_1$, 
\begin{align}\nonumber
\sum_{\substack{j_{k_0},\dots,j_1\\\gamma_{\tilde q}([a_{j_{k_0},\dots,j_1},a_{j_{k_0},\dots,j_1+1}])
      \text{ is white}}}
  &|\gamma_{\tilde q}(a_{j_{k_0},\dots,j_1})-\gamma_{\tilde q}(a_{j_{k_0},\dots,j_1+1})|\\ \label{*nredsum}
  &\ge(1-C_1N^{-m_1})\bigl(|\gamma(a)-\gamma(c)|+|\gamma(c)-\gamma(b)|\bigr)
\end{align}
and
\begin{align}\nonumber
\sum_{\substack{j_{k_0},\dots,j_1\\\gamma_{\tilde q}([a_{j_{k_0},\dots,j_1},a_{j_{k_0},\dots,j_1+1}])
      \text{ is not red}}}
  &|\gamma_{\tilde q}(a_{j_{k_0},\dots,j_1})-\gamma_{\tilde q}(a_{j_{k_0},\dots,j_1+1})|\\ \label{*redsum}
  &\ge(1-C_1N^{-m_1k_0})\bigl(|\gamma(a)-\gamma(c)|+|\gamma(c)-\gamma(b)|\bigr).
\end{align}
\end{proposition}

\begin{proof}
Let $l\in\{k_0,\dots,1\}$. Here we use the convention
$[a_{j_{k_0+1}},a_{j_{k_0+1}+1}]:=[a,b]$. Consider
$\gamma_{\tilde q}\colon [a_{j_{k_0},\dots,j_{l+1}},a_{j_{k_0},\dots,j_{l+1}+1}]\to\R^d$. 
By Remarks~\ref{caniterate} and \ref{stayinside} and
Condition~\eqref{sameproperties}, there exist cubes
$Q_{j_{k_0},\dots,j_{l+1}},Q_{j_{k_0},\dots,j_{l+1}}^\eta\in Q_{n+L^0(m_1,k_0)_l}$,
$\eta\in\{\alpha,\beta\}$, such that
\begin{align}
&\gamma_{\tilde q}([a_{j_{k_0},\dots,j_{l+1}},a_{j_{k_0},\dots,j_{l+1}+1}])\subset
  K_{Q_{j_{k_0},\dots,j_{l+1}}},\label{indexfixcubes1}\\
&\gamma_{\tilde q}(a_{j_{k_0},\dots,j_{l+1}})\in\partial Q_{j_{k_0},\dots,j_{l+1}}^\alpha
  \text{ and }
  \gamma_{\tilde q}(a_{j_{k_0},\dots,j_{l+1}+1})\in\partial Q_{j_{k_0},\dots,j_{l+1}}^\beta
  \text{ and}\label{indexfixcubes1.5}\\
&|\gamma_{\tilde q}(a_{j_{k_0},\dots,j_{l+1}})-\gamma_{\tilde q}(a_{j_{k_0},\dots,j_{l+1}+1})|
  \ge N^{-n-L^0(m_1,k_0)_l} 
  =\frac 1{\sqrt d}\diam Q_{j_{k_0},\dots,j_{l+1}} \label{indexfixcubes2}.
\end{align}

If $\gamma_{\tilde q}([a_{j_{k_0},\dots,j_{l+1}},a_{j_{k_0},\dots,j_{l+1}+1}])$ is red, the
interval $[a_{j_{k_0},\dots,j_{l+1}},a_{j_{k_0},\dots,j_{l+1}+1}]$ does not contain any
further layer division points and, in particular, it does not contain any
white
or blue curve segments. If
$\gamma_{\tilde q}([a_{j_{k_0},\dots,j_{l+1}},a_{j_{k_0},\dots,j_{l+1}+1}])$ is blue, it does
not contain any white curve segments of the form 
$\gamma_{\tilde q}([a_{j_{k_0},\dots,j_l},a_{j_{k_0},\dots,j_l+1}])$ but may contain
blue and red ones. Finally, if
$\gamma_{\tilde q}([a_{j_{k_0},\dots,j_{l+1}},a_{j_{k_0},\dots,j_{l+1}+1}])$ is white, it
may contain white, blue and red curve segments 
$\gamma_{\tilde q}([a_{j_{k_0},\dots,j_l},a_{j_{k_0},\dots,j_l+1}])$. 

Suppose that $\gamma_{\tilde q}([a_{j_{k_0},\dots,j_{l+1}},a_{j_{k_0},\dots,j_{l+1}+1}])$ is
white. We will first estimate the contribution of blue curve segments
$\gamma_{\tilde q}([a_{j_{k_0},\dots,j_l},a_{j_{k_0},\dots,j_l+1}])$ it contains.
If the segment $\gamma_{\tilde q}([a_{j_{k_0},\dots,j_l},a_{j_{k_0},\dots,j_l+1}])$
is blue,
it intersects $K_{Q'}$ for some blue cube
$Q'\in\Q_{n+L^0(m_1,k_0)_{l-1}}(K_{Q_{j_{k_0},\dots,j_{l+1}}})$. Applying
\eqref{indexfixcubes1} with $l$ replaced by $l-1$, we conclude that
$\gamma_{\tilde q}([a_{j_{k_0},\dots,j_l},a_{j_{k_0},\dots,j_l+1}])\subset 15Q'$. Since
$\gamma_{\tilde q}([a_{j_{k_0},\dots,j_{l+1}},a_{j_{k_0},\dots,j_{l+1}+1}])$ is white, all
cubes in the set $\Q_{n+L^0(m_1,k_0)_l}(K_{Q_{j_{k_0},\dots,j_{l+1}}})$ are white
according to Step 3
of Construction~\ref{brokenline0}. Combining Inequality~\eqref{indexfixcubes2}
and the first claim in Lemma~\ref{colourcontribution} gives
\begin{equation}\label{notblue}
\begin{split}
&\sum_{\substack{j_l\\\gamma_{\tilde q}([a_{j_{k_0},\dots,j_l},a_{j_{k_0},\dots,j_l+1}])\text{ is not blue}}}
 |\gamma_{\tilde q}(a_{j_{k_0},\dots,j_l})-\gamma_{\tilde q}(a_{j_{k_0},\dots,j_l+1})|\\
&\quad\ge|\gamma_{\tilde q}(a_{j_{k_0},\dots,j_{l+1}})
 -\gamma_{\tilde q}(a_{j_{k_0},\dots,j_{l+1}+1})|\\
&\qquad -\sum_{\widetilde Q\in\Q_{n+L^0(m_1,k_0)_l}(K_{Q_{j_{k_0},\dots,j_{l+1}}})}
 \sum_{\substack{Q'\\ \lpb(Q')=\widetilde Q}}15\diam Q'\\
&\quad\ge(1-5^d\mfc 15\sqrt dN^{-m_1l})
  |\gamma_{\tilde q}(a_{j_{k_0},\dots,j_{l+1}})-\gamma_{\tilde q}(a_{j_{k_0},\dots,j_{l+1}+1})|.
\end{split}
\end{equation}

Next we estimate the contribution of red parts which are contained in the curve
$\gamma_{\tilde q}([a_{j_{k_0},\dots,j_{l+1}},a_{j_{k_0},\dots,j_{l+1}+1}])$. If
$\gamma_{\tilde q}([a_{j_{k_0},\dots,j_l},a_{j_{k_0},\dots,j_l+1}])$ is red, it
intersects $K_{Q'}$ for some $i$-red cube
$Q'\in\Q_{n+L^0(m_1,k_0)_{l-1}}(K_{Q_{j_{k_0},\dots,j_{l+1}}})$ and, as above, we obtain that
$\gamma_{\tilde q}([a_{j_{k_0},\dots,j_l},a_{j_{k_0},\dots,j_l+1}])\subset 15Q'$. Now
$\lpi(Q')\in\Q_{n+L^0(m_1,k_0)_{r(i,l)}}$ for $r(i,l)\ge l+1$.
Suppose first that
$c\not\in [a_{j_{k_0},\dots,j_{r(i,l)+1}},a_{j_{k_0},\dots,j_{r(i,l)+1}+1}]$. Let
\begin{align*}
t_1&:=\min\{t\in [a_{j_{k_0},\dots,j_{l+1}},a_{j_{k_0},\dots,j_{l+1}+1}]\mid
\gamma_{\tilde q}(t)\in K_{Q'}\}\text{ and }\\
t_2&:=\max\{t\in [a_{j_{k_0},\dots,j_{l+1}},a_{j_{k_0},\dots,j_{l+1}+1}]\mid
\gamma_{\tilde q}(t)\in K_{Q'}\}.
\end{align*}
By Algorithm~\ref{modification}, the curve $\gamma_{\tilde q}([t_1,t_2])$ is a
straight line inside $K_{Q'}$. Assuming that
$K_{Q'}\subset\widetilde Q\in\Q_{n+L^0(m_1,k_0)_l}$, a repeated application of
condition \eqref{indexfixcubes1.5} with $l$ replaced by $l+1,\dots,r(i,l)$
implies the existence of a unique sequence $j_{r(i,l)},\dots,j_{l+1}$ such that
the curve
$\gamma_{\tilde q}([a_{j_{k_0},\dots,j_{r(i,l)},\dots,j_{l+1}},
  a_{j_{k_0},\dots,,j_{r(i,l)},\dots,j_{l+1}+1}])$
intersects $K_{Q'}$. If there exists an index
$\tilde l\in\{l,\dots,r(i,l)\}$ such that
$K_{Q'}$ intersects the interiors of at least two cubes in
$\Q_{n+L^0(m_1,k_0)_{\tilde l}}$, there are at most two sequences
$j_{r(i,l)},\dots,j_{l+1}$ with the property that the curve
$\gamma_{\tilde q}([a_{j_{k_0},\dots,j_{r(i,l)},\dots,j_{l+1}},
  a_{j_{k_0},\dots,,j_{r(i,l)},\dots,j_{l+1}+1}])$
intersects $K_{Q'}$, and these sequences are next to each other with respect to
the natural order of sequences given by the layer division points they are
labelling. We
pick up the first one of these sequences. If
$c\in [a_{j_{k_0},\dots,j_{r(i,l)+1}},a_{j_{k_0},\dots,j_{r(i,l)+1}+1}]$,
by Remark~\ref{caniterate2}.(e), there may be two
sequences $j_{r(i,l)},\dots,j_{l+1}$ of this type  --
one before $c$ and one after $c$.

Let $\tilde r(i,l)$ be such that
$L^0(m_1,k_0)_{r(i,l)}=L^i(m_1,k_0)_{\tilde r(i,l)}$. For all
$j_{k_0},\dots,j_{r(i,l)+1}$, define a function
$\chi_{i,l}^{j_{k_0},\dots,j_{r(i,l)+1}}$ by setting
$\chi_{i,l}^{j_{k_0},\dots,j_{r(i,l)+1}}(j_{r(i,l)},\dots,j_{l+1})=1$ provided that
$j_{r(i,l)},\dots,j_{l+1}$ is a sequence determined by some $i$-red cube $Q'$ as
above and, otherwise,
$\chi_{i,l}^{j_{k_0},\dots,j_{r(i,l)+1}}(j_{r(i,l)},\dots,j_{l+1})=0$.
In particular, if $i$ and $l$ are such that there are no $i$-red cubes at level
$n+L^0(m_1,k_0)_{l-1}$, then $\chi_{i,l}^{j_{k_0},\dots,j_{r(i,l)+1}}\equiv 0$.
Note that the function $\chi$ depends on $\omega$ but, for simplicity,
we suppress it from the notation.
Set $\widetilde C:=5^d\mfc 360\sqrt d$ (the factor 360 instead of 15 will be
needed at later stages of the proof). Combining the above
information with Inequality~\eqref{notblue} and multiplying the contribution
of the red cubes by an extra factor 2, to be utilised in the proof of
Lemma~\ref{smallgain}, leads to
\begin{equation}\label{onesum}
\begin{split}  
&\sum_{\substack{j_l\\\gamma_{\tilde q}([a_{j_{k_0},\dots,j_l},a_{j_{k_0},\dots,j_l+1}])\text{ is white}}}
   |\gamma_{\tilde q}(a_{j_{k_0},\dots,j_l})-\gamma_{\tilde q}(a_{j_{k_0},\dots,j_l+1})|\\
 &\ge(1-\widetilde CN^{-m_1l})(|\gamma_{\tilde q}(a_{j_{k_0},\dots,j_{l+1}})
   -\gamma_{\tilde q}(a_{j_{k_0},\dots,j_{l+1}+1})|)\\
 &\quad -\sum_{i=1}^q\chi_{i,l}^{j_{k_0},\dots,j_{r(i,l)+1}}30\sqrt d
   N^{-\Delta_{\tilde r(i,l)}^i}N^{-n-L^i(m_1,k_0)_{\tilde r(i,l)}}.
\end{split}
\end{equation}

By Definition~\ref{j-levels}, the level $n+l_0$, corresponding to the sum over
$j_1$, is such that there may be $i$-red cubes for all $i=1,\dots,i_0+2$ and
$\chi_{i,l}^{j_{k_0},\dots,j_{r(i,l)+1}}\equiv 0$ for $i>i_0+2$. Recalling that only
white curves contain white curve segments, we conclude from \eqref{onesum} that
\begin{align*}
&\sum_{j_{k_0},\dots,j_2}
   \sum_{\substack{j_1\\\gamma_{\tilde q}([a_{j_{k_0},\dots,j_1},a_{j_{k_0},\dots,j_1+1}])\text{ is white}}}
   |\gamma_{\tilde q}(a_{j_{k_0},\dots,j_1})-\gamma_{\tilde q}(a_{j_{k_0},\dots,j_1+1})|\\
 &\ge\sum_{j_{k_0},\dots,j_3}
   \sum_{\substack{j_2\\\gamma_{\tilde q}([a_{j_{k_0},\dots,j_2},a_{j_{k_0},\dots,j_2+1}])\text{ is white}}}
   (1-\widetilde CN^{-m_1})
   (|\gamma_{\tilde q}(a_{j_{k_0},\dots,j_2})-\gamma_{\tilde q}(a_{j_{k_0},\dots,j_2+1})|)\\
 &\quad -\sum_{i=1}^q\sum_{j_{k_0},\dots,j_{r(i,1)+1}}30\sqrt dN^{-\Delta_{\tilde r(i,1)}^i}
   N^{-n-L^i(m_1,k_0)_{\tilde r(i,1)}}\\
 &\quad\times\sum_{j_{r(i,1)},\dots,j_2}
   \chi_{i,1}^{j_{k_0},\dots,j_{r(i,1)+1}}(j_{r(i,1)},\dots,j_2)=:A.
\end{align*}

While summing over $j_2,\dots,j_{r(1,1)-1}$, we only need to subtract the
contribution of blue cubes, since there are 1-red cubes next time at level
$n+L^1(m_1,k_0)_{\tilde r(1,1)}=n+L^0(m_1,k_0)_{r(1,1)}$ corresponding the summing
index $j_{r(1,1)+1}$. Further, the contribution of
$\chi_{1,1}^{j_{k_0},\dots,j_{r(1,1)+1}}(j_{r(1,1)},\dots,j_2)$ cannot be estimated before
we reach the summing index $j_{r(1,1)}$. By construction, every $i$-red cube
$Q'\in\Q_{n+L^0(m_1,k_0)_{l-1}}$ defines at most two sequences
$j_{r(i,l)},\dots,j_{l+1}$ such that
$\chi_{i,l}^{j_{k_0},\dots,j_{r(i,l)+1}}(j_{r(i,l)},\dots,j_{l+1})\ne 0$.
By \eqref{indexfixcubes1} with $l$ replaced by $r(i,l)$, there are at most 
$5^d$ cubes which are $i$-red-labelled parents for some $Q'$ related to a fixed
$\chi_{i,l}^{j_{k_0},\dots,j_{r(i,l)+1}}$. Thus, by the second claim of
Lemma~\ref{colourcontribution} and Inequality~\eqref{indexfixcubes2}, we have
that
\begin{equation}\label{1-redcontribution}
\begin{split}
30&\sqrt dN^{-\Delta_{\tilde r(1,1)}^1}N^{-n-L^1(m_1,k_0)_{\tilde r(1,1)}}
  \sum_{j_{r(1,1)},\dots,j_2}\chi_{1,1}^{j_{k_0},\dots,j_{r(1,1)+1}}(j_{r(1,1)},\dots,j_2)\\
&\le 5^d\mfc 60\sqrt d N^{-\Delta_{\tilde r(1,1)}^1}
  |\gamma_{\tilde q}(a_{j_{k_0},\dots,j_{r(1,1)+1}})
  -\gamma_{\tilde q}(a_{j_{k_0},\dots,j_{r(1,1)+1}+1})|.
\end{split}
\end{equation}
Combining the above facts, we obtain that 
\begin{align*}
A&\ge\sum_{\substack{j_{k_0},\dots,j_{r(1,1)+1}\\
   \gamma_{\tilde q}([a_{j_{k_0},\dots,j_{r(1,1)+1}},a_{j_{k_0},\dots,j_{r(1,1)+1}+1}])\text{ is white}}}
  \Bigl[\Bigl(\prod_{r=1}^{r(1,1)}(1-\widetilde CN^{-m_1r})\Bigr)\\
 &\quad\times|\gamma_{\tilde q}(a_{j_{k_0},\dots,j_{r(1,1)+1}})
  -\gamma_{\tilde q}(a_{j_{k_0},\dots,j_{r(1,1)+1}+1})|\\
 &\quad -\widetilde CN^{-\Delta_{\tilde r(1,1)}^1}
  |\gamma_{\tilde q}(a_{j_{k_0},\dots,j_{r(1,1)+1}})
  -\gamma_{\tilde q}(a_{j_{k_0},\dots,j_{r(1,1)+1}+1})|\Bigr]\\
 &\quad -\sum_{i=2}^q\sum_{j_{k_0},\dots,j_{r(i,1)+1}}30\sqrt dN^{-\Delta_{\tilde r(i,1)}^i}
  N^{-n-L^i(m_1,k_0)_{\tilde r(i,1)}}\\
 &\quad\times\sum_{j_{r(i,1)},\dots,j_2}
  \chi_{i,1}^{j_{k_0},\dots,j_{r(i,1)+1}}(j_{r(i,1)},\dots,j_2)=:B.
\end{align*}
There exists $M_1$, depending only on $d$ and $\mfc$, such that,
for all $m_1\ge M_1$, we have that
$\prod_{r=1}^\infty(1-2\widetilde CN^{-m_1r})>\frac 12$ (the factor 2 appearing
in front of $\widetilde C$ is only needed at later stages of the proof), which
implies that
\begin{align*}
\Bigl( &\prod_{r=1}^{r(1,1)}(1-\widetilde CN^{-m_1r})\Bigr)
   |\gamma_{\tilde q}(a_{j_{k_0},\dots,j_{r(1,1)+1}})
   -\gamma_{\tilde q}(a_{j_{k_0},\dots,j_{r(1,1)+1}+1})|\\
 &-\widetilde CN^{-\Delta_{\tilde r(1,1)}^1}
   |\gamma_{\tilde q}(a_{j_{k_0},\dots,j_{r(1,1)+1}})
   -\gamma_{\tilde q}(a_{j_{k_0},\dots,j_{r(1,1)+1}+1})|\\ 
\ge & (1-2\widetilde CN^{-\Delta_{\tilde r(1,1)}^1})\Bigl(\prod_{r=1}^{r(1,1)}
   (1-\widetilde CN^{-m_1r})\Bigr)|\gamma_{\tilde q}(a_{j_{k_0},\dots,j_{r(1,1)+1}})
   -\gamma_{\tilde q}(a_{j_{k_0},\dots,j_{r(1,1)+1}+1})|. 
\end{align*}
Therefore, using Inequality~\eqref{onesum} with $l=r(1,1)+1$ and recalling
that $i=1$ gives the only non-zero contribution to the sum in \eqref{onesum},
we deduce that
\begin{align*} 
B&\ge\sum_{\substack{j_{k_0},\dots,j_{r(1,1)+2}\\
   \gamma_{\tilde q}([a_{j_{k_0},\dots,j_{r(1,1)+2}},a_{j_{k_0},\dots,j_{r(1,1)+2}+1}])\text{ is white}}}
  (1-2\widetilde CN^{-\Delta_{\tilde r(1,1)}^1})\\
 &\quad\times\Bigl(\prod_{r=1}^{r(1,1)+1}(1-\widetilde CN^{-m_1r})\Bigr)
  |\gamma_{\tilde q}(a_{j_{k_0},\dots,j_{r(1,1)+2}})
  -\gamma_{\tilde q}(a_{j_{k_0},\dots,j_{r(1,1)+2}+1})|\\
 &\quad -\sum_{j_{k_0},\dots,j_{r(1,r(1,1)+1)+1}}30\sqrt dN^{-\Delta_{\tilde r(1,r(1,1)+1)}^1}
  N^{-n-L^1(m_1,k_0)_{\tilde r(1,r(1,1)+1)}}\\
 &\quad\times\sum_{j_{r(1,r(1,1)+1)},\dots,j_{r(1,1)+2}}
  \chi_{1,r(1,1)+1}^{j_{k_0},\dots,j_{r(1,r(1,1)+1)+1}}(j_{r(1,r(1,1)+1)},\dots,j_{r(1,1)+2})\\
 &\quad -\sum_{i=2}^q\sum_{j_{k_0},\dots,j_{r(i,1)+1}}30\sqrt dN^{-\Delta_{\tilde r(i,1)}^i}
  N^{-n-L^i(m_1,k_0)_{\tilde r(i,1)}}\\
 &\quad\times\sum_{j_{r(i,1)},\dots,j_2}
  \chi_{i,1}^{j_{k_0},\dots,j_{r(i,1)+1}}(j_{r(i,1)},\dots,j_2)=:D.
\end{align*}

We proceed by estimating $D$ in a similar manner. When computing
the contribution of $i$-red cubes for $i=1,\dots,q'$, where
$q'\le q$, we apply the third claim of
Lemma~\ref{colourcontribution}, that is, instead of
Inequality~\eqref{1-redcontribution}, we have
\begin{align*}
30&\sqrt d\sum_{i=1}^{q'}\sum_{j_{r(i,l_i)},\dots,j_{l_i+1}}N^{-\Delta_{\tilde r(i,l_i)}^i}
  N^{-n-L^i(m_1,k_0)_{\tilde r(i,l_i)}}
  \chi_{i,l_i}^{j_{k_0},\dots,j_{r(i,l_i)+1}}(j_{r(i,l_i)},\dots,j_{l_i+1})\\
 &\le 5^d\cdot 6\mfc 60\sqrt d N^{-\Delta_{\tilde r(1,l_1)}^1}
  |\gamma_{\tilde q}(a_{j_{k_0},\dots,j_{r(1,l_1)+1}})
  -\gamma_{\tilde q}(a_{j_{k_0},\dots,j_{r(1,l_1)+1}+1})|,
\end{align*}
explaining the factor 360 in the definition of $\widetilde C$.
Proceeding in this way, recalling Remark~\ref{caniterate2}.(d) while summing
over $j_{k_0}$ and recalling \eqref{*dj1}, we end up with the estimate
\begin{align*}
D&\ge (1-CN^{-m_1k_0})(|\gamma(a)-\gamma(c)|+|\gamma(c)-\gamma(b)|)\Bigl(
  \prod_{r=1}^{k_0}(1-\widetilde CN^{-m_1r})\Bigr)\\
 &\quad\,\times\prod_{s=k_0+1}^{k_1}(1-2\widetilde CN^{-\Delta_s^1})\\
 &\ge (1-\widetilde C_1N^{-m_1})(|\gamma(a)-\gamma(c)|+|\gamma(c)-\gamma(b)|),
\end{align*} 
where $\widetilde C_1$ depends only on $d$ and $\mfc$.

The proof of the second claim is similar. The role of blue cubes is taken
by 1-red cubes and the last claim of Lemma~\ref{colourcontribution} is
utilised. Instead of the factor
\[
\Bigl(\prod_{r=1}^{k_0}(1-\widetilde CN^{-m_1r})\Bigr)
  \prod_{s=k_0+1}^{k_1}(1-2\widetilde CN^{-\Delta_s^1})
\]
the computation results, by using \eqref{*dj1} for $i=2$, in the factor 
\[
\Bigl(\prod_{r=k_0+1}^{k_1}(1-\widetilde CN^{-m_1r})\Bigr)
  \prod_{s=k_1+1}^{k_2}(1-2\widetilde CN^{-\Delta_s^2})
\]
leading to a constant different from $\widetilde C_1$, denoted by $C_1$.
\end{proof}

\begin{remark}\label{restricteddivision}
(a) By construction (recall Remark~\ref{caniterate2}.(a)), we have that
\[
|\gamma(a)-\gamma(c)|+|\gamma(c)-\gamma(b)|\le 10\sqrt d N^{-n}\text{ and }
|\gamma_{\tilde q}(a_{j_{k_0},\dots,j_1})-\gamma_{\tilde q}(a_{j_{k_0},\dots,j_1+1})|
\ge N^{-l_0-n}
\]
for all $(j_{k_0},\dots,j_1)$. Therefore, one may choose a subcollection
$J\subset\{(j_{k_0},\dots,j_1)\mid j_l=0,\dots,p_l+1, l=k_0,\dots,1\}$
with $\# J\le\lfloor 10\sqrt d N^{l_0}\rfloor+1< C_0N^{l_0}$, where
$C_0:=11\sqrt d$, such that Proposition~\ref{paintedinhereditarily} is valid
when the sum is restricted to the indices in $J$.

(b) If $q>i_0+2$ in Proposition~\ref{paintedinhereditarily}, there may be curve
segments which are primed with $i$-red primer but not painted with
$i$-red paint for $i>i_0+2$. These curve segments play no role in
Proposition~\ref{paintedinhereditarily} (recall
Remark~\ref{caniterate2}.(b)).
However, in the proof of Proposition~\ref{paintedinhereditarily}  we used
Lemma~\ref{colourcontribution}, which takes into account also the contribution
of these $i$-red segments. We will use this fact in the proof of
Proposition~\ref{notalphaholder} later.
\end{remark}

Next we estimate the increase rate of the length of the broken line
approximation provided the curve stays close to the fractal percolation set
$E(\omega)$.  Next lemma is a key observation guaranteeing length gain.

\begin{lemma}\label{lengthgain}
Fix $\omega\in\Omega$ and $m_0\in\N\setminus\{0\}$. Let $n\in\N$ and $Q\in\Q_n$.
Suppose that $Q'$ is $m_0$-good for all $Q'\in\Q_n(K_Q)$. Let
$\gamma\colon [a,b]\to K_Q$ be a curve passing through an $(n,i)$-layer for some
$i\in\{1,\dots,d\}$. 
Suppose further that there are no points $\tilde a,\tilde b\in[a,b]$ such that 
$\gamma(\mathopen]\tilde a,\tilde b\mathclose[)\cap E(\omega)=\emptyset$
    and
$|\gamma(\tilde a)-\gamma(\tilde b)|\ge d^{-1}N^{-m_0-n}$. Then there is 
$c\in\mathopen]a,b\mathclose[$ such that
\begin{equation}\label{*cgain}
|\gamma(a)-\gamma(c)|+|\gamma(c)-\gamma(b)|\ge(1+C_2N^{-2m_0})
  |\gamma(a)-\gamma(b)|,
\end{equation}
where $C_2$ depends only on $d$.
\end{lemma}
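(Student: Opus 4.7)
The plan is to apply the contrapositive of Lemma~\ref{gap}: the no-gap hypothesis of the present lemma forbids any gap in $E(\omega)$ along $\gamma$ of size at least $d^{-1}N^{-n-m_0}$, so $\gamma$ cannot shadow the line segment $L(\gamma(a),\gamma(b))$ too closely, and the resulting pointwise deviation forces the broken line $L(\gamma(a),\gamma(c))\cup L(\gamma(c),\gamma(b))$ to be longer than $L(\gamma(a),\gamma(b))$ by a quantity of order $N^{-n-2m_0}$.

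First I would verify the hypotheses of Lemma~\ref{gap} with $x:=\gamma(a)$ and $y:=\gamma(b)$. Since $\gamma$ passes through some $(n,i)$-layer $\L$, its endpoints lie in different connected components of $(\inter\L)^c$, so $L(\gamma(a),\gamma(b))$ passes through $\L$ as well. Convexity of $K_Q$ places this segment inside $K_Q$, so every cube of $\Q_n$ that meets it in a set of positive length lies in $\Q_n(K_Q)$ and is $m_0$-good by hypothesis. The contrapositive of Lemma~\ref{gap} thus yields
\[
\dH(\gamma([a,b]),L(\gamma(a),\gamma(b)))\ge\tfrac{1}{2\sqrt 2}N^{-n-m_0}.
\]
To upgrade this Hausdorff bound to a pointwise statement, I would observe that orthogonal projection of $\gamma$ onto the supporting line of $L(\gamma(a),\gamma(b))$ is continuous and agrees with the identity at $t=a$ and $t=b$; hence its image covers $L(\gamma(a),\gamma(b))$, so every point of the segment is automatically within $\sup_t\dist(\gamma(t),L(\gamma(a),\gamma(b)))$ of $\gamma$. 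Only the ``curve-far-from-segment'' direction can be large, so there exists $c\in\mathopen]a,b\mathclose[$ with $h:=\dist(\gamma(c),L(\gamma(a),\gamma(b)))\ge\tfrac{1}{2\sqrt 2}N^{-n-m_0}$.

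The final step is an elementary planar computation. Dropping the perpendicular from $\gamma(c)$ onto the supporting line and letting $u,v$ denote the distances from its foot to $\gamma(a)$ and $\gamma(b)$, one has, in the generic case where the foot lies on the segment,
\[
|\gamma(a)-\gamma(c)|+|\gamma(c)-\gamma(b)|-|\gamma(a)-\gamma(b)|
=\tfrac{h^2}{\sqrt{u^2+h^2}+u}+\tfrac{h^2}{\sqrt{v^2+h^2}+v};
\]
the case where the foot lies outside the segment only makes the excess larger. Since $\gamma([a,b])\subset K_Q$, all of $u,v,h$ and $|\gamma(a)-\gamma(b)|$ are bounded by $\diam K_Q=5\sqrt d\,N^{-n}$, while $h\ge\tfrac{1}{2\sqrt 2}N^{-n-m_0}$. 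Hence the excess length is at least a constant (depending only on $d$) times $N^{-n-2m_0}$, and since $|\gamma(a)-\gamma(b)|\le 5\sqrt d\,N^{-n}$, dividing produces the required multiplicative gain $1+C_2 N^{-2m_0}$.

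The only step requiring care is the verification of Lemma~\ref{gap}'s hypotheses, in particular that the relevant cubes along $L(\gamma(a),\gamma(b))$ are among those guaranteed $m_0$-good; the transfer from Hausdorff to pointwise distance and the trigonometric estimate are both routine.
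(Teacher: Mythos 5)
Your proof is correct and takes essentially the same route as the paper: both apply the contrapositive of Lemma~\ref{gap} to the chord $L(\gamma(a),\gamma(b))$ to produce a point $\gamma(c)$ at distance at least $\tfrac{1}{2\sqrt 2}N^{-n-m_0}$ from it, and then convert this deviation into the relative length gain of order $N^{-2m_0}$ by elementary geometry. Your Hausdorff-to-pointwise projection argument and the explicit excess formula are just slightly more detailed versions of steps the paper treats implicitly (the paper instead minimises the broken-line length over positions of $\gamma(c)$, placing it on the bisecting hyperplane, and gets $C_2=(100\sqrt 2 d)^{-1}$).
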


\begin{proof}
Observe that $|\gamma(a)-\gamma(b)|\le 5\sqrt d N^{-n}$. If $\gamma$ passes
through an $(n,j)$-layer for several
$j\in\{1,\dots,d\}$, we consider the index $j$ which maximises the length of
$\Pi_j(L(\gamma(a),\gamma(b)))$. By the contrapositive form of Lemma~\ref{gap},
there exists $c\in\mathopen]a,b\mathclose[$ such that
\[
\dist(\gamma(c),L(\gamma(a),\gamma(b)))\ge\frac 1{2\sqrt 2}N^{-n-m_0}
  \ge (10\sqrt{2d})^{-1}N^{-m_0}|\gamma(a)-\gamma(b)|.
\]
Under this condition, the minimum of
$|\gamma(a)-\gamma(c)|+|\gamma(c)-\gamma(b)|$ is attained when
$\gamma(c)$ is in the 
hyperplane consisting of the points which are equally far away from
$\gamma(a)$ and $\gamma(b)$. Combining this with the fact that 
$\sqrt{1+x}\ge1+\frac x{2\sqrt 2}$ for $0<x<1$ leads to
\begin{align*}
|\gamma(a)-\gamma(c)|+|\gamma(c)-\gamma(b)|
 &\ge 2\sqrt{(\tfrac 12)^2+(10\sqrt{2d})^{-2}N^{-2m_0}}|\gamma(a)-\gamma(b)|\\
 &\ge (1+C_2N^{-2m_0})|\gamma(a)-\gamma(b)|,
\end{align*}
where $C_2:=(100\sqrt 2d)^{-1}$.
\end{proof}

Lemma~\ref{lengthgain} guarantees that $m_0$-good cubes produce a
relative length gain of order $N^{-2m_0}$ provided a curve has no gaps
of relative order $N^{-m_0}$ in $E(\omega)$ (see
Definition~\ref{gapdef}). According to the next lemma, a length gain of the
same relative order is also produced by $(\bL^0(m_1,k_0),\mfc,m_0)$-good cubes.
Note that in Lemma~\ref{lengthgain} the diameter of the curve is of order
$N^{-n}$ and the gap size is $N^{-n-m_0}$.
In the next lemma, the diameter of the curve is still of order $N^{-n}$ but the
gap size is $N^{-n-l_0-m_0}$. The reason why we nevertheless obtain the length
gain of relative order $N^{-2m_0}$ is that the painted curve $\gamma_{\tilde q}$
has at least $N^{l_0}$ white curve segments with diameter of order $N^{-n-l_0}$,
each of them producing a length gain $N^{-2m_0-n-l_0}$ by Lemma~\ref{lengthgain}.
The point $c\in\mathopen]a,b\mathclose[$ in the next lemma is needed later
in  the proof of  Proposition~\ref{notalphaholder} when we iterate our construction.
 
\begin{lemma}\label{smallgain}
Fix $\omega\in\Omega$ and $m_0,\mfc\in\N\setminus\{0\}$.
Let $k_0,m_1\in\N\setminus\{0\}$ be such that
$C_1N^{-m_1}\le\frac 14$, Proposition~\ref{paintedinhereditarily} is valid and
$C_1N^{-m_1k_0}\le\frac 14C_2N^{-2m_0}$, where $C_1$ is
as in Proposition~\ref{paintedinhereditarily} and $C_2$ as in
Lemma~\ref{lengthgain}. Let $q,n\in\N$ and $Q\in\Q_n$. Assume that every
$Q'\in\Q_n(K_Q)$ is $(q,m_1,k_0,\mfc,m_0)$-hereditarily good.
Further, suppose that $\gamma\colon [a,b]\to K_Q$ is a curve passing through an
$(n,i)$-layer for some $i\in\{1,\dots,d\}$. Fix
$c\in\mathopen]a,b\mathclose[$. Assume that there are no
points $\tilde a,\tilde b\in[a,b]$ such that 
$\gamma(\mathopen]\tilde a,\tilde b\mathclose[)\cap E(\omega)=\emptyset$ and
$|\gamma(\tilde a)-\gamma(\tilde b)|\ge (4d)^{-1}N^{-m_0-l_0-n}$.
Let $\gamma_{\tilde q}$
be the painted curve obtained when applying Construction~\ref{brokenline} to
$\gamma$. Then there exist $C_3>0$, depending only on $d$, and a
sequence of points $a\le b_1<d_1\le\dots\le b_{2M}<d_{2M}\le b$ such that
\begin{equation}\label{smallgainineq}
\sum_{\substack{j=1\\\gamma_{\tilde q}([b_j,d_j])\text{ is not red}}}^{2M}
  |\gamma_{\tilde q}(b_j)-\gamma_{\tilde q}(d_j)|
  \ge (1+C_3N^{-2m_0})\bigl(|\gamma(a)-\gamma(c)|+|\gamma(c)-\gamma(b)|\bigr),
\end{equation}
where $M<C_0N^{l_0}$.
\end{lemma}

\begin{proof}
Let $a_{j_{k_0},\dots,j_1}$ be the layer division points defined in
Construction~\ref{brokenline}. If $q\le i_0+2$, there are no segments in
$\gamma_{\tilde q}$ which are primed but not painted (recall
Remark~\ref{caniterate2}.(b)). Therefore, if
$\gamma_{\tilde q}([a_{j_{k_0},\dots,j_1},a_{j_{k_0},\dots,j_1+1}])$ is white, then the
curve
$\gamma_{\tilde q}\colon [a_{j_{k_0},\dots,j_1},a_{j_{k_0},\dots,j_1+1}]
\to K_{Q_{j_{k_0},\dots,j_1}}$
satisfies the assumptions of Lemma~\ref{lengthgain}, since
$\gamma_{\tilde q}|_{[a_{j_{k_0},\dots,j_1},a_{j_{k_0},\dots,j_1+1}]}
  =\gamma|_{[a_{j_{k_0},\dots,j_1},a_{j_{k_0},\dots,j_1+1}]}$.
Let $c_{j_{k_0},\dots,j_1}\in [a_{j_{k_0},\dots,j_1},a_{j_{k_0},\dots,j_1+1}]$ be the point
given by Lemma~\ref{lengthgain}. We get
\begin{equation}\label{individualgain}
\begin{split}
 &|\gamma_{\tilde q}(a_{j_{k_0},\dots,j_1})-\gamma_{\tilde q}(c_{j_{k_0},\dots,j_1})|
   +|\gamma_{\tilde q}(c_{j_{k_0},\dots,j_1})-\gamma_{\tilde q}(a_{j_{k_0},\dots,j_1+1})|\\
 &\ge (1+C_2N^{-2m_0})|\gamma_{\tilde q}(a_{j_{k_0},\dots,j_1})
   -\gamma_{\tilde q}(a_{j_{k_0},\dots,j_1+1})|.
\end{split}
\end{equation}
If $q>i_0+2$, $\gamma_{\tilde q}$ and $\gamma$ may differ even on white
segments if they contain parts which are primed with $i$-red primer but not
painted with $i$-red paint for some $i>i_0+2$. In this case, $\gamma_{\tilde q}$
contains a line segment included in $K_{Q'}$ for some $i$-red cube $Q'$ (recall
Algorithm~\ref{modification}). If the sum of the side lengths of all such
cubes $K_{Q'}$ is at least $(2d)^{-1}N^{-m_0-l_0-n}$, we can use the extra factor
2 introduced in \eqref{onesum} to obtain the length gain
\eqref{individualgain}, recalling that $C_2N^{-2m_0}<(2d)^{-1}N^{-m_0}$. If the
above sum is less than $(2d)^{-1}N^{-m_0-l_0-n}$ and if the conclusion of
Lemma~\ref{lengthgain} is not valid for
$\gamma_{\tilde q}|_{[a_{j_{k_0},\dots,j_1},a_{j_{k_0},\dots,j_1+1}]}$, there is
$j\in\{1,\dots,d\}$ and a $j$-layer $\L$ of width $d^{-1}N^{-m_0-l_0-n}$ such
that $\gamma_{\tilde q}|_{[a_{j_{k_0},\dots,j_1},a_{j_{k_0},\dots,j_1+1}]}$ passes through $\L$
without intersecting $E(\omega)$ (recall the proof of Lemma~\ref{gap}).
Since the total length of the modified part of
$\gamma_{\tilde q}|_{[a_{j_{k_0},\dots,j_1},a_{j_{k_0},\dots,j_1+1}]}$ is less than
$(2d)^{-1}N^{-m_0-l_0-n}$, there is $t\in [a_{j_{k_0},\dots,j_1},a_{j_{k_0},\dots,j_1+1}]$
such that $\gamma_{\tilde q}(t)=\gamma(t)$ and
$\dist(\gamma(t),\L^c)\ge (4d)^{-1}N^{-m_0-l_0-n}$, leading to a contradiction with
assumptions on $\gamma$. Therefore, we obtain \eqref{individualgain} in all
cases.

If $\gamma_{\tilde q}([a_{j_{k_0},\dots,j_1},a_{j_{k_0},\dots,j_1+1}])$ is blue, we will
not have any length gain but, for notational simplicity, we set
$c_{j_{k_0},\dots,j_1}:=\frac 12(a_{j_{k_0},\dots,j_1}+a_{j_{k_0},\dots,j_1+1})$.
By Remark~\ref{restricteddivision}, we may choose a set $J$ of indices
$(j_{k_0},\dots,j_1)$ such that $\#J<C_0N^{l_0}$ and
Proposition~\ref{paintedinhereditarily} remains valid when summing
over $J$.
Define points $b_i$ and $d_i$ by setting
\begin{align*}
&\{b_{2p-1}\mid p=1,\dots M\}=\{a_{j_{k_0},\dots,j_1}\mid (j_{k_0},\dots,j_1)\in J\},\\
&\{d_{2p-1}\mid p=1,\dots M\}=\{c_{j_{k_0},\dots,j_1}\mid (j_{k_0},\dots,j_1)\in J\}
=\{b_{2p}\mid p=1,\dots M\}\text{ and }\\
&\{d_{2p}\mid p=1,\dots M\}=\{a_{j_{k_0},\dots,j_1+1}\mid (j_{k_0},\dots,j_1)\in J\}.
\end{align*}  
Using \eqref{individualgain}, Proposition~\ref{paintedinhereditarily} and the
choices of $m_1$ and $k_0$, we obtain that
\begin{align*}
&\sum_{\substack{j=1\\\gamma_{\tilde q}([b_j,d_j])\text{ is not red}}}^{2M}
  |\gamma_{\tilde q}(b_j)-\gamma_{\tilde q}(d_j)|\\
&=\sum_{\substack{(j_{k_0},\dots,j_1)\in J\\\gamma_{\tilde q}([a_{j_{k_0},\dots,j_1},a_{j_{k_0},\dots,j_1+1}])
  \text{ is white}}}\Bigl(|\gamma_{\tilde q}(a_{j_{k_0},\dots,j_1})
  -\gamma_{\tilde q}(c_{j_{k_0},\dots,j_1})|\\
&\qquad\qquad\qquad\qquad\qquad\qquad\quad
  +|\gamma_{\tilde q}(c_{j_{k_0},\dots,j_1})
  -\gamma_{\tilde q}(a_{j_{k_0},\dots,j_1+1})|\Bigr)\\
&\quad+\sum_{\substack{(j_{k_0},\dots,j_1)\in J\\
  \gamma_{\tilde q}([a_{j_{k_0},\dots,j_1},a_{j_{k_0},\dots,j_1+1}])
  \text{ is blue}}}\Bigl(|\gamma_{\tilde q}(a_{j_{k_0},\dots,j_1})
  -\gamma_{\tilde q}(c_{j_{k_0},\dots,j_1})|\\
&\qquad\qquad\qquad\qquad\qquad\qquad\quad
  +|\gamma_{\tilde q}(c_{j_{k_0},\dots,j_1})
  -\gamma_{\tilde q}(a_{j_{k_0},\dots,j_1+1})|\Bigr)\\ 
&\ge\sum_{\substack{(j_{k_0},\dots,j_1)\in J\\
  \gamma_{\tilde q}([a_{j_{k_0},\dots,j_1},a_{j_{k_0},\dots,j_1+1}])
  \text{ is white}}}(1+C_2N^{-2m_0})
  |\gamma_{\tilde q}(a_{j_{k_0},\dots,j_1})-\gamma_{\tilde q}(a_{j_{k_0},\dots,j_1+1})|\\
&\quad+\sum_{\substack{(j_{k_0},\dots,j_1)\in J\\
  \gamma_{\tilde q}([a_{j_{k_0},\dots,j_1},a_{j_{k_0},\dots,j_1+1}])
  \text{ is blue}}}|\gamma_{\tilde q}(a_{j_{k_0},\dots,j_1})
  -\gamma_{\tilde q}(a_{j_{k_0},\dots,j_1+1})|\\
&\ge\sum_{\substack{(j_{k_0},\dots,j_1)\in J\\
   \gamma_{\tilde q}([a_{j_{k_0},\dots,j_1},a_{j_{k_0},\dots,j_1+1}])\text{ is not red}}}
  |\gamma_{\tilde q}(a_{j_{k_0},\dots,j_1})-\gamma_{\tilde q}(a_{j_{k_0},\dots,j_1+1})|\\
&\quad+C_2N^{-2m_0}(1-C_1N^{-m_1})(|\gamma(a)-\gamma(c)|+|\gamma(c)-\gamma(b)|)
\end{align*}
\begin{equation}\label{*critineq}
\ge (1-C_1N^{-m_1k_0}+\frac 34C_2N^{-2m_0})
   (|\gamma(a)-\gamma(c)|+|\gamma(c)-\gamma(b)|),
\end{equation}  
which gives the claim with $C_3:=\frac 12C_2$.
\end{proof}

 Next we utilise Lemma~\ref{smallgain} iteratively to prove Proposition \ref{notalphaholder}. 


\begin{proof}[Proof of Proposition \ref{notalphaholder}]
Apply Construction~\ref{brokenline} to $\gamma$ without fixing any
$c\in\mathopen]a,b\mathclose[$. Choosing $h=0$ in the assumptions, we see that
the assumptions of Lemma~\ref{smallgain} are satisfied. Applying
Lemma~\ref{smallgain} to $\gamma$, we obtain points
$\tilde b_j$ and $\tilde d_j$, $j=1,\dots,\widetilde M$, with
$\widetilde M<C_0N^{l_0}$. For every
$p\in\{1,\dots,\widetilde M\}$, there is $(j_{k_0},\dots,j_1)$ such that
$\tilde b_{2p-1}=a_{j_{k_0},\dots,j_1}$, $\tilde d_{2p}=a_{j_{k_0},\dots,j_1+1}$,
$\tilde d_{2p-1}=\tilde b_{2p}=c_{j_{k_0},\dots,j_1}\in
    \mathopen]a_{j_{k_0},\dots,j_1},a_{j_{k_0},\dots,j_1+1}\mathclose[$
and $\gamma_{\tilde q}([a_{j_{k_0},\dots,j_1},a_{j_{k_0},\dots,j_1+1}])$ is either white or
blue. By Remark~\ref{caniterate2}.(a), the conditions of
Construction~\ref{brokenline} are valid for
$\gamma_{\tilde q}|_{[\tilde b_{2p-1},\tilde d_{2p}]}$
with $q$ replaced by $q-1$. If $\gamma_{\tilde q}([\tilde b_{2p-1},\tilde d_{2p}])$
is white, apply Construction~\ref{brokenline} to it with $c=\tilde b_{2p}$
recalling that, if $q>i_0+2$, a part of
$\gamma_{\tilde q}([\tilde b_{2p-1},\tilde d_{2p}])$
may already be primed with an $i$-red
primer for $i>i_0$ (recall Remark~\ref{caniterate2}.(b)). If
$\gamma_{\tilde q}([\tilde b_{2p-1},\tilde d_{2p}])$ is blue, apply
Construction~\ref{brokenline} to it without fixing any $c$ and interpreting that
it is white, that is, the blue paint of
$\gamma_{\tilde q}([\tilde b_{2p-1},\tilde d_{2p}])$ is not inherited by its
subcurves. Choosing $h=1$ in the assumptions, we see that
$\gamma|_{[\tilde b_{2p-1},\tilde d_{2p}]}$ satisfies the assumptions of
Lemma~\ref{smallgain} for all $p\in\{1,\dots,\widetilde M\}$. By means of
Lemma~\ref{smallgain}, we find new points $\tilde b_j$ and $\tilde d_j$ as
above. Repeat
this process $q+1$ times until the level $n+L^q(m_1,k_0)_0$ is reached and the
final points $a\le b_1<d_1\le\dots\le b_{2M}<d_{2M}\le b$ are defined. Note that
at this level all primed parts are painted which implies that $\gamma_{\tilde q}$
and $\gamma$ agree on white and blue segments. Therefore,
$\gamma_{\tilde q}(b_i)=\gamma(b_i)$ and $\gamma_{\tilde q}(d_i)=\gamma(d_i)$ for
all $i=1,\dots,2M$.
By Lemma~\ref{smallgain}, at every step the number of chosen subcurves is less
than $C_0N^{l_0}$, so $M<(C_0N^{l_0})^{q+1}$. Recalling
Remark~\ref{restricteddivision}.(b), the claim follows by using
recursively the estimate given by Lemma~\ref{smallgain} starting from the
lowest level $n+L^q(m_1,k_0)_0$ and finishing at level $n$. 
\end{proof}

\section*{Acknowledgement}

We thank the referee for  several useful comments improving the
readability and clarifying the structure of the paper.

\end{document}